\documentclass{amsart}
\usepackage{amsmath}
\usepackage{amssymb}
\usepackage[all]{xy}
\usepackage{comment}
\usepackage{color}

\theoremstyle{plain}
\newtheorem{thm}{Theorem}[section]
\newtheorem{thm*}{Theorem}[section]
\newtheorem{cor}[thm]{Corollary}
\newtheorem{prop}[thm]{Proposition}
\newtheorem{lemma}[thm]{Lemma}

\newtheorem{lemma*}{Lemma}

\newtheorem{construct}[thm]{Construction}

\theoremstyle{definition}
\newtheorem{defn}[thm]{Definition}
\newtheorem{remark}[thm]{Remark}

\newtheorem*{remark*}{Remark}

\newtheorem{ex}[thm]{Example}
\newtheorem{notation}[thm]{Notation}

\newtheorem{question*}{Question}

\numberwithin{equation}{thm}

\newcommand{\bR}{\mathbb R}

\newcommand{\bC}{\mathbb C}

\newcommand{\cB}{\mathcal B}

\newcommand{\cN}{\mathcal N}

\def\Spec{\operatorname{Spec}\nolimits}

\newcommand{\bG}{\mathbb G}

\newcommand{\bA}{\mathbb A}

\newcommand{\cP}{\mathcal P}
\newcommand{\cF}{\mathcal F}
\newcommand{\bP}{\mathbb P}
\newcommand{\bZ}{\mathbb Z}
\newcommand{\bF}{\mathbb F}

\newcommand{\cC}{\mathcal C}

\newcommand{\cE}{\mathcal E}

\newcommand{\cJ}{\mathcal J}

\newcommand{\ol}{\overline}
\newcommand{\ul}{\underline}

\def\Spec{\operatorname{Spec}\nolimits}
\def\sl2{\operatorname{SL_{2(2)}}\nolimits}
\def\Ga2{\operatorname{\mathbb G_{\rm a(2)}}\nolimits}

\def\GL{\operatorname{GL}\nolimits}

\newcommand{\bN}{\mathbb N}
\newcommand{\bQ}{\mathbb Q}

\newcommand{\bu}{\bullet}

\date\today

\begin{document}

 \title[The Stable Adams Conjecture]{The Stable Adams Conjecture}
 
 \author[ Eric M. Friedlander]
{Eric M. Friedlander$^{*}$} 

\address {Department of Mathematics, University of Southern California,
Los Angeles, CA 90089}
\email{ericmf@usc.edu}

\thanks{$^{*}$ partially supported by the Simons Foundation }

\subjclass[2020]{55N20, 55R15}

\keywords{stable Adams conjecture, spectra, simplicial schemes}

\begin{abstract}
We provide proofs of several variants for spectra  of the 
original Adams Conjecture.  Our approach uses $\cF$-spaces (also
known as Segal $\Gamma$-spaces), resulting in establishing 
both unstable and stable homotopy equivalences between maps 
of spectra associated to the ${\bf J}$-map and the Adams operations.  We employ 
a rigid version of Artin-Mazur \'etale homotopy theory applied
to commutative diagrams of simplicial schemes defined over the Witt
vectors of fields of positive characteristic.  Much of this text is devoting
to developing a theory of $X$-fibrations and $\bZ/\ell$-complete
$X$-fibrations over the $\cF$-spaces we consider.
\end{abstract}

\maketitle


\section{Introduction}

The original Adams Conjecture \cite{Adams} compared the 
$J$-homomorphism from topological $K$-groups to the stable homotopy groups of 
spheres to the composition of this  $J$-homomorphism with the Adams operation
$\psi^p$ on $K$-theory for some prime $p$.   These homomorphisms are 
induced by maps from the topological K-theory spectrum to the delooping 
of the sphere spectrum.

Our main theorem is the following formulation of the Stable Adams Conjecture,
occurring in the text  as Theorem \ref{thm:stable-adams}.   

\vskip .1in

\begin{thm}
\label{thm:below}
Let ${\bf kU}$ denote the connective spectrum of (topological) complex K-theory.
If $p$ and  $\ell$ are distinct primes, then the maps 
$${\bf J}_\ell, \ {\bf J}_\ell \circ (\psi^p)_\ell:  ({\bf kU})_\ell \times_{{\bf K(\bZ_\ell,0)}} {\bf K(\bZ,0)} 
\quad \to \quad {\bf BG_\ell(S)}$$
are equal in the homotopy category of  connective $\Omega$-spectra.
\end{thm}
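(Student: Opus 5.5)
The plan follows Friedlander's earlier étale-homotopy proof of the (unstable) Adams Conjecture, but carries it out at the level of $\cF$-spaces (Segal $\Gamma$-spaces) so that all comparisons are maps of special $\cF$-spaces and hence deloop to maps of connective $\Omega$-spectra.

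\textbf{Step 1: model the source $K$-theory as an $\cF$-space.} Let $k=\bar{\bF}_p$ and $W=W(k)$, with $W\subset\bC$. The source spectrum, $({\bf kU})_\ell\times_{{\bf K(\bZ_\ell,0)}}{\bf K(\bZ,0)}$, will be realized by an $\cF$-space built from the nerves of the groups $\GL_n$. Concretely, to the finite pointed set $\mathbf{n}$ I assign the category of $\mathbf{n}$-tuples of finite-dimensional vector spaces together with isomorphisms compatible with direct sums, and apply $\bigsqcup_d B\GL_d$ degreewise. This gives a diagram of simplicial schemes over $W$ indexed by $\cF$. Its complex points recover ${\bf kU}$, with $\bZ$ in $\pi_0$ (this is where the pullback along ${\bf K}(\bZ,0)$ enters). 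Taking $\ell$-completed rigid étale homotopy types of the geometric fibers gives the $\ell$-completion.

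\textbf{Step 2: realize ${\bf J}_\ell$ and $\psi^p$ on this model.} The rigid étale homotopy type of the closed fiber over $k$ and that of the generic fiber over $\bC$ are connected by a zigzag of $\cF$-spaces that are levelwise $\ell$-adic equivalences. This uses specialization for the smooth schemes $B\GL_d$ over $W$ and Artin–Mazur comparison on the $\bC$ side, applied functorially in $\cF$. Under this zigzag, the Frobenius on the closed fiber corresponds to $\psi^p$. For ${\bf J}_\ell$, I would use the theory of $\bZ/\ell$-complete $X$-fibrations over these $\cF$-spaces. The universal spherical fibration, namely the fiberwise one-point compactification of the tautological bundle ($\bA^d\setminus 0$ over $B\GL_d$, completed), is classified by a map to ${\bf BG_\ell(S)}$, the $\cF$-space of $\ell$-complete spherical fibrations. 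Whitney sum compatibility of this classification makes it a map of $\cF$-spaces, and hence of spectra.

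\textbf{Step 3: show Frobenius acts trivially on the target.} The key point is that Frobenius $F$ is an algebraic self-map of the scheme $\bA^d\setminus 0$ fibered over $F$ on $B\GL_d$. So the pullback of the universal sphere fibration along $F$ is fiber-homotopy equivalent to the original, via the relative Frobenius. This equivalence is canonical and is compatible with the $\cF$-structure, since Frobenius commutes with all morphisms of schemes over $\bF_p$. It yields a homotopy ${\bf J}_\ell\circ F\simeq{\bf J}_\ell$ of $\cF$-space maps, and transporting it along the zigzag of Step 2 gives the equality ${\bf J}_\ell\circ(\psi^p)_\ell = {\bf J}_\ell$ after passing to spectra. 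The point of forming the pullback over ${\bf K}(\bZ,0)$ is that the degree $d$ is an honest integer, so the fibration makes sense on each component.

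\textbf{Main obstacle.} I expect the hardest part to be the rigidity and coherence required in Steps 2 and 3. Ordinary étale homotopy types live only in pro-homotopy categories. To get maps of $\cF$-spaces, and then an equality in the homotopy category of $\Omega$-spectra rather than merely componentwise homotopies, one needs a rigid étale homotopy functor that is strictly functorial on diagrams of simplicial schemes. One also needs a classification theorem showing that $\ell$-complete $X$-fibrations over an $\cF$-space $X$ are represented by maps to ${\bf BG_\ell(S)}$ naturally in $\cF$. I would attack this first: define $X$-fibrations as compatible families over the levels $X(\mathbf{n})$, prove fiberwise $\ell$-completion preserves them, and prove a representability result up to a contractible space of choices. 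With that in place, the homotopies of Step 3 become natural in $\cF$ and therefore deloop.
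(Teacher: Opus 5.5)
Your plan is essentially the paper's proof. It reduces, via level-wise $\ell$-completion of the $\cF$-space $Sin(\ul\cB GL(\bC)^{top})$ (Proposition~\ref{prop:ell-spectra}), to a statement about maps of $\cF$-spaces. It transports the algebraic sphere fibration along the base-change zigzag $\bC\leftarrow W(k)\to k$ (Propositions~\ref{prop:class-compare}, \ref{prop:tau-hat}, \ref{prop:relate-Js}) and identifies $F^\wedge$ with $(\psi^p)_\ell$ (Theorem~\ref{thm:Sul}, via the Teichm\"uller lift of the arithmetic Frobenius and the splitting principle). It then concludes from the Frobenius self-map of $(\tau_k)^\wedge$ together with the representability Theorem~\ref{thm:X-ell-universal} (Proposition~\ref{prop:F-wedge}). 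The one point you should make explicit is that the relative Frobenius has degree $p^n$ on fibers, so it is a fiber homotopy equivalence only after $\bZ/\ell$-completion (since $p\neq\ell$) and does not preserve orientations, which is exactly why the conclusion holds in the unoriented ${\bf BG_\ell(S)}$.
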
 

\vskip .1in
One can view $ {\bf BG_\ell(S)}$
as  the even part of a connective delooping
of ${\bf GL_1(\mathbb S_\ell)}$, the units of the $\bZ/\ell$-completion of the sphere spectrum.
For a given prime $\ell$, we utilize the Bousfield-Kan $\bZ/\ell$-completion functor
\cite{B-K} on pointed simplicial sets which determines ``level-wise $\bZ/\ell$-completions" of spectra
arising from $\cF$-spaces as well as the Bousfield $\ell$-completion of spectra \cite{Bo}.   
Applied to the $\cF$-spaces we consider, the Bousfield-Kan 
$\bZ/\ell$-completion functor and the Bousfield $\ell$-completion functor yield homotopy equivalent 
spectra. (See Proposition \ref{prop:ell-spectra}.)

Theorem \ref{thm:below}  is a reformulation of a flawed statement of the fundamental result of \cite[Thm 10.4]{F80}
which fails to take into  into account the fact that the Frobenius map on ``algebraic spheres" 
does not preserve orientations.

Theorem \ref{thm:below} has the following oriented version, occurring in the text as 
Theorem \ref{thm:oriented-stable-adams}. 
We denote by $\bf{BU}$ the 1-connected cover of $\bf{kU}$ and by 
$\bf{BG^o(S)}_{[0]}$ the 1-connected cover of the 
delooping of the sphere spectrum.

\vskip .1in

\begin{thm}
\label{thm:below2}
Let ${\bf J}_\ell^o:  ({\bf BU})_\ell \ \to \ (\bf{BG^o(S)}_{[0]})_\ell$ denote the map 
of 1-connected $\Omega$-spectra induced by ${\bf J}_\ell$.
If $p$ and $\ell$ are distinct primes, then the following maps 
  $${\bf J}_\ell^o, \ {\bf J}_\ell^o \circ (\psi^p)_\ell:  ({\bf BU})_\ell 
\quad \to \quad (\bf{BG^o(S)}_{[0]})_\ell $$
are equal in the homotopy category of  connective $\Omega$-spectra.
 \end{thm}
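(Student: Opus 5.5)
The plan is to deduce Theorem \ref{thm:below2} from Theorem \ref{thm:below} by restriction to 1-connected covers, and then to correct for the failure of the Frobenius to preserve orientations.

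\textbf{Step 1: restrict to 1-connected covers.} First observe that the 1-connected cover of $({\bf kU})_\ell \times_{{\bf K(\bZ_\ell,0)}} {\bf K(\bZ,0)}$ is $({\bf BU})_\ell$. Indeed, the fiber product only modifies $\pi_0$, and $\ell$-completion commutes with passing to connected covers of connective spectra of finite type; Proposition \ref{prop:ell-spectra} identifies the Bousfield and Bousfield--Kan completions here. Precomposing ${\bf J}_\ell$ and ${\bf J}_\ell\circ(\psi^p)_\ell$ with the inclusion of the 1-connected cover gives two maps $({\bf BU})_\ell \to {\bf BG_\ell(S)}$. Since the source is 1-connected, each factors, uniquely up to homotopy, through the 1-connected cover of the target. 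By Theorem \ref{thm:below}, the two resulting maps agree in the homotopy category.

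\textbf{Step 2: identify the 1-connected cover of the target.} Next I compare the 1-connected cover of ${\bf BG_\ell(S)}$ with $({\bf BG^o(S)}_{[0]})_\ell$. The difference between ``oriented'' and ``unoriented'' lies only in $\pi_1$, which is $\pi_0$ of the units, namely $\bZ_\ell^\times$ versus $\{\pm 1\}$-type data; the Frobenius acts on the algebraic sphere of dimension $2n$ through degree $p^n$, which changes orientation classes. Passing to 1-connected covers kills exactly this $\pi_1$. Hence ${\bf J}^o_\ell$ is, by definition, the induced map on 1-connected covers, and the equality from Step 1 transports to the required equality.

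\textbf{Step 3: check that $\psi^p$ restricts correctly.} One must also verify that $(\psi^p)_\ell$ restricts to the 1-connected cover $({\bf BU})_\ell$. Since $\psi^p$ is a map of spectra, it induces a map on connected covers functorially, so this holds automatically.

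\textbf{Main obstacle.} The hardest point is Step 2: making precise, within the $\cF$-space models of the text, that the lift through 1-connected covers is functorial and unique up to homotopy in the homotopy category of connective $\Omega$-spectra. This requires that the Postnikov-type cover construction on the $\cF$-spaces commutes with $\bZ/\ell$-completion. I would first try to construct ${\bf BG^o(S)}_{[0]}$ directly as the $\cF$-space of 1-connected covers levelwise. I would then apply the Bousfield--Kan completion levelwise, using fiber lemma arguments for nilpotent spaces to see that completion preserves the covering fibration sequences. If the text instead defines ${\bf J}^o_\ell$ through an oriented $X$-fibration construction, I would rerun the proof of Theorem \ref{thm:below} with oriented $\bZ/\ell$-complete $X$-fibrations over the 1-connected base. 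That base has trivial $\pi_1$, so the Frobenius orientation discrepancy, a $\pi_1$ phenomenon, disappears.
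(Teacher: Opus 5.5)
Your Steps 1--3 are correct and are essentially the paper's proof. The paper likewise restricts Theorem~\ref{thm:stable-adams} to $({\bf BU})_\ell$ and uses the fiber sequence $({\bf BG^o(S)}_{[0]})_\ell \to {\bf BG_\ell(S)}_{[0]} \to {\bf K}(\bZ_\ell^*,1)$ of Proposition~\ref{prop:S2-spec} to see that the two lifts can only differ by an element coming from $Hom(({\bf BU})_\ell,{\bf K}(\bZ_\ell^*,0)) = 0$, which is exactly your uniqueness of lifts through the 1-connected cover.

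The $\cF$-space-level work in your ``main obstacle'' paragraph is unnecessary, because Proposition~\ref{prop:S2-spec} already supplies that fiber sequence and the rest takes place in the homotopy category of connective spectra. In your fallback, note also that the orientation discrepancy disappears because $({\bf BU})_\ell$ has trivial $\pi_0$, not because of $\pi_1$ of the base, whose components $(BGL_{n,k})^\wedge$ are already simply connected.
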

 
 As for Theorem \ref{thm:below}, we prove Theorem \ref{thm:below2} by establishing a stronger, 
 unstable homotopy equivalence in the homotopy category of $\cF$-spaces.
 
 As a corollary of Theorem \ref{thm:below2}, we obtain
the following $\ell$-local version of the Stable Adams Conjecture, occurring as Corollary  \ref{cor:local}
in the text below.  We use the Bousfield-Kan $\bZ_{(\ell)}$-completion functor $(-)_{(\ell)}$ to
localize spaces at the prime $\ell$.
 
\begin{cor}
If $p$ and $\ell$ are distinct primes, the following maps of 1-connected spectra
$${\bf J}_{(\ell)}^o, \ {\bf J}_{(\ell)}^o \circ (\psi^p)_{(\ell)}:  ({\bf BU})_{(\ell)} 
\quad \to \quad (\bf{BG^o(S)}_{[0]})_{(\ell)} $$
are equal in the homotopy category of  connective $\Omega$-spectra.
\end{cor}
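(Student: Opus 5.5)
The plan is to deduce the corollary from Theorem \ref{thm:below2} by an arithmetic fracture argument. The $\bZ_{(\ell)}$-localization of a 1-connected spectrum of finite type sits in a homotopy pullback square with its $\ell$-completion and its rationalization, all over the rationalization of the $\ell$-completion. So it suffices to compare the two maps ${\bf J}_{(\ell)}^o$ and ${\bf J}_{(\ell)}^o \circ (\psi^p)_{(\ell)}$ after $\ell$-completion and after rationalization. We also need to control the $\lim^1$ (phantom) term that obstructs assembling the two homotopies into one.

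First, we identify $\ell$-completion of the $\bZ_{(\ell)}$-localized maps with the maps of Theorem \ref{thm:below2}. The Bousfield–Kan $\bZ/\ell$-completion of a $\bZ_{(\ell)}$-local 1-connected space agrees with the $\bZ/\ell$-completion of the original space. Proposition \ref{prop:ell-spectra} identifies this level-wise completion with Bousfield $\ell$-completion for the $\cF$-spaces in question. Hence after $\ell$-completion the two maps become ${\bf J}_\ell^o$ and ${\bf J}_\ell^o\circ(\psi^p)_\ell$, which Theorem \ref{thm:below2} says are equal.

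Second, we treat the rational parts. The target $(\bf{BG^o(S)}_{[0]})$ has finite homotopy groups in every degree, since $\pi_n$ of the 1-connected cover of the delooped sphere spectrum is $\pi_{n-1}^s$ for $n\ge 2$. Its rationalization is therefore contractible. So the target localization is already $\ell$-complete, in fact $\ell$-torsion with finite homotopy groups, and equals its own $\ell$-completion. It follows that any map from $({\bf BU})_{(\ell)}$ into it factors through the completion map $({\bf BU})_{(\ell)} \to ({\bf BU})_\ell$. More precisely, the target is $\ell$-complete, so restriction along the completion map gives a bijection
$$[({\bf BU})_\ell, T]\;\cong\;[({\bf BU})_{(\ell)}, T]$$
for $T=(\bf{BG^o(S)}_{[0]})_{(\ell)}$. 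This is the universal property of Bousfield $\ell$-completion with respect to $H\bZ/\ell$-local targets. Here we use that $\ell$-complete spectra with finite homotopy groups are $H\bZ/\ell$-local.

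Third, we conclude. Under this bijection the two maps of the corollary correspond to ${\bf J}_\ell^o$ and ${\bf J}_\ell^o\circ(\psi^p)_\ell$. This requires naturality of completion with respect to ${\bf J}$ and $\psi^p$, which holds because these are maps of $\cF$-spaces and completion is functorial. The two maps therefore agree. This argument avoids the fracture square entirely, and with it the phantom-map issue.

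The main obstacle is the second step. We must check carefully that $(\bf{BG^o(S)}_{[0]})_{(\ell)}$, formed via Bousfield–Kan $\bZ_{(\ell)}$-completion level-wise on the $\cF$-space, really is $H\bZ/\ell$-local as a connective $\Omega$-spectrum. We must also check that the level-wise $\bZ_{(\ell)}$-completion is compatible with the $\ell$-completion used in Theorem \ref{thm:below2}. I would first verify both points on homotopy groups, using that the homotopy groups are finite $\ell$-groups after localization. I would then apply Proposition \ref{prop:ell-spectra} to transfer the conclusion to the spectrum level.
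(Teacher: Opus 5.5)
Your argument is correct and is essentially the paper's proof: the paper likewise uses the finite homotopy groups (Proposition \ref{prop:fingen}) to see that $(\mathbf{BG^o(S)}_{[0]})_{(\ell)} \to (\mathbf{BG^o(S)}_{[0]})_\ell$ is an equivalence, compares the two maps with their completions through a naturality square along $({\bf BU})_{(\ell)} \to ({\bf BU})_\ell$, and concludes from Theorem \ref{thm:oriented-stable-adams}. Only that commutative square is needed, not the full bijection from the universal property; also, the compatibility for $\psi^p$ should come from functoriality of completion applied to the $\ell$-local stable operation $(\psi^p)_{(\ell)}$, not from $\psi^p$ being a map of $\cF$-spaces, which it is not integrally.
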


 \vskip .1in

 The category $\cF$ is the opposite category of Segal's category $\Gamma$; an 
 $\cF$-space is a (covariant) functor ${\ul \cB}: \cF \ \to \ (s.sets_*)$ from 
the category $\cF$ of finite pointed sets to the category 
$(s.sets_*)$ of pointed simplicial sets which sends ${\bf 0}$ to the singleton space $*$.
An $\cF$-space ${\ul \cB}: \cF \to (s.sets_*)$ is said to be special if
$ \ \times p_i: {\ul \cB}({\bf n}) \to \prod_{i=1}^n {\ul \cB}({\bf 1})$  is a weak homotopy 
equivalence for each $n > 0$, where $p_i$ is the map induced by the map ${\bf  n} \to {\bf 1}$ in $\cF$
 sending $j \in \{ 0,\ldots,n \}$ to $1$ if $j=i$ and to 0 otherwise.  If $\ul\cB$ is special, 
 then the natural map from $\ul\cB({\bf 1})$ to the 0-space of its associated $\Omega$-spectrum $||\ul\cB||^+$ is a 
 homotopy-theoretic group completion. (See \cite{Mc-Seg}.)  
  
 We use the $\cF$-space $Sin(\ul \cB GL(\bC)^{top})$ to model complex K-theory, 
we adopt D. Quillen's using of the geometric Frobenius \cite{Quillen68} and/or D. Sullivan's Teichm\"uller 
lifting of the arithmetic Frobenius \cite{Sul} to model the $\bZ/\ell$-completion $(\psi^p)_\ell$
of the Adams operator $\psi^p$ (see Theorem \ref{thm:Sul}), and we use the $\cF$-space $\ul \cB G(S^2)$ to
model the connective spectrum associated to homotopy self-equivalences of even dimensional spheres.   
We model the ${\bf J}$-map  of $\Omega$-spectra associated to the 
$J$-homeomorphism by the map of $\cF$-spaces $\cJ: Sin(\ul \cB GL(\bC)^{top}) \to Sin(\ul \cB G^{top}(\bC^+))$.
The concrete models of $\cF$-spaces  for objects in the homotopy category of spectra as  introduced by 
G. Segal \cite{Segal} fit well with input from algebraic geometry given by \'etale homotopy theory
 as introduced by M. Artin and B. Mazur \cite{A-M}.

Our approach is to produce a universal $\cF$-space $\ul\cB G(X)$ for
a given pointed, connected simplicial set $X$  and develop a theory
of equivalence classes of $X$-fibrations over $\ul\cB$ represented by $\ul\cB G(X)$ in order to investigate 
homotopy classes of maps from a ``general" $\cF$-space $\ul\cB$  to $\ul\cB G(X)$.  This
constitutes an $\cF$-space version  of the familar classification of fiber homotopy
equivalence classes of Hurwicz fibrations over a CW complex with a given fiber 
as in \cite{Stash}, \cite{May75}.
Theorem \ref{thm:X-universal} presents our simplest classification result for an arbitrary
pointed connected simplicial set $X$, using monoids of weak equivalences from $X^n$ to 
$Sin(|X^n|)$ (where $X^n$ is the $n$-th smash power of $X$) and the 
``sum pairing" on these monoids determined by smash products. 
To prove Theorem \ref{thm:below}, we establish in Theorem \ref{thm:X-ell-universal} the 
analog of Theorem \ref{thm:X-universal} for $\bZ/\ell$-complete $X$-fibrations built using 
monoids of mod-$\ell$-equivalences $X^n \to \bZ/\ell(X^n)$; to prove
Theorem \ref{thm:below2}, we utilize Corollary \ref{cor:X-fib-o-ell},
 the analog of Theorem \ref{thm:X-ell-universal} for oriented $\bZ/\ell$-complete $X$-fibrations.

Our strategy of proof is to use commutative diagrams of simplicial schemes
 to produce fiber homotopy equivalences between $\bZ/\ell$-completed $X$-fibrations.  To do 
 this, we utilize the functor $(-)^\wedge$ 
which sends a pointed simplicial scheme to a pointed simplicial set (see Definition \ref{defn:wedge}).
Key attributes of $X \mapsto X^\wedge$ are its functoriality, its formulation using the \'etale
site of $X$ (including the \'etale homotopy theory of Artin and Mazur), its use of 
$\bZ/\ell$-completion for some prime $\ell$ invertible in the structure
sheaf of $X$, and the fact that the natural map of simplicial sets $(X\times Y)^\wedge \to 
X^\wedge \times Y^\wedge$ is a homotopy equivalence for the simplicial schemes we consider.
 
 Although aspects of this paper resemble that of \cite{F80}, only traces of that 
 paper remain in this manuscipt for we have
 introduced numerous modifications and details in order to formalize that outline.  In Section 
 \ref{sec:alg-spheres}, we introduce a corrected definition of algebraic spheres
 over a field which behaves properly with respect to smash products.  In Section \ref{sec:rigid-etale},
 we provide a corrected definition of the rigid \'etale topological type of a simplicial
 scheme extending the definition in \cite{F82} for schemes.  In Section \ref{sec:aspects},
 we introduce the Quillen model category $(\cF{\text -}spec/\ul\cN)_*$ of pointed-connected
 special $\cF$-spaces $\ul\cB$ equipped with a split projection to the $\cF$-space 
 $\ul\cN$, where $\ul\cN({\bf n})$ is the  discrete set ${\bf N}^{\times n}$ with sum 
 pairing given by addition of $n$-tuples.  This structure of a special $\cF$-space provides compatible base points for components
 of $\ul\cB$.   We give various examples of $\cF$-spaces used in the proofs of the
 above results, examples formulated using a simplification of the construction introduced in \cite{Segal}.
   
  Our definition of $X$-fibration in Section \ref{sec:X-fib} is a corrected, simplified 
  version of earlier definitions.  We introduce the functor $X{\text -}fib(-)$
  on the homotopy category of special pointed-connected $\cF$-spaces which
  sends $\ul\cB$ to the set of fiber homotopy equivalence classes of $X$-fibrations.
  Theorem \ref{thm:X-universal} asserts that $\ul\cB G (X)$ represents this functor.
  Section \ref{sec:ell-complete} introduces $\bZ/\ell$-complete $X$-fibrations, 
   leading to the theory $\ul\cB \mapsto X_\ell{\text -}fib(\ul\cB)$ represented by $\ul\cB G_\ell(X)$.
  This is complemented by the formalization of oriented $X$-fibrations and oriented $\bZ/\ell$-complete
  $X$-fibrations in Section \ref{sec:or-ell-complete}.
 
  In Section \ref{sec:Adams}, we give a comparison of maps of $\cF$-spaces
  determined by the Adams operation $\psi^p$,
  the Teichm\"uller lift of the arithmetic Frobenius used by Sullivan in \cite{Sul}, and the 
  Frobenius map used by Quillen in \cite{Quillen}.

Throughout this paper, $k$ will denote an algebraically closed field of characteristic $p > 0$ and
$\ell$ will denote a prime different from $p$.  We caution the reader that we use 
$\bZ_\ell$ to denote the $\ell$-adic integers (equal to \ $\varprojlim_n \bZ/\ell^n$)
and we use $\bZ_{(\ell)}$ to denote the localization of $\bZ$ at the prime $\ell$.
We fix some $\ell$-th primitive root of unity in $k$, 
thereby identifying the \'etale sheaf $\mu_\ell$ on a $k$-variety with the constant sheaf $\bZ/\ell$. 
We frequently utilize  $(X)_\ell$ or $X_\ell$ to denote $(\bZ/\ell)_\infty(X)$, 
which we refer to as the $\bZ/\ell$-completion of $X$.    
When working with schemes over $\Spec R$ for $R$ equal to
$k$, \ $\bC$, \ or the Witt vectors $W(k)$ of $k$, we designate the fiber product over $\Spec R$ 
simply by $(-) \times (-)$.

We respectfully acknowledge the influence upon this work of innovative ideas introduced
by Graeme Segal in \cite{Segal} for the category of spectra, and the approaches of 
 Daniel Quillen in \cite{Quillen68}, \cite{Quillen} and Dennis Sullivan in \cite{Sul} in their the proofs the original 
Adams Conjecture.  We thank J. Peter May for his early guidance and  Nitya Kitchloo  
for reviving our interest in the Stable Adams Conjecture.  Finally, we thank anonymous 
referees for their constructive comments. 

\vskip .2in


\section{Frobenius maps and algebraic spheres}
\label{sec:alg-spheres}

We begin by recalling the (geometric) Frobenius endomorphism,
stated for simplicity for affine $k$-varieties over $k$ defined over $\bF_q$
but easily extended to simplicial varieties over $k$ defined over $\bF_q$.

\begin{prop}
\label{prop:Frob}
Let $V$ be an affine algebraic variety over $k$ with coordinate algebra $k[V]$,
a finitely generated commutative $k$-algebra.   Assume that $V$ is defined 
over some finite subfield $\bF_q \subset k$ with $q = p^d$, so that $k[V] = F_q[V_{\bF_q}] \otimes_{\bF_q} k$
for some finitely generated commutative $\bF_q$-algebra $\bF_q[V_{\bF_q})]$.    
\begin{enumerate}
\item
The $d$-th {\bf (geometric) Frobenius} $F^d: V \to V$ is map of $k$-varieties defined as
the base change to $k$ of the $q$-th power map
$V_{\bF_q} \to V_{\bF_q}$ given by sending $f \in \bF_q[V_{\bF_q}]$ to $f^q$.  
For $d = 1$, we use $F$ to denote $F^1$.
\item 
The  $d$-th {\bf (arithmetic) Frobenius} $\sigma^d: V \to V$ is the map of schemes (but
not of $k$-varieties) given
by $1 \otimes (-)^q: \bF_q[V_{\bF_q}] \otimes_{\bF_q}  k \ \to \ k[V_{\bF_q}] \otimes_{\bF_q}  k$.
\item
The {\bf total $q$-th power map} \ $(-)^q = F^d \circ \sigma^d = \sigma^d \circ F^d: V \ \to \ V$ 
(also called the $d$-th {\bf (absolute) Frobenius})
is the map of schemes sending each  $f \in k[V]$ to $f^q$.  This map  induces the identity map 
on the (rigid) \`etale homotopy type and \`etale cohomology of $V$.
 \end{enumerate}
 \end{prop}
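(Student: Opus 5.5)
Parts (1) and (2) are definitions, so the content to check is that they give well-defined morphisms, that the two commute with composite $(-)^q$, and the last sentence of (3): the absolute Frobenius acts as the identity on the rigid étale homotopy type and on étale cohomology.

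I would proceed in three steps.

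First, well-definedness. Write $A=\bF_q[V_{\bF_q}]$, so $k[V]=A\otimes_{\bF_q}k$. The $q$-th power map $\phi_A:a\mapsto a^q$ is an $\bF_q$-algebra endomorphism of $A$, since $x\mapsto x^q$ is additive in characteristic $p$ and fixes $\bF_q$. Hence $F^d$, given on coordinate rings by $\phi_A\otimes 1$, is $k$-linear. Likewise $\phi_k$ is a ring automorphism of $k$ fixing $\bF_q$, so $1\otimes\phi_k$ is a ring map. It is not $k$-linear, so $\sigma^d$ is only a morphism of schemes.

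Second, the factorization. On a pure tensor $a\otimes\lambda$,
$$(\phi_A\otimes 1)(1\otimes\phi_k)(a\otimes \lambda)=a^q\otimes\lambda^q=(a\otimes\lambda)^q,$$
and the same holds in the other order. Since $f\mapsto f^q$ is a ring endomorphism and sums of pure tensors span $k[V]$, we get $\sigma^d\circ F^d=F^d\circ\sigma^d=(-)^q$. For simplicial varieties this is checked levelwise; naturality of $(-)^q$ in all ring maps makes it compatible with face and degeneracy maps.

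Third, the étale statement. The key fact is SGA-style: for any scheme $X$ of characteristic $p$, the absolute Frobenius $\Phi_X$ is a universal homeomorphism. Moreover, for every étale $U\to X$ the square with $\Phi_U$ and $\Phi_X$ is cartesian, so that $\Phi_X^{*}U\cong U$ canonically via the relative Frobenius $\Phi_U$ itself (SGA 5, XV). Hence pullback along $\Phi_X$ is naturally isomorphic to the identity functor on the small étale site $X_{et}$. Consequently, for any étale sheaf $\mathcal F$, the map $\Phi_X^{*}:H^i(X_{et},\mathcal F)\to H^i(X_{et},\Phi_X^{*}\mathcal F)=H^i(X_{et},\mathcal F)$ is the identity.

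For the rigid étale homotopy type, which is built from hypercovers or rigid coverings indexed by geometric points and étale neighborhoods, I would check that $\Phi$ sends each rigid covering $U\to X$ to the covering $U\times_{X,\Phi}X\cong U$. This identification matches geometric points, since $\Phi$ is the identity on underlying topological spaces and on geometric points up to the canonical isomorphism. It therefore induces the identity on the indexing category and on the associated simplicial sets of connected components. Applying this levelwise to simplicial schemes gives the claim for $V$.

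The main obstacle is this last rigidity check. It needs the identification $\Phi^{*}U\cong U$ to be strictly functorial and compatible with the chosen geometric-point data in the rigid construction, so that one gets the identity on the nose rather than merely a homotopy. I would handle it by noting that the isomorphism is given by the natural transformation $\Phi$ itself, which is strictly natural in $U$.
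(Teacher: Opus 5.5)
The paper gives no proof of this proposition. It presents it as a recollection of standard definitions, together with the standard fact (SGA~5, Exp.~XV; SGA~4, Exp.~VIII) that the absolute Frobenius acts trivially on the étale site. Your proposal supplies exactly that standard argument, and it is correct. Parts (1)--(2) and the identity $F^d\circ\sigma^d=\sigma^d\circ F^d=(-)^q$ are the routine checks you give. The étale assertion rests, as you say, on the fact that for $U\to X$ étale the relative Frobenius $F_{U/X}=(\Phi_U,\pi_U)\colon U\to U\times_{X,\Phi_X}X$ is an isomorphism over $X$.

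Two small precisions on the last step.

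First, the isomorphism $\Phi_X^{*}U\cong U$ is $F_{U/X}$, not $\Phi_U$. What makes the induced map on the rigid type trivial is the identity $\mathrm{pr}_1\circ F_{U/X}=\Phi_U$, combined with the fact that $\Phi$ is the identity on underlying spaces. Hence it acts as the identity on $\pi_0$ of every level of the \v{C}ech nerve. On geometric points, point $\Phi_X$ by factoring $\Phi_X\circ\ul x=\ul x\circ\Phi_{k(\ul x)}$ through $\ul x$ via the Frobenius of $k(\ul x)$. With that choice, $F_{U/X}\circ u_{\ul x}=(u_{\ul x}\circ\Phi_{k(\ul x)},\ul x)$, which is precisely the pulled-back geometric point, so $F_{U/X}$ is an isomorphism of rigid coverings.

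Second, strict naturality of $\Phi$ alone only shows that the reindexing functor $U\mapsto\Phi_X^{*}U$ on $\cC(X,r{\text -}et)$ is naturally isomorphic to the identity. That gives the identity in pro-simplicial sets, and a map homotopic to the identity after $\mathrm{holim}$ and $(-)_\ell$. The upgrade to ``on the nose,'' which you flag as the main obstacle, comes from $\cC(X,r{\text -}et)$ being a partially ordered set. Isomorphic rigid coverings, such as $U$ and $\Phi_X^{*}U$, are then the same object. Either version suffices for how the paper uses the statement later.
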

 
 \vskip .1in

Smash products play an important role in our arguments.  The following definition formalizes our
smash products of ``algebraic spheres", correcting the definition in \cite[\S 8]{F80}.  To shorten
formulas we denote by $\Delta[1]^{\times 2}$ the product $\Delta[1]\times \Delta[1]$ and we denote
by $L$ the union $\Delta[1]\times 1 \cup_{(0,0)} 1\times \Delta[1]$ inside $\Delta[1]^{\times 2}$

\begin{defn}
\label{defn:smash-alg}
Let $R$ denote either $\bC$ (the complex numbers) or $k$ or the Witt vectors $W(k)$ of $k$ 
(a complete discrete valuation ring with residue field $k$ and field of fractions of characteristic 0).
Consider the simplicial mapping cone of the open embedding 
$\bA_R^{n}{\text -}\{ 0 \} \hookrightarrow \bA^n_R$,
\begin{equation}
\label{defn:S-alg}
S^{2n,alg}_R \ \equiv \ (\Spec R) \cup_{(\bA_R^n{\text-}\{ 0 \} \times 0)}  
(\bA_R^n{\text-}\{ 0 \} \times \Delta[1])  \cup_{(\bA_R^n{\text-}\{ 0 \} \times 1)}   \bA_R^n,
\end{equation}
a simplicial $R$-scheme.  Since $GL_n$ acts naturally on the diagram
$$\ast \ \leftarrow \ \bA^n{\text -}0  \stackrel{id \times 0}{\to} \ \bA^n{\text -}0\times \Delta[1]
\ \stackrel{id \times 1}{\leftarrow } \  \bA^n{\text -}0 \ \hookrightarrow \ \bA^n,$$
there is a natural $GL_{n,R}$ action on $S^{2n,alg}_R$. 

The smash product 
\begin{equation}
\label{eqn:define-wedge}
\wedge:  S^{2m,alg}_R \wedge S^{2n,alg}_R \quad \to \quad S^{2m+2n,alg}_R
\end{equation}
is the map of simplicial $R$-schemes which factors the composition of 
the natural map 
$$S^{2m,alg}_R \times S^{2n,alg}_R \ \to \ (\Spec R) \cup_{\bA_R^{m+n}{\text-}\{ 0 \} \times  L }
(\bA_R^{m+n}{\text-}\{ 0 \} \times \Delta[1]^{\times 2})  \cup_{\bA_R^{m+n}{\text-}\{ 0 \} \times (1,1) }   
\bA_R^{m+n}$$
induced by the embedding $(\bA_R^m{\text-}\{ 0 \} \times \Delta[1]) \times (\bA_R^n{\text-}\{ 0 \} \times \Delta[1])
\ \hookrightarrow \ \bA_R^{m+n}{\text-}\{ 0 \} \times \Delta[1]^{\times 2}$
and the map 
$$(\Spec R) \cup_{\bA_R^{m+n}{\text-}\{ 0 \} \times  L }
(\bA_R^{m+n}{\text-}\{ 0 \} \times \Delta[1]^{\times 2})  \cup_{\bA_R^{m+n}{\text-}\{ 0 \} \times (1,1) }   
\bA_R^{m+n} \ \to S^{2m+2n,alg}_R$$
 induced by the simplicial map $\Delta[1]^{\times 2} \to \Delta[1]$
which sends $L \subset \Delta[1]^{\times 2} $ to the vertex 0 and sends the 
vertex $(1,1) \in \Delta[1]^{\times 2} $ to the vertex 1.

We verify that 
\begin{equation}
\label{eqn:alg-equi}
\wedge: S^{2m,alg}_R \times S^{2n,alg}_R \ \to \ S^{2m+2n,alg}_R
\end{equation}
is $GL_{m,R} \times GL_{n,R}$-equivariant, by observing that $GL_{m,R} \times GL_{n,R}$
acts on the appropriate diagram.
\end{defn}

 \vskip .1in
 
 The following proposition establishes that the definitions of Definition \ref{defn:smash-alg} induce
 the expected maps on \'etale cohomology.  We remind the reader that the multiplicative group
 scheme $\bG_{m,R}$ is isomorphic as an $R$-scheme to $\bA^1{\text -} 0$ and the ``projective
 $n$-space" $\bP^n_R$ over $R$ is $Proj$ of the graded $R$-algebra $R[x_0,\ldots,x_n]$.

\begin{prop}
\label{prop:Pn}
For any positive integer $d$, the
the maps on \'etale cohomology induced by the (geometric) Frobenius map $F$
\begin{equation}
\label{eqn:H-Pn}
H_{et}^{2s}(\bP^n_k,\bZ/\ell^d)  \ \stackrel{F^*}{\to} \ \ H_{et}^{2n}(\bP^n_k,\bZ/\ell^d), \quad \quad 
H_{et}^{2s}(S^{2n,alg}_k,\bZ/\ell^d)  \ \stackrel{F^*}{\to} \ \ H_{et}^{2s}(S^{2n,alg}_k,\bZ/\ell^d)
\end{equation} 
are identified with multiplication by $p^s$ on $\bZ/\ell^d$. 
Moreover,
\begin{equation}
\label{eqn:H*-S}
H_{et}^i(S^{2n,alg}_k,\bZ/\ell^d) \simeq \bZ/\ell^d, \ i = 0, 2n, \quad H_{et}^i(S^{2n,alg}_k,\bZ/\ell^d) = 0, \ 
i \not=  0, 2n,
\end{equation}
and the smash product (\ref{eqn:define-wedge}) induces an isomorphism
\begin{equation}
\label{eqn:et-iso}
H^*_{et}(S^{2m+2n,alg}_k,\bZ/\ell^d) \quad \stackrel{\sim}{\to} \quad
H^*_{et}(S^{2m,alg}_k \wedge S^{2n,alg}_k,\bZ/\ell^d).
\end{equation}
\end{prop}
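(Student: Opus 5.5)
The plan is to reduce everything to the long exact (Mayer–Vietoris type) sequence in étale cohomology attached to the simplicial mapping cone (\ref{defn:S-alg}), together with the classical computations for $\bA^n_k$, $\bA^n_k{\text -}\{0\}$ and $\bP^n_k$. For $\bP^n_k$ we use the standard fact that $H^*_{et}(\bP^n_k,\bZ/\ell^d)=\bZ/\ell^d[c]/(c^{n+1})$, where $c=c_1(\mathcal O(1))\in H^2$. Since $F^*\mathcal O(1)\simeq\mathcal O(1)^{\otimes p}$ (the Frobenius pulls back the coordinate $x_i$ to $x_i^p$), we get $F^*c=pc$. Multiplicativity then gives $F^*c^s=p^sc^s$ on $H^{2s}$.

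For $S^{2n,alg}_k$, the cohomology of a simplicial scheme presented as a homotopy pushout of $\Spec k \leftarrow U \rightarrow \bA^n$ with $U=\bA^n{\text -}\{0\}$ is computed by the spectral sequence of the simplicial scheme, which here degenerates to a long exact sequence
$$\cdots\to H^{i-1}(U)\to H^i(S^{2n,alg}_k)\to H^i(\Spec k)\oplus H^i(\bA^n)\to H^i(U)\to\cdots.$$
By homotopy invariance, $\bA^n_k$ has the cohomology of a point. By the Gysin/purity sequence for $\{0\}\subset\bA^n$, the only nonzero groups of $U$ are $H^0(U)=\bZ/\ell^d$ and $H^{2n-1}(U)\simeq H^{2n}_{\{0\}}(\bA^n)\simeq\bZ/\ell^d(-n)$, trivialized by our fixed root of unity. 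This gives (\ref{eqn:H*-S}): $H^0=\bZ/\ell^d$, $H^{2n}\simeq H^{2n-1}(U)$, and all other groups vanish.

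Frobenius compatibility follows from naturality of the sequence, since $F$ acts on the whole diagram. On $H^{2n}_{\{0\}}(\bA^n)$, equivalently $H^{2n-1}(U)$, the map $F^*$ acts by $p^n$. One way to see this is to compare with $\bP^n$: the class of a point is $c^n$, and excision gives $H^{2n}_{\{0\}}(\bA^n)\simeq H^{2n}_{\{0\}}(\bP^n)\to H^{2n}(\bP^n)$, an isomorphism compatible with $F$. Alternatively, the degree of $F$ on $\bA^n$ is $p^n$, and $F^*$ on the top Tate-twisted class is multiplication by that degree. On $H^0$, $F^*$ is the identity, i.e. $p^0$. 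This settles (\ref{eqn:H-Pn}).

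For (\ref{eqn:et-iso}), we compute the cohomology of the smash product (the quotient of $S^{2m}\times S^{2n}$ by the wedge) via the Künneth formula for $\bZ/\ell^d$-coefficients, valid over an algebraically closed field. The reduced cohomology is $\tilde H^{2m}\otimes\tilde H^{2n}\simeq\bZ/\ell^d$ in degree $2m+2n$. It then suffices to show that the map $\wedge$ sends a generator to a generator. We would check this by restricting to the open piece $U_{m+n}\times(\Delta[1]^{\times 2},L)$, where the map factors through the natural map $(\bA^m,U_m)\times(\bA^n,U_n)\to(\bA^{m+n},U_{m+n})$. The compatibility of the Thom/purity classes with external products, $cl(0)\times cl(0)=cl(0)$, then shows that the generator of $H^{2m+2n}_{\{0\}}(\bA^{m+n})$ pulls back to the product of generators. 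The collapse $\Delta[1]^{\times2}\to\Delta[1]$ sending $L$ to $0$ induces an isomorphism on the relative cohomology of the cone coordinates.

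The main obstacle is this last bookkeeping: identifying the intermediate simplicial scheme's cohomology and tracking the cone/suspension isomorphisms through $\Delta[1]^{\times 2}$ with $L$ collapsed. The Frobenius and $\bP^n$ parts are standard.
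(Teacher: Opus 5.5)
Your proof is correct, but it reaches the two key inputs by a different route from the paper's.

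The shared skeleton has two parts. The first is the $\bP^n$ computation: your $F^*\mathcal O(1)\simeq\mathcal O(p)$ is the paper's ``functoriality of the cycle class map.'' The second is the spectral sequence $E_1^{s,t}=H^t_{et}(X_s)\Rightarrow H^{s+t}_{et}(X_\bullet)$. As you note, it has only two nonzero normalized columns for the mapping cone, so it becomes a Mayer--Vietoris sequence.

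For $\bA^n_k{\text -}\{0\}$, the paper uses the $\bG_m$-fibration over $\bP^{n-1}_k$ and reads off the Frobenius weight as $p\cdot p^{n-1}$. You instead use purity at the origin and excision into $\bP^n$. This ties the weight directly to $F^*c^n=p^nc^n$ and avoids the Leray spectral sequence of the torsor. You should say why $H^{2n}_{\{0\}}(\bP^n)\to H^{2n}(\bP^n)$ is an isomorphism: either $\bP^n$ minus a point retracts onto $\bP^{n-1}$, or simply $cl(0)\mapsto c^n$.

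For the smash product, the paper only says the computation mirrors topology, where Proposition \ref{prop:map-cone} makes the map a homotopy equivalence. Your argument is intrinsic to characteristic $p$, and it is simpler than you make it sound. The pair $(\bA^m,U_m)\times(\bA^n,U_n)$ is literally $(\bA^{m+n},U_{m+n})$, because $U_m\times\bA^n\cup\bA^m\times U_n=U_{m+n}$. So once the reduced cohomology of each cone is identified with cohomology supported at the origin, everything reduces to $cl(0)\times cl(0)=cl(0)$.

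What your route buys is a self-contained étale argument that does not lean on topology or on the comparison machinery of Section \ref{sec:rigid-etale}. This matters because Example \ref{ex:S-alg} justifies base change for $S^{2n,alg}$ by pointing back to this very proof. What it costs is the bookkeeping you flag, which is routine:
\begin{itemize}
\item K\"unneth for simplicial schemes, via degreewise K\"unneth plus Eilenberg--Zilber on the diagonal;
\item compatibility of the cone identifications with the collapse $\Delta[1]^{\times 2}\to\Delta[1]$. This is the $\min$ map, reading $L$ as the two edges through $(0,0)$, which is what the paper's typo must mean.
\end{itemize}
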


\begin{proof}
The assertion for $F^*$ on $H_{et}^{2s}(\bP^n_k,\bZ/\ell^d)$ follows from the functoriality of 
the cycle class map.  Using the ``geometric fibration" $\bA^1_k{\text-}0 \to \bA^n_k{\text-}0 \to \bP_k^{n-1}$,
we conclude that the action of $F^*$ on $H_{et}^{2n-1}( \bA^n_k{\text-}0,\bZ/\ell^d)$ is identified with multiplication
by $p^n$ on $\bZ/\ell^d$ and $H_{et}^{i}( \bA^n_k{\text-}0,\bZ/\ell^d) = 0, \ i \not= 0, 2n-1$.   

Computations involving the \'etale cohomology of the simplicial schemes $S^{2n,alg}_k$ are achieved by using the
spectral sequence $E_1^{s,t} = H^t_{et}(X_s,\bZ/\ell^d) \Rightarrow H^{s+t}_{et}(X_\bu,\bZ/\ell^d)$ for the etale 
cohomology of a simplicial scheme $X_\bu$ (see \cite[Prop 2.4]{F82}).  For the simplicial schemes occurring
in this proposition, the spectral sequences are readily compared with analogs occurring in topology.
\end{proof}

\vskip .1in

The next proposition addresses the issue of the compatibility of the 
smash product on (topological) spheres $S^{2n}$ with the smash product of spaces 
$|S_\bC^{2n,alg}|$ closely  related to $S^{2n,alg}_\bC$.   The most natural way
to introduce such compatibility is to view $S^{2n}$ as the one-point compactification 
$(\bC^n)^+$ of $\bC^n$ equipped with its analytic topology
and use the action of $GL_n(\bC)$ on $(\bC^n)^+$; in this case, we view 
the smash product as the map on one-point compactifications, 
$(\bC^m)^+ \times (\bC^n)^+ \ \to \ (\bC^{m+n})^+$,  
induced by $\bigoplus: \bC^m \times \bC^n \to \bC^{m+n}$.
Observe that the Lie group $GL(n,\bC)^{top}$ acts continuously on $(\bC^n)^+$.

\begin{prop}
\label{prop:map-cone}
We denote by $|S^{2n,alg}_\bC|$ the total space of simplicial topological space $(S^{2n,alg}_\bC)^{top}$ obtained
by applying the analytic topology functor to the simplicial scheme $S^{2n,alg}_\bC$
over $\bC$.  The Lie group $GL(n,\bC)^{top}$ acts naturally on $|S^{2n,alg}_\bC|$.  

Consider the continuous pointed map 
 \begin{equation}
 \label{eqn:tau-n}
 \gamma_n: |S^{2n,alg}_\bC| \quad \to \quad (\bC^n)^+
 \end{equation}
 given by sending $\ast \in |S^{2n,alg}_\bC|$ to the base point $\infty \in (\bC^n)^+$;
 sending $(z,t) \in \bA^n{\text-}0 \times [0,\frac{1}{2}]$ to $\frac{z}{4t} \in (\bC^n)^+$;
 sending $(z,t) \in \bA^n{\text-}0 \times [\frac{1}{2},1]$ to $tz$, and sending 
 $\bA^n$ (attached at $t=1$) via the inclusion in $(\bC^n)^+$.
So defined, $\gamma_n$ is $\GL_n^{top}$-equivariant as well as a pointed homotopy equivalence.

Then smash product determines the following commutative square whose maps are
$GL_m^{top} \times GL_n^{top}$-equivariant
\begin{equation}
\label{eqn:GL-smash}
\xymatrix{
|S^{2m,alg}_\bC| \times |S^{2n,alg}_\bC| \ar[d]_{\gamma_m \times\gamma_n} \ar[r]^-\wedge & |S^{2m+2n,alg}_\bC| \ar[d]^{\gamma_{m+n}} \\
(\bC^m)^+ \times (\bC^n)^+ \ar[r]^\wedge & (\bC^{m+n})^+
.}
\end{equation}
\end{prop}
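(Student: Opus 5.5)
The plan is to check three things in turn: that $\gamma_n$ is a well-defined continuous equivariant map, that it is a pointed homotopy equivalence, and that the square (\ref{eqn:GL-smash}) commutes. I expect the third to be the main obstacle.

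\emph{Continuity and equivariance of $\gamma_n$.} First I will check that the piecewise formulas agree on overlaps. At $t=\frac12$ both formulas give $z/2$. At $t=1$ the point $z$ agrees with the inclusion $\bA^n \subset (\bC^n)^+$. At $t=0$, where $\bA^n{\text -}0\times 0$ is collapsed to $\ast$, we have $z/(4t)\to\infty$ uniformly on compact subsets of $\bC^n{\text -}0$. This uniformity is what gives continuity on the quotient topology of the realization, since $|S^{2n,alg}_\bC|$ carries the colimit topology of the three glued pieces. Each formula only rescales $z$ by a positive real scalar depending on $t$, so it commutes with the linear action of $GL(n,\bC)^{top}$. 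The action fixes $\ast$, $\infty$ and the simplicial coordinate, which gives equivariance.

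\emph{Homotopy equivalence.} Both spaces are simply connected CW-type spaces: $|S^{2n,alg}_\bC|$ is the mapping cone of $\bC^n{\text -}0\hookrightarrow\bC^n$, which is homotopy equivalent to $\Sigma(\bC^n{\text -}0)\simeq S^{2n}$. By Whitehead's theorem it therefore suffices to show that $\gamma_n$ is an isomorphism on $H_{2n}$. I will split $|S^{2n,alg}_\bC|$ into the contractible piece $U=\bA^n\cup(\bC^n{\text -}0)\times[\frac12,1]$ and the cone piece $V=\ast\cup(\bC^n{\text -}0)\times[0,\frac12]$. Correspondingly I split $(\bC^n)^+$ into the closed ball $\{|w|\le$ large$\}$ and a neighborhood of $\infty$. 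On the intersection $(\bC^n{\text -}0)\times\{\frac12\}\simeq S^{2n-1}$, the map $\gamma_n$ is $z\mapsto z/2$, a homotopy equivalence onto $\bC^n{\text -}0$. The Mayer--Vietoris connecting maps then identify the two $H_{2n}$ groups with $H_{2n-1}(S^{2n-1})$ compatibly, which gives the isomorphism.

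\emph{Commutativity of (\ref{eqn:GL-smash}).} I will make the realized smash map explicit. The simplicial map $\Delta[1]^{\times 2}\to\Delta[1]$ realizes to $(s,t)\mapsto\min(s,t)$: on each of the two nondegenerate $2$-simplices, the barycentric weight on $(1,1)$ is the smaller coordinate. So on the middle pieces the smash sends $((z,s),(w,t))\mapsto((z,w),\min(s,t))$, and on the pieces involving $\bA^m$ or $\bA^n$ it sends $((z,1),(w,t))\mapsto ((z,w),t)$, and so on. I then compare $\gamma_{m+n}((z,w),\min(s,t))$ with $(\gamma_m(z,s),\gamma_n(w,t))$ case by case over the regions $s,t\lessgtr\frac12$, and also check that every point with some coordinate collapsed goes to $\infty$ on both sides.

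This case check is where I expect trouble. With the stated formulas, for $s\neq t$ the radial scalings on the two sides differ: for $s,t\ge\frac12$ the left side is $\min(s,t)(z,w)$ and the right side is $(sz,tw)$. So I expect literal equality to hold only after the correct choice of realization of $\Delta[1]^{\times 2}\to\Delta[1]$, or else after reinterpreting ``commutative'' as commutative up to a $GL_m^{top}\times GL_n^{top}$-equivariant pointed homotopy. In the second case, the first thing I would try is the linear homotopy in the positive scalars. On each region, both composites have the form $(z,w)\mapsto(\lambda z,\mu w)$ with $\lambda,\mu\in(0,\infty]$ depending only on $(s,t)$. Interpolating $\lambda,\mu$ (in $\log$-coordinates, with $\infty$ handled by a reciprocal parameter) is equivariant because $GL_m\times GL_n$ commutes with block-scalar rescaling. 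It is continuous because the interpolation respects the boundary conditions that send $s=0$ or $t=0$ to $\infty$. This homotopy suffices for all later uses, where only the homotopy class of the square matters.
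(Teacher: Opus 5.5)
Your checks of continuity, equivariance and the homotopy-equivalence claim are sound. They are more explicit than the paper, which only hints at comparing with the mapping cone of $S^{2n-1}\hookrightarrow\bR^{2n}$. One correction to the Mayer--Vietoris step: $\gamma_n$ carries the $\bA^n$-piece onto all of $\bC^n$, so the image of $U$ is not contained in a bounded ball. The cover of $(\bC^n)^+$ to use is $\{\bC^n,\ (\bC^n)^+\setminus\{0\}\}$, with $U,V$ thickened to open sets, e.g.\ $\{t>\frac14\}\cup\bA^n$ and $\{t<\frac34\}\cup\ast$. With that cover the connecting maps compare as you say, via $z\mapsto z/2$.

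On the square, your computation is correct, and it shows that the paper's claim that (\ref{eqn:GL-smash}) commutes ``by inspection'' is false as stated. The realized smash is $((z,s),(w,t))\mapsto((z,w),\min(s,t))$. Already for $m=n=1$, the point $(1\in\bA^1,\,(1,\frac12))$ goes to $(\frac12,\frac12)$ one way and to $(1,\frac12)$ the other. So no proof of the strict statement exists. What your argument correctly establishes is the corrected statement: the square commutes up to a $GL_m^{top}\times GL_n^{top}$-equivariant pointed homotopy. The log-linear interpolation is continuous at the collapsed loci because, on the block whose cone parameter is smaller, the interpolated scalar is exactly $\lambda$ of that parameter. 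Hence the image tends to $\infty$ uniformly in the homotopy variable as $s\to 0$ or $t\to 0$.

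Drop your first hedge, though: no choice of realization, and indeed no choice of $\gamma$, gives strict commutativity. Equivariance forces $\gamma_n(z,t)=c_n(t)z$ with $c_n(t)\in\bC\cup\{\infty\}$, while the top map only changes the cone coordinate. Strict commutativity would then force $c_m(s)=c_n(t)$ whenever both are finite. Since $c_m,c_n$ are continuous on the connected interval $[0,1]$ with $c_m(0)=c_n(0)=\infty$, one of $\gamma_m,\gamma_n$ would have to be constant.

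Your closing remark that the homotopy ``suffices for all later uses'' is not justified. In Proposition \ref{prop:class-compare} the paper uses this square to build a strict map of $\cF$-spaces (the left square of (\ref{eqn:C-to-Cwedge})). A separate homotopy for each pair $(m,n)$ does not give strict functoriality over $\cF$. That step needs either coherent homotopies for all iterated smash products and permutations, or a different way of comparing the two $S^2$-fibrations.
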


\begin{proof}
We view $|S^{2n}_\bC|$ is the mapping cone of the inclusion
$(A^n(\bC){\text-}\{ 0 \})^{top} \hookrightarrow (A^n(\bC))^{top}$.
We see by inspection that $\gamma_n$ is $GL_n^{top}$-equivariant.
 One way to see that $\gamma_n$ is a homotopy equivalence
 is by viewing $S^{2n}$ as homotopy equivalent to the the mapping cone of 
 $S^{2n-1} \hookrightarrow \bR^{2n}$.

The commutativity of (\ref{eqn:GL-smash}) is verified by inspection.
\end{proof}

\vskip .1in

We shall use the ``simplicial bar construction" for topological groups and algebraic groups.  For
the reader's convenience, we specify our notation for these simplicial objects.

\begin{notation}
\label{note:bar}
For a  closed subgroup scheme $\bG_R$ of $GL_{N,R}$ for some 
$N$ (we shall take $R$ to be $\bC$ or 
$W(k)$ or $k$), we denote by $B\bG_R$ the simplicial $R$-scheme obtained by
applying the ``simplicial bar construction"; so defined, $B\bG_R$ has $\Spec R$ in simplicial 
degree 0 and $\bG_R^{\times n}$ in simplicial degree $n > 0$; 
face maps $\bG_R^{\times n} \to \bG_R^{\times n-1}$ are determined by multiplications of adjacent 
copies of $\bG_R$ as well as projections; degeneracy 
maps $\bG_R^{\times n} \to \bG_R^{\times n+1}$ involve projections 
and the identity map $\Spec R \to \bG_R$.   If $X_\bu$ is a simplicial $R$-scheme
with $\bG_R$ acting on the left on each $X_n$ and with each structure map of $X_n \to X_m$ a map of 
$\bG_R$-schemes, then we define $B(\bG_R,X_\bu)$ to be the diagonal of the evident 
bisimplicial $R$-scheme:   $B(\bG_R,X_\bu)_0 = X_0$, \ $B(\bG_R,X_\bu)_n \ = \ 
\bG_k^{\times n}\times_{\Spec R} X_n$, and with face maps determined by multiplication 
for $\bG_r$ except that the last face map entails the action $\bG_R \times X_n \to X_{n-1}$.
More generally, if also given $Y_\bu$  simplicial $R$-scheme
with $\bG_R$ acting on the right on each $Y_n$, we consider $B(Y_\bu,\bG_R,X_\bu)$,
 the diagonal of the evident tri-simplicial $R$-scheme.

In order to use a parallel construction for Lie groups $G^{top}$ acting continuously 
on a simplicial topological 
space $T$, we use the simplicial bar constructions to define $BG^{top}$ and $B(G^{top},T)$ as the
total spaces of the simplicial spaces $(BG^{top})_\bu$ and $B(G^{top},T)_\bu$; here 
$(BG^{top})_0$ is a point and $(BG^{top})_n$ the topological space $(G^{top})^{\times n}$ for $n > 0$;
$B(G^{top},T)_0$ is the space $T_0$ and $B(G^{top},T)_n$ is the space $(G^{top})^{\times n} \times T_n$; the face and
degeneracy maps are defined as above.

In what follows, we shall suppress the designation $(-)_\bu$ for simplicial schemes and
simplicial spaces.
\end{notation}

\vskip .1in

The relevance of Proposition \ref{prop:map-cone} to our investigation of sphere fibrations
begins to appear in the following observation.

\begin{prop}
\label{prop:geom-fiber}
Consider the commutative square of simplicial $k$-varieties
\begin{equation}
\label{eqn:tau-F}
\xymatrix{
B(GL_{n,k},S_k^{2n,alg}) \ar[d]_{\tau_n^{alg}} \ar[r]^F  & B(GL_{n,k},S_k^{2n,alg}) \ar[d]^{\tau_n^{alg}} \\
BGL_{n,k} \ar[r]_F & BGL_{n,k}
}
\end{equation}
whose horizontal maps are the (geometric) Frobenius map and whose vertical maps are the natural projections.
Then the restriction of \ $F: B(GL_{,k},S_k^{2n,alg}) \to B(GL_{n,k},S_k^{2n,alg})$ above
the base point $\Spec k \to BGL_{n,k}$ is the Frobenius map $F: S_k^{2n,alg} \to S_k^{2n,alg}$.
\end{prop}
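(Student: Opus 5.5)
The plan is to reduce everything to the definition of the Frobenius in Proposition \ref{prop:Frob} and the levelwise description of the bar construction in Notation \ref{note:bar}. First, note that $GL_{n,k}$, $\bA^n_k$, $\bA^n_k{\text -}\{0\}$, $\Delta[1]$-products and hence each $S^{2n,alg}_k$ and each $B(GL_{n,k},S^{2n,alg}_k)_m = GL_{n,k}^{\times m}\times S^{2n,alg}_{k,m}$ are defined over $\bF_p$. Moreover, all face and degeneracy maps are defined over $\bF_p$: group multiplication, the action $GL_n\times \bA^n \to \bA^n$, the inclusion $\bA^n{\text -}0 \subset \bA^n$, and the structure maps of the mapping cone. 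Since the geometric Frobenius is natural with respect to maps of $k$-varieties defined over $\bF_p$, $F$ commutes with all these maps. It is therefore a well-defined endomorphism of each simplicial variety, and the square (\ref{eqn:tau-F}) commutes levelwise. The projections $\tau_n^{alg}$, given in degree $m$ by $GL_{n,k}^{\times m}\times S^{2n,alg}_{k,m}\to GL_{n,k}^{\times m}$, are likewise defined over $\bF_p$.

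Next, I would identify the fiber of $\tau_n^{alg}$ over the base point. The base point $\Spec k \to BGL_{n,k}$ is given in degree $m$ by the identity section $(e,\dots,e)\colon \Spec k \to GL_{n,k}^{\times m}$. Taking the levelwise fiber product gives $\{(e,\dots,e)\}\times S^{2n,alg}_{k,m}\cong S^{2n,alg}_{k,m}$. The faces involving multiplication or the action restrict to the identity on this fiber, because $e\cdot e = e$ and $e$ acts trivially. So the fiber, as a simplicial scheme, is exactly $S^{2n,alg}_k$.

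Finally, Frobenius commutes with fiber products of $\bF_p$-defined varieties: $F_{X\times_Z Y}=F_X\times_{F_Z}F_Y$. Here the identity section is an $\bF_p$-rational point, so it is fixed by $F$ on $BGL_{n,k}$. Hence $F$ on $B(GL_{n,k},S^{2n,alg}_k)$ preserves the fiber, and its restriction there is $F_{\Spec k}\times F_{S^{2n,alg}_k}=F_{S^{2n,alg}_k}$, since $F$ on $\Spec k$ is the identity.

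I do not expect a real obstacle. The only point needing care is the bookkeeping that the mapping-cone constituents, including the constant simplicial factor $\Delta[1]$ (on which $F$ acts as the identity) and the gluing maps, are stable under $F$. This makes $F$ a genuine map of simplicial schemes rather than merely a levelwise one, so the base-change compatibility applies degree by degree.
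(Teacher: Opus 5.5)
Your argument is correct. The paper gives no proof of this proposition and presents it as an immediate observation; your verification is exactly the routine check it leaves implicit. That check has three parts: all structure maps are defined over $\bF_p$, so $F$ is a map of simplicial schemes; the fiber over the identity section is levelwise $\{(e,\dots,e)\}\times S^{2n,alg}_{k,m}$; and $F$ commutes with fiber products, fixes the $\bF_p$-rational point $e$, and satisfies $F_{\Spec k}=id$. One wording fix: on that fiber the face maps do not become the identity. They reduce to the face maps of $S^{2n,alg}_k$, because the multiplication and action terms become trivial there.
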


\vskip .2in


\section{$\bZ/\ell$-completions and rigid \'etale topological types}
\label{sec:rigid-etale}

We begin by specifying the $\bZ/\ell$-completions  we shall use.  Throughout,
$\ell$ will denote a prime number.

\begin{defn}
\label{defn:ell-complete}
We employ the $\bZ/\ell$-completion functor of \cite[X.4.9,X.4.10]{Bo-Kan}
$$(\bZ/\ell_\infty)(-): (s.sets_*) \ \quad \to \quad (s.sets_*)$$
(taking values in Kan complexes)
which has the important property that $(\bZ/\ell)_\infty(f): (\bZ/\ell)_\infty(X) \to 
(\bZ/\ell)_\infty(Y)$ is a homotopy equivalence whenever the map $f:X \to Y$
of simplicial sets is a mod-$\ell$ equivalence (i.e., whenever $f$
 induces an isomorphism $H_*(X,\bZ/\ell) \ \stackrel{\sim}{\to} \ H_*(Y,\bZ/\ell)$).
Another useful property of $(\bZ/\ell_\infty)(-)$ is that the natural map $(\bZ/\ell_\infty)(X\times Y) 
\to (\bZ/\ell_\infty)(X) \times (\bZ/\ell_\infty)(Y)$ is a homotopy equivalence with
a natural left inverse \cite[I.7.2]{Bo-Kan}.

To simplify notation, we shall often abbreviate the name of the functor 
$(\bZ/\ell_\infty)(-)$ by setting
$$(-)_\ell \ \equiv \ (\bZ/\ell_\infty)(-): (s.sets_*) \quad \to \quad (s.sets_*).$$ 
\end{defn}

\vskip .1in

Using the properties of $(\bZ/\ell_\infty)(-)$ mentioned in Definition \ref{defn:ell-complete},  
we easily verify that the smash product $S^{2m} \times S^{2n} \ \to \ S^{2m+2n}$ of spheres
(for example, represented by the map of one-point compactifications $(\bC^m)^+ \times (\bC^n)^+
\to (\bC^{m+n})^+$ induces a smash product natural with respect to $m$ and $n$ 
\begin{equation}
\label{eqn:wedge}
(Sin(S^{2m}))_\ell \times (Sin(S^{2n}))_\ell \quad \to \quad (Sin(S^{2m}\times S^{2n}))_\ell
\quad \to \quad Sin((S^{2m+2n}))_\ell.
\end{equation}

\vskip .1in

 We recall that a 
rigid \'etale cover $U \to X$ of a scheme $X$ consists of a disjoint union $\coprod_{\ul x \in \ol X} U_{\ul x}$,
where $U_{\ul x}$ is a connected scheme \'etale over $X$ equipped with a geometric point $\Spec k(\ul x) \to 
U_{\ul x}$ mapping to some $\ul x: \Spec k(\ul x) \to X$ in $\ol X$, where the indexing set $\ol X$ is the
 ``set of geometric points $\ol X$ " of $X$ as discussed in  \cite[\S 4]{F82}. We denote by $\cC(X,r{{\text - }}et)$ 
 the category (which is a partially ordered set) of rigid \'etale coverings $U \to X$.  This is functorial in $X$ in the sense that if 
$ Y \to X$ is a map of (geometrically pointed) schemes, then 
$$\cC(X,r{{\text - }}et) \ \to \ \cC(Y,r{{\text - }}et), \quad \coprod_{\ul x \in \ol X} U_{\ul x} \ \mapsto \ 
\coprod_{\ul y \in \ol Y} Y\times_X U_{f(\ul y)}.$$

\vskip .1in

\begin{defn}
\label{defn:pro-type}
As discussed in \cite[\S 4, \S 8]{F82}, the ($\check{C}$ech) \'etale topological type functor on the 
category of locally noetherian schemes
$$(-)_{ret}: (\text{geometrically pointed schemes}) \quad \to \quad (pro{\text-}s.sets_*)$$
is defined by setting 
$$X_{ret}: (\cC(X,r{{\text - }}et))^{op} \ \to \ (s.sets_*), \quad (U \to X) \ \mapsto \ \pi_0(N_X(U)),$$ 
sending a rigid \'etale cover $U \to X$ to 
the simplicial set of connected components of its $\check{C}$ech nerve.
\end{defn}

\vskip .1in

To obtain a functor from geometrically pointed simplicial schemes to pointed simplicial sets we use the
Bousfield-Kan homotopy (inverse) limit functor (see \cite[XI.3.2]{Bo-Kan}) 
$$\underset{\longleftarrow}{holim}(-): (pro-s.sets_*) \ \to \ (s.sets_*).$$ 
functorial with respect to maps of pro-simplicial sets determined by a functorial map of indexing categories.
 The following definition differs somewhat from that of \cite{F82} for a simplicial scheme $n \mapsto X_n$
 in that it utilizes $(X_n)_{ret}$ for each $n$.
 \vskip .1in
 
\begin{defn}
\label{defn:wedge}
For a geometrically pointed, connected simplicial scheme $X_\bu$ and a prime $\ell$, we define the $\bZ/\ell$-completed
rigid \'etale topological type functor 
$$(-)^\wedge: (s.schemes_*)  \quad \to \quad (s.sets_*)$$
by sending the geometrically pointed simplicial scheme $X_\bu$ to
\begin{equation}
\label{eqn:hat}
(X_\bu)^\wedge \quad \equiv \quad (diag\{ n \mapsto \underset{\longleftarrow}{holim}((X_n)_{ret})_\ell) \})_\ell,
\end{equation}
where $diag\{ - \}: (bis.sets_*) \to (s.sets_*)$ denotes the diagonal functor sending a bi-simplicial set to its diagonal.  
(See \cite[App B]{Bo-F}.)

If $X_\bu$ is not connected, then we require a geometric pointing of each connected component  of $X_\bu$
and then take the disjoint union of  the result of applying $(-)^\wedge$ to each component.
\end{defn}

\vskip .1in

\begin{remark}
\label{rem:explain}

In the above definition, we apply $(-)_\ell$ after taking the diagonal of a bisimplicial set
because the diagonal of a simplicial object of Kan complexes is not necessarily a 
Kan complex.   

For the schemes considered in this paper, the natural map
$(X_n)_{ret} \to ((X_n)_{ret})_\ell$ induces an isomorphism on $\bZ/\ell$-cohomology
(i.e., $(X_n)_{ret}$ is $\ell$-good) and  $\pi_1((X_n)_{ret})$ acts trivially on $H^*((X_n)_{ret},\bZ/\ell)
= H^*_{et}(X_n,\bZ/\ell)$.
Because $\ol X_n$ has bounded cardinality and the homotopy groups of are finitely generated 
$\bZ_\ell$-modules, this implies that $\underset{\longleftarrow}{holim}((X_n)_{ret})_\ell)$ is also 
$\ell$-good. 

Consequently, the $\bZ/\ell$-cohomology of $diag\{ n \mapsto \underset{\longleftarrow}{holim}((X_n)_{ret})_\ell) \}$
equals the \'etale cohomology $H^*_{et}(X_n,\bZ/\ell)$.  Thus, applying $(-)_\ell$ after taking the 
diagonal of bisimplicial set does not change the $\bZ/\ell$-cohomology and yields a Kan complex.
\end{remark}

\vskip .1in

The input to the following theorem is the ``classical 
comparison theorem" relating the mod-$\ell$ \'etale cohomology of a complex algebraic
variety $V$ (denoted by $H^*_{et}(V,\bZ/\ell)$ to  cohomology of the analytic 
space of complex points $V(\bC)$ (denoted by $H^*(V(\bC)^{top},\bZ/\ell)$)
established by M.  Artin and A. Grothendieck \cite{SGA4}, subsequently 
sharpened to pro-finite homotopy types by Artin and B. Mazur in \cite{A-M}.
(See also \cite{Deligne}.)

	For a pointed simplicial scheme $X$ of finite type over $\bC$, we denote by
$Sin(X(\bC)^{top})$   the diagonal of the bi-simplical set 
$(s,t) \mapsto  Sin_t(X_s(\bC)^{top})$, where 
$Sin(X_s(\bC)^{top})$ is the singular complex of 
the topological space of complex points of $X_s$ equipped with the 
analytic topology.

\vskip .1in

\begin{thm} 
\label{thm:holim} 
Let $X_\bC$ be a pointed, connected simplicial scheme which is of finite type over $\bC$
in each simplicial degree.  Then there
exist homotopy equivalences
\begin{equation}
\label{eqn:Phi_X} 
Sin(X(\bC)^{top})_\ell \
\quad \stackrel{\rho_{U\mapsto X}}{\leftarrow} \quad \underset{\longleftarrow}{holim} (Sin((N_X(U)_\bu(\bC))^{top})_\ell
\quad \stackrel{\rho_{top\mapsto \pi_0}}{\to} \quad (X_\bC)^\wedge
\end{equation}
natural with respect to pointed maps of simplicial schemes over $\bC$.
\end{thm}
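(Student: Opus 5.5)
The plan is to construct both maps degreewise in the simplicial direction, prove the equivalence for a single scheme $X_n$ using the Artin--Mazur comparison, and then pass to diagonals. I will build an intermediate bisimplicial object that maps to both ends. For each $n$ and each rigid \'etale cover $U \to X_n$ (an object of $\cC(X_n,r{\text-}et)$), take the \v{C}ech nerve $N_{X_n}(U)$. This is a simplicial scheme of finite type over $\bC$. Form the bisimplicial set $Sin(N_{X_n}(U)(\bC)^{top})$, apply its diagonal and $(-)_\ell$, and then take $\underset{\longleftarrow}{holim}$ over $\cC(X_n,r{\text-}et)^{op}$. Doing this for each $n$ and taking the diagonal in $n$, followed by a final $(-)_\ell$ exactly as in Definition \ref{defn:wedge}, gives the middle term of (\ref{eqn:Phi_X}).

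The two maps come from natural maps out of $Sin(N_{X_n}(U)(\bC)^{top})$.
\begin{enumerate}
\item The map $\rho_{U\mapsto X}$ is induced by the augmentation $N_{X_n}(U) \to X_n$, which gives a map to the constant diagram $Sin(X_n(\bC)^{top})_\ell$. Composing with the map from a constant holim, and then taking diagonals, yields $\rho_{U\mapsto X}$.
\item The map $\rho_{top\mapsto \pi_0}$ is induced by sending each singular simplex of $N_{X_n}(U)_s(\bC)$ to the connected component of $N_{X_n}(U)_s$ containing it. This uses that the analytic components of $U_s(\bC)$ are the Zariski components of $U_s$ for varieties over $\bC$.
\end{enumerate}
Both constructions are functorial in $X$ because the assignment $\cC(X,r{\text-}et)\to\cC(Y,r{\text-}et)$ is strictly functorial, being given by pullback indexed by geometric points. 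Naturality for pointed maps of simplicial schemes therefore follows once the constructions are set up compatibly with the holim functoriality of \cite[XI.3.2]{Bo-Kan}.

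To prove that both maps are weak equivalences, it suffices to check mod-$\ell$ homology equivalences, since all targets are $\bZ/\ell$-complete Kan complexes and $(-)_\ell$ inverts mod-$\ell$ equivalences. First I reduce to a single scheme $X_n$. The diagonal of a map of bisimplicial sets that is degreewise a mod-$\ell$ equivalence is again a mod-$\ell$ equivalence, by the spectral sequence for bisimplicial sets, which is the analogue of \cite[Prop 2.4]{F82}. So it is enough to show that for each scheme $V=X_n$ the two maps are equivalences.

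For $\rho_{U\mapsto X}$ applied to $V$, note that each $N_V(U)(\bC)^{top}\to V(\bC)^{top}$ is the \v{C}ech nerve of an \'etale, hence local-homeomorphism, surjection. Its realization is therefore weakly equivalent to $V(\bC)^{top}$ by the standard hypercovering descent theorem (Segal/Dugger--Isaksen). Hence the diagram over $\cC(V,r{\text-}et)$ is homotopically constant. The holim over a cofiltered (directed) poset of a diagram of equivalences is equivalent to any term, so $\rho_{U\mapsto X}$ is an equivalence.

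For $\rho_{top\mapsto \pi_0}$, I invoke Artin--Mazur \cite{A-M}, together with the $\ell$-goodness discussed in Remark \ref{rem:explain} and the rigid refinement in \cite[\S 8]{F82}. These say that the pro-map $\{Sin(N_V(U)(\bC))\}\to\{\pi_0 N_V(U)\}=V_{ret}$ is a pro-$\ell$-equivalence, which is the comparison theorem $H^*_{et}(V,\bZ/\ell)\cong H^*(V(\bC)^{top},\bZ/\ell)$ at the pro-level. After levelwise $(-)_\ell$ and holim, this becomes a weak equivalence. The holim commutes with the needed cohomology by the finiteness (finitely generated $\bZ_\ell$ homotopy groups, bounded indexing) noted in Remark \ref{rem:explain}, via Milnor $\varprojlim^1$ vanishing for profinite groups.

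I expect the main obstacle to be this last step: turning the pro-equivalence of Artin--Mazur into an honest weak equivalence of Bousfield--Kan holims of $\ell$-completions. This requires the Mittag-Leffler/$\varprojlim^1$ control and a check that the $\ell$-goodness hypotheses hold uniformly over the indexing category. I would handle it by citing \cite[\S 8]{F82} for schemes and then applying the degreewise-to-diagonal argument above.
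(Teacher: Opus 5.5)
Your proposal is correct and follows essentially the paper's route. The paper likewise defines $\rho_{U\mapsto X}$ and $\rho_{top\mapsto\pi_0}$ level-wise from the \v{C}ech nerves $N_{X_n}(U)$ of rigid \'etale covers, using the augmentation to $X_n$ and the maps $Sin(N_{X_n}(U)(\bC)^{top}) \to \pi_0(N_{X_n}(U))$, and gets the equivalences by citing \cite[Thm 8.4, Cor 8.5]{F82}. You go further by making the level-wise-to-diagonal reduction explicit and by proving $\rho_{U\mapsto X}$ directly via topological descent and the contractibility of the cofiltered index poset; the substantive Artin--Mazur step for $\rho_{top\mapsto\pi_0}$ rests on the same citation in both, and, as in the paper, your $\ell$-completeness claims implicitly use the $\ell$-goodness of Remark \ref{rem:explain}.
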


\begin{proof}
The map $\rho_{U\mapsto X}$ is determined level-wise maps for each $n \geq 0$
 by applying $Sin(|N_{X_n}(-)^{top}|)$ to rigid  \'etale coverings $U \to X_n$,
whereas the map $\rho_{top\mapsto \pi_0}$ is determined level-wise by the ``inverse system" of maps \\
$Sin(N_{X_n}(U)^{top}) \to \pi_0(N_{X_n}(U)^{top})$.
The proof that these maps  are homotopy equivalences can be found in \cite[Thm 8.4, Cor 8.5]{F82}.
\end{proof}

\vskip .1in

Fix an embedding $W(k) \hookrightarrow \bC$, where $W(k)$ denotes the Witt
vectors of $k$.  This embedding, together the
quotient map $W(k) \to k$, determines distinguished geometric points $\Spec k 
\to \Spec W(k)$, \ $\Spec \bC \to W(k)$.
From the point of view of the \'etale topology, $\Spec W(k)$ is contractible.
For many smooth (simplicial) schemes $X_R$ over $R$, the base change maps
$X_\bC \to X_{W(k)}, \ X_k \to X_{W(k)}$ induce isomorphisms in \'etale cohomology
\begin{equation}
\label{eqn:base-change}
H^*_{et}(X_\bC,\bZ/\ell^d) \  \stackrel{\sim}{\leftarrow} \ H^*_{et}(X_{W(k)},\bZ/\ell^d) \ 
 \  \stackrel{\sim}{\to} \ H^*_{et}(X_k,\bZ/\ell^d).
 \end{equation}
 See \cite{Deligne} and \cite{Milne}.
 For example, any scheme $X_\bZ$ which is proper and smooth over $\Spec \bZ$
 enables such base changes isomorphisms, as 
 well as the complement in such a proper, smooth scheme of a divisor with normal
 crossings; another important example for us is a reductive group scheme $\bG_{\bZ}$ over
 $\Spec \bZ$ and the bar construction $B\bG_{\bZ}$ applied to such a reductive
 group scheme.   See, for example, \cite{F82}, \cite{F-Par}.

\vskip .1in

As a consequence of the base changes isomorphisms of (\ref{eqn:base-change}),
one obtains homotopy equivalences of completed \'etale homotopy types as
established in \cite[\S 8]{F82}.  The formulation and proof of Proposition 
\ref{prop:comparison} is parallel to that for
Theorem \ref{thm:holim}. To arrange that the maps of (\ref{eqn:natural}) preserve 
base points, we require that the simplical scheme $X_{W(k)}$ be connected
and the existence of a section $\Spec W(k) \to (X_{W(k)})_0$ relating  base points.
(We point out that if 
$Z= \Spec W(k), \ V = Z-\Spec k$, then $Z\coprod V \to Z$ is the unique rigid \'etale cover
of $Z$ and $\pi_0(N_Z(Z\coprod V))$ is contractible, so that we may naturally identify 
the two pointings of $(X_{W(k)})^\wedge$ determined by the section $\Spec W(k) \to (X_{W(k)})_0$.)

\begin{prop}
\label{prop:comparison} 
Equip $W(k)$ with an embedding into $\bC$ and consider a connected simplicial scheme $X_{W(k)}$ 
over $W(k)$ with a section $\Spec W(k) \to (X_{W(k)})_0$ such $X_{W(k)}$ is of  finite type over $W(k)$
in each simplicial degree.  Assume that $X_{W(k)}$
satisfies the \'etale cohomological base change isomorphisms of (\ref{eqn:base-change}).
Then base change determines pointed homotopy equivalences
\begin{equation}
\label{eqn:natural} 
(X_\bC)^\wedge  \quad \stackrel{\rho_{W \mapsto \bC}}{\leftarrow} \quad
 (X_{W(k)})^\wedge  \quad \stackrel{\rho_{W \mapsto k}}{\rightarrow}  \quad (X_k)^\wedge.
 \end{equation}
 
 These equivalences are natural with respect to maps $X_{W(k)} \to Y_{W(k)}$ 
defined over $\Spec W(k)$.
\end{prop}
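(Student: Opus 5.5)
The plan is to build the maps level-wise from the functoriality of rigid étale covers under base change, and then use the cohomological hypothesis (\ref{eqn:base-change}) to show they are equivalences. For each simplicial degree $n$, the base change morphisms $(X_n)_\bC \to (X_n)_{W(k)}$ and $(X_n)_k \to (X_n)_{W(k)}$ are geometrically pointed using the chosen geometric points $\Spec \bC \to \Spec W(k)$ and $\Spec k \to \Spec W(k)$. They therefore induce functors
$$\cC((X_n)_{W(k)},r{\text -}et) \to \cC((X_n)_\bC,r{\text -}et), \qquad \cC((X_n)_{W(k)},r{\text -}et) \to \cC((X_n)_k,r{\text -}et),$$
sending $U \mapsto (X_n)_\bC \times_{X_n} U$ and similarly over $k$. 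These functors come with natural maps of Čech nerves, and hence with maps of pro-simplicial sets $((X_n)_\bC)_{ret} \to ((X_n)_{W(k)})_{ret}$ and $((X_n)_k)_{ret} \to ((X_n)_{W(k)})_{ret}$, given by a functorial change of indexing category. Next I apply $(-)_\ell$ and $\underset{\longleftarrow}{holim}$. Both are functorial for such maps, so I obtain maps of bisimplicial sets, and after taking diagonals and $(-)_\ell$, the maps $\rho_{W\mapsto\bC}$ and $\rho_{W\mapsto k}$ of (\ref{eqn:natural}).

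Base points are handled using the section $\Spec W(k) \to (X_{W(k)})_0$. It gives compatible geometric points of $(X_n)_{W(k)}$, $(X_n)_\bC$ and $(X_n)_k$, via degeneracies. By the parenthetical remark before the statement, the contractibility of $\pi_0(N_Z(Z\coprod V))$ for $Z=\Spec W(k)$ identifies the two possible pointings of $(X_{W(k)})^\wedge$, so the maps are pointed. Naturality for maps $X_{W(k)} \to Y_{W(k)}$ over $W(k)$ follows because base change of rigid covers commutes with pulling back covers along $X \to Y$, so all the constructions are functorial.

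To prove the maps are homotopy equivalences, I argue as follows.

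1. Both sides are $\bZ/\ell$-complete Kan complexes, since $(-)_\ell$ is applied last. It therefore suffices to show that each map is a mod-$\ell$ equivalence, equivalently that it induces an isomorphism on $H^*(-,\bZ/\ell)$.

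2. By Remark \ref{rem:explain}, the $\bZ/\ell$-cohomology of $(X)^\wedge$ is computed from the level-wise pieces $\underset{\longleftarrow}{holim}((X_n)_{ret})_\ell$. Each such piece has $\bZ/\ell$-cohomology $H^*_{et}(X_n,\bZ/\ell)$, the identification coming from the Čech-to-derived comparison for étale cohomology via rigid covers as in \cite{F82}.

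3. Hence there is a map of spectral sequences
$$E_1^{s,t}=H^t_{et}(X_s,\bZ/\ell) \Rightarrow H^{s+t}((X)^\wedge,\bZ/\ell),$$
compatible with base change, namely the spectral sequence of a bisimplicial set, which matches \cite[Prop 2.4]{F82}. By hypothesis (\ref{eqn:base-change}), taken with $d=1$, the base change maps are isomorphisms on $E_1$. So they are isomorphisms on the abutments, and the maps of (\ref{eqn:natural}) are mod-$\ell$ equivalences and hence homotopy equivalences.

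The main obstacle is step 2: justifying that the cohomology of $\underset{\longleftarrow}{holim}((X_n)_{ret})_\ell$ is étale cohomology, and that this identification is compatible with base change. This needs the $\ell$-goodness and the finiteness of homotopy groups noted in Remark \ref{rem:explain}, which must hold over $W(k)$, $\bC$ and $k$ simultaneously. I would handle it by citing the corresponding statements of \cite[\S 8]{F82} level-wise, in parallel with the proof of Theorem \ref{thm:holim}. I would also check that the holim commutes with the comparison maps, using naturality of the Bousfield–Kan holim under change of index categories.
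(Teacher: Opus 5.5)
Your proposal is correct and is essentially the paper's argument. The paper proves this ``in parallel with Theorem \ref{thm:holim}'': it defines the maps level-wise via base change of rigid \'etale covers, and deduces that they are equivalences of $\bZ/\ell$-completions from the cohomological base change isomorphisms as in \cite[\S 8]{F82}; you have made that deduction explicit through the bisimplicial spectral sequence and the Bousfield--Kan mod-$\ell$ criterion. Two small points: the maps you construct go $(X_\bC)^\wedge \to (X_{W(k)})^\wedge \leftarrow (X_k)^\wedge$, the covariant direction used in (\ref{eqn:base-change-ss}) and opposite to the arrows drawn in (\ref{eqn:natural}), which is harmless since they are equivalences; and your $E_1$ step reads (\ref{eqn:base-change}) level-wise, which is how the paper checks it in Example \ref{ex:S-alg} (if only the isomorphism for the whole simplicial scheme is assumed, compare the abutments instead, using Remark \ref{rem:explain}).
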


\vskip .1in

\begin{remark}
\label{rem:alt-def}
Proposition \ref{prop:comparison} provides motivation/justification for our definition of
$(-)^\wedge$.  In \cite{F82}, \'etale coverings (and hypercoverings) of simplicial schemes
were considered, rather than considering ``level-wise" \'etale coverings.  This works well if 
one considers pro-objects in the homotopy category of pointed simplicial sets but the
rigidification process seems questionable for such coverings.

Another approach is due to G. Quick in \cite{Quick} and \cite{Quick2} whose ``\'etale topological type"
of a simplicial scheme is a simplicial object in a suitable homotopy category for 
pro-spaces.  We find this approach elegant, and an interested reader might find
Quick's formulation to be preferable to that given in Definition \ref{defn:wedge} though
perhaps requiring additional machinery in the context of the homotopy category of pro-spaces.
\end{remark}

\vskip .1in

\begin{ex}
\label{ex:S-alg}
$S^{n,alg}_{W(k)}$ satisfies the hypotheses of Proposition \ref{prop:comparison}.
This can be verified by considering base changes at each level of the simplicial schemes.  See the
proof of Proposition \ref{prop:Pn}.
\end{ex}

\vskip .1in

One immediate consequence of Theorem \ref{thm:holim} and Proposition \ref{prop:comparison} 
is the following assertion concerning 
the behavior of $(-)^\wedge$ with respect
to products of simplicial varieties.   This property enables us to conclude that applying
$(-)^\wedge$ to certain $\cF$-objects of simplicial schemes yields $\cF$-spaces which are special.

\begin{cor}
\label{cor:product}
Consider pointed, connected
simplicial schemes $X_{W(k)}, \ Y_{W(k)}$ over $\Spec W(k)$ of finite type in each dimension which satisfy the
base change equivalences of (\ref{eqn:natural}).  Assume that $X \times Y$ satisfies
the condition that the natural map 
$$H^*(X(\bC)^{top},\bZ/\ell) \otimes H^*(Y(\bC)^{top},\bZ/\ell)  \quad \to \quad H^*(X(\bC)^{top} \times Y(\bC)^{top},\bZ/\ell)$$
is an isomorphism.  Then  the natural map 
$$(X_R \times  Y_R)^\wedge \ \to \ (X_R)^\wedge \times (Y_R)^\wedge$$
is a pointed homotopy equivalence whenever $R$ equals $k, \ \bC,$ or $W(k)$.   
\end{cor}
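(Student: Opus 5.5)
The plan is to reduce to the case $R = \bC$ and then to a statement about singular complexes of topological spaces. Everything is natural, so the reduction uses only naturality.

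\emph{Reduction to $R=\bC$.} The projections $X\times Y \to X$ and $X\times Y\to Y$ are maps of simplicial schemes defined over $W(k)$. Hence, by the naturality in Proposition \ref{prop:comparison}, the natural map $(X_R\times Y_R)^\wedge \to (X_R)^\wedge\times (Y_R)^\wedge$ for $R=k$ and for $R=W(k)$ sits in a commutative ladder with the corresponding map for $R=\bC$. The connecting horizontal maps are $\rho_{W\mapsto k}$ and $\rho_{W\mapsto\bC}$, together with their products.

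For this I need $X_{W(k)}\times Y_{W(k)}$ to satisfy the base change isomorphisms (\ref{eqn:base-change}). I would deduce this degreewise from a Künneth formula in étale cohomology with $\bZ/\ell$ coefficients over $k$, $\bC$ and $W(k)$, combined with the hypothesis that the topological Künneth map is an isomorphism. The étale Künneth formula holds for the smooth schemes at hand, and the comparison with topology is Theorem \ref{thm:holim}. I would then pass to the simplicial scheme via the spectral sequence $E_1^{s,t}=H^t_{et}(X_s)\Rightarrow H^{s+t}_{et}(X_\bu)$.

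Once this is in place, the horizontal maps are pointed homotopy equivalences. By two-out-of-three, the cases $R=k$ and $R=W(k)$ follow from the case $R=\bC$. Base points are handled as in the remark before Proposition \ref{prop:comparison}, using the sections over $\Spec W(k)$.

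\emph{The case $R=\bC$.} Apply Theorem \ref{thm:holim} to $X$, $Y$ and $X\times Y$. Its naturality with respect to the two projections gives a commutative diagram. In that diagram the map $(X_\bC\times Y_\bC)^\wedge\to (X_\bC)^\wedge\times(Y_\bC)^\wedge$ is identified, through zigzags of homotopy equivalences, with the composite
$$Sin((X\times Y)(\bC)^{top})_\ell \to (Sin(X(\bC)^{top})\times Sin(Y(\bC)^{top}))_\ell \to Sin(X(\bC)^{top})_\ell\times Sin(Y(\bC)^{top})_\ell .$$
The same identification must also be made for $\rho_{U\mapsto X}$ and $\rho_{top\mapsto\pi_0}$ on the product targets, using that $holim$ and $diag$ commute with finite products.

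In the composite, the first map comes from $Sin$ of the homeomorphism $(X\times Y)(\bC)^{top}\cong X(\bC)^{top}\times Y(\bC)^{top}$, taken degreewise and then diagonally, so it is a weak equivalence before completion. The second map is a homotopy equivalence by the property \cite[I.7.2]{Bo-Kan} recorded in Definition \ref{defn:ell-complete}. Hence the composite is a homotopy equivalence.

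At this point the Künneth hypothesis is really used in a different role. It guarantees that the product is $\ell$-good, with $\bZ/\ell$-cohomology equal to the tensor product. This in turn ensures that $(-)^\wedge$ of the product, as computed in Remark \ref{rem:explain}, has the expected mod-$\ell$ cohomology, and that the various $\ell$-completions agree up to mod-$\ell$ equivalence.

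\emph{Main obstacle.} I expect the hardest step to be verifying that $X\times Y$ satisfies the hypotheses of Proposition \ref{prop:comparison} and the $\ell$-goodness used in Remark \ref{rem:explain}, rather than the formal diagram chase. I would handle it by checking mod-$\ell$ cohomology isomorphisms degreewise via the étale Künneth formula. I would then conclude with the fact, stated in Definition \ref{defn:ell-complete}, that $(-)_\ell$ converts mod-$\ell$ equivalences into homotopy equivalences, which is what upgrades the cohomology statement to a pointed homotopy equivalence.
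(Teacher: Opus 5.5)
Your proposal is correct and is essentially the paper's argument. The paper presents this corollary as an immediate consequence of Theorem \ref{thm:holim} and Proposition \ref{prop:comparison}, and that is your structure: for $R=\bC$, Theorem \ref{thm:holim} together with the product property of $(-)_\ell$ from Definition \ref{defn:ell-complete}, then the base-change ladder and two-out-of-three for $W(k)$ and $k$.

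Both you and the paper leave implicit that $X\times Y$ itself satisfies (\ref{eqn:base-change}), which is the natural place for the K\"unneth hypothesis to enter; over $W(k)$ your justification also relies on smoothness, which is not among the stated hypotheses. Your paragraph on $\ell$-goodness in the case $R=\bC$ is unnecessary, since Theorem \ref{thm:holim} applies directly to $X\times Y$.
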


\vskip .1in

We interpret the following proposition as asserting that the Frobenius map $F$ induces
a self-map of the $\bZ/\ell$-completed sphere fibration $(\tau^{alg}_n)^\wedge$ over $(BGL_{n,k})^\wedge$
which restricts to a map of degree $p^n$ on homotopy fibers.

\begin{prop}
\label{prop:hom-fiber}
There is a natural homotopy equivalence from $ (S_k^{2n,alg})^\wedge$ to the 
homotopy fiber of the map obtained by applying $(-)^\wedge$ to the projection map $\tau_{n,k}^{alg}$:
$$(\tau_{n,k})^\wedge: (B(GL_{n,k},S_k^{2n,alg}))^\wedge \ \to \ (BGL_{n,k})^\wedge.$$ 

The commutative squares of  (\ref{eqn:tau-F}) of simplicial $k$-varieties determines
$$F^\wedge: (\tau_{n,k})^\wedge \quad \to \ \quad F^{\wedge*}((\tau_{n,k})^\wedge)$$
 whose map on homotopy fibers is homotopic to 
$F^\wedge: (S_k^{2n,alg})^\wedge \ \to \ (S_k^{2n,alg})^\wedge$
which has degree $p^n$.
\end{prop}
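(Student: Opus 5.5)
The plan is to build the fiber sequence first over $W(k)$, and even over $\bC$, where a topological comparison is available. Only afterwards will I pass to $k$ via Proposition \ref{prop:comparison}.

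\textbf{Step 1 (the strict fiber over $k$).} Form the pullback of $\tau^{alg}_n$ along the base point $\Spec k \to BGL_{n,k}$. By Notation \ref{note:bar}, in each simplicial degree $s$ this pullback is $\{e\}^{\times s}\times S^{2n,alg}_{k,s}$, so the strict fiber is $S^{2n,alg}_k$. Since $(-)^\wedge$ is functorial, we obtain a natural map from $(S^{2n,alg}_k)^\wedge$ to the homotopy fiber of $(\tau_{n,k})^\wedge$. It then suffices to show that
$$(S^{2n,alg}_k)^\wedge \to (B(GL_{n,k},S^{2n,alg}_k))^\wedge \to (BGL_{n,k})^\wedge$$
is a homotopy fiber sequence.

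\textbf{Step 2 (the fiber sequence over $\bC$ and $W(k)$).} The same construction makes sense over $W(k)$ and over $\bC$. All three simplicial schemes ($S^{2n,alg}$, $BGL_n$, $B(GL_n,S^{2n,alg})$) satisfy the base change isomorphisms (\ref{eqn:base-change}); for $S^{2n,alg}$ this is Example \ref{ex:S-alg}, and the bar constructions are handled levelwise. Proposition \ref{prop:comparison} therefore identifies the sequence over $k$ with the sequence over $\bC$, naturally. Theorem \ref{thm:holim} in turn identifies the sequence over $\bC$ with
$$Sin(|S^{2n,alg}_\bC|)_\ell \to Sin(B(GL_n^{top},|S^{2n,alg}_\bC|))_\ell \to Sin(BGL_n^{top})_\ell.$$
Before completion this is a genuine quasifibration: it is the Borel construction of a connected group acting on a space. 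The base $BGL_n^{top}$ is simply connected, and by Proposition \ref{prop:map-cone} the fiber is homotopy equivalent to $S^{2n}$, which is nilpotent and $\ell$-good. The Bousfield–Kan fiber lemma \cite{Bo-Kan} (simply connected base, fiberwise completion) then shows that $\bZ/\ell$-completion preserves the fibration. This yields the first assertion.

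\textbf{Step 3 (the Frobenius).} The square (\ref{eqn:tau-F}) gives a map over $F^\wedge$ of the base, hence a map $(\tau_{n,k})^\wedge\to F^{\wedge*}((\tau_{n,k})^\wedge)$. By Proposition \ref{prop:geom-fiber}, the restriction of $F$ over the base point is $F$ on $S^{2n,alg}_k$. Because the identification in Step 1 is natural, the induced map on homotopy fibers is homotopic to $F^\wedge$ on $(S^{2n,alg}_k)^\wedge$. For the degree, Remark \ref{rem:explain} gives $H^{2n}((S^{2n,alg}_k)^\wedge,\bZ/\ell^d)=H^{2n}_{et}(S^{2n,alg}_k,\bZ/\ell^d)$, and Proposition \ref{prop:Pn} says $F^*$ acts there by $p^n$. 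Passing to the limit over $d$, $F^\wedge$ has degree $p^n\in\bZ_\ell^\times$ on the $\ell$-complete $2n$-sphere.

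The main obstacle is Step 2. We must check that the levelwise definition of $(-)^\wedge$, via the diagonal and $(-)_\ell$, is compatible with the comparisons to $\bC$. We must also justify that completion preserves this particular fibration. I would try first to verify the hypotheses of the fiber lemma directly, namely $\pi_1$ of the base trivial and the fiber $\ell$-good. A point to watch is that the naturality in Theorem \ref{thm:holim} and Proposition \ref{prop:comparison} must be applied to the pullback square itself, so that the homotopy fiber identification is the same one used for the Frobenius.
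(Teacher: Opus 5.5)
Your proposal is correct. For the Frobenius part it essentially coincides with the paper: both use naturality of the fiber comparison applied to the square (\ref{eqn:tau-F}), with the degree $p^n$ coming from Proposition \ref{prop:Pn}. For the first assertion, however, you take a genuinely different route.

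The paper stays over $k$ and invokes the \'etale-homotopical comparison of geometric and homotopy-theoretic fibers \cite[Thm 10.7]{F82}. That theorem directly supplies a natural $\bZ/\ell$-equivalence from $(S^{2n,alg}_k)^\wedge$ to the homotopy fiber of $(\tau_{n,k})^\wedge$. Example \ref{ex:S-alg} and Proposition \ref{prop:comparison} are then invoked for the Frobenius assertion.

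You avoid that external theorem. You build the comparison map over $k$ from the strict fiber. You then prove it is an equivalence by transporting the whole diagram along the base-change zigzag of Proposition \ref{prop:comparison} and through Theorem \ref{thm:holim}; the diagram here is the fiber, total space, base, and the strict factorization of the composite through the base point. This lands you at the topological Borel fibration. There the Bousfield--Kan mod-$\ell$ fiber lemma applies, because $BGL_n(\bC)^{top}$ is simply connected and the fiber is a $2n$-sphere.

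What this buys you:
\begin{itemize}
\item The proof uses only tools already set up in the paper, plus standard topology.
\item There is a clean separation of tasks. The equivalence is proved once, via characteristic zero. Compatibility with $F$ is strict naturality of the strict-fiber map over $k$, so you never need to lift $F$.
\end{itemize}

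The price is the bookkeeping you already flag:
\begin{itemize}
\item You must verify the base-change hypotheses of Proposition \ref{prop:comparison} for the total space $B(GL_{n,W(k)},S^{2n,alg}_{W(k)})$. Levelwise these are products of $GL_n^{\times s}$ with the pieces of $S^{2n,alg}$, handled by K\"unneth and the reductive-group case.
\item You must check that the zigzags of Proposition \ref{prop:comparison} and Theorem \ref{thm:holim} respect the fiber inclusion. They must also respect the null-homotopy through the (contractible) completed type of the base point.
\item Your method also depends on having a smooth model over $W(k)$. The paper's citation works directly over $k$, but it puts all the real work into \cite[Thm 10.7]{F82}.
\end{itemize}
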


\begin{proof}
By Proposition \ref{prop:geom-fiber}, $S_k^{2n,alg}$ is the geometric fibre of $\tau_{n,k}$.
By \cite[Thm 10.7]{F82}, there is a natural $\bZ/\ell$-equivalence from 
$(S_k^{2n,alg})^\wedge$ (the ``homotopy
type of the geometric fiber") to the homotopy fiber of $(\tau_{n,k})^\wedge$ (the 
``homotopy-theoretic fiber").
Using Example \ref{ex:S-alg} and Proposition \ref{prop:comparison}, we conclude that the second 
assertion follows from the naturality of the comparison of geometric and homotopy-theoretic
fibers.
\end{proof}

\vskip .2in


\section{Some aspects of $\cF$-spaces}
\label{sec:aspects}

We require compatible structures on the collection of $(S_k^{2n,alg})^\wedge$-fibrations 
$(\tau_{n,k}^{alg})^\wedge$ of Proposition \ref{prop:hom-fiber} 
in order to specify a homotopy class of spectra from the spectrum associated to
 $\langle \{  (BGL_{n,k})^\wedge, n\geq 0 \} , \ \bigoplus\rangle$
to the spectrum associated to \\
$\langle\{  BG_\ell(S^{2n}),n\geq 0 \}, \ \wedge \rangle$.  The structure we employ is
that of $\cF$-spaces.

We remind the reader that the category $\cF$ (opposite to Segal's category $\Gamma$)
has objects consisting of the finite pointed sets ${\bf n} = \{ 0,1,\dots,n\}$ and maps 
${\bf m} \to {\bf n} $ which are pointed (i.e., 0-preserving) maps of sets.  A functor from
$\cF$ to a category $\cC$ equipped with a given final-cofinal object which sends
 ${\bf 0}$ to this final-cofinal object is called an $\cF$-object of $\cC$.
Of particular interest is the case in which $\cC$ is the category of pointed simplicial sets; 
in this case, we call such a functor an $\cF$-space. 

Recall that an $\cF$-space $\ul\cB$ is said to be special if $\times p_i: \ul \cB({\bf n}) \to
\prod_{i=1}^n \ul\cB({\bf 1})$ is a weak homotopy equivalence for each $n$.  For a special
$\cF$-space $\ul\cB$, the map $\mu: \ul\cB({\bf 2}) \to \ul\cB({\bf 1})$ is a ``sum" pairing 
which is commutative up to all higher homotopies, where $\mu: {\bf 2} \to  {\bf 1} \in \cF$
sends both $1,2 \in {\bf 2}$ to $1 \in {\bf 1}$.

The category of special $\cF$-spaces (which we denote by $\cF[s.sets_*]$) is given 
 in \cite[Thm 3.5]{Bo-F} the structure of a proper closed simplicial model category.
 In this model, a map of special $\cF$-spaces $f: \ul\cB^\prime \to \ul\cB$ is a weak 
equivalence provided that $f({\bf n}): \ul\cB^\prime({\bf n}) \to \ul\cB({\bf n})$ is a weak 
equivalence for all ${\bf n} \in \cF$ (i.e., is a ``level-wise weak equivalence"); we 
denote the associated homotopy category by $Ho(\cF[s.sets_*])$.
We refer to \cite{Bo-F}, \cite{F80} for discussions
of model-theoretic aspects of  $\cF[s.sets_*]$
such as fibrant replacement of objects/maps and cofibrant replacement
of objects in the category.  

\vskip .1in

\begin{defn}
\label{defn:assoc-spec}
Let $T$ be the finite simplicial set with two non-degenerate simplices and with
$|T| = S^1$.  Let $T^n$ denote the $n$-th smash power of $T$.  For any 
$\cF$-space $\ul\cB$, we extend $\ul\cB$ so that it is a functor from 
finite pointed simplicial sets to simplical sets.  

Following \cite[Prop 1.4]{Segal} and \cite[Thm 4.4]{Bo-F}, we define the spectrum
$$|| \ul\cB|| \quad \equiv \quad \{ \ul\cB(T^n), \ T \wedge \ul\cB(T^n) \to \ul\cB(T^{n+1})\}$$
associated to $\ul\cB$. 
\end{defn}

\vskip .1in

The $\cF$-space $\ul\cN$ mentioned in the introduction is our first example.

\begin{ex}
\label{ex:cN}
We denote by $\ul \cN: \cF \to (sets_*)$  the functor sending ${\bf n}$ for $n > 0$ to the discrete
pointed simplicial set $\bN^{\times n}$ (the pointed set of $n$-tuples of non-negative integers)
and sending $\alpha: {\bf n} \to {\bf m}$ in $\cF$ to the pointed set map
$\bN^{\times n} \to \bN^{\times m}$ which sends the $n$-tuple $I=(i_1,\dots,i_n)$ to the $m$-tuple
$\alpha(I) = (j_1,\dots,j_m)$ where $j_t = \sum_{\alpha(s)= t} i_s$ and the sum is the
monoid structure of $\bN$.

The spectrum $||\ul\cN ||$ has $\pi_0$ equal to $\bN$.  The associated $\Omega$-spectrum
 $||\ul\cN ||^+$ (see (\ref{eqn:equiv})) can be identified with ${\bf K(\bZ,0)}$.
\end{ex}

\vskip .1in

We summarize results of \cite{Bo-F} describing the relationship between the homotopy 
category of $\cF$-spaces and the homotopy category of connected spectra.  An $\cF$-space
$\ul\cB$ is said to be very special if it is special and if $\pi_0(\ul\cB({\bf 1}))$ is a group.

\begin{thm} \cite[\S 5]{Bo-F}
\label{thm:summary}
Denote by $vs\cF[s.sets_*]$  the full subcategory of $\cF[s.sets_*]$
whose objects are very special $\cF$-spaces.
There is a functor 
 \ $T: \cF[s.sets_*] \to vs\cF[s.sets_*]$ \ together with a 
natural transformation $\eta: id \to T$ which induces a map on homotopy categories
fitting in the following commutative square
\begin{equation}
\label{eqn:equiv}
\xymatrix{
Ho(\cF[s.sets_*]) \ar[d]_T \ar[r] ^{||-||} & Ho(c.spectra) \ar[d]^+ \\
Ho(vs\cF[s.sets_*])  \ar[r] & Ho(c.\Omega\text{-}spectra)
.}
\end{equation}
\begin{enumerate}
\item
The lower horizontal map of (\ref{eqn:equiv}) is an equivalence of categories, where
$Ho(c.\Omega{\text -}spectra)$ is the ``usual" stable homotopy category of connective spectra.
\item
For any $\cF$-space $: \ul\cB$, $\eta: \ul\cB \to T(\ul\cB)$
is a stable weak equivalence in the sense of \cite{Bo-F}.
\item
If $\ul\cB$ is a special $\cF$-space, then $\eta: ||\ul\cB|| \to ||T(\ul\cB)||$ is a ``homotopy-theoretic group
completion" on $0$-spaces  $||\ul\cB||_0 \to ||T(\ul\cB)||_0$ and determines homotopy equivalences
$||\ul\cB||_i \to ||T(\ul\cB)||_i$ for $i > 0$.
\item
If $\ul\cB$ is a very special $\cF$-space, then $\eta: ||\ul\cB|| \to ||T(\ul\cB)||$ 
is a homotopy equivalence of $\Omega$-spectra.
\end{enumerate}
\end{thm}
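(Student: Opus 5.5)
The theorem is cited from \cite[\S 5]{Bo-F}, so the plan is to recover it from the Bousfield--Friedlander machinery rather than build anything new. First I define $T$. Take a functorial fibrant replacement $\ul\cB \to R\ul\cB$ in the stable model structure on $\cF$-spaces of \cite{Bo-F}; its fibrant objects are exactly the very special $\cF$-spaces that are levelwise Kan. Concretely, one route is to form the spectrum $||\ul\cB||$, pass to an $\Omega$-spectrum $Q||\ul\cB||$ by the usual $\operatorname{colim}_k \Omega^k$ construction after Kan replacement, and pull back along the Segal ``$\Gamma$-space of a spectrum'' functor. Either way, $T(\ul\cB)$ is very special, and $\eta$ is the replacement map, which is natural. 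The square (\ref{eqn:equiv}) then commutes up to natural isomorphism, because $||-||$ carries stable equivalences to stable equivalences of spectra, and $+$ is $\Omega$-spectrum replacement.

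Items (2) and (4) are essentially formal. For (2), $\eta$ is a fibrant replacement in the stable model structure, hence a stable weak equivalence by construction. For (4), suppose $\ul\cB$ is very special. Segal's theorem \cite[Prop 1.4]{Segal} shows that $||\ul\cB||$ is already an $\Omega$-spectrum (after levelwise Kan replacement). A stable equivalence between $\Omega$-spectra is a levelwise weak equivalence, so $||\eta||$ is a homotopy equivalence.

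For (3), I apply Segal's group-completion theorem to the special $\cF$-space $\ul\cB$. The map $\ul\cB({\bf 1}) \to \Omega\,\ul\cB(T)$ is a homotopy-theoretic group completion, and $\ul\cB(T^i) \to \Omega\,\ul\cB(T^{i+1})$ is an equivalence for $i \geq 1$, since $\ul\cB(T^i)$ is connected. Hence the positive levels of $||\ul\cB||$ already form an $\Omega$-spectrum, and $\eta$ is a weak equivalence in those degrees. In degree $0$, $||T\ul\cB||_0 \simeq \Omega\,||\ul\cB||_1$, which yields the group-completion statement. The step I expect to be the main obstacle is checking that the replacement $T$ does not alter the positive levels. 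This should follow from (2) together with the fact that both spectra are $\Omega$-spectra above degree $0$, compared through $\Omega$ of the higher levels.

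Finally, item (1), that the lower horizontal map is an equivalence of categories, is the substance of \cite[Thm 5.1]{Bo-F}. For essential surjectivity, every connective $\Omega$-spectrum $E$ arises as $||\ul\cB_E||$ for the $\cF$-space $\ul\cB_E({\bf n}) = \operatorname{Map}_*\bigl(\bigvee_n S, E\bigr)_0$, suitably rigidified. Full faithfulness comes from Segal's delooping, which identifies homotopy classes of maps of very special $\cF$-spaces with infinite-loop maps of the $0$-spaces. I would invoke this comparison rather than reprove it, and the remaining checks amount to routine bookkeeping with Quillen adjunctions.
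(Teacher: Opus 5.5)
Your proposal is correct and matches the paper, which gives no argument of its own beyond citing \cite[\S 5]{Bo-F}: you take $T$ to be a functorial stable fibrant replacement (or the $\cF$-space of $Q||\ul\cB||$), read off (2) from the construction, and obtain (3) and (4) from Segal's Prop.~1.4 plus the observation that a stable equivalence between spectra which are $\Omega$-spectra in positive degrees is a levelwise weak equivalence in those degrees, which is the standard content of that section. One small caution about your aside on (1): the naive functoriality of $\mathbf n\mapsto \operatorname{Map}(\bigvee_n S,E)$ is contravariant on $\cF$, so it goes the wrong way, and the strict model is Bousfield--Friedlander's $\mathbf n\mapsto \operatorname{map}(\mathbb S^{\times n},E)$, where the levelwise products $(S^k)^{\times n}=\operatorname{Map}_*(\mathbf n,S^k)$ are contravariant in $\mathbf n$; since you cite \cite{Bo-F} for (1), nothing in your argument depends on this.
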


\vskip .1in

We view the functor $T: Ho(\cF[s.sets_*]) \ \to \ Ho(vs\cF[s.sets_*])$ as a localization
with respect to stable equivalences,  namely those maps $\ul\cB \to \ul\cB^\prime$ in 
$\cF[s.sets_*]$ whose induced map on the stable homotopy groups 
$\pi_*(\ul\cB) \to \pi_*(\ul\cB^\prime)$ is an isomorphism.

\vskip .1in

\begin{construct}
\label{construct:homotopy}
Denote by $\cF{\text -}spec$ the full subcategory of $\cF[s.sets_*]$ of special 
$\cF$-spaces.  Then $\cF{\text -}spec$ inherits the structure of a Quillen
model category from that of $\cF[s.sets_*]$ such that
$$Ho(\cF{\text -}spec) \quad  \to \quad Ho(\cF[s.sets_*])$$
 is fully faithful.  Namely, one verifies 
that any fibrant replacement and any cofibrant replacement of an object of 
$\cF{\text -}spec$ is a map in $\cF{\text -}spec$.

Moreover, the slice category $\cF{\text -}spec/\ul\cN$ inherits the structure of a Quillen
model category from that of $\cF{\text -}spec$ such that
$$Ho(\cF{\text -}spec/\ul\cN) \quad  \to \quad Ho(\cF[s.sets_*])$$
is also fully faithful.

Denote by $(\cF{\text -}spec/\ul\cN)_*$ the full subacategory of $\cF{\text -}spec/\ul\cN$
whose objects 
$$f: (\ul\cB \to \ul\cN, \ s: \ul\cN \to \ul\cB;\  f\circ s = id)$$ 
are special $\cF$-spaces over $\ul\cN$  equipped with a section
with the additional property the pre-image
$\ul\cB_I$ of any $I \in \ul\cN({\bf n})$ is pointed, connected.  Then 
$(\cF{\text -}spec/\ul\cN)_*$ inherits the structure of a Quillen
model category from that of $\cF{\text -}spec/\ul\cN$ such that
$$Ho((\cF{\text -}spec/\ul\cN)_*) \quad  \to \quad Ho(\cF{\text -}spec/\ul\cN)$$
is fully faithful.  The justification for this is that fibrant and cofibrant replacements
in $\cF{\text -}spec/\ul\cN$ of objects in $(\cF{\text -}spec/\ul\cN)_*$ can
be achieved using functorial base-point preserving constructions 
level-by-level (see, for example, \cite{Bo-F}), and the base point of the connected 
space $\ul\cB_I$ for 
$(\ul\cB,f,s)$ for each $I \in \ul\cN({\bf n})$ is given by $s(I)$.
\end{construct}

\vskip .1in

\begin{cor}
\label{cor:homotopy-cat}
Two maps $\phi, \psi: {\ul \cB} \to {\ul \cB}^\prime$ in $(\cF{\text -}spec/\ul\cN)_*$
are equal in \\
$Hom_{\cF[s.sets_*]}(\ul\cB, \ul\cB^\prime)$ if and only if they are equal in 
$Hom_{(\cF{\text -}spec/\ul\cN)_*}(\ul\cB, \ul\cB^\prime)$.   

Moreover, if $\phi, \psi$ are equal in $Hom_{(\cF{\text -}spec/\ul\cN)_*}(\ul\cB, \ul\cB^\prime)$,
then they are homotopic in the stable homotopy category of 
connective spectra.
\end{cor}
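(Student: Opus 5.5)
The plan is to read both assertions off Construction \ref{construct:homotopy} and Theorem \ref{thm:summary}.

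For the first assertion, I use the chain of functors
$$Ho((\cF{\text -}spec/\ul\cN)_*) \ \to \ Ho(\cF{\text -}spec/\ul\cN) \ \to \ Ho(\cF{\text -}spec) \ \to \ Ho(\cF[s.sets_*]),$$
each induced by a forgetful functor. Construction \ref{construct:homotopy} asserts that each composite into $Ho(\cF[s.sets_*])$, and the first arrow, is fully faithful. A fully faithful functor is in particular faithful, so it is injective on Hom-sets. Hence $\phi$ and $\psi$ agree in $Hom_{(\cF{\text -}spec/\ul\cN)_*}(\ul\cB,\ul\cB^\prime)$ exactly when their images agree in $Hom_{\cF[s.sets_*]}(\ul\cB,\ul\cB^\prime)$.

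One direction needs no fullness. If $\phi=\psi$ in the slice homotopy category, applying the forgetful functor gives equality downstairs. The converse is where faithfulness is really used. Concretely, I would take a cofibrant replacement of $\ul\cB$ and a fibrant replacement of $\ul\cB^\prime$ inside $(\cF{\text -}spec/\ul\cN)_*$. As the Construction notes, these can be made by functorial, base-point-preserving level-wise constructions, and they are also cofibrant and fibrant replacements in $\cF[s.sets_*]$. A homotopy in $\cF[s.sets_*]$ between the replaced maps, using a cylinder object $\ul\cB\otimes\Delta[1]$, must then be lifted to a homotopy over $\ul\cN$ that respects the sections.

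This lifting is the step I expect to be the main obstacle. Since $\ul\cN$ is level-wise discrete, the composite of the homotopy with the projection to $\ul\cN$ is constant along $\Delta[1]$. So the homotopy automatically lies over $\ul\cN$, provided $\phi$ and $\psi$ induce the same map to $\ul\cN$, which they do because both are maps over $\ul\cN$. To keep the homotopy compatible with the sections, I would use the pointed-connected condition on the fibers $\ul\cB_I$ to choose the homotopy relative to the section, that is, pointed on each fiber. I would simply cite the Construction for this, since it is packaged there as the full faithfulness statement.

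For the second assertion, equality in $Hom_{\cF[s.sets_*]}$ gives equality of the images under $||-||$ in $Ho(c.spectra)$, by the commutative square (\ref{eqn:equiv}) of Theorem \ref{thm:summary}. Applying $(-)^+$, or equivalently $T$ followed by the equivalence in part (1), shows that the induced maps agree in $Ho(c.\Omega\text{-}spectra)$. By part (2), $\eta$ is a stable equivalence, so these are the stable homotopy classes of the original maps.
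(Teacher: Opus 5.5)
Your proposal follows the paper's route: the corollary is read off directly from the full faithfulness claimed in Construction \ref{construct:homotopy} (and, as you note, only faithfulness is actually needed), together with the functor $||-||$ and Theorem \ref{thm:summary}; the paper gives no further argument. Your aside that pointed-connectedness of the $\ul\cB_I$ lets you make a homotopy relative to the sections is only a heuristic, since section-preserving and unrestricted homotopy classes can differ by a $\pi_1$-type action just as based and free homotopy classes do, so that step rests entirely on citing the Construction, exactly as in the paper.
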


\vskip .1in

\begin{remark}
The constructions in this paper involve objects and maps in $(\cF{\text -}spec/\ul\cN)_*$.
The additional structure for an $\cF$-space $\ul\cB \to \ul\cN$ to be an object of $(\cF{\text -}spec/\ul\cN)_*$
enables internal structure maps to be base-point preserving, needed when one takes smash products.

We refer to an object of $(\cF{\text -}spec/\ul\cN)_*$ as a ``pointed-connected special $\cF$-space $\ul\cB$ over $\ul\cN$",
leaving the structure map $f: \ul\cB \to \ul\cN$ and the section $s: \ul\cN \to \ul\cB$ implicit.
\end{remark}

\vskip .1in

As seen in \cite[Cor 2.2]{Segal} (see also \cite{May}), topological permutative categories determine $\cF$-spaces
over $\ul\cN$.  

By replacing the category of finite pointed sets by the full subcategory in which ${\bf n}$ is the unique
set in its isomorphism class,  we simplify Segal's construction for \cite[Cor 2.2]{Segal} for a category $\cC$ 
with sums.  Segal considers categories $\cC(S)$ for each finite pointed set $S$
whose objects are functors from the category $\cP(S)$ of subsets of $S$ with maps being inclusions
to $\cC$ which take disjoint unions to sums.  If we  replace $\cP(S)$ by the full subcategory of subsets of $\cP({\bf n})$
(i.e., choose a unique representative ${\bf n}$ for a finite set $S$ with a base point and $n$ non-base-point elements),
then Segal's construction simplifies to 
\begin{equation}
\label{eqn:simplify-X}
{\ul \cB}G(X)({\bf n})_I \quad \equiv \quad BG(X^I) \times (\prod_{S\subset {\bf n}} E\Sigma_{|S|}),
\end{equation}
where $|S|$ is the number of non-base point elements of $S$.  

The permutations groups $\Sigma_{|S|}$
play an essential role in defining $\alpha: {\ul \cB}G(X)({\bf n})_I \ \to \ {\ul \cB}G(X)({\bf n})_{\alpha(I)}$.
Following \cite{Segal}, the justification of this model comes from envisioning the $n$-th simplicial category as having
object ``space" the discrete set of pairs $(I \in \ul\cN({\bf n}), \sigma_s \in \prod_{S\subset {\bf n}} \Sigma_{|S|}$)
and having endomorphism``spaces" of such pairs equal to $G(X^I) \times \prod_{S\subset {\bf n}} \Sigma_{|S|}$.

We give two such ``topological examples" relevant to our main theorems. 

\vskip .1in

\begin{ex} \cite[Ex 8.1]{F80}, \cite{Segal}
\label{ex:BGL}
There exists a special $\cF$-object over $\ul\cN$ of pointed topological spaces 
$${\ul \cB}GL(\bC)^{top}: \cF \quad \to \quad (\text{spaces}_*)$$
with ${\ul \cB}GL(\bC)^{top}({\bf 1})\ = \ \coprod_{n\geq 0} BGL_n(\bC)^{top}$ 
and sum pairing given by block sum of matrices.  Here,
$BGL_n(\bC)^{top}$ is the total topological space associated to the simplicial bar construction 
applied to the complex Lie group $GL_n(\bC)^{top}$ which we view as the nerve of the 
topological category with one object $\bC^n$ and whose mapping space is $GL_n(\bC)^{top}$.
For $n > 1$ and $I \in \ul\cN$, we identify 
${\ul \cB}GL(\bC)^{top}({\bf n})_I$ with
\begin{equation}
\label{eqn:id-GL}
{\ul \cB}GL(\bC)^{top}({\bf n})_I \quad \simeq \quad
\prod_{i=1}^n  BGL_{n_i}(\bC)^{top} \times \prod_{S\subset {\bf n}} E(\Sigma_{|S|},\Sigma_{|S|}).
\end{equation}
This is  the nerve of the topological category $\cC(GL)({\bf n})$ an object of which is indexed by pairs
$(I = (i_1,\ldots,i_n) \in \cF({\bf n}), \times_{S\subset {\bf n}} \sigma_S \in \Sigma_{|S|})$ where $S\subset {\bf n}$ 
is the set of subsets of $\{1,\ldots,n\}$  of size $|S| \geq 2$.  Maps of $\cC(GL)({\bf n})$
consist of isomorphisms of objects, where $Iso((I \in \cF({\bf n}), \times_{S\subset {\bf n}} \sigma_S \in \Sigma_{|S|})$
can be identified with $\prod_{j=1}^n GL_{i_j}(\bC)^{top} \times \prod_{S\subset {\bf n}} \Sigma_{|S|}$.
Sending ${\bf n}$ to $\cC(GL)({\bf n})$ determines a well-defined
functor from $\cF$ to (topological categories), where elements $\mu_S \in \Sigma_{|S|}$ are used to 
provide functorial maps between partial sums indexed by $S \subset {\bf n}$. 
Taking the nerve of these categories, one obtains the ``special" $\cF$-object 
${\ul \cB}GL(\bC)^{top}: \cF \to \ (\text{spaces}_*)$ over $\ul\cN$.

Moreover, there is a map of special $\cF$-objects over $\ul\cN$ of
topological spaces
\begin{equation}
\label{eqn:tau-top-spec}
\tau^{top}_{\bC^+}: {\ul \cB}(GL(\bC),\bC^+)^{top} \to {\ul \cB}GL(\bC)^{top}: \ \cF \quad \to \quad (\text{spaces}_*)
\end{equation}
with the property that 
$${\ul \cB}(GL(\bC),\bC^+)^{top}({\bf 1})\ = \ \coprod_{n\geq 0} B(GL_n(\bC),(\bC^n)^+)^{top} \ \to$$
$$\coprod_{n\geq 0} BGL_n(\bC)^{top} \ = \ {\ul \cB}GL(\bC)^{top}({\bf 1})$$
is the projection.  

The construction of ${\ul \cB}(GL(\bC),\bC^+)^{top} $ is given by taking the nerves of 
 the topological categories $\widetilde{\cC(GL)}({\bf n})$, an object of which is indexed by pairs
$$(z \in \wedge_{j=1}^n (\bC^{i_j})^+,\ \times_{S\subset {\bf n}} \sigma_S \in \Sigma_{|S|}).$$
Again, the indexing set $S\subset {\bf n}$ is the set of subsets of $\{1,\ldots,n\}$  of size $|S| \geq 2$.
Maps of $\widetilde{\cC(GL)({\bf n})}$ consist of isomorphisms of objects which respect sums; these 
can be identified with $\prod_{j=1}^n GL_{i_j}(\bC)^{top}$.
\end{ex}

\vskip .1in

\begin{ex}
\label{ex:tau}
Proposition \ref{prop:map-cone} enables us to replace $(\bC^n)^+$ by $|S^{2n,alg}_\bC|$
in the construction of $\tau^{top}_{\bC^+}$, thereby obtaining
\begin{equation}
\label{eqn:tau-S2alg-top}
\tau_{|S^{2,alg}_\bC|}^{top}: {\ul \cB}(GL(\bC),|S^{2,alg}_\bC|)^{top} \to {\ul \cB}GL(\bC)^{top}: 
\ \cF \quad \to \quad (\text{spaces}_*)
\end{equation}
\end{ex}

\vskip .1in
\begin{ex}
\label{ex:G(|X|)}
Let $X$ denote a pointed, connected, locally finite simplicial set with realization $|X|$.  Denote by 
$G(|X|)^{top}$ the function space with the compact open topology of pointed self-equivalences
of $|X|$.    Composition gives $G(|X|)^{top}$ the structure of an $H$-space.
Moreover, smash product $X^m \times X^n \to X^{m+n}$ determines  a continuous
smash product $|X^m|\times |X^n| \to |X^{m+n}|$ leading to the smash product 
$G(|X^m|)^{top} \times G(|X^n|)^{top} \to G(|X^{m+n}|)^{top}$ of self-equivalences and a continuous action of 
$G(|X^m|)^{top} \times |X^m| \to |X^m|$ commuting with this smash product.

There exists a special $\cF$-object over $\ul\cN$ of pointed topological spaces 
$${\ul \cB}G(|X|)^{top}: \cF \quad \to \quad (\text{spaces}_*)$$
with ${\ul \cB}G(|X|)^{top}({\bf 1})\ = \ \coprod_{n\geq 0} BG(|X^n|)^{top}$ 
and sum pairing given by smash products.   As in the previous example,
$BG(|X^n|)^{top}$ is the total topological space associated to the simplicial bar construction 
applied to the $H$-space $G(|X^n|)^{top}$.  For $n > 1$ and $I = (i_1,\ldots,i_n)\in \ul\cN$, we identify 
${\ul \cB}G(|X|)^{top}({\bf n})_I$ with
\begin{equation}
\label{eqn:id-GL-I}
{\ul \cB} G(|X|)^{top}({\bf n})_I \quad \simeq \quad
\prod_{j=1}^n  BG(|X^{i_j}|)^{top} \times \prod_{S\subset {\bf n}} E(\Sigma_{|S|},\Sigma_{|S|}).
\end{equation}

As in the previous example, there there is a map of special $\cF$-objects over $\ul\cN$ of
topological spaces
\begin{equation}
\label{eqn:tau-top}
\pi^{top}_{|X|}: {\ul \cB}(G(|X|),|X))^{top} \to {\ul \cB}G(|X|)^{top}: \ \cF \quad \to \quad (\text{spaces}_*)
\end{equation}
with the property that 
$${\ul \cB}(G(|X|),|X|)^{top}({\bf 1})\ = \ \coprod_{n\geq 0} B(G(|X^n)|,|X^n|)^{top} \ \to$$
$$\coprod_{n\geq 0} BG(|X^n|)^{top}\ = \ {\ul \cB}G(|X|)^{top}({\bf 1})$$
is the projection.  
\end{ex}

\vskip .1in

By applying $Sin(-)$ to the previous examples, one obtains $\cF$-spaces.  We proceed to 
construct $\cF$-spaces within the context of pointed simplicial sets, motivated by these 
topological examples.

\vskip .1in

We remind the reader that $Sin(T)$ is a Kan complex for any topological space.  
To avoid the awkwardness that smash products of Kan complexes need not be
Kan complexes, we consider  monoids of weak equivalences $X^i \to Sin(|X^i|)$
rather than monoids of self-equivalences of Kan complexes.

\vskip .1in

\begin{defn}
\label{defn:G(X)}
Let $X$ be a pointed, connected simplicial set with geometric realization $|X|$.  Recall 
that $|-|$ is left adjoint to the singular functor $Sin(-): (spaces_*) \to (s.sets_*)$.
We consider the 
function complex $\ul{Hom}_*(X,Sin(|X|))$ whose $t$-simplices are simplicial maps
$X \times \Delta[t] \to Sin(|X|)$ subject to the condition that $x_0 \times \Delta[t]$ maps to 
$x_0 \subset Sin_0(|X|)$ where $x_0 \in X_0$ is the base point.  The adjunction $| - | \circ Sin(-) \to id$ equips 
$\ul{Hom}_*(X,Sin(|X|))$ with a simplicial monoid structure associated with composition
of functions and defines the natural action
\begin{equation}
\label{eqn:nat-act}
\ul{Hom}_*(X,Sin(|X|)) \times Sin(|X|) \quad \to \quad Sin(|X|).
\end{equation}

We denote by $G(X) \ \subset \ \ul{Hom}_*(X,Sin(|X|))$ 
the (simplicial) submonoid consisting of components whose images in 
the discrete monoid $\pi_0 (\ul{Hom}_*(X,Sin(|X|)))$ are invertible, and we
denote by  $G^o(X) \subset G(X)$ the distinguished component containing 
the canonical map $X \to Sin(|X|)$.  

If the action of each $\sigma \in \Sigma_n$ on smash 
factors of $X^n$ is weakly homotopy equivalent to the identity of $X^n$, 
then we say that $\Sigma_n \subset G^o(X^n)$.

The adjunction equivalence \ $\ul{Hom}_*(X,Sin(|X|)) \ \stackrel{\sim}{\to} \ Sin(\ul{Hom}^{top}_*(|X|,|X|)$
implies that $G(X)$ is $G(X)$ is a Kan complex.  If $X$ is locally finite, then $G(X) \ \simeq Sin(G(|X|)^{top})$.
\end{defn}

\vskip .1in

\begin{ex}
\label{ex:sphere-spectrum}
If $|X|$ has the homotopy type of the (pointed) $n$-sphere $S^n$, 
then $G(X)$ is homotopy equivalent to $Sin(\Omega_{\pm 1}^n(S^n))$
and $G^0(X)$ is homotopy equivalent to $Sin(\Omega_1^n(S^n))$.  Thus,
$\pi_i(G(X)) = \pi_{i+n}(S^n)$ for $i > 0$ and $\pi_0(G(X)) = \bZ/2$.  
See, for example, a discussion by J.F. Adams in \cite[\S2]{Adams}.
\end{ex}

\vskip .1in

The following special example of a map of $\cF$-spaces
$$\pi_X:  \ul\cB (G(X),Sin(|X|))  \quad \to \quad \ul\cB G(X),$$
is the simplicial analog of Example \ref{ex:G(|X|)}.  This will serve as the ``universal $X$-fibration".

\begin{ex}
\label{ex:G(X)-act}
Let $X$ be a pointed, connected simplicial set.
For $I = (i_1,\ldots,i_n) \in \ul\cN({\bf n})$, denote by $X^I$ the product $\prod_{j=1}^n X^{i_j}$, 
where $X^{i_j}$ is the smash product of $i_j$ copies of $X$.  We
consider the simplicial monoid $G(X^I) \ \subset \ \ul{Hom}_*(X^I,Sin(|X^I|))$ with product 
given by composition (together with the adjunction map $|-| \circ Sin \to id$).
\begin{enumerate}
\item
Let $\alpha: {\bf n} \to {\bf m}$ be a non-decreasing map in $\cF$, $I \in \ul\cN({\bf n})$, 
and $J = \alpha(I) \in \ul\cN({\bf m})$.  Then $\alpha$ determines maps involving smash products
\begin{equation}
\label{eqn:smash-prod}
X^I \ \stackrel{\alpha}{\to} \  X^J, \quad G(X^I) \ \stackrel{\alpha}{\to} \  G(X^J).
\end{equation}
\item
For $\alpha: {\bf n} \to {\bf m}$ non-decreasing, the natural actions of the form (\ref{eqn:nat-act})
fit into commutative squares 
\begin{equation}
\label{eqn:compat}
\xymatrix{
G(X^I) \times X^I \ar[r] \ar[d]_\alpha &  Sin(|X^I |) \ar[d]^\alpha \\
G(X^J) \times X^J  \ar[r] &  Sin(|X^J |).
}
\end{equation}
\end{enumerate}

The compatible pairings of (\ref{eqn:compat}) determine the map of special $\cF$-spaces over $\ul\cN$
\begin{equation}
\label{eqn:pi-X}
\pi_X:  \ul\cB (G(X),Sin(|X|))  \quad \to \quad \ul\cB G(X)
\end{equation}
whose value on ${\bf 1}$ is the projection $\coprod_{i \geq 0} B(G(X^i),Sin(|X^i|)) \ \to \ \coprod_{i \geq 0} BG(X^i).$
As in previous examples, we can identify the restriction of $\pi_X$ above $I \in \cN({\bf n})$ as the projection 
$$
\prod_{i=1}^n  BG(X^n),Sin(|X^n)) \times \prod_{S\subset {\bf n}} E(\Sigma_{|S|},\Sigma_{|S|}) \ \to \ 
\prod_{i=1}^n  BG(X^n) \times \prod_{S\subset {\bf n}} E(\Sigma_{|S|},\Sigma_{|S|}).
$$

If $\Sigma_n \subset G^o(X^n)$ for all $n > 0$, then we define the ``oriented analogue" 
 $\ul \cB G^o(X) \hookrightarrow \ Sin(\ul\cB G(|X|)^{top,o})$
by simply replacing $G(X^I)$ by $G^o(X^I)$  in the construction of $\ul\cB G(X)$.
\end{ex}

\vskip .1in

The map  $\pi^{top}_{|X|}$ of Example \ref{ex:G(|X|)} and  and the map $\pi_X$ of Examle \ref{ex:G(X)-act}
are closely related, as we next state.

\begin{prop}
\label{prop:close-relate}
Let $X$ denote a pointed, connected, locally finite simplicial set with realization $|X|$. 
Then there is a commutative square of pointed-connected special $\bF$-spaces over $\ul\cN$
\begin{equation}
\label{eqn:compat-relate}
\xymatrix{
\ul\cB (G(X),Sin(|X|))  \ar[r]  \ar[d]_{\pi_X} &  \ar[d]^{\pi^{top}_{|X|} } Sin(\ul\cB (G(|X|),|X|)^{top}) \\
 \ul\cB G(X) \ar[r]_{adj} & Sin(\ul\cB G(|X|)^{top})
}
\end{equation}
whose horizontal maps arise using the canonical maps $G(X^I) \to Sin(G(|X^I|))$.
For every $I \in \ul\cN({\bf n}), n > 0$, the restriction of (\ref{eqn:compat-relate}) above $I$
\begin{equation}
\label{eqn:compat-I}
\xymatrix{
\ul\cB (G(X),Sin(|X|))_I  \ar[r]  \ar[d]_{\pi_X} &  \ar[d]^{\pi^{top}_{X} } Sin(\ul\cB (G(|X|),|X|)^{top})_I \\
 \ul\cB G(X)_I  \ar[r]_{adj}  & Sin(\ul\cB G(|X|))_I
}
\end{equation}
is a cartesian square whose vertical maps are fibrations with fibers homotopy
equivalent to $Sin(|X^I |)$ and whose horizontal maps are weak equivalences.
\end{prop}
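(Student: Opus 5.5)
The plan is to build the square levelwise from the map of simplicial monoids $c_I: G(X^I) \to Sin(G(|X^I|)^{top})$, check it is compatible with the $\cF$-structure, and then analyze each $I$-component separately. For a $t$-simplex $f: X^I\times\Delta[t]\to Sin(|X^I|)$, take $c_I(f)$ to be the adjoint $|X^I|\times|\Delta[t]| \to |X^I|$, obtained by realizing $f$ and composing with the counit $|Sin(-)|\to id$. By the adjunction equivalence recalled in Definition \ref{defn:G(X)}, and since $X$ is locally finite, $c_I$ is a weak equivalence $G(X^I)\simeq Sin(G(|X^I|)^{top})$. Because the monoid structure on $G(X^I)$ is itself defined through the counit, $c_I$ is a map of simplicial monoids. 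It also commutes with the smash-product maps $\alpha$ of (\ref{eqn:smash-prod}), since realization commutes with finite products of locally finite simplicial sets. Similarly, the action (\ref{eqn:nat-act}) corresponds under $c_I$ to the action of $Sin(G(|X^I|)^{top})$ on $Sin(|X^I|)$. Applying the bar construction levelwise, together with the identity on the factors $\prod_S E(\Sigma_{|S|},\Sigma_{|S|})$ and on the indexing set $\ul\cN$, then gives the two horizontal maps. Commutativity of (\ref{eqn:compat-relate}) and naturality in $\cF$ follow from the squares (\ref{eqn:compat}) and their topological counterparts. We also use that $Sin$ commutes with products and with the diagonal of the simplicial-space bar construction, up to the canonical map $Sin(|B_\bu|)\leftarrow diag\, Sin(B_\bu)$.

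For fixed $I$, both vertical maps are of the form $B(M,Y)\to BM\times E$, with $E=\prod_S E(\Sigma_{|S|},\Sigma_{|S|})$ contractible. Here $M$ is a grouplike simplicial monoid acting on a Kan complex $Y=Sin(|X^I|)$. Note that the total space $B(M,Y)$ and the base $BM$ are not themselves products over the factors of $I$ (the product structure is only up to homotopy); indeed the stated fiber is $Sin(|X^I|)$ and not a product. The square (\ref{eqn:compat-I}) is therefore induced by the monoid map $c_I$ acting compatibly on the same $Y$. In each simplicial degree $q$ it is the pullback
\[ M^{q}\times Y \to M'^{q}\times Y \quad \text{over} \quad M^{q}\to M'^{q}, \]
so after taking the diagonal the square is strictly cartesian. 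The fiber over the base vertex is $Y=Sin(|X^I|)$.

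The main obstacle is showing that $B(M,Y)\to BM$ is a fibration, or at least a quasifibration with homotopy fiber $Y$. I would first try the standard criterion for simplicial spaces (Bousfield--Friedlander \cite[App B]{Bo-F}, via the $\pi_*$-Kan condition). Each $M^q\times Y\to M^q$ is a product projection, hence a fibration. The face maps induce weak equivalences on fibers because $M$ acts by weak equivalences, which is exactly where invertibility in $\pi_0$, i.e. the definition of $G(X)$, is used. Since $M$ is grouplike, $\pi_0$ of the degreewise spaces is a group, and the $\pi_*$-Kan condition holds. If needed, functorially replace the projection by a Kan fibration; cartesianness is preserved up to weak equivalence.

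Given the fibration property, the horizontal maps are weak equivalences by a levelwise argument. On bases, $Bc_I$ is a degreewise weak equivalence $M^q\to M'^q$, and the diagonal of a degreewise weak equivalence of bisimplicial sets is a weak equivalence. The total spaces are handled the same way, or alternatively by comparing the long exact sequences of the two fibrations, whose fibers agree. Specialness and the pointed-connected structure over $\ul\cN$ are then inherited from the product decomposition (\ref{eqn:id-GL-I}) and its simplicial analogue, with sections given by the base points.
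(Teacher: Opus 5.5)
The paper states this proposition without proof, so there is no argument to compare yours with. Judged on its own, most of it is sound, provided you read $Sin(\ul\cB(\ldots)^{top})_I$ as the diagonal of the levelwise singular complex. With $Sin$ of the realized bar construction the square is not strictly cartesian, so drop the hedge ``up to the canonical map''. In that model:
\begin{itemize}
\item the square is cartesian for the reason you give;
\item the horizontal maps are weak equivalences, and even isomorphisms whenever the exponential law identifies $\ul{Hom}_*(X^I,Sin(|X^I|))$ with $Sin$ of the mapping space;
\item the Bousfield--Friedlander/Puppe argument shows that $Sin(|X^I|)$ is the homotopy fibre of both vertical maps.
\end{itemize}

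The gap is the claim that the vertical maps are \emph{fibrations}. Theorem B.4 of \cite{Bo-F} gives homotopy cartesianness of the diagonal, not horn filling, and horn filling genuinely fails. On the diagonal,
$d_t(g_1,\ldots,g_t,y)=(d_tg_1,\ldots,d_tg_{t-1},(d_tg_t)\cdot d_ty)$.
So lifting a horn $\Lambda^t_k$ with $k<t$ over a $t$-simplex $(g_1,\ldots,g_t)$ of $diag\,BM$ requires solving $(d_tg_t)\cdot w=z$ in $Y_{t-1}$ with prescribed boundary. For $t=1$, $k=0$ this asks that a vertex $z\in Y_0=|X^I|$ lie in the image of the self-equivalence of $|X^I|$ adjoint to $d_1g\in M_0$. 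Take $X=\Delta[1]$ pointed at $0$, $I=(1)$, $g=s_0c$ for the constant map $c$, and $z=1$: no lift exists. The same computation applies to the right-hand column.

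Your fallback does not restore the literal statement. Replacing both vertical maps by fibrations leaves a square that is in general only homotopy cartesian. Replacing $\pi^{top}_{|X|}$ alone and pulling back along $adj$ changes the top-left corner. What you actually prove is the corrected statement: a cartesian square whose vertical maps are quasifibrations with fibre $Sin(|X^I|)$ and whose horizontal maps are weak equivalences. Since the literal claim is false in this generality, the repair belongs in the formulation, not in your argument. For example, take the universal $X$-fibration of Example \ref{ex:X-fib}(1) to be a fibrant replacement of $\pi_X$ over $\ul\cB G(X)$, as Definition \ref{defn:X-fibration} requires. You should say this explicitly rather than ``if needed''.

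Separately, your remark that $B(M,Y)$ and $BM$ are not products over the factors of $I$, and that the fibre $Sin(|X^I|)$ is ``not a product'', is wrong. It also contradicts your final sentence about specialness.
\begin{itemize}
\item Since $X^I=\prod_jX^{i_j}$, one has $Sin(|X^I|)=\prod_jSin(|X^{i_j}|)$.
\item Specialness forces the structure monoid above $I$ to be $\prod_jG(X^{i_j})$, as in (\ref{eqn:id-GL-I}) and the last display of Example \ref{ex:G(X)-act}.
\item If $M$ were the monoid of all self-equivalences of $\prod_jX^{i_j}$, then $BM\not\simeq\prod_jBG(X^{i_j})$, and the $\cF$-spaces would not be special. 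Already $\pi_0$ differs: for $|X|\simeq S^2$ the factor swap of $X\times X$ is not in the image of $\pi_0G(X)\times\pi_0G(X)$.
\end{itemize}
With the product reading, $B(M,Y)=\prod_jB(G(X^{i_j}),Sin(|X^{i_j}|))$ on the nose (times the $E\Sigma$ factor), and specialness is immediate.
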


\vskip .2in


\section{$X$-fibrations over $\cF$-spaces}
\label{sec:X-fib}

For  a pointed, connected simplical set $X$ and $I = (i_1,\ldots,i_n) \in \ul\cN({\bf n})$,
$X^I$ denotes $\prod_{j=1}^n X^{i_j}$.  If $\alpha: {\bf n} \to {\bf m}$ is a non-decreasing map which 
sends $I \in \ul\cN({\bf n})$ to $J \in \ul\cN({\bf m})$,
then $\alpha$ determines maps $\wedge_{\alpha,I}: X^I \to X^J$ and $G(X^I) \to G(X^J)$.  Since the smash product
is not commutative, these maps are not functorial with respect to all maps of $\cF$.  To obtain functoriality, one
must incorporate actions of symmetric groups on the factors.

If $\ul\cB$ is a pointed-connected special $\cF$-space over $\ul\cN$, then an $X$-fibration $f: \ul\cE \to \ul\cB$ 
is an $\cF$-space structure over $\ul\cB$ which packages the (left) actions of $G(X^I)$ on 
fibers of $f_I: \ul\cE_I \to \ul\cB_I$ compatible with smash products.  The role of these actions becomes 
more explicit when one replaces $f: \ul\cE \to \ul\cB$ by the homotopy equivalent $X$-fibration
$\pi_{P(f),X}: \ul\cB(P(f),G(X),X) \to \ul\cB(P(f),G(X))$ of Proposition \ref{prop:2-sided}

\begin{defn}
\label{defn:X-fibration}
Consider  a pointed, connected simplicial set $X$ and a pointed-connected special $\cF$-space $\ul\cB$
over $\ul\cN$.  An $X$-fibration over $\ul\cB$ is a pair of maps 
\ $(f: \ul\cE \ \to \ \ul\cB, \ s_f: \ul\cB \to \ul\cE)$ \ of special $\cF$-spaces over $\ul \cN$
with $f \circ s_f = id$ satisfying the following conditions:
\begin{enumerate}
\item
For each $n > 0$ and $I =(i_1,\ldots,i_n) \in \ul\cN({\bf n})$, $f_I: \ul\cE_I\to \ul\cB_I $ is a fibration with fibers which
are pointed homotopy equivalent to $Sin(|X^I|)$.
\item
For any non-decreasing $\alpha: {\bf n} \to {\bf  m} \in \cF$,  $I \in \ul\cN({\bf n})$,  $J = \alpha(I) \in \ul\cN({\bf m})$,
and $\Delta[d] \to \ul\cB_I$, the restriction of $\alpha$ above $\Delta[d]$, 
$$\alpha_{I,\Delta[d]}: \ul\cE_I \times_{\ul\cB_I} \Delta[d] \ \to \ \ul\cE_J \times_{\ul\cB_J} \Delta[d],$$
is homotopic to $(\wedge_{\alpha,I})\times id: Sin(|X^I |)\times \Delta[d] \quad \to \quad Sin(|X^J|) \times \Delta[d].$
\end{enumerate}
\end{defn}

Observe that if $\theta: {\bf n} \to {\bf n} \in \cF$ is a pointed bijection, $I \in \ul\cN({\bf n})$ and $J= \theta(I)$, 
and $\Delta[d] \to \ul\cB_I$ is a $d$-simplex,
then $\theta_{I,\Delta[d]}: \ul\cE_I \times_{\ul\cB_I} \Delta[d] \ \to \ \ul\cE_J \times_{\ul\cB_J} \Delta[d]$ is necessarily
homotopic to  $\theta \times id: Sin(|X^I |)\times \Delta[d] \quad \to \quad Sin(|X^J|) \times \Delta[d]$
as can be seen by utilizing  Definition \ref{defn:X-fibration}(1) for each projection $\pi: {\bf n} \to {\bf 1}$
and the condition that $f: \ul\cE \to \ul\cB$ is a map of special $\cF$-spaces.

\vskip .1in

\begin{remark}
\label{rem:Reedy}
Definition \ref{defn:X-fibration} is a simplified formulation of the definition of an $X$-fibration given in
\cite{F80} and corrected in \cite{B-K}.  We have replaced the somewhat cumbersome
condition that an $X$-fibration $f:\ul\cE \to \ul\cB$ be ``properly sectioned" 
by the condition that $f$ is equipped with a left inverse.  This enables compatible 
base points on fibers of $f$.
\end{remark}

\vskip .1in

\begin{defn}
\label{defn:X-map}
Let $X$ be a pointed, connected simplicial set.
Consider a map $\phi: \ul\cB \ \to \ \ul \cB^\prime$ of pointed-connected special $\cF$-spaces over
$\ul\cN$, and $X$-fibrations $(f: \ul\cE \to \ul \cB, \ s_f: \ul\cB \to \ul\cE)$ \ 
$(f^\prime: \ul\cE ^\prime\to \ul \cB^\prime, \ s_{f^\prime}: \ul\cB^\prime \to \ul\cE^\prime)$.
A map $(\tilde \phi,\phi): (f,s_f) \ \to \ (f^\prime,s_{f^\prime})$  of $X$-fibrations over $\phi$ is  
a commutative square  in $(\cF{\text -}spec/\ul\cN)_*$.
\begin{equation}
\label{eqn:F-map}
\xymatrix{
\ul\cE \ar[r]^{\tilde \phi}  \ar[d]_f  &  \ul\cE^\prime \ar[d]^{f^\prime} \\
\ul\cB \ar[r]_{\phi }& \ul\cB^\prime
}
\end{equation}
compatible with sections for $f$ and $f^\prime$
with the property that the induced map ${\ul\cE}_I  \ \to \ {\ul\cE^\prime}_I \times_{\ul\cB_I^\prime} \ul\cB_I$
is a fiber homtopy equivalence over $\ul\cB_I$ for all $I \in \ul\cN({\bf n})$.
\end{defn}

\vskip .1in

The main result of this section is Theorem \ref{thm:X-universal} telling us that the functor
sending a pointed-connected special $\cF$-space $\ul\cB$ over $\ul\cN$ to the set of  equivalence
classes of $X$-fibrations over $\ul\cB$ (as defined in Definition \ref{defn:Xfib}) is representable
in $Ho(\cF[s.sets_*]/\ul\cN)$ by $\ul\cB G(X)$ of Example \ref{ex:G(X)-act}  with the map 
$\pi_X:  \ul\cB (G(X),Sin(|X|))  \ \to \ \ul\cB G(X)$ of (\ref{eqn:pi-X}) serving as the universal $X$-fibration.   

\vskip .1in

\begin{ex}
\label{ex:X-fib}
We give a few examples of $X$-fibrations.
\begin{enumerate}
\item
Let $X$ be a pointed, connected simplicial set.
The map of $\cF$-spaces 
$$\pi_X: \ul\cB (G(X),Sin(|X|)) \quad \to \quad \ul\cB G(X)$$
 of Example \ref{ex:G(X)-act}
is a $X$-fibration over $\ul\cB G(X)$.
\item
If $X$ is a pointed, connected simplicial set such that $\Sigma_n \subset G^o(X^n)$ for all $n > 0$, 
then $\pi^o_X$ restricts to the $X$-fibration
$$\pi^o_X: \ul\cB (G^o(X),Sin(|X|)) \quad \to \quad \ul\cB G^o(X).$$
\item
Applying $Sin(-)$ to the map $\tau^{top}_{\bC^+}:{\ul \cB}(GL(\bC),\bC^+)^{top} \to {\ul \cB}GL(\bC)^{top}$ of (\ref{eqn:tau-top})
determines the $X$-fibration
\begin{equation}
\label{eqn:tau-C+}
\tau_{\bC^+}: Sin({\ul \cB}(GL(\bC),\bC^+)^{top}) \quad \to \quad  Sin({\ul \cB}GL(\bC)^{top})
\end{equation}
 for some pointed simplicial set $X$ with $|X| \simeq \bC^+$.
\item
Similarly, if $T_n(\bC) \subset GL_n(\bC)$ denotes the diagonal torus, we obtain
\begin{equation}
\label{eqn:tau-T-C+}
\tau_{T,\bC^+}: Sin({\ul \cB} (T(\bC),\bC^+)^{top}) \quad \to \quad  Sin({\ul \cB}T(\bC)^{top})
\end{equation}
\end{enumerate}
\end{ex}

\vskip .1in

\begin{lemma}
\label{lem:iota!-construct}
Consider a pointed, connected simplicial set $X$ and a pointed-connected special $\cF$-space $\ul\cB$ over $\ul\cN$,
and an $X$-fibration  $(f:\ul\cE \to \ul\cB, \ s_f: \ul\cB \to \ul\cE$).  Let $\iota: \ul\cB \to \ul\cB^\prime$ be a
trivial cofibration of pointed, connected special $\cF$-spaces over $\ul\cN$ and 
factor $\iota \circ f$ as $\iota_!(f) \circ j: \ul\cE  \ \to \ \ul\cE^! \to \ul\cB^\prime$ with $j$ a 
trivial cofibration and  $\iota_!(f)$ a fibration.
Then $\iota_!(f): \ul\cE^! \to \ul\cB^\prime$ is an $X$-fibration over $\ul\cB^\prime$ (with section 
$s_{f^!}: \ul\cB^\prime \to \ul\cE^!$ obtained by extending $\iota \circ s_f$).
\end{lemma}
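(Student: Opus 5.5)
The strategy is to carry out the factorization in the model category $(\cF{\text -}spec/\ul\cN)_*$ of Construction \ref{construct:homotopy}, and then verify the two conditions of Definition \ref{defn:X-fibration} for $\iota_!(f)$ fiberwise over each $I \in \ul\cN({\bf n})$. Since the fibrant and cofibrant replacements in Construction \ref{construct:homotopy} are functorial, base-point preserving and level-by-level, I take the factorization $\iota\circ f = \iota_!(f)\circ j$ there as well. Then $\ul\cE^!$ is again a pointed-connected special $\cF$-space over $\ul\cN$. Moreover, each restriction $j_I: \ul\cE_I \to \ul\cE^!_I$ is a trivial cofibration and each $\iota_!(f)_I$ is a fibration of simplicial sets; this uses that the projective-type model structure of \cite{Bo-F} has level-wise fibrations, and that the pieces over distinct $I$ are disjoint.

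The section comes next. The map $j \circ s_f: \ul\cB \to \ul\cE^!$ lies over $\iota$, and $\iota$ is a trivial cofibration while $\iota_!(f)$ is a fibration. So the square formed by $j\circ s_f$, $\iota$, $\iota_!(f)$ and $id_{\ul\cB^\prime}$ admits a lift $s_{f^!}: \ul\cB^\prime \to \ul\cE^!$ with $\iota_!(f)\circ s_{f^!} = id$ and $s_{f^!}\circ\iota = j\circ s_f$. This lift is a map in $(\cF{\text -}spec/\ul\cN)_*$, so it preserves the base points over each $I$.

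Condition (1) of Definition \ref{defn:X-fibration} is the key step. For each $I$, consider the square with top edge $j_I$, bottom edge $\iota_I$, and vertical maps the fibrations $f_I$ and $\iota_!(f)_I$. Both horizontal maps are weak equivalences, so by properness of the model structure on simplicial sets the induced map $\ul\cE_I \to \iota_I^*\ul\cE^!_I$ is a weak equivalence between fibrations over $\ul\cB_I$. It is therefore a fiber homotopy equivalence, and the fibers of $f_I$ and $\iota_!(f)_I$ over corresponding points agree up to homotopy. Because $\ul\cB^\prime_I$ is connected and $\iota_I$ is surjective on $\pi_0$, every fiber of $\iota_!(f)_I$ is weakly equivalent to a fiber of $f_I$. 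That fiber is $\simeq Sin(|X^I|)$, and the equivalence is pointed via the compatible sections.

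Condition (2) is the main obstacle, because simplices $\Delta[d] \to \ul\cB^\prime_I$ need not lie in the image of $\iota$. My approach is to deform such a simplex, inside the fibration $\iota_!(f)_I$, to one whose vertices lie in the image of $\iota_I$. This is possible because $\iota_I$ is a weak equivalence and $\ul\cB^\prime_I$ is fibrant-replaceable, so the inclusion of a vertex gives a homotopy to a vertex in the image. Transport along such a homotopy gives fiber homotopy equivalences, and these are compatible with the structure map $\alpha$ because $\alpha$ is a map of fibrations over the map $\ul\cB^\prime_I\to\ul\cB^\prime_J$. This reduces the claim that $\alpha_{I,\Delta[d]}$ is homotopic to $\wedge_{\alpha,I}\times id$ to the case of a simplex in the image of $\iota$. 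Over such a simplex, the identification $\ul\cE \simeq \iota^*\ul\cE^!$ established above, together with condition (2) for $f$, gives the result. Since homotopy classes of maps over a contractible $\Delta[d]$ are determined by their restriction to a vertex, it is enough to check the statement on fibers over points, where it follows directly.
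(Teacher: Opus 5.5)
Your proposal is correct, and its core is exactly the paper's argument: right properness of simplicial sets applied to the pullback of the weak equivalence $\iota_I$ along the fibration $\iota_!(f)_I$, then two-out-of-three with $j_I$, gives a weak equivalence (hence fiber homotopy equivalence) $f_I \to \iota^*(\iota_!(f))_I$ over $\ul\cB_I$. The paper's proof stops there, so your lifting construction of the section (which correctly extends $j\circ s_f$ along $\iota$), your use of the connectedness of $\ul\cB^\prime_I$ to handle fibers over vertices outside the image of $\iota_I$ (this connectedness, rather than ``fibrant-replaceability'', is also what justifies moving vertices into the image in your treatment of condition (2)), and your transport argument for condition (2) fill in details the paper leaves implicit.
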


\begin{proof}
Consider 
$\iota^*(\iota_!(f)):  \ul\cE^! \times_{\ul\cB^\prime} \ul\cB \ \to \ \ul\cB$ and 
some $n > 0$ and $I \in \cF({\bf n})$.  Observe that
 $\ul\cE^! \times_{\ul\cB^\prime} \ul\cB \ \to \ \ul\cE^!$ is a weak equivalence,
the pull-back of the weak equivalence $\iota$ along $(\iota_!(f))_I$.  (This uses the fact that 
the category of $(s.sets_*)$ is right proper.  See \cite{nLab}.)  Thus, the map of fibrations
$f_I \to (\iota^*(\iota_!(f)))_I$ is a map of fibrations over $\ul\cB_I$ whose map on total spaces is 
a weak equivalence.
\end{proof}

For notational simplicity, we suppress explicit choices of sections $s_f$.

\begin{defn}
\label{defn:Xfib}
Consider a pointed, connected simplicial set $X$ and a pointed-connected special $\cF$-space $\ul\cB$ over $\ul\cN$.
We define an equivalence relation on $X$-fibrations over $\ul\cB$ by setting  $f \ \sim \ f^\prime$
if and only there is a chain of weak equivalences of pointed-connected special $\cF$-spaces over $\ul\cN$
\begin{equation}
\label{eqn:self-chain}
\ul\cB \ \stackrel{p}{\twoheadleftarrow} \quad \ul\cB_1 \quad \stackrel{\phi}{\to} \quad \ul\cB_2  
\quad \stackrel{j}{\hookleftarrow}  \ \quad \ul\cB
\end{equation}
and a  map of $X$-fibrations $p^*(\ul\cE) \to j_!(\ul\cE^\prime)$ covering $\phi: \ul\cB_1 \to \ul\cB_2$
(with sections)
$$p^*(\ul\cE) \quad \to \quad \iota_!(\ul\cE^\prime),$$
where $p$ is a cofibrant replacement of $\ul\cB$, $j$ is a fibrant replacement of $\ul\cB^\prime$,  and the 
image of (\ref{eqn:self-chain}) in $Hom_{Ho((\cF{\text -}spec/\ul\cN)_*)}(\ul\cB,\ul\cB)$ is the identity.

We denote by $X{\text-}fib(\ul\cB)$ the set of equivalence classes of $X$-fibrations over $\ul\cB$,
which we view as ``fiber homotopy equivalence classes" of $X$-fibrations over $\ul\cB$.
\end{defn}

\begin{remark}
\label{rem:restate}
Two $X$-fibrations $f , \ f^\prime$ over $\ul\cB$ are equivalent if and only if 
there is a chain of weak equivalences in $(\cF{\text -}spec/\ul\cN)_*$
\begin{equation}
\label{eqn:chain-equiv}
\ul\cB \ \stackrel{\phi_1}{\to} \ \ul\cB_1 \ \stackrel{\phi_2}{\leftarrow} \ \ul\cB_2 \to \cdots \to  \ul\cB_{2n-1}
\stackrel{\phi_{2n}}{\leftarrow} \ul\cB
\end{equation}
whose image in $Hom_{Ho((\cF{\text -}spec/\ul\cN)_*)}(\ul\cB,\ul\cB)$ is the identity and which is 
covered by a chain of maps of $X$-fibrations $f \to f^1 \leftarrow f^2 \to \cdots \to f^{2n} \leftarrow f^\prime$.
\end{remark}

\vskip .1in

The following two lemmas  enable a proof of functoriality for $\ul\cB \ \mapsto \ X{\text-}fib(\ul\cB)$.
The first concerns behavior of $X$-fibrations with respect to trivial cofibrations $\iota: \ul\cB \to \ul\cB^\prime$ 
and the second provides the analogous behavior with respect to trivial fibrations $p: \ul\cB^{\prime\prime} \to \ul\cB$.

\begin{lemma}
\label{lem:iota!}
Consider a pointed, connected simplicial set $X$ and a pointed-connected special $\cF$-space $\ul\cB$ over $\ul\cN$.
If $\iota: \ul\cB \to \ul\cB^\prime$ is a trivial cofibration, then sending an $X$-fibration $f: \ul\cE \to \ul\cB$ to the
$X$-fibration $\iota_!*(f): \ul\cE^! \to \ul\cB^\prime$ determines a bijection 
\begin{equation}
\label{eqn:iota}
\iota_!: X{\text-}fib(\ul\cB) \quad \to \quad X{\text-}fib(\ul\cB^\prime).
\end{equation} 

If $f: \ul\cE \to \ul\cB$ is an $X$-fibration over $\ul \cB$,
there is a natural map $f  \ \to \ \iota^*(\iota_!(f))$ of $X$-fibrations over $\ul\cB$,
and if $f^\prime: \ul\cE^\prime \to \ul\cB^\prime$ is an $X$-fibration over $\ul \cB^\prime$,
there is a  natural map $\iota_!(\iota^*(f^\prime)) \ \to \ f^\prime$ of $X$-fibrations 
over $\ul\cB^\prime$.
\end{lemma}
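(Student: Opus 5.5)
The natural candidate for the inverse of $\iota_!$ is pullback $\iota^*$. The plan is to show that $\iota_!$ and $\iota^*$ are well defined on equivalence classes and are mutually inverse, with the two natural maps of the second paragraph of the statement serving as the witnesses.

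\textbf{Step 1: well-definedness of $\iota_!$ on classes.} By Lemma \ref{lem:iota!-construct}, $\iota_!(f)$ is an $X$-fibration over $\ul\cB^\prime$ with section extending $\iota\circ s_f$. Different choices of factorization $\iota\circ f = \iota_!(f)\circ j$ differ by weak equivalences of total spaces over $\ul\cB^\prime$. These are maps of fibrations over $\ul\cB^\prime_I$ that are weak equivalences, hence fiber homotopy equivalences, so they define maps of $X$-fibrations over the identity. Next, suppose $f\sim f^\prime$ is witnessed by a chain as in Remark \ref{rem:restate}. Composing the two ends with $\iota$, and applying $(-)_!$ along trivial cofibrations (after factoring each map of the chain in the model structure of Construction \ref{construct:homotopy}), gives a chain over $\ul\cB^\prime$. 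Its image in $Ho$ is still the identity, because $\iota$ is invertible there. Hence $\iota_!(f)\sim\iota_!(f^\prime)$.

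\textbf{Step 2: the natural maps.} For $f$ over $\ul\cB$, the trivial cofibration $j:\ul\cE\to\ul\cE^!$ satisfies $\iota_!(f)\circ j=\iota\circ f$, so it factors through the pullback as $f\to\iota^*(\iota_!(f))=\ul\cE^!\times_{\ul\cB^\prime}\ul\cB$. As in the proof of Lemma \ref{lem:iota!-construct}, right properness of $(s.sets_*)$ makes the projection $\ul\cE^!\times_{\ul\cB^\prime}\ul\cB\to\ul\cE^!$ a weak equivalence. By two-out-of-three, the map $f\to\iota^*\iota_!(f)$ is then a weak equivalence on each $\ul\cE_I$. It is a map of fibrations over $\ul\cB_I$, so it is a fiber homotopy equivalence, and it is a map of $X$-fibrations over the identity.

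For $f^\prime$ over $\ul\cB^\prime$, first check that $\iota^*(f^\prime)$ is an $X$-fibration. The fibers and the conditions of Definition \ref{defn:X-fibration}(2) are inherited simplex-wise, and the result is special because $\iota^*\ul\cE^\prime\to\ul\cE^\prime$ is a levelwise weak equivalence by right properness. Then apply the lifting property of the trivial cofibration $\iota^*\ul\cE^\prime\to(\ul\cE^\prime)^!$ against the fibration $f^\prime$, using the square with top map the projection $\iota^*\ul\cE^\prime\to\ul\cE^\prime$ and bottom map the identity of $\ul\cB^\prime$. This yields a map $\iota_!\iota^*(f^\prime)\to f^\prime$ over $\mathrm{id}_{\ul\cB^\prime}$ that is a weak equivalence on total spaces, hence a fiber homotopy equivalence levelwise.

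\textbf{Step 3: conclusion.} Step 2 shows $\iota^*\iota_!(f)\sim f$ and $\iota_!\iota^*(f^\prime)\sim f^\prime$, using chains of length one over identities. Provided $\iota^*$ also respects $\sim$, this gives bijectivity. That $\iota^*$ respects $\sim$ follows by the same argument as Step 1, pulling back along $\iota$ and using right properness.

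The main obstacle is Step 1: checking that equivalence chains are transported while keeping the image in $Hom_{Ho((\cF{\text -}spec/\ul\cN)_*)}$ equal to the identity, and keeping sections and pointed-connectedness. I would handle this by replacing every map in a chain by a trivial cofibration followed by a trivial fibration, and using that $(-)_!$ and $(-)^*$ compose up to natural maps of $X$-fibrations.
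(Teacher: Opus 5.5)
Your proposal is correct and follows essentially the paper's route. The unit $f\to\iota^*(\iota_!(f))$ comes from right properness of $(s.sets_*)$ plus two-out-of-three. The counit $\iota_!(\iota^*(f^\prime))\to f^\prime$ comes from factoring through $f^\prime$; your lifting argument is exactly what the paper's terse ``factors through $f^\prime$'' means, once the lifting square is read with bottom map $\iota_!(\iota^*(f^\prime))$ out of $(\iota^*\ul\cE^\prime)^!$, giving a lift over $\mathrm{id}_{\ul\cB^\prime}$. Well-definedness comes from transporting equivalence chains along $\iota$.

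The only mild difference is in Step 1. The paper transports the chain by pullback and pushout squares along $\iota$. In your version, splicing the maps $f\to\iota_!(f)$ over $\iota$ onto the two ends of a zigzag as in Remark \ref{rem:restate} already suffices, so your extra factoring of each map in the chain is unnecessary.
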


\begin{proof}
To show that $\iota$ is well defined on equivalence classes of $X$-fibrations over $\ul\cB$, 
it suffices to observe that $\iota$ applied to (\ref{eqn:chain-equiv}) determines a commutative
diagram
\begin{equation}
\label{eqn:i!-map}
\xymatrix{
\ul\cB \ar[d]_\iota  & \ar[l]_{\stackrel{p}{\twoheadleftarrow}} \ar[d] \ul\cB_1 \ar[r]^\phi \ &  \ul\cB_2  \ar[d] &
 \ul\cB  \ar[d]^\iota \ar[l]_{\stackrel{j}{\hookleftarrow}} \\
\ul\cB^\prime  & \ar[l]^{\stackrel{p^\prime}{\twoheadleftarrow}}  \ul\cB^\prime_1 \ar[r]_{\phi^\prime}  &  \ul\cB^\prime_2   & \ul\cB^\prime  
\ar[l]^{\stackrel{j^\prime}{\hookleftarrow}}
}
\end{equation}
where the left square is chosen to be a pull-back square, the right square is a pushout square, and $\phi^\prime$ is 
chosen to make the middle square commute. 

Consider $\iota^*(\iota_!(f)):  \ul\cE^! \times_{\ul\cB^\prime} \ul\cB \ \to \ \ul\cB$ and 
some $n > 0$ and $I \in \cF({\bf n})$.  Observe that
 $\ul\cE^! \times_{\ul\cB^\prime} \ul\cB \ \to \ \ul\cE^!$ is a weak equivalence,
the pull-back of the weak equivalence $\iota$ along $(\iota_!(f))_I$.  (This uses the fact that 
the category of $(s.sets_*)$ is right proper.  See \cite{nLab}.)  Thus, the map of fibrations
$f_I \to (\iota^*(\iota_!(f)))_I$ is a map of fibrations over $\ul\cB_I$ whose map on total spaces is 
a weak equivalence.  Using the long exact sequence for homotopy groups of a fibration,
we conclude that $f_I \to (\iota^*(\iota_!(f)))_I$ induces homotopy equivalences on fibers
so that $f \to \iota^*(\iota_!(f))$ is a map of $X$-fibrations over $\ul\cB$.  
 
 On the other hand, given an $X$-fibration $f^\prime: \ul\cE^\prime \to \ul\cB^\prime$ over $\ul\cB^\prime$,
 the fibration of $\iota_!(\iota^*(f^\prime)): (\iota^*(\ul\cE^\prime)^! \to \ul\cB^\prime$ factors through
 $f^\prime$ with  $\iota_!(\iota^*(f^\prime)) \to f^\prime$ a map of $X$-fibrations over $\ul\cB^\prime$.
\end{proof}

We omit the straight-forward proof of the next lemma which is analogous to that of Lemma \ref{lem:iota!}.

\begin{lemma}
\label{lem:p*}
Consider a pointed, connected simplicial set $X$, a pointed-connected special $\cF$-space $\ul\cB$ over $\ul\cN$, 
and a trivial fibration $p: \ul\cB^{\prime\prime} \to \ul\cB$. Then
\begin{equation}
\label{eqn:p*}
p^*:  X{\text-}fib(\ul\cB) \quad \to \quad X{\text-}fib(\ul\cB^{\prime\prime})
\end{equation}
is well defined bijection.

Moreover, if $f^{\prime\prime}  : \ul\cE^{\prime\prime}  \ \to \ \ul\cB^{\prime\prime} $ is an $X$-fibration 
over $\ul\cB^{\prime\prime} $, then the composition $p\circ f^{\prime\prime} : \ul\cE^{\prime\prime} \ \to \ \ul\cB$
is an $X$-fibration over $\ul\cB$ with the property that the natural map $p^*(p\circ f^{\prime\prime}) \to f^{\prime\prime}$
is a map of $X$-fibration over $\ul\cB^{\prime\prime}$.  Furthermore, if $f: \ul\cE \to \ul\cB$ is an $X$-fibration
over $\ul\cB$, then $p \circ p^*(f): \ul\cE\times_{\ul\cB} \ul\cB^{\prime\prime} \to \ul\cB^{\prime\prime} \to \ul\cB$
naturally maps to $f$ as a map of $X$-fibrations over $\ul\cB$.

Consequently, $p^*$ of (\ref{eqn:p*}) is bijective.
\end{lemma}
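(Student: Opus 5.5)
We prove Lemma \ref{lem:p*} by imitating the proof of Lemma \ref{lem:iota!}, with pull-back along the trivial fibration $p$ in place of the push-forward $\iota_!$. Throughout, $p$ is a level-wise trivial fibration over $\ul\cN$. In particular, for each $I \in \ul\cN({\bf n})$ the restriction $p_I: \ul\cB^{\prime\prime}_I \to \ul\cB_I$ is a trivial Kan fibration of pointed, connected simplicial sets.

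The first step is to show that $p^*$ and $p\circ(-)$ preserve $X$-fibrations.

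For $p^*$: given an $X$-fibration $(f,s_f)$ over $\ul\cB$, form $p^*(f): \ul\cE\times_{\ul\cB}\ul\cB^{\prime\prime} \to \ul\cB^{\prime\prime}$ with the section $(s_f\circ p, id)$. Pull-backs are computed level-wise, so $p^*(f)_I$ is a fibration whose fiber over a vertex $b$ equals the fiber of $f_I$ over $p(b)$. Hence condition (1) of Definition \ref{defn:X-fibration} holds. Condition (2) holds because each simplex $\Delta[d] \to \ul\cB^{\prime\prime}_I$ composes with $p$, and the restricted map $\alpha_{I,\Delta[d]}$ is literally the one for $f$. Specialness of the total space follows since products commute with fiber products and $p$ is a level-wise equivalence.

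For $p\circ f^{\prime\prime}$: it is a composite of fibrations, hence a fibration. Its fiber over $b \in \ul\cB_I$ is $f^{\prime\prime -1}(p^{-1}(b))$, which fibers over the contractible space $p_I^{-1}(b)$ with fiber homotopy equivalent to $Sin(|X^I|)$; by the long exact sequence, this gives condition (1). For condition (2), a $d$-simplex of $\ul\cB_I$ lifts to $\ul\cB^{\prime\prime}_I$ because $p$ is a trivial fibration. The fiber of $p\circ f^{\prime\prime}$ over the simplex then deformation retracts compatibly onto the restriction over the lift, and on that restriction the homotopy comes from $f^{\prime\prime}$. For the section, use $s_{f^{\prime\prime}}\circ \sigma$ for a section $\sigma$ of $p$. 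Such a $\sigma$ exists in $(\cF{\text -}spec/\ul\cN)_*$ provided $\ul\cB$ is cofibrant; in general, I would first replace $\ul\cB$ cofibrantly using Lemma \ref{lem:iota!}.

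The second step constructs the comparison maps. The natural map $p^*(p\circ f^{\prime\prime}) = \ul\cE^{\prime\prime}\times_{\ul\cB}\ul\cB^{\prime\prime} \to \ul\cE^{\prime\prime}$ is induced by $(f^{\prime\prime},id)$; more precisely, $\ul\cE^{\prime\prime}$ maps into the pull-back via $(id, f^{\prime\prime})$, and that is the direction we use. Over $\ul\cB^{\prime\prime}_I$ it is a map of fibrations whose fibers are both homotopy equivalent to $Sin(|X^I|)$, compatibly with the identification. By the five lemma on the long exact sequences it is a fiber homotopy equivalence, hence a map of $X$-fibrations in the sense of Definition \ref{defn:X-map}. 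Similarly, the projection $\ul\cE\times_{\ul\cB}\ul\cB^{\prime\prime} \to \ul\cE$ covers $id_{\ul\cB}$ for the composite $p\circ p^*(f)$. It is a weak equivalence, being the pull-back of the trivial fibration $p$, so the same fiber argument applies.

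The last step deduces bijectivity. Well-definedness of $p^*$ on equivalence classes goes as in Lemma \ref{lem:iota!}: pull back a chain (\ref{eqn:chain-equiv}) along $p$, using right properness to keep the maps weak equivalences, and note that the image in $Hom_{Ho}$ is still the identity after conjugating by the isomorphism $p$. The two comparison maps then show that $p^*$ and $[f^{\prime\prime}]\mapsto[p\circ f^{\prime\prime}]$ are mutually inverse on equivalence classes. Each comparison map is a map of $X$-fibrations covering a weak equivalence whose image in the homotopy category is the appropriate identity, as Remark \ref{rem:restate} requires.

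I expect the main obstacle to be condition (2) for $p\circ f^{\prime\prime}$ together with the choice of section. Both rest on lifting simplices, and sections, through the trivial fibration $p$ compatibly with the $\cF$-structure and with $\ul\cN$. I would first try to handle this by reducing, via Lemma \ref{lem:iota!}, to the case where $\ul\cB$ is cofibrant.
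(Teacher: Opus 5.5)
Your handling of $p^*$ itself is fine. Pull-backs are level-wise, the fibers and condition (2) for $p^*(f)$ are literally those of $f$, and $pr_1\colon p^*(f)\to f$ is a level-wise weak equivalence. This is what the paper has in mind when it omits the proof as analogous to Lemma \ref{lem:iota!}.

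The gap is where you suspected, and your repair does not close it.

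\textbf{The composites need not be $X$-fibrations.} A section $s$ of $p\circ f^{\prime\prime}$ gives the section $f^{\prime\prime}\circ s$ of $p$, and the same holds for $p\circ p^*(f)$. So these composites are $X$-fibrations in the sense of Definition \ref{defn:X-fibration} only if $p$ splits in $(\cF{\text -}spec/\ul\cN)_*$. For non-cofibrant $\ul\cB$ it need not split:
\begin{itemize}
\item a section must commute with the swap of ${\bf 2}$;
\item a cofibrant $\ul\cB^{\prime\prime}$ has no swap-fixed simplices over $(1,1)\in\ul\cN({\bf 2})$ other than degeneracies of the base point;
\item a $\ul\cB$ built from a strictly commutative $\bN$-graded simplicial monoid has many such fixed simplices.
\end{itemize}

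\textbf{The reduction via Lemma \ref{lem:iota!} fails.} That lemma treats trivial cofibrations out of $\ul\cB$. These are injective and equivariant, so they carry such fixed simplices along and cannot make $\ul\cB$ cofibrant. A cofibrant replacement is instead a trivial fibration $\ul\cB^c\twoheadrightarrow\ul\cB$, which is the present lemma again, so the reduction is circular. This is also exactly the situation in which the lemma is used (Definition \ref{defn:Xfib}, Proposition \ref{prop:functor}).

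\textbf{Sections are not respected even when $\sigma$ exists.} Your comparison $(id,f^{\prime\prime})$ sends $s_{f^{\prime\prime}}$ to $(s_{f^{\prime\prime}},id)$, not to the pulled-back section $(s_{f^{\prime\prime}}\circ\sigma\circ p,\,id)$. So it is compatible with sections only if $\sigma\circ p=id$. The paper suppresses sections and addresses none of this.

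\textbf{A substitute for $p\circ f^{\prime\prime}$ that needs no $\sigma$.}
\begin{enumerate}
\item Form the level-wise pushout $P=\ul\cE^{\prime\prime}\sqcup_{\ul\cB^{\prime\prime}}\ul\cB$ of $s_{f^{\prime\prime}}$ and $p$. It maps to $\ul\cB$ via $p\circ f^{\prime\prime}$ and $id$, and has a section $\lambda\colon\ul\cB\to P$.
\item Since $s_{f^{\prime\prime}}$ is level-wise injective, left properness of simplicial sets makes $\kappa\colon\ul\cE^{\prime\prime}\to P$ a level-wise weak equivalence. Hence $P$ is special.
\item Factor $P\to\ul\cB$ as a trivial cofibration $j$ followed by a fibration $f$, as in Lemma \ref{lem:iota!-construct}. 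Then $f$ is an $X$-fibration with section $j\circ\lambda$.
\item The map $(j\circ\kappa,f^{\prime\prime})\colon\ul\cE^{\prime\prime}\to p^*(\ul\cE)$ is a weak equivalence over $\ul\cB^{\prime\prime}$. It is strictly compatible with sections because $\kappa\circ s_{f^{\prime\prime}}=\lambda\circ p$.
\end{enumerate}
This proves surjectivity of $p^*$.

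\textbf{Well-definedness and injectivity need no inverse map.} View $pr_1\colon p^*(f_i)\to f_i$ as a map of $X$-fibrations covering $p$, rather than over $id_{\ul\cB}$. Splice these maps onto the ends of a chain as in Remark \ref{rem:restate}. The base chain then represents $p^{-1}\circ id\circ p$ or $p\circ id\circ p^{-1}$, which is the identity in either case.

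\textbf{Comparison with a given $f_0$ over $\ul\cB$.} Apply the construction to $p^*(f_0)$. The map $P\to\ul\cE_0$ given by $pr_1$ and $s_{f_0}$ extends over $j$ by lifting against the fibration $f_0$.
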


\vskip .1in

 \begin{prop}
 \label{prop:functor}
 Consider a pointed, connected simplicial set $X$.
 Sending a pointed-connected special $\cF$-space $\ul\cB$ over $\ul\cN$ to $X{\text-}fib(\ul\cB)$
 and using the bijections of Lemmas \ref{lem:iota!}.and \ref{lem:p*}, 
 we obtain the functor
 $$X{\text-}fib(-): Ho((\cF{\text -}spec/\ul\cN)_*)^{op} \quad \to \quad (sets). $$
 
 Consequently, sending $\phi: \ul\cB \dashrightarrow \ul\cB G(X)$ to $\phi^*(\pi_X) \in X{\text-}fib(\ul\cB)$
 determines a well defined natural transformation of functors
 \begin{equation}
 \label{eqn:upper-X}
 (-)^*(\pi_X): Hom_{Ho((\cF{\text -}spec/\ul\cN)_*)}(-,\ul\cB G(X)) \quad \to \quad X{\text-}fib(-).
 \end{equation}
 \end{prop}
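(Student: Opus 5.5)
The approach is to define $X{\text-}fib(-)$ on morphisms of the homotopy category by representing each morphism as a zig-zag through cofibrant/fibrant replacements, and to push and pull $X$-fibrations along the legs of that zig-zag. In $(\cF{\text -}spec/\ul\cN)_*$ any morphism $\ul\cB \dashrightarrow \ul\cB^\prime$ of $Ho((\cF{\text -}spec/\ul\cN)_*)$ is represented by a chain
$$\ul\cB \stackrel{p}{\twoheadleftarrow} \ul\cB_c \stackrel{\phi}{\to} \ul\cB^\prime_f \stackrel{j}{\hookleftarrow} \ul\cB^\prime,$$
with $p$ a trivial fibration from a cofibrant replacement, $j$ a trivial cofibration into a fibrant replacement, and $\phi$ an honest map determined up to homotopy. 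Given $f^\prime \in X{\text-}fib(\ul\cB^\prime)$, we set
$$\phi^*(f^\prime) := (p^*)^{-1}\bigl(\phi^*(j_!(f^\prime))\bigr),$$
using Lemma \ref{lem:iota!} for $j_!$ and Lemma \ref{lem:p*} for the inverse of $p^*$.

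We first check that pull-back along an honest map $\phi$ of pointed-connected special $\cF$-spaces over $\ul\cN$ preserves $X$-fibrations. Conditions (1) and (2) of Definition \ref{defn:X-fibration} are stated fiberwise over simplices $\Delta[d] \to \ul\cB_I$, and the section pulls back as well. We also check that this pull-back respects the equivalence relation of Remark \ref{rem:restate}. A chain of maps of $X$-fibrations over $\ul\cB_2$ pulls back to a chain over $\ul\cB_1$, since the fiberwise fiber homotopy equivalences of Definition \ref{defn:X-map} are preserved under base change.

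Next, and this is the main obstacle, we show that the construction is independent of the representing zig-zag. The hardest case is when two maps $\phi_0,\phi_1: \ul\cB_c \to \ul\cB^\prime_f$ are homotopic. Since $\ul\cB_c$ is cofibrant and $\ul\cB^\prime_f$ fibrant, we may take a cylinder object $\ul\cB_c \otimes \Delta[1]$ with trivial cofibrations $i_0,i_1$ and a common left inverse trivial fibration $q$ (after factoring). The homotopy $H$ gives an $X$-fibration $H^*(g)$ on the cylinder. Lemma \ref{lem:iota!} applied to $i_\epsilon$ gives $i_\epsilon^*H^*(g) \sim \phi_\epsilon^*(g)$, with both classes corresponding to the single class of $H^*(g)$ under the bijections $i_{\epsilon!}$. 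The composite $q\circ i_\epsilon = id$ identifies $(i_0)_!^{-1}$ and $(i_1)_!^{-1}$ via $q^*$, using Lemma \ref{lem:p*} and functoriality of pull-back. Changing the choice of replacements is handled the same way: any two cofibrant replacements are related by a trivial fibration after passing to a common refinement, and likewise dually, and the bijections of Lemmas \ref{lem:iota!} and \ref{lem:p*} commute with the comparison maps by the natural maps $f \to \iota^*\iota_!(f)$ and $p^*(p\circ f) \to f$ recorded there.

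Finally, composition and identities. Composites of zig-zags reduce, via the lifting and factorization axioms, to zig-zags of the same shape. Pullback and pushforward along trivial (co)fibrations compose correctly up to equivalence, again using the natural comparison maps of Lemmas \ref{lem:iota!} and \ref{lem:p*}, and the identity zig-zag induces the identity. This yields the contravariant functor $X{\text-}fib(-)$.

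For the natural transformation (\ref{eqn:upper-X}), evaluate the functor on the class of $\pi_X$ from Example \ref{ex:X-fib}(1), i.e.\ set $\phi \mapsto X{\text-}fib(\phi)([\pi_X])$. Naturality in $\ul\cB$ is then just the functoriality just established: $(\phi\circ\psi)^*(\pi_X) = \psi^*\phi^*(\pi_X)$.
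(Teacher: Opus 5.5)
Your proposal is correct and follows essentially the same route as the paper: you represent the morphism by the zig-zag $\ul\cB \twoheadleftarrow \ul\cB_c \to \ul\cB^\prime_f \hookleftarrow \ul\cB^\prime$, define the induced map as $(p^*)^{-1}\circ\phi^*\circ j_!$ via Lemmas \ref{lem:iota!} and \ref{lem:p*}, use a cylinder object to show independence of the homotopy representative, and dominate two choices of replacements by a third. You add detail the paper omits (the $q\circ i_\epsilon = id$ argument, composition and identities, and the Yoneda-style construction of the natural transformation); however, your one-line claim that pull-back along an honest map respects the equivalence relation is only sketched, since that relation involves a zig-zag of base changes rather than maps over a fixed base, a point the paper likewise takes for granted.
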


\begin{proof}
If $\phi: \ul\cB \dashrightarrow \ul\cB_1$ is a map in $Ho((\cF{\text -}spec/\ul\cN)_*)$, 
let $p: \ul\cB^{\prime\prime} \twoheadrightarrow \ul\cB$ be a cofibrant replacement and let 
$\iota:\ul\cB_1 \hookrightarrow \ul\cB_1^\prime$ be a fibrant replacement.  
Represent $\phi$ by a chain of the form 
$\ul\cB \ \stackrel{p}{\leftarrow} \ \ul\cB^{\prime\prime} \ \stackrel{f}{\to} \ \ul\cB_1^\prime \ \stackrel{\iota}{\leftarrow} \ \ul\cB_1.$
We define
$$X{\text-}fib(\phi) \ \equiv \ (p \circ) \circ  f^* \circ  \iota_!: \ X{\text-}fib(\ul\cB_1) \quad \to \quad  X{\text-}fib(\ul\cB),$$
which can be viewed as $(p^*)^{-1} \circ f^* \circ (\iota^*)^{-1}$ by Lemmas \ref{lem:iota!} and \ref{lem:p*}.  

Granted that
two choices of $f: \ul\cB^{\prime\prime} \to \ul\cB^\prime_1$ representing $\phi$ are related by a homotopy from
a cyclinder object for $\ul\cB^{\prime\prime} $ to $\ul\cB^\prime_1$, the pull-back $f^*$ is independent of this 
choice.  To verify that the composition is independent of choices of cofibrant and fibrant replacements is a 
straight-forward argument involving the comparison of two such choice by ``dominating" them with a third.
\end{proof}

 \vskip .1in

The following definition of principal $d$-simplices is similar that of \cite[Defn 4.1]{F80}.  Because
$d$-simplices of $G(X)$ are maps $X \times \Delta[d] \to Sin(|X|)$, our definition of a principal
$d$-simplex of $f_I$ is a map $\phi: X^I \times \Delta[d]
 \to Sin(|\ul\cE_I |)$ projecting to a $d$-simplex of $Sin(|\ul\cB_I |)$
 rather than simply a map $\phi: X^I \times \Delta[d] \to \ul\cE_I$ projecting to a $d$-simplex of $\ul\cB_I$.

\begin{defn}
\label{defn:principal-simplex}
Let $X$ be a pointed, connected simplicial set.
Consider  an $X$-fibration $f: \ul\cE \ \to \ \ul\cB$ (with section $s:\ul\cB \to \ul\cE$ implicit)
for some pointed-connected special $\cF$-space $\ul\cB$ over $\ul\cN$.
 For $I = (i_,\ldots,i_n) \in \ul\cN({\bf n})$, we say that a map $\phi: X^I \times \Delta[d]
 \to Sin(|\ul\cE_I |)$ is a principal $d$-simplex of $f_I$ provided that
 \begin{enumerate}
 \item
 The composition $f_I \circ \phi: X^I \times \Delta[d] \to Sin(|\ul\cE_I |) \to Sin(|\ul\cB_I |)$ factors though the
 projection, $X^I \times \Delta[d] \to \Delta[d] \to Sin(|\ul\cB_I |)$ whose associated map 
 $X^I \times \Delta[d] \ \to \ Sin(|\ul\cE_I |) \times_{Sin(|\ul\cB_I |)} \Delta[d]$ restricted to each fiber
 over $\Delta[d]$  is weakly homotopic to the canonical pointed map $X^I \to Sin(|X^I|)$.
 \item
 Consider a non-decreasing map $\alpha: {\bf n} \to {\bf m}$ in $\cF$ and
 set $J= (j_1,\ldots,j_m) \in \ul\cN({\bf m}), \ j_s = \sum_{\alpha(i) = s} i_s$.
 Then $\phi$ induces $\phi^\alpha: X^J \times \Delta[d] \to Sin(|\ul\cE_{J} |)$ fitting in the commutative square
 \begin{equation}
\xymatrix{
X^I \times \Delta[d] \ar[d]_{\alpha} \ar[r]^-\phi &  Sin(|\ul\cE_I|) \ar[d]^{\alpha} \\
X^J \times \Delta[d] \ar[r]^-{\phi^\alpha} & Sin(|\ul\cE_J|)
}
\end{equation}
such that $\phi^\alpha$ restricted to each fiber over $\Delta[d] \to Sin(|\ul\cB_I |)$ is weakly
homotopy equivalent to the canonical map $X^J\to Sin(|X^J|)$.
 
For $I = (i_,\ldots,i_n) \in \ul\cN({\bf n})$, we define 
\begin{equation}
\label{eqn:prin(f)}
P(f_I) \quad \subset \quad \ul{Hom}(X^I,Sin(|\cE_I |)) 
\end{equation}
to be the subcomplex of  principal simplices of $f_I$ .  
By construction, there is a natural map of simplicial sets $P(f)_I \to Sin(|\ul\cB_I|)$.
 \end{enumerate}
\end{defn}

\vskip .1in

We state the following analog of Example \ref{ex:G(X)-act} for the right action of $G(X^I)$ on $P(f_I)$.

\begin{lemma}
\label{lem:prin-X}
Consider a pointed, connected simplicial set $X$ and a pointed-connected special $\cF$-space $\ul\cB$ over $\ul\cN$.
Let $f: \ul\cE \to \ul\cB$ be an $X$-fibration (with section $s:\ul\cB \to \ul\cE$ implicit).  Consider the  right action
of $G(X^I)$ on $P(f_I)$ for some $I \in \ul\cN({\bf n})$
\begin{equation}
\label{eqn:prin-act}
P(f_I) \times G(X^I) \quad \to \quad P(f_I)
\end{equation}
which sends the pair
$(\phi: X^I \times \Delta[d] \to Sin(|\ul\cE_I |),\ \theta: X^I \times \Delta[d] \to Sin(| X^I |))$ to 
the composition
$$X^I \times \Delta[d] \stackrel{\theta,pr_2}{\to} Sin(| X^I |) \times \Delta[d] \stackrel{Sin\circ |\phi|}{\to} 
Sin(|(Sin(|\cE_I |)|))   \to Sin(|\cE_I |).$$

The action of (\ref{eqn:prin-act}) fits in commutative squares involving compositions 
and smash products for any non-decreasing  $\alpha: {\bf n} \to {\bf m} \in \cF$,
$I \in \ul\cN({\bf n})$, and $J = \alpha(I)$
\begin{equation}
\label{eqn:prin-compat}
\xymatrix{
P(f_{I}) \times G(X^I) \ar[r] \ar[d]_\alpha & P(f_I) \ar[d]^\alpha \\
P(f_{J}) \times G(X^J) \ar[r]  & P(f_J). 
}
\end{equation}

\end{lemma}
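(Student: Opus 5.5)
The lemma makes two claims, and I would prove them in order. First, the formula (\ref{eqn:prin-act}) defines a right action of the simplicial monoid $G(X^I)$ on $P(f_I)$. Second, this action is compatible with the maps induced by non-decreasing $\alpha$, so that (\ref{eqn:prin-compat}) commutes. The argument follows the model of Example \ref{ex:G(X)-act}, with $Sin(|\ul\cE_I|)$ in place of $Sin(|X^I|)$.

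\textbf{Step 1: the formula gives a well-defined map.} Let $\phi$ be a principal $d$-simplex and $\theta$ a $d$-simplex of $G(X^I)$. The composite $\phi\cdot\theta$ is $X^I\times\Delta[d]\to Sin(|X^I|)\times\Delta[d]\to Sin(|\ul\cE_I|)$, where the second map is built from $|\phi|$ and the counit $|Sin(-)|\to id$. This is the same adjunction used to define composition in $G(X)$ in Definition \ref{defn:G(X)}.

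\textbf{Step 2: $\phi\cdot\theta$ is principal.}
\begin{enumerate}
\item Projecting to $Sin(|\ul\cB_I|)$ still factors through $\Delta[d]$, because $\theta$ is taken over $\Delta[d]$ (it uses $pr_2$).
\item On each fiber, $\phi\cdot\theta$ is the composite of $\theta_t$ with something weakly homotopic to the canonical map. So it is weakly homotopic to $\theta_t$, which is a weak equivalence lying in $G(X^I)$.
\end{enumerate}
Point (2) is where care is needed: condition (1) of Definition \ref{defn:principal-simplex} literally requires homotopy to the \emph{canonical} map. I would read that condition as meaning ``weakly homotopy equivalent through an element of $G(X^I)$'', as in the phrasing of condition (2), and note that $G(X^I)$ acts by invertible components. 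Under that reading, principality is preserved.

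\textbf{Step 3: action axioms.} The unit is the canonical map $X^I\to Sin(|X^I|)$; it acts trivially by the triangle identity $|\eta|$ followed by the counit $=id$. Associativity, $(\phi\cdot\theta)\cdot\theta'=\phi\cdot(\theta\circ\theta')$, follows from naturality of the counit together with the definition of the product on $G(X^I)$, exactly as for the action (\ref{eqn:nat-act}). Compatibility with faces and degeneracies in $d$ is automatic, because every construction is natural in $\Delta[d]$.

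\textbf{Step 4: compatibility with $\alpha$.} For $\phi\in P(f_I)$, condition (2) of Definition \ref{defn:principal-simplex} provides $\phi^\alpha$, which defines $\alpha: P(f_I)\to P(f_J)$. For $\theta$, the map $\alpha: G(X^I)\to G(X^J)$ of (\ref{eqn:smash-prod}) is given by smashing. To show (\ref{eqn:prin-compat}) commutes, I need $(\phi\cdot\theta)^\alpha=\phi^\alpha\cdot\alpha(\theta)$.

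Both sides are the unique maps filling the evident square through $\alpha: X^I\to X^J$ and $\alpha:Sin(|\ul\cE_I|)\to Sin(|\ul\cE_J|)$. So the identity reduces to combining three commutative squares:
\begin{itemize}
\item the compatibility square (\ref{eqn:compat}) for $\theta$;
\item the defining square of $\phi^\alpha$, with $|-|$ applied;
\item naturality of the counit $|Sin(-)|\to id$ with respect to the map $\alpha:\ul\cE_I\to\ul\cE_J$.
\end{itemize}

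\textbf{Main obstacle.} The hardest step is making $\phi^\alpha$ genuinely well defined. It has to be a strict map determined by $\phi$ with the square commuting on the nose, and $\alpha: X^I\to X^J$ is only a quotient map (a smash product). I would handle this by checking that $\alpha\circ\phi$ is constant on the collapsed subcomplex, using the sections $s_f$, which give compatible base points in the fibers, and the fact that $\ul\cE\to\ul\cN$ respects sums. The map $\phi^\alpha$ is then induced on the quotient, and uniqueness on the quotient yields the strict identity in Step 4.
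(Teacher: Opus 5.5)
Your overall plan is the natural one. The paper states this lemma as an analogue of Example \ref{ex:G(X)-act} and gives no proof, so your argument is what it leaves implicit: unit and associativity come from the adjunction identities, and compatibility with $\alpha$ comes from (\ref{eqn:compat}), the defining square of $\phi^\alpha$, and naturality of the counit. Your reading of condition (1) of Definition \ref{defn:principal-simplex} as ``fiberwise a weak equivalence'' is also forced. Under the literal reading, $\theta$ in a non-identity component of $G(X^I)$ would not preserve $P(f_I)$. Moreover, the proof of Proposition \ref{prop:2-sided} needs $P(f_I)$ over a simplex to look like all of $G(X^I)$.

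The gap is in the step you call the main obstacle: the fix you propose would fail. Nothing forces $\alpha\circ\phi$ to be constant on $(X^{i_1}\vee X^{i_2})\times\Delta[d]$, the subcomplex collapsed by $\alpha\times id$. The sections $s_f$ only fix a base point in each fiber, and $\alpha\colon\ul\cE_I\to\ul\cE_J$ is only fiberwise \emph{homotopic} to the smash map. The failure already occurs for the universal $\pi_X$, where $\alpha$ is literally the smash map on fibers. Take a pointed self-homeomorphism $h$ of $|X|\times|X|$ that is homotopic to the identity but does not preserve $|X|\vee|X|$; when $|X|\cong S^2$, a shear $(x,y)\mapsto(x,g(x)y)$ with $g\colon S^2\to SO(3)$ pointed will do. Composing the canonical map $X\times X\to Sin(|X\times X|)$ with $Sin(h)$ gives a simplex that satisfies condition (1) but admits no $\phi^\alpha$. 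So existence of $\phi^\alpha$ is a genuine restriction: it is condition (2), exactly as you invoke it at the start of Step 4. Only its uniqueness is automatic, from surjectivity of $\alpha\times id$. Consequently, what actually needs proof, and what your Step 2 omits, is that $\phi\cdot\theta$ again satisfies condition (2).

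Your Step 4 computation supplies this once it is read as an existence statement. Write $\hat\phi=Sin(\epsilon\circ|\phi|)\colon Sin(|X^I\times\Delta[d]|)\to Sin(|\ul\cE_I|)$, so that $\phi\cdot\theta$ is $\hat\phi$ precomposed with $(\theta,pr_2)$.
\begin{itemize}
\item Naturality of the counit turns $\alpha\circ\phi=\phi^\alpha\circ(\alpha\times id)$ into $\alpha\circ\hat\phi=\widehat{\phi^\alpha}\circ Sin(|\alpha\times id|)$.
\item The square (\ref{eqn:compat}) gives $Sin(|\alpha|)\circ\theta=\alpha(\theta)\circ(\alpha\times id)$.
\item Combining the two yields $\alpha\circ(\phi\cdot\theta)=(\phi^\alpha\cdot\alpha(\theta))\circ(\alpha\times id)$.
\item The map $\phi^\alpha\cdot\alpha(\theta)$ is fiberwise an equivalence by your Step 2 argument.
\end{itemize}
This shows at once that $\phi\cdot\theta\in P(f_I)$ and that $(\phi\cdot\theta)^\alpha=\phi^\alpha\cdot\alpha(\theta)$, i.e.\ that (\ref{eqn:prin-compat}) commutes. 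Replace the main-obstacle paragraph with this, and your argument is complete. As the paper does, you would still be taking (\ref{eqn:compat}), that is, the descent of $\theta$ through the smash product, as given from Example \ref{ex:G(X)-act}.
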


\vskip .1in

We relate various $X$-fibrations which we have seen in examples, following \cite[Thm 9.2]{May75}.

\begin{prop}
\label{prop:2-sided}
Consider a pointed, connected simplicial set $X$, a pointed-connected special $\cF$-space $\ul\cB$ over $\ul\cN$,
and an $X$-fibration $f: \ul\cE \to \ul\cB$ (with section $s:\ul\cB \to \ul\cE$ implicit).
The diagrams (\ref{eqn:prin-compat}) for all $\ul n$ and $I \in \ul\cN(\ul n)$ 
determine a pointed-connected special $\cF$-space $\ul\cB (P(f),G(X))$ and (using the 2-sided bar construction)
the $X$-fibration $\pi_{P(f),X}: \ul\cB (P(f),G(X),X) \to \ul\cB (P(f),G(X))$.  

Moreover, $\pi_{P(f),X}$ fits in the following commutative diagram of $\cF$-spaces
\begin{equation}
\label{eqn:2-sided-func}
\xymatrix{
\ul\cE \ar[d]_f \ar[r] &Sin(|\ul\cE|) \ar[d]^{Sin(|f|)} & \ar[l]_-{\tilde p_f} \ul\cB (P(f),G(X),X)
 \ar[d]^{\pi_{P(f),X}} \ar[r]^-{\tilde q_f} &   \ul\cB (G(X),X) \ar[d]_{\pi_X} &  \\
\ul\cB \ar[r]_\iota & Sin(|\ul\cB|)  & \ar[l]^-{p_f} \ul\cB (P(f),G(X)) \ar[r]_{q_f} &  \ul\cB G(X) 
}
\end{equation}
whose squares are maps of $X$-fibrations and whose left and middle horizontal maps are weak equivalences.
\end{prop}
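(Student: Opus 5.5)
The plan is to follow May's argument \cite[Thm 9.2]{May75} level-wise over each $I \in \ul\cN({\bf n})$. Then we check that every construction is compatible with the $\cF$-structure, using the compatibility squares (\ref{eqn:prin-compat}) of Lemma \ref{lem:prin-X} and (\ref{eqn:compat}) of Example \ref{ex:G(X)-act}.

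First we build the $\cF$-spaces. For each $I$ we set $\ul\cB(P(f),G(X))_I \equiv B(P(f_I),G(X^I),*) \times \prod_{S\subset {\bf n}} E\Sigma_{|S|}$ and $\ul\cB(P(f),G(X),X)_I \equiv B(P(f_I),G(X^I),Sin(|X^I|)) \times \prod_S E\Sigma_{|S|}$, following the simplified Segal construction (\ref{eqn:simplify-X}). Non-decreasing $\alpha$ act via (\ref{eqn:prin-compat}) and (\ref{eqn:compat}), and permutations act through the $E\Sigma$ factors as in Example \ref{ex:G(X)-act}. Specialness follows because, over $I$, both sides decompose as products over the coordinates $i_j$ up to the contractible $E\Sigma$ factors, just as for $\ul\cB G(X)$. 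The section is induced from $s_f$: the basepoint of $X^I$ mapped by $s_f$ gives the basepoint of each fiber. That $\pi_{P(f),X}$ is an $X$-fibration is the usual fact that $B(P,G,Y)\to B(P,G,*)$ is a quasifibration with fiber $Y$. Here $Y=Sin(|X^I|)$, and after fibrant replacement in $(\cF{\text -}spec/\ul\cN)_*$ it becomes a fibration; condition (2) of Definition \ref{defn:X-fibration} holds because $\alpha$ acts on fibers by $\wedge_{\alpha,I}$.

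Next we define the maps. The map $\tilde q_f, q_f$ is induced by $P(f_I)\to *$, and the square with $\pi_X$ is a pullback level-wise, hence a map of $X$-fibrations. The map $p_f$ sends a bar simplex $(\phi,g_1,\dots,g_k)$ to $f_I\circ\phi$ composed with the degeneracies. The map $\tilde p_f$ sends $(\phi,g_1,\dots,g_k,x)$ to the evaluation $\phi(g_1\cdots g_k x)$, using the adjunction $|Sin(|-|)|\to|-|$ as in Lemma \ref{lem:prin-X}. The left square commutes by naturality of $\ul\cE \to Sin(|\ul\cE|)$, and that map is a weak equivalence level-wise.

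The main obstacle is showing that $p_f$ (and hence $\tilde p_f$) is a weak equivalence. I will show that $P(f_I)\to Sin(|\ul\cB_I|)$ is a principal $G(X^I)$-quasifibration on which $G(X^I)$ acts freely up to homotopy. The fiber over a point is the space of principal maps $X^I\to$ fiber, which by condition (1) of Definition \ref{defn:principal-simplex} is a union of components of $\ul{Hom}_*(X^I, Sin(|X^I|))$ that are equivalent to the canonical map up to homotopy. That space is a torsor for $G(X^I)$, so its homotopy quotient by $G(X^I)$ is contractible. Then $B(P(f_I),G(X^I),*)\to Sin(|\ul\cB_I|)$ has contractible homotopy fibers, and so is a weak equivalence. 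Some care is needed because condition (2) of Definition \ref{defn:principal-simplex} constrains principal simplices for all $\alpha$ simultaneously. For the fiber computation I will use specialness: it reduces principal simplices over $I$ to the data for each coordinate $i_j$ together with smash compatibility, and the latter is automatic because the canonical maps are compatible with smash. Finally, $\tilde p_f$ is a weak equivalence by the five lemma on long exact sequences of fibrations: both vertical maps have fiber $Sin(|X^I|)$, and the map on fibers is the canonical equivalence. So the middle square is a map of $X$-fibrations.
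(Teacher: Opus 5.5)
Your proposal follows essentially the same route as the paper: level-wise two-sided bar constructions carrying the $E\Sigma$ factors, right squares that are cartesian because they are induced by $P(f_I)\to *$, and $p_f$ a weak equivalence because $P(f_I)$ is fiberwise a $G(X^I)$-torsor. The paper phrases this last point as a $G(X^I)$-equivariant weak equivalence $G(X^I)\times\Delta[d]\to P(f_I)\times_{\ul\cB_I}\Delta[d]$, which gives contractible fibers; $\tilde p_f$ is then handled by comparing fibers.

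Your extra steps go beyond what the paper writes: the quasifibration-plus-fibrant-replacement treatment of $\pi_{P(f),X}$, the five-lemma argument, and flagging the simultaneous constraints of condition (2) of Definition \ref{defn:principal-simplex}. However, your claim that smash compatibility is ``automatic'' is not really justified. The structure maps of $\ul\cE$ are only homotopic to smash maps on fibers, while condition (2) asks for a strictly commuting square. The paper does no better at this point, since it simply asserts the torsor property.
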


\begin{proof}
For any $I = (i_1,\ldots,i_n) \in \ul\cN({\bf n})$, Example \ref{ex:G(X)-act} and Lemma \ref{lem:prin-X} determine
commutative diagram
 of pointed simplicial sets
\begin{equation}
\label{eqn:2-sided}
\xymatrix{
Sin(| \ul\cE |)_I \ar[d]^{f_I}  & \ar[l]^-{\tilde p_{f,I}} \ul\cB(P(f),G(X),X) _I
 \ar[d]^{\pi_{P(f),X,I}} \ar[r]^-{\tilde q_{f,I}}  &   
 B(G(X),X)_I \ar[d]_{\pi_{X,I}} &  \\
Sin(|\ul\cB|)_I  & \ar[l]^-{p_{f,I}} B(P(f),G(X))_I \ar[r]_{q_{f,I}} &  BG(X)_I.
}
\end{equation}
The simplicial set $\ul\cB(P(f),G(X),X) _I $ given by the 2-sided bar construction 
has set of degree $t$ simplices equal to
$$P(f_I)_t \times G(X^I)^{\times t} \times Sin(|X^I|)_t \times (\prod_{S\subset {\bf n}} \Sigma_{|S|})^{t+1}.$$

 The right horizontal maps of (\ref{eqn:2-sided}) are projections (sending $P(f_I)$ to the base point),  
so that the right squares of (\ref{eqn:2-sided}) are cartesian.  The lower left map, $p_{f,I}$, 
is given by projection to $P(f)_I$ followed by $P(f)_I  \to Sin(|\ul\cB_I|)$.
The upper left map $\tilde p_{f,I}$ utilizes the right action of $G(X^I)$ on
$P(f_I)$ and the left action of $G(X^I)$ on $Sin(|X^I|)$.  
Then $\tilde p_{f,I}$ sends a $d$-simplex of this simplicial set represented (with $\Delta[d]$ omitted)
as \ $(\phi:X^I \to Sin(|\cE_I|),\ (g_1,\ldots,g_t) \in G(X^I)^{\times t}, 
\ y \in Sin|(X^I|), \ \ul \sigma \in (\prod_{S\subset {\bf n}} \sigma_{|S|})^{t+1}$  
to the $d$-simplex of $Sin(| \ul\cE |)_I$ represented as 
$(\phi\circ g_1)(g_t \circ y)$, where the actions $(-)\circ g_1$ and $g_t \circ (-y)$ are given
by Example \ref{ex:G(X)-act} and Lemma \ref{lem:prin-X}, ``twisted"  by $\ul \sigma$.
So defined, the left squares of (\ref{eqn:2-sided}) are commutative and homotopy cartesian. 

The assertion that the diagrams (\ref{eqn:2-sided}) determine the commutative diagram of $\cF$-spaces
(\ref{eqn:2-sided-func}) is the assertion that maps of (\ref{eqn:2-sided}) are functorial
with respect to maps $\alpha: {\bf n} \to {\bf m} \in \cF$ (sending $I\in \ul\cN({\bf n})$ to 
$J= \alpha(I) \in \ul\cN({\bf m})$).  This is essentially implicit in the construction of these $\cF$-spaces
which entails the compatibility of $G(X^I)$-actions with smash products.  
Since the squares of (\ref{eqn:2-sided}) are cartesian, the squares of  (\ref{eqn:2-sided-func})
are maps of $X$-fibrations.

For any simplex $\Delta[d] \to \ul\cB_I$, there is a $G(X^I)$-equivariant weak equivalence \\
$G(X^I) \times \Delta[d]  \to P(f_I)\times_{\ul\cB_I} \Delta[d]$. 
This implies that the lower left map of (\ref{eqn:2-sided-func}) has contractible fibers 
and thus is a a weak equivalence.  The 
left square of (\ref{eqn:2-sided}) is an equivalence of $X$-fibrations in view of
 the fact that for any simplical set $Y$ the 
natural map $Y \to Sin(|Y|)$ is a weak equivalence.
\end{proof}

\vskip .1in

Propositions \ref{prop:functor} and \ref{prop:2-sided} lead us to the following representability
theorem for $X$-fibrations.

\begin{thm} (cf. \cite[Thm 6.1]{F80})
\label{thm:X-universal}
Let $X$ be a pointed, connected simplicial set.
 The  natural transformation of Proposition \ref{prop:functor}
 \begin{equation}
(-)^*(\pi_X):  Hom_{Ho((\cF{\text -}spec/\ul\cN)_*)}(-, \ul\cB G(X)) \quad \to \quad X{\text-}fib(-) 
\end{equation}
is an isomorphism of functors with inverse
\begin{equation}
\label{eqn:nat-trans}
\rho_X:  X{\text-}fib(-) \quad \to \quad Hom_{Ho((\cF{\text -}spec/\ul\cN)_*})(-, \ul\cB G(X))
\end{equation}
sending the equivalence class  in $X{\text-}fib(\ul\cB)$ of an $X$-fibration $f: \ul\cE \to \ul\cB$  to the homotopy class 
of the composition 
\begin{equation}
\label{eqn:rho}
\rho_X(f) \quad \equiv \quad q_f \circ p_f^{-1} \circ \iota: \ul\cB \dashrightarrow \ul\cB G(X)
\end{equation}
 as in (\ref{eqn:2-sided-func}).

If $f: \ul\cE \to \ul\cB$ is an $X$-fibration, then a representative of $\rho_X(f): \ul\cB \dasharrow \ul\cB G(X)$
is called a classifying map for $f$.
\end{thm}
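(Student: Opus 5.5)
We show that $\rho_X$ is well defined and that it is a two-sided inverse of $(-)^*(\pi_X)$. The argument adapts the classical proof that $BG$ classifies fibrations with fiber $X$ (as in \cite[Thm 9.2]{May75}) to the present setting, with Proposition \ref{prop:2-sided} doing most of the work.

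\textbf{Step 1: $\rho_X$ is well defined.} First, $\rho_X(f)$ is a morphism of $Ho((\cF{\text -}spec/\ul\cN)_*)$. Indeed, $\iota$ and $p_f$ are weak equivalences of pointed-connected special $\cF$-spaces over $\ul\cN$ by Proposition \ref{prop:2-sided}, so $p_f^{-1}$ makes sense in the homotopy category. Next, $\rho_X(f)$ depends only on the class of $f$ in $X{\text-}fib(\ul\cB)$. By Remark \ref{rem:restate}, it suffices to treat a single map of $X$-fibrations $(\tilde\phi,\phi)\colon f\to f'$ covering a weak equivalence $\phi\colon \ul\cB\to\ul\cB'$. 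Composing a principal simplex of $f_I$ with $Sin(|\tilde\phi_I|)$ gives a principal simplex of $f'_I$, because $\tilde\phi$ is a fiber homotopy equivalence on each fiber. This yields a $G(X)$-equivariant map $P(f)\to P(f')$, and hence a map $\ul\cB(P(f),G(X))\to\ul\cB(P(f'),G(X))$. This map commutes with the maps $q_f$, $q_{f'}$ to $\ul\cB G(X)$ and with the maps $p_f$, $p_{f'}$ to $Sin(|\ul\cB|)\to Sin(|\ul\cB'|)$. It follows that $\rho_X(f')\circ\phi=\rho_X(f)$ in the homotopy category. Since the chain in Remark \ref{rem:restate} represents the identity, the classes agree. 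Naturality of $\rho_X$ in $\ul\cB$ follows by the same argument, applied to pullbacks along fibrations between cofibrant-fibrant objects and to the constructions $\iota_!$ and $p^*$ of Lemmas \ref{lem:iota!} and \ref{lem:p*}.

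\textbf{Step 2: $(-)^*(\pi_X)\circ\rho_X=\mathrm{id}$.} By Proposition \ref{prop:2-sided}, the right-hand square of (\ref{eqn:2-sided-func}) is a map of $X$-fibrations $\pi_{P(f),X}\to\pi_X$ covering $q_f$. Hence $q_f^*(\pi_X)\sim\pi_{P(f),X}$. The left and middle squares are maps of $X$-fibrations covering the weak equivalences $\iota$ and $p_f$, so they identify $\pi_{P(f),X}$ with $f$ along the zigzag $\ul\cB\to Sin(|\ul\cB|)\leftarrow\ul\cB(P(f),G(X))$. Applying the functoriality of Proposition \ref{prop:functor} to $\rho_X(f)=q_f\circ p_f^{-1}\circ\iota$ therefore returns the class of $f$.

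\textbf{Step 3: $\rho_X\circ(-)^*(\pi_X)=\mathrm{id}$.} By Lemmas \ref{lem:iota!} and \ref{lem:p*}, we may take $\ul\cB$ cofibrant and $\ul\cB G(X)$ replaced fibrantly, so that a class is represented by an honest map $\phi\colon\ul\cB\to\ul\cB G(X)$. Set $f=\phi^*(\pi_X)$. By the naturality from Step 1, $\rho_X(f)=\rho_X(\pi_X)\circ\phi$. So it suffices to show $\rho_X(\pi_X)=\mathrm{id}$. For this I would construct a section $\ul\cB G(X)\to\ul\cB(P(\pi_X),G(X))$ of $p_{\pi_X}$ (up to the map $\iota$). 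It sends a simplex of $BG(X^I)$ to the tautological principal simplex $X^I\to Sin(|X^I|)\to Sin(|B(G(X^I),Sin(|X^I|))|)$, formed by inclusion of the fiber and twisted by the bar coordinates. Composing this section with $q_{\pi_X}$ should be homotopic to the identity; the homotopy comes from the standard contraction $B(G,G,G)\simeq G$ in the bar construction, applied levelwise in $I$.

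\textbf{Main obstacle.} The hard part is Step 3: making the section and the homotopy compatible with all structure maps $\alpha\colon{\bf n}\to{\bf m}$ of $\cF$, including the $\Sigma_{|S|}$-twistings. Only non-decreasing $\alpha$ act on $X^I$ directly. My plan is to build everything over the simplified Segal model (\ref{eqn:simplify-X}). The permutation factors $E\Sigma_{|S|}$ are carried along identically, so it suffices to check compatibility with the smash product maps of (\ref{eqn:compat}) and (\ref{eqn:prin-compat}). Those compatibilities hold because the tautological principal simplices are defined by the canonical maps $X^I\to Sin(|X^I|)$, which commute with smash products.
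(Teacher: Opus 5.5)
Your proposal is correct in structure, but it proves injectivity by a different route from the paper. The paper's proof is your Step 2: diagram (\ref{eqn:2-sided-func}) exhibits $f$ as equivalent to $\rho_X(f)^*(\pi_X)$, which gives surjectivity and $(-)^*(\pi_X)\circ\rho_X=\mathrm{id}$. It then asserts injectivity directly. An equivalence between $\phi^*(\pi_X)$ and $\phi'^*(\pi_X)$ is a chain of maps of $X$-fibrations over a chain representing the identity of $\ul\cB$, and ``focusing on the base'' is said to yield $\phi=\phi'$. The paper never checks that $\rho_X$ is well defined on classes or natural. You instead prove those properties (Step 1) and derive $\rho_X\circ(-)^*(\pi_X)=\mathrm{id}$ from $\rho_X(\pi_X)=\mathrm{id}$ (Step 3). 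This costs an extra construction, but it is what the paper's injectivity step tacitly relies on. The chain of maps of $X$-fibrations between the two pullbacks does not itself involve $\phi$ or $\phi'$. Extracting $\phi=\phi'$ needs exactly your two facts: that $\rho_X$ is constant on classes, and that $\rho_X(\phi^*\pi_X)=\phi$. Your Step 1 argument is correct. Postcomposing with $Sin(|\tilde\phi|)$ gives a $G(X)$-equivariant map $P(f)\to P(f')$ intertwining the $q$'s and the $p$'s, hence $\rho_X(f')\circ\phi=\rho_X(f)$ for every map of $X$-fibrations. This also supplies the naturality of $\rho_X$ that the statement asserts.

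The point that needs repair is the section in Step 3. The real difficulty there is not the $\Sigma_{|S|}$-twisting but the fact that $G(X^I)$ is only a monoid. The tautological principal simplices $\phi_b\colon x\mapsto(g_1,\dots,g_k,x)$ over $b=(g_1,\dots,g_k)$ satisfy $d_0\phi_b=\phi_{d_0b}$ and $d_k\phi_b=\phi_{d_kb}\cdot g_k$, because the left action enters at the last face. A section of the evident form $b\mapsto(\phi_b,g_1,\dots,g_k)$ into $\ul\cB(P(\pi_X),G(X))$, where the right action enters at the zeroth face, instead needs $(d_0\phi_b)\cdot g_1=\phi_{d_0b}$ and $d_k\phi_b=\phi_{d_kb}$. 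Converting one convention into the other requires inverses of the bar coordinates, so the tautological section is not a simplicial map.

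What works without inverses is the map $B(*,G(X^I),G(X^I))\to P(\pi_{X,I})$, $(g_1,\dots,g_k,h)\mapsto\bigl(x\mapsto(g_1,\dots,g_k,hx)\bigr)$. It is simplicial (check $d_0$ and $d_k$ directly), equivariant for right multiplication on $h$, and compatible with the smash-product maps $\alpha$. It induces a map $\Psi$ from $\ul\cB(B(*,G(X),G(X)),G(X))$ to $\ul\cB(P(\pi_X),G(X))$ with $q_{\pi_X}\circ\Psi=q'$ and $p_{\pi_X}\circ\Psi=\iota\circ pr$. Here $q'$ collapses the first factor, and $pr$ is induced by $B(*,G,G)\to B(*,G,*)$. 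Both $q'$ and $pr$ are weak equivalences (check levelwise in the bisimplicial structure), so $\rho_X(\pi_X)=q'\circ pr^{-1}$. You are thus reduced to showing $q'=pr$ in the homotopy category, which is the bar-construction input you had in mind. For a group, $(g_1,\dots,g_k)\mapsto\bigl((g_1,\dots,g_k,(g_1\cdots g_k)^{-1}),g_1,\dots,g_k\bigr)$ is a common section of $q'$ and $pr$. Since $G(X^I)$ is grouplike, a natural zigzag to a simplicial group reduces you to that case. Compatibility with the $\Sigma_{|S|}$-twistings is then routine, as you say.
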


\begin{proof}
The diagram (\ref{eqn:2-sided-func}) displays an arbitrary $X$-fibration $f: \ul\cE \to \ul\cB$ as equivalent to the pull-back along 
$\rho_X(f): \ul\cB \dashrightarrow \ul\cB G(X)$ of $\pi_X$.   Thus, 
$(-)^*(\pi_X):  Hom_{Ho((\cF{\text -}spec/\ul\cN)_*)}(\ul\cB, \ul\cB G(X)) \quad \to \quad X{\text-}fib(\ul\cB)$ \ is 
surjective.

If $f \equiv \phi^*(\pi_x)$ is equivalent to $f^\prime \equiv \phi^{\prime*}(\pi_X)$, 
then this equivalence gives a chain of diagrams of $X$-fibrations covering the identity
of $\ul\cB$ in $Hom_{Ho((\cF{\text -}spec/\ul\cN)_*)}(\ul\cB,\ul\cB)$.  Focusing on the base of these chains 
of $X$-fibrations, we see that \\
$\phi = \phi^\prime \in Hom_{Ho((\cF{\text -}spec/\ul\cN)_*)}(\ul\cB, \ul\cB G(X))$.
In other words, \\
$(-)^*(\pi_X):  Hom_{Ho((\cF{\text -}spec/\ul\cN)_*)}(\ul\cB, \ul\cB G(X)) \quad \to \quad X{\text-}fib(\ul\cB)$
\ is also injective.

Observe that if $f: \ul\cE \to \ul\cB$ is an $X$-fibration whose equivalence class is represented by
$\rho_X(f): \ul\cB \dashrightarrow \ul\cB G(X)$, then $f$ is equivalent to $(\rho_X(f))^*(\pi_X) \ = \
(\rho_X \circ (-)^*(\pi_X))(f)$ (by the definitions of these constructions).
\end{proof}

\vskip .2in


\section{$\bZ/\ell$-complete $X$-fibrations}
\label{sec:ell-complete}

We proceed to provide $\bZ/\ell$-complete analogues of 
the constructions and results of Section \ref{sec:X-fib}.   
In Definition \ref{defn:ell-fibration}, we define a
 $\bZ/\ell$-complete $X$-fibration to be a  map of $\cF$-spaces $f:\ul\cE \ \to \ \ul \cB$ over
$\ul\cN$ equipped with a section $s: \ul\cB \to \ul\cE$ such that the fibers of 
$f_I: \ul\cE_I \ \to \ul \cB_I$ are (pointed) homotopy equivalent
to $(X^I)_\ell $ for $I = (i_1,\ldots,i_n) \in \ul\cN(\bf n)$.
The ``classifying $\cF$-space" for such fibrations involves monoids $G_\ell(X^I)$
of mod-$\ell$ equivalences $X^I \to (X^I)_\ell$.

As in Example \ref{ex:G(X)-act} where we considered maps from $X$ to $Sin(| X |)$, 
it is important that we consider maps whose source is often a product of smash products (and thus not a 
smash product of $\bZ/\ell$-completions) and whose target is $\bZ/\ell$-complete.

We begin by observing that if $\ul\cB$ is a pointed-connected special $\cF$-space over $\ul\cN$, 
then $\ul\cB_\ell$ defined by setting $(\ul\cB_\ell)_I$ equal to $(\ul\cB_I)_\ell$ 
is also a pointed connected $\cF$-space over $\ul\cN$.

\vskip .1in

\begin{defn}
\label{defn:smash-ell-good}
We recall that a pointed, connected simplicial set $X$ is called $\ell$-good if the 
canonical $\bZ/\ell$-completion map $X \to X_\ell$ is a mod-$\ell$-equivalence
(i.e., induces an isomorphism in mod-$\ell$ homology).  
We say that a pointed simplicial set $X$ is ``smash $\ell$-good" if the $m$-fold smash 
product $X^m$ of $X$ is $\ell$-good for all $m > 0$.

Assume that $X$ is smash $\ell$-good.  For $I = (i_1,\ldots i_n) \in \ul\cN({\bf n})$, we  denote by
\begin{equation}
\label{eqn:action1}
G_\ell(X^I) \ \quad \hookrightarrow \quad \ul{Hom}_*(X^I,(X^I)_\ell)
\end{equation}
the (simplicial) function complex consisting of those $t$-simplices $\sigma: X^I \times \Delta[t] \to (X^I)_\ell$
which are mod-$\ell$ equivalences.   We denote by $G_\ell^o(X^I) \subset G_\ell(X^I)$ the subcomplex
of those $t$-simplices $\sigma: X^I \times \Delta[t] \ \to (X^I)_\ell$ with the property that
$\sigma_\ell: (X^I \times \Delta[t])_\ell \ \to ((X^I)_\ell)_\ell \to (X^I)_\ell$ is homotopic to the identity.
\end{defn}

\vskip .1in

We state the $\bZ/\ell$-completion analogue of Definition \ref{defn:G(X)} and Example \ref{ex:G(X)-act}.

\begin{prop}
\label{prop:G-ell(X)-act}
Let $X$ be a pointed simplicial set which is smash $\ell$-good.
Consider some $I = (i_1,\ldots,i_n) \in \ul\cN({\bf n})$.
\begin{enumerate}
\item 
$G_\ell(X^I)$ is a simplicial monoid and $G_\ell^o(X^I) \ \subset \ G_\ell(X^I)$ 
is a simplicial sub-monoid, where the product structure is induced by composition of functions.
\item 
There is a natural action $G_\ell(X^I) \times X^I \ \to \  (X^I)_\ell$ 
determining an inclusion of simplicial monoids
$$G_\ell(X^I) \quad \hookrightarrow \quad \ul{Hom}_*(X^I,(X^I)_\ell).$$
\item 
If $\alpha: {\bf n} \to {\bf m}$ non-decreasing,  $I \in \cF({n})$ and $J = (j_1,\ldots j_m)$ with $j_s = \sum_{\alpha(i) = s}i$,
the action of (2) fits in a commutative square involving compositions and smash products
\begin{equation}
\label{eqn:smash}
\xymatrix{
G_\ell(X^I) \times X^I \ar[r] \ar[d]_\alpha & (X^I)_\ell \ar[d]^\alpha \\
G_\ell(X^J) \times X^J \ar[r] & (X^J)_\ell .
}
\end{equation}
\end{enumerate}
\end{prop}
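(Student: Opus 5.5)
The plan is to build everything from the functoriality of the Bousfield--Kan completion $(-)_\ell$ of Definition \ref{defn:ell-complete}. Two properties do the work. First, $(-)_\ell$ sends mod-$\ell$ equivalences to homotopy equivalences. Second, there is a natural unit $\eta_Y: Y \to Y_\ell$, together with a natural ``multiplication'' $(Y_\ell)_\ell \to Y_\ell$, which is a left inverse of $\eta_{Y_\ell}$ coming from the triple structure of \cite{Bo-Kan}.

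\textbf{Monoid structure, (1).} Given $t$-simplices $\sigma,\tau: X^I\times\Delta[t]\to (X^I)_\ell$, I form their product as the composite
$$X^I\times\Delta[t]\xrightarrow{(\tau,pr_2)} (X^I)_\ell\times\Delta[t]\to (X^I\times\Delta[t])_\ell \text{ or } ((X^I)_\ell)_\ell\to (X^I)_\ell .$$
The last step applies $\sigma_\ell$ fibrewise over $\Delta[t]$ and then uses the multiplication. This mirrors the way $|-|\circ Sin\to id$ is used in Definition \ref{defn:G(X)}. Associativity and unitality then follow from the triple identities of $(\bZ/\ell)_\infty$; the unit is $\eta_{X^I}$.

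Closure under the product uses the smash $\ell$-good hypothesis. Since $X^I$ is a product of smash powers of $X$, and $(-)_\ell$ commutes with products up to homotopy, $X^I$ is $\ell$-good and so $\eta_{X^I}$ is a mod-$\ell$ equivalence. A mod-$\ell$ equivalence $\tau$ therefore induces a homotopy equivalence $\tau_\ell$. Hence the composite is again a mod-$\ell$ equivalence, by two-out-of-three for $H_*(-,\bZ/\ell)$. For $G^o_\ell$, if $\sigma_\ell\simeq id$ and $\tau_\ell\simeq id$, then the completion of the product is homotopic to $\sigma_\ell\circ\tau_\ell\simeq id$. So $G^o_\ell(X^I)$ is also closed under the product.

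\textbf{Action, (2).} The action $G_\ell(X^I)\times X^I\to (X^I)_\ell$ is evaluation on $0$-simplices, extended simplexwise. The claimed inclusion of monoids is just the defining inclusion $G_\ell(X^I)\subset \ul{Hom}_*(X^I,(X^I)_\ell)$. I give the target the same composition product, which is defined for all maps, not only mod-$\ell$ equivalences. Compatibility of the action with the product then follows by unwinding the definition of composition.

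\textbf{Smash compatibility, (3).} For non-decreasing $\alpha$, the smash map $\wedge_{\alpha,I}: X^I\to X^J$ combines adjacent factors. I define $\alpha: G_\ell(X^I)\to G_\ell(X^J)$ by smashing the relevant components and following with the natural comparison $(Y)_\ell\wedge (Z)_\ell \to (Y\wedge Z)_\ell$. That comparison comes from $(Y\times Z)_\ell\simeq Y_\ell\times Z_\ell$, which has a natural left inverse $Y_\ell\times Z_\ell\to (Y\times Z)_\ell$ as in (\ref{eqn:wedge}), followed by completing the quotient map $Y\times Z\to Y\wedge Z$.

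\textbf{Main obstacle.} The step I expect to be hardest is checking that this smash map lands back in $G_\ell(X^J)$ and makes (\ref{eqn:smash}) commute strictly rather than only up to homotopy. Two ingredients are needed:
\begin{itemize}
\item naturality of the left inverse $Y_\ell\times Z_\ell\to(Y\times Z)_\ell$ of \cite[I.7.2]{Bo-Kan};
\item basepoint preservation, so that the map descends to smash products.
\end{itemize}
The mod-$\ell$ property then follows from the Künneth formula together with smash $\ell$-goodness of $X^J$. I would first verify (\ref{eqn:smash}) on $0$-simplices and then note that every construction is natural in $\Delta[t]$.
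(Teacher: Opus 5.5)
Part (1) is the paper's argument. Your product is exactly the paper's composite $\Delta[d]\times X^I\to\Delta[d]\times(X^I)_\ell\to(\Delta[d]\times X^I)_\ell\to((X^I)_\ell)_\ell\to(X^I)_\ell$. This is a Kleisli composition for the triple $(\bZ/\ell)_\infty$; write $\mu$ for its multiplication, which the paper calls $\eta$. Both arguments rest on the same triple identities.

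\textbf{Where your route differs: part (3).} The paper's whole justification is one observation: $\sigma\mapsto\mu\circ\sigma_\ell$, as a map $\ul{Hom}_*(X^I,(X^I)_\ell)\to\ul{Hom}_*((X^I)_\ell,(X^I)_\ell)$, respects composition. It then says this ``readily implies'' the square, without ever describing the left vertical map $G_\ell(X^I)\to G_\ell(X^J)$.
\begin{itemize}
\item You use that same compatibility where it genuinely applies, namely closure of $G^o_\ell(X^I)$ under the product.
\item For (3) you instead construct the vertical map explicitly, from the natural comparison $\psi:Y_\ell\wedge Z_\ell\to(Y\wedge Z)_\ell$ induced by the Bousfield--Kan product map.
\item Strict commutativity is then naturality plus basepoint behaviour.
\item Membership in $G_\ell(X^J)$ comes from K\"unneth plus smash $\ell$-goodness. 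You will also want $\psi\circ(\eta\wedge\eta)=\eta$, with $\eta$ your unit, so that $\psi$ is itself a mod-$\ell$ equivalence.
\end{itemize}
Your route costs an extra ingredient, a monoidal structure on the completion. In exchange it actually produces the map the paper leaves implicit.

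\textbf{Caveat: make your reading of $G_\ell(X^I)$ explicit.} ``Smashing the relevant components'' presupposes that an element of $G_\ell(X^I)$ is a tuple in $\prod_j G_\ell(X^{i_j})$, acting through $\prod_j(X^{i_j})_\ell\to(X^I)_\ell$. That matches the product identification the paper uses for $\ul\cB G(X)_I$. It is not what Definition \ref{defn:smash-ell-good} literally says. For an arbitrary mod-$\ell$ equivalence $\sigma:X^I\to(X^I)_\ell$, the composite $(\wedge_{\alpha,I})_\ell\circ\sigma$ need not be constant on the fibres of $\wedge_{\alpha,I}$. So under the literal definition no vertical map makes (\ref{eqn:smash}) commute strictly.

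State the product reading up front and use it in (2) as well. There the action passes through the product comparison. The map $\prod_j G_\ell(X^{i_j})\to\ul{Hom}_*(X^I,(X^I)_\ell)$ is multiplicative only if that comparison is compatible with $\mu$. This is the same kind of monoidality you already invoke in (3).
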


\begin{proof}
The ``composition product" $G_\ell(X^I) \times G_\ell(X^I) \ \to \ G_\ell(X^I)$
is defined by sending $\gamma_1, \gamma_2: \Delta[d] \times X^I \to (X^I)_\ell$ to the 
composition
\begin{equation}
\Delta[d] \times X^I \ \stackrel{pr_1\times \gamma_2}{\to} \ \Delta[d]\times (X^I)_\ell \to 
(\Delta[d]\times X^I)_\ell \ \stackrel{(\gamma_1)_\ell}{\to} \ ((X^I)_\ell)_\ell \ \stackrel{\eta}{\to} \ (X^I)_\ell \ ,
\end{equation}
where $\eta: (\bZ/\ell)_\infty \circ (\bZ/\ell)_\infty \to (\bZ/\ell)_\infty$ is part of the natural 
triple structure (see, for example, \cite{MacL}). 

The map $\ul{Hom}_*(X^I,X^I_\ell) \ \to \ \ul{Hom}_*((X^I)_\ell,(X^I)_\ell)$
commutes with composition thanks to the ``idempotence" of $\eta$.  This readily 
implies the commutativity of (\ref{eqn:smash}).
\end{proof}

\vskip .1in

The following definition is the $\bZ/\ell$-complete analog of Definition \ref{defn:X-fibration}.
We shall often suppress mention of the section 
$s: \ul\cB \to \ul\cE$ of an $X$ fibration ($f: \ul\cE \ \to \ \ul\cB, s: \ul\cB \to \ul\cE)$.

\begin{defn}
\label{defn:ell-fibration}
Let $X$ be a pointed, connected simplicial set which is smash $\ell$-good.
A $\bZ/\ell$-complete $X$-fibration over a pointed-connected special $\cF$-space 
\\ ${\ul \cB}: \cF \to (s.sets_*)$
is a  pair of maps \ ($f: \ul\cE \ \to \ \ul\cB, s: \ul\cB \to \ul\cE)$ \ of special $\cF$-spaces 
over $\ul \cN$ with $s \circ f = id$  satisfying the following conditions:
\begin{enumerate}
\item
For each $n > 0$ and $I =(i_1,\ldots,i_n) \in \ul\cN({\bf n})$, $f_I: \ul\cE_I\to \ul\cB_I $ is a fibration with fibers which
are pointed homotopy equivalent to $(X^I)_\ell$.
\item
For any non-decreasing $\alpha: {\bf n} \to {\bf  m} \in \cF$,  $I \in \ul\cN({\bf n})$,  $J = \alpha(I) \in \ul\cN({\bf m})$,
and $\Delta[d] \to \ul\cB_I$, the restriction of $\alpha$ above $\Delta[d]$, 
$$\alpha_{I,\Delta[d]}: \ul\cE_I \times_{\ul\cB_I} \Delta[d] \ \to \ \ul\cE_J \times_{\ul\cB_J} \Delta[d],$$
is homotopic to $(\wedge_{\alpha,I})\times id: (X^I)_\ell \times \Delta[d] \quad \to \quad (X^J)_\ell \times \Delta[d].$
\end{enumerate}

A map of $\bZ/\ell$-complete $X$-fibrations
$(\tilde \phi,\phi)$ from $(f:\ul\cE \to \ul\cB, \ s:\ul\cB \to \ul\cE)$  to
$(f^\prime:\ul\cE^\prime \to \ul\cB^\prime, s^\prime: \ul\cB^\prime \to \ul\cE^\prime)$ 
is a commutative square of $\cF$-spaces which restricts for each $I \in \cF({\bf n})$
to a fiber homotopy equivalence of the form (\ref{eqn:F-map}).
\end{defn}

Once again, we shall often suppress mention of the section 
$s: \ul\cB \to \ul\cE$ of a $\bZ/\ell$-completed $X$ fibration ($f: \ul\cE \ \to \ \ul\cB, s: \ul\cB \to \ul\cE)$.

\vskip .1in

The following example is the $\bZ/\ell$-complete analog of $\ul\cB G(X)$.

\begin{ex}
\label{ex:BGX-X-ell}
Assume that $X$ is a smash $\ell$-good, pointed, connected simplicial set.
In a manner parallel to the construction of the $\cF$-space $\ul\cB G(X)$, 
the actions \ $G_\ell(X^I) \times X^I \ \to \ (X^I)_\ell$
for all $I \in \cN({\bf n})$ determine the pointed special $\cF$-space over $\ul\cN$
$$\ul \cB G_\ell(X): \cF \quad \to \quad (s.sets_*)$$
with
$$ \ul\cB G_\ell(X)({\bf 1}) \ = \ \coprod_{n\geq 0} B G_\ell(X^n) \ = \ \ul\cB G_\ell(X)({\bf 1}), \quad
{\ul \cB}G_\ell(X)_I \ = \ 
BG_\ell(X^I) \times \prod_{S\subset {\bf n}} E(\Sigma_{|S|},\Sigma_{|S|}).$$

Similarly, in parallel to the $X$-fibration $\pi_X: \ul\cB(G(X),X) \to \ul\cB G(X)$ of 
Example \ref{ex:G(X)-act}, these actions 
determine the $\bZ/\ell$-complete $X$-fibration
\begin{equation}
\label{eqn:pi-univ-ell}
\pi_{X,\ell}: \ul \cB (G_\ell(X),X_\ell) \quad \to \quad \ul \cB G_\ell(X)
\end{equation}
\begin{equation}
\label{eqn:explicit-X-ell}
{\ul \cB}G_\ell(X)({\bf n})_I \quad \simeq \quad
BG_\ell(X^I) \times \prod_{S\subset {\bf n}} E(\Sigma_{|S|},\Sigma_{|S|}).
\end{equation}

If $\Sigma_n \subset G(X^n)$ for all $n > 0$, then we similarly define
\begin{equation}
\label{eqn:pi-univ-ell-o}
\pi_{X,\ell}^o: \ul \cB (G_\ell^o(X),X_\ell) \quad \to \quad \ul \cB G_\ell^o(X).
\end{equation}
\end{ex}

\vskip .1in

\begin{remark}
\label{rem:contrast}
We emphasize that the construction of $\pi_{X_\ell}: \ul \cB (G_\ell(X),X_\ell) \ \to \ \ul \cB G_\ell(X)$
differs from that of Example \ref{ex:G(X)-act} with $X$ replaced by $X_\ell$.
One difference is that  $(X^m)_\ell$ is not the $m$-fold smash 
product of $X_\ell$; another is that $G_\ell(X) \ \not= \ G(X_\ell)$.
\end{remark}

\vskip .1in

\begin{prop}
\label{prop:X-ell-cases}
Let $X$ be a smash $\ell$-good, pointed, connected simplicial set and consider a
pointed-connected special $\cF$-space $\ul \cB$ over $\ul\cN$.
\begin{enumerate}
\item
The fiber-wise $\bZ/\ell$-completion $(\bZ/\ell)_\infty^\bu(f)$ of an $X$-fibration 
$f: \ul\cE \to \ul\cB$ is a $\bZ/\ell$-complete $X$-fibration over $\ul\cB$.
\item
If $f: \ul\cE \to \ul\cB$ is a $\bZ/\ell$-complete $X$-fibration over $\ul\cB$, then the
fiber-wise $\bZ/\ell$-completion $(\bZ/\ell)_\infty^\bu(f)$ is also a $\bZ/\ell$-complete $X$-fibration over $\ul\cB$
and the canonical map $f \to (\bZ/\ell)_\infty^\bu(f)$ is a fiber homotopy equivalence
of $\bZ/\ell$-complete $X$-fibrations over $\ul\cB$.
\item
If $f: \ul\cE \to \ul\cB$ is an $X$-fibration  and if 
$\pi_1(\ul\cB_I)$ acts nilpotently on $H^*(X^I,\bZ/\ell)$ for all $n > 0$, all $I \in \ul\cN({\bf n})$,
then $(\bZ/\ell)_\infty(f): \ul\cE_\ell \ \to \ \ul\cB_\ell$
is a  $\bZ/\ell$-complete $X$-fibration over $\ul\cB_\ell$.
\end{enumerate}
\end{prop}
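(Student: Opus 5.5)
The plan is to take the fiber-wise $\bZ/\ell$-completion $(\bZ/\ell)_\infty^\bu(f)$ to mean the Bousfield--Kan fiber-wise construction (\cite{Bo-Kan}). For each $I$, apply $(\bZ/\ell)_\infty$ simplex-wise to the fibers of $f_I$ over simplices $\Delta[d] \to \ul\cB_I$, and glue. Concretely, first replace $f_I$ by a minimal (or twisted-cartesian) fibration with fiber $F_I$, then apply $(\bZ/\ell)_\infty$ to the fiber, using the action of the structure group. Because $(\bZ/\ell)_\infty$ is a functor on pointed simplicial sets, this yields a fibration $(\bZ/\ell)^\bu_\infty(f)_I \to \ul\cB_I$ with fiber $(F_I)_\ell$, a natural map $f_I \to (\bZ/\ell)^\bu_\infty(f)_I$ over $\ul\cB_I$, and a section induced from $s_f$. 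Naturality of the construction in the fibration, together with the product property $(Y\times Z)_\ell \simeq Y_\ell\times Z_\ell$ of Definition \ref{defn:ell-complete}, shows that these assemble, over all ${\bf n}$ and $I$, into a map of special $\cF$-spaces over $\ul\cN$ with section.

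\textbf{(1).} Condition (1) of Definition \ref{defn:ell-fibration} holds because the fibers are $(Sin|X^I|)_\ell \simeq (X^I)_\ell$; here $X^I\to Sin|X^I|$ is a weak equivalence, hence a mod-$\ell$ equivalence. Condition (2) follows from Definition \ref{defn:X-fibration}(2): the restriction $\alpha_{I,\Delta[d]}$ is homotopic to $\wedge_{\alpha,I}\times id$, and applying the functor $(\bZ/\ell)_\infty$ preserves this homotopy. Smash $\ell$-goodness ensures that the target $(X^J)_\ell$ is the right one.

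\textbf{(2).} If $f$ is already $\bZ/\ell$-complete, its fibers are $\simeq (X^I)_\ell$, which are $\bZ/\ell$-complete. The map from each fiber to its completion is therefore a homotopy equivalence, since $(\bZ/\ell)_\infty$ of a complete nilpotent space (or of an $\ell$-good space already of the form $Y_\ell$) is equivalent to it. The natural map $f_I\to(\bZ/\ell)^\bu_\infty(f)_I$ is then a map of fibrations over $\ul\cB_I$ that is an equivalence on fibers, hence a fiber homotopy equivalence by Dold's theorem (or the five lemma on homotopy groups plus properness). This gives a map of $\bZ/\ell$-complete $X$-fibrations, and (1) shows the target is one.

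\textbf{(3).} The plan is to invoke the Bousfield--Kan fiber lemma \cite[II.5]{Bo-Kan} (the mod-$\ell$ fiber lemma). If $F\to E\to B$ is a fibration with $B$ connected and $\pi_1(B)$ acting nilpotently on $H_*(F,\bZ/\ell)$, then $F_\ell\to E_\ell\to B_\ell$ is again a fibration up to homotopy. Apply this for each $I$, using that $\ul\cB_I$ is connected (pointed-connected hypothesis) and the given nilpotence on cohomology, which is dual to the homology statement over a field. The result is that the homotopy fiber of $(f_I)_\ell$ is $\simeq (Sin|X^I|)_\ell\simeq (X^I)_\ell$; after a fibrant replacement of $(f_I)_\ell$ functorial in $\cF$ this gives condition (1). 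Condition (2) again follows by functoriality of $(\bZ/\ell)_\infty$ applied to the homotopies over simplices. One must also check that $\ul\cB_\ell$ and $\ul\cE_\ell$ are special, which uses the product property. I expect the main obstacle to be this last point together with the requirement that the level-wise fibrant replacement be compatible with the $\cF$-structure and the sections. I would handle it via the functorial factorization in the model category $(\cF{\text -}spec/\ul\cN)_*$ of Construction \ref{construct:homotopy}, as in Lemma \ref{lem:iota!-construct}.
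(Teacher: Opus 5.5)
Your proposal follows the paper's proof essentially step for step: (1) by functoriality of the fiberwise completion, (2) by the equivalence $((X^I)_\ell)_\ell \simeq (X^I)_\ell$ coming from $\ell$-goodness of $X^I$ (you add the Dold-type upgrade to a fiber homotopy equivalence, which the paper leaves implicit), and (3) by the Bousfield--Kan mod-$\ell$ fibre lemma \cite[II.5.1]{Bo-Kan}. One small caution: drop the ``concrete'' description via minimal fibrations, because that replacement is not functorial and would break compatibility with the $\cF$-structure maps; the Bousfield--Kan fiberwise completion is defined directly and naturally, which is what your naturality argument actually needs.
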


\begin{proof}
The first assertion follows from the functoriality of the fiber-wise $\bZ/\ell$-completion
functor sending conditions of Definition \ref{defn:X-fibration} to conditions 
of Definition \ref{defn:ell-fibration}.  To prove assertion (2), observe that
if the fibers of $f_I$  are homotopy equivalent to $(X^I)_\ell$, then the fibers of
$(\bZ/\ell)^\bu(f_I)$ are homotopy equivalent to $((X^I)_\ell)_\ell$ which is naturally 
homotopy equivalent to $(X^I)_\ell$.  Namely, using the natural commutative square 
\begin{equation}
\xymatrix{
X^I \ar[d] \ar[r] &  (X^I)_\ell \ar[d] \\
(X_\ell)^I \ar[r] & ((X_\ell)^I)_\ell
}
\end{equation}
and the fact that the $(\bZ/\ell)$-completion of
$(X^I)_\ell$ of the $\ell$-good simplicial set $X^I$ is also $\ell$-good, we conclude that
if $X$ is smash $\ell$-good then so is $X_\ell$.

Granted the hypotheses of assertion (3), the  Bousfield--Kan``mod-$\bZ/\ell$ fibre lemma" 
(see \cite[II.5.1]{Bo-Kan}) implies  that the natural map from the $(\bZ/\ell)_\infty$-completion of 
the fiber of $f: \ul\cE \to \ul\cB_I$ to the fiber of  $(\bZ/\ell)_\infty(f)$ is a homotopy
equivalence; in other words, the fiber of $(\bZ/\ell)_\infty(f)$ is homotopy equivalent 
to $(X^I)_\ell$.  
\end{proof}

\vskip .1in
\begin{remark}
The $\bZ/\ell$-completed $S^2$-fibrations occurring in (\ref{eqn:base-change-ss}) provide
further examples of $\bZ/\ell$-completed $X$-fibrations which are not included among the 
examples of Proposition \ref{prop:X-ell-cases}.
\end{remark}

\vskip .1in

The following definition of the set of $X_\ell{\text -}fib(\ul\cB)$ equivalence classes of 
$\bZ/\ell$-complete $X$-fibrations over $\ul\cB$ is the evident modification
of the definition of the set $X{\text -}fib(\ul\cB)$ given in Definition \ref{defn:Xfib}.
This uses Lemma \ref{lem:iota!-construct}, stated for $X$-fibrations but whose proof
applies verbatim for $\bZ/\ell$-complete $X$-fibrations.

\begin{defn}
\label{defn:X-ell-fib}
Consider a pointed, connected simplicial set $X$  which is smash $\ell$-good and a 
pointed-connected special $\cF$-space 
$\ul\cB$ over $\ul\cN$.
We define an equivalence relation on $\bZ/\ell$-complete $X$-fibrations over $\ul\cB$  by setting  $f \ \sim \ f^\prime$
if and only if there is a chain of weak equivalences of pointed special $\cF$-spaces over $\ul\cN$
\begin{equation}
\label{eqn:self-chain-ell}
\ul\cB \ \stackrel{p}{\twoheadleftarrow} \quad \ul\cB_1 \quad \stackrel{\phi}{\to} \quad \ul\cB_2  
\quad \stackrel{j}{\hookleftarrow}  \ \quad \ul\cB
\end{equation}
and a  map of $\bZ/\ell$-complete $X$-fibrations $p^*(\ul\cE) \to j_!(\ul\cE^\prime)$ covering $\phi: \ul\cB_1 \to \ul\cB_2$
(with sections)
$$p^*(\ul\cE) \quad \to \quad \iota_!(\ul\cE^\prime),$$
where $p$ is a cofibrant replacement of $\ul\cB$, $j$ is a fibrant replacement of $\ul\cB^\prime$,  
and the image of (\ref{eqn:self-chain-ell})
 in $Hom_{Ho((\cF{\text -}spec/\ul\cN)_*)}(\ul\cB,\ul\cB)$ is the identity.

We denote by $X_\ell{\text -}fib(\ul\cB)$ the set of equivalence classes of $\bZ/\ell$-complete $X$-fibrations over $\ul\cB$.
\end{defn}

\vskip .1in

The  proofs  of Lemmas \ref{lem:iota!} and \ref{lem:p*}
apply verbatim to the prove their $\bZ\ell$-complete analogs.  Thus, the proof of Proposition \ref{prop:functor}
can be repeated to prove the following proposition.

 \begin{prop}
 \label{prop:functor-ell}
 Consider a pointed, connected simplicial set $X$ which is smash $\ell$-good.
 Sending a pointed-connected special $\cF$-space $\ul\cB$ over $\ul\cN$ to $X_\ell{\text-}fib(\ul\cB)$
 and using bijections of $\bZ/\ell$-complete analogs of Lemmas \ref{lem:iota!}.and \ref{lem:p*}, 
 we obtain the functor
 $$X_\ell{\text-}fib(-): Ho((\cF{\text -}spec/\ul\cN)_*)^{op} \quad \to \quad (sets). $$
 
 Consequently, sending $\phi: \ul\cB \dashrightarrow \ul\cB G_\ell(X)$ to $\phi^*(\pi_{X,\ell}) \in X{\text-}fib(\ul\cB)$
 determines a well defined natural transformation of functors
 \begin{equation}
 \label{eqn:upper-ell-X}
 (-)^*(\pi_{X,\ell}): Hom_{Ho((\cF{\text -}spec/\ul\cN)_*)}(-,\ul\cB G_\ell(X)) \quad \to \quad X_\ell{\text-}fib(-).
 \end{equation}
 \end{prop}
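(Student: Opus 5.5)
The plan is to repeat the proof of Proposition \ref{prop:functor} verbatim, with $X$-fibrations replaced by $\bZ/\ell$-complete $X$-fibrations and $\pi_X$ replaced by $\pi_{X,\ell}$. The first step is to record the $\bZ/\ell$-complete analogs of Lemmas \ref{lem:iota!-construct}, \ref{lem:iota!} and \ref{lem:p*}. For a trivial cofibration $\iota: \ul\cB \to \ul\cB^\prime$, factor $\iota\circ f$ as a trivial cofibration followed by a fibration $\iota_!(f)$. Right properness of $(s.sets_*)$ shows that $\ul\cE^!\times_{\ul\cB^\prime}\ul\cB \to \ul\cE^!$ is a weak equivalence. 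The long exact homotopy sequence then identifies the fibers of $\iota_!(f)_I$ with those of $f_I$, which are homotopy equivalent to $(X^I)_\ell$. Condition (2) of Definition \ref{defn:ell-fibration} transports along the same weak equivalences, since it is a condition up to homotopy on fibers over simplices. Pullback along a trivial fibration $p$ and composition with $p$ plainly preserve fibers and condition (2). The maps $f\to \iota^*\iota_!(f)$, $\iota_!\iota^*(f^\prime)\to f^\prime$, $p^*(p\circ f^{\prime\prime})\to f^{\prime\prime}$ and $p\circ p^*(f)\to f$ are fiberwise weak equivalences of fibrations. Hence they are maps of $\bZ/\ell$-complete $X$-fibrations. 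This gives bijections $\iota_!: X_\ell{\text -}fib(\ul\cB)\to X_\ell{\text -}fib(\ul\cB^\prime)$ and $p^*: X_\ell{\text -}fib(\ul\cB)\to X_\ell{\text -}fib(\ul\cB^{\prime\prime})$, compatible with the equivalence relation of Definition \ref{defn:X-ell-fib}. Compatibility is checked by the diagram (\ref{eqn:i!-map}): pull back on the left, push out on the right, and choose $\phi^\prime$ to fill in the middle.

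Next comes functoriality. Represent a morphism $\phi:\ul\cB\dashrightarrow\ul\cB_1$ of $Ho((\cF{\text -}spec/\ul\cN)_*)$ by a chain $\ul\cB\stackrel{p}{\leftarrow}\ul\cB^{\prime\prime}\stackrel{g}{\to}\ul\cB_1^\prime\stackrel{\iota}{\leftarrow}\ul\cB_1$, where $p$ is a cofibrant replacement and $\iota$ a fibrant replacement. Define $X_\ell{\text -}fib(\phi)=(p^*)^{-1}\circ g^*\circ \iota_!$. Pullback along any map of pointed-connected special $\cF$-spaces over $\ul\cN$ preserves fibrations, fiber types and condition (2), so $g^*$ is defined. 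Independence of the representative $g$ is the step I expect to be the main obstacle. Two representatives $g_0,g_1$ are related by a homotopy $H$ from a cylinder object $\ul\cB^{\prime\prime}\wedge\Delta[1]_+$, because the source is cofibrant and the target fibrant. I would show $g_0^*(f)\sim H^*(f)|_{0}$ and $H^*(f)|_1\sim g_1^*(f)$. Both inclusions $\ul\cB^{\prime\prime}\to$ cylinder are trivial cofibrations with common retraction $r$. Pulling back $H^*(f)$ along $r$ and then along either inclusion recovers it, and by the bijectivity just established (applied to the inclusion $\iota_!$ together with the natural maps $f\to\iota^*\iota_!(f)$), both restrictions are equivalent to each other. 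Independence of the choice of replacements follows by dominating two choices by a third and using the naturality maps from the lemmas. Compatibility with composition follows similarly after choosing compatible replacements.

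Finally, $\pi_{X,\ell}$ is a $\bZ/\ell$-complete $X$-fibration over $\ul\cB G_\ell(X)$ by Example \ref{ex:BGX-X-ell}. Therefore the assignment $\phi\mapsto\phi^*(\pi_{X,\ell}):=X_\ell{\text -}fib(\phi)([\pi_{X,\ell}])$ is well defined on homotopy classes. Naturality in $\ul\cB$ is immediate from functoriality, because $(\phi\circ\psi)^*=\psi^*\circ\phi^*$ on $X_\ell{\text -}fib$. This yields the natural transformation (\ref{eqn:upper-ell-X}).
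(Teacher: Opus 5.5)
Your proposal is correct and follows the paper's own route. The paper's proof simply asserts that the proofs of Lemmas \ref{lem:iota!} and \ref{lem:p*} carry over verbatim to $\bZ/\ell$-complete $X$-fibrations, and then repeats the proof of Proposition \ref{prop:functor}: a chain representative $p$, $g$, $\iota$, the definition $(p^*)^{-1}\circ g^*\circ \iota_!$, cylinder-homotopy independence, and domination of replacements by a third choice. Your retraction argument for the cylinder step fills in a detail that the paper takes for granted.
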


\vskip .1in

We give the $\bZ/\ell$-complete analogue of the construction of principal $d$-simplices given in
Definition \ref{defn:principal-simplex}.   For a $\bZ/\ell$-complete $X$-fibration $f$
(with section implicit), 
we  define a principal $\bZ/\ell$-complete $d$-simplex
as a map $X^I \times \Delta[d]  \to (\bZ/\ell)_\infty^\bu(\ul\cE_I)$ (rather than a map 
$X^I \times \Delta[d]  \to \ul\cE_I$).

\begin{defn}
\label{defn:principal-ell-simplex}
Assume that $X$ is a pointed, connected simplicial set which is smash $\ell$-good.
Let $f: \ul\cE \ \to \ \ul\cB$ be an $\bZ/\ell$-complete $X$-fibration with $\ul\cB$ a
pointed-connected special $\cF$-space over $\ul\cN$, 
and denote by $(\bZ/\ell)_\infty^\bu(f): (\bZ/\ell)_\infty^\bu(\ul\cE) \to \ul\cB$ the fiberwise 
$\bZ/\ell$-completion of $f$.  Consider some $I = (i_,\ldots,i_n) \in \ul\cN({\bf n})$.   
We say that a map $\phi: X^I \times \Delta[d]
 \to (\bZ/\ell)_\infty^\bu(\ul\cE_I)$ is a $\bZ/\ell$-complete principal $d$-simplex of $f$ provided that
 $\phi$ satisfies the following conditions.
 \begin{enumerate}
 \item
 The composition $f_I \circ \phi: X^I\times \Delta[d] \to (\bZ/\ell)_\infty^\bu(\ul\cE_I) \to \ul\cB_I$ factors though the
 projection, $X^I\times \Delta[d] \to \Delta[d] \to \ul\cB_I$  and determines a $\bZ/\ell$-equivalence \\
 $X^I \times \Delta[d] \ \to \ (\bZ/\ell)_\infty^\bu(\ul\cE_I) \times_{\ul\cB_I} \Delta[d]$.
 \item
  For every non-decreasing map $\alpha: {\bf n} \to {\bf r}$ in $\cF$,  set 
  $J \equiv \alpha(I) = (j_1,\ldots,j_r) \in \ul\cN(r), \ j_s = \sum_{\alpha(i) = s} i_s$.
 Then $\phi$ induces $\phi_\alpha: X^{\alpha(I)} \times \Delta[d] \to (\bZ/\ell)_\infty^\bu(\ul\cE_{\alpha(I)})$ 
 fitting in the commutative square
 \begin{equation}
\xymatrix{
X^I \times \Delta[d] \ar[d]_{\alpha} \ar[r]^-\phi &  (\bZ/\ell)_\infty^\bu(\ul\cE_I) \ar[d]^{\alpha} \\
X^J \times \Delta[d] \ar[r]^-{\phi^\alpha} & (\bZ/\ell)_\infty^\bu(\ul\cE_J)
}
\end{equation}
such that $\phi^\alpha$ induces a mod-$\ell$ equivalence 
$$X^J \times \Delta[d] \ \to \ (\bZ/\ell)_\infty^\bu(\ul\cE_J)  
\times_{\ul\cB_J} \Delta[d].$$
\end{enumerate}
\end{defn}

\vskip .1in

As in Lemma \ref{lem:prin-X} for $X$-fibrations,  Definition \ref{defn:principal-ell-simplex} determines a suitable right 
action of action of $G_\ell(X^I)$ on $P_\ell(f_I)$.  This action uses the natural retraction 
$(\bZ/\ell)_\infty^\bu(-) \circ (\bZ/\ell)_\infty^\bu(-) \ \to \ (\bZ/\ell)_\infty^\bu(-)$ (a restriction
of the natural retraction $(\bZ/\ell)_\infty(-) \circ (\bZ/\ell)_\infty\ \to \ (\bZ/\ell)_\infty$), arising
from a triple structure on $(s.sets_*/Y)$ for any pointed simplicial set $Y$. 

The justification for the following construction is strictly analogous the justification of (\ref{eqn:2-sided}) 
in Proposition \ref{prop:2-sided}, simply replacing the action of $G(X^I)$ on $P(f_I)$. 
by the action of $G_\ell(X^I)$ on $P_\ell(f_I)$. 

\vskip .1in

\begin{construct} 
\label{construct:principal-ell-spaces}
Assume that $X$ is a pointed, connected simplicial set which is smash $\ell$-good and consider 
some $\bZ/\ell$-complete $X$-fibration $f: \ul\cE \ \to \ \ul\cB$ with $\ul\cB$ a 
pointed-connected special $\cF$-space over $\ul\cN$.
There is a commutative diagram of simplicial sets
\begin{equation}
\label{eqn:2-sided-ell}
\xymatrix{
 (\bZ/\ell)_\infty^\bu(\ul\cE_I) \ar[d]_{f_I}  & \ar[l]_-{\tilde p_{f,I}}  B(P_\ell(f_I),G_\ell(X^I),X_\ell^I )
 \ar[d]^{\pi_{P_\ell,X,\ell,I}} \ar[r]^-{\tilde q_{f,I}}  &   
 B(G_\ell(X_\ell^I),X^I ) \ar[d]^{\pi_{X,\ell,I}} &  \\
\ul\cB_I   & \ar[l]^-{p_{f,I}}  B(P_\ell(f_I),G_\ell(X^I)) \ar[r]_-{q_{f,I}} &  BG_\ell(X^I)
}
\end{equation}
for each $I \in \ul\cN({\bf n})$, where the right horizontal maps are projections, the lower left map
is given by projection to $P_\ell(f_I)$ followed by the structure map
$P_\ell(f_I) \to \ul\cB_I$.  The upper left map $\tilde p_{f,I}$ utilizes the right action of $G_\ell(X^I)$ 
on $P_\ell(f_I)$ and the left action of $G_\ell(X^I)$ on $(X^I)_\ell$.  The formulation of $\tilde p_{f,I}$
is exactly parallel to the formulation the upper left arrow of (\ref{eqn:2-sided}) given in the proof
of Proposition \ref{prop:2-sided}.

The vertical maps of (\ref{eqn:2-sided-ell}) are fibrations whose homotopy fibers are homotopy equivalent to $(X^I)_\ell$, the
squares of (\ref{eqn:2-sided-ell}) are homotopy cartesian, 
and the left square is a homotopy
equivalence of fibrations (see the proof of Proposition \ref{prop:2-sided}).
\end{construct}

\vskip .1in

We basically repeat the proof of Proposition \ref{prop:2-sided} to verify the following
$\bZ/\ell$-complete analogue.  

\vskip .1in

\begin{prop}
\label{prop:func-P(f)-ell}
Consider a pointed, connected simplicial set $X$ which is smash $\ell$-good.
Let  $f: \ul\cE \ \to \ \ul\cB$ be a $\bZ/\ell$-complete $X$-fibration with $\ul\cB$ a
pointed-connected special $\cF$-space over $\ul\cN$.
The diagrams of (\ref{eqn:2-sided-ell}) for all $I \in \ul\cN({\bf n}), n > 0$ determine
a commutative diagram of $\cF$-spaces
\begin{equation}
\label{eqn:2-sided-func-ell}
\xymatrix{
 \ul\cE  \ar[r]^-\epsilon \ar[d]_f & (\bZ/\ell)_\infty^\bu(\ul\cE)  \ar[d]^{(\bZ/\ell)_\infty^\bu(f)} &
 \ar[l]_-{\tilde p_{f,\ell}} \ul\cB (P_\ell(f),G_\ell(X),X_\ell) \ar[d]_{\pi_{P_\ell(f),X_\ell}} \ar[r]^-{\tilde q_{f,\ell}} &   
 \ul\cB (G_\ell(X),X_\ell) \ar[d]_{\pi_{X,\ell}}   \\
\ul\cB \ar[r]^= & \ul\cB   & \ar[l]^-{p_{f,\ell}} \ul\cB (P_\ell(f),G_\ell(X)) \ar[r]_-{q_{f,\ell}} &  \ul\cB G_\ell(X)
}
\end{equation}
whose squares are maps of $\bZ/\ell$-complete $X$-fibrations and whose left and middle horizontal maps are weak equivalences.

In other words, (\ref{eqn:2-sided-func-ell}) determines a homotopy class of maps of 
$\bZ/\ell$-complete $X$-fibrations  $f \to \pi_{X_\ell}$.
\end{prop}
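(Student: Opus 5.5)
The plan follows the proof of Proposition \ref{prop:2-sided}, working level-wise and then assembling functorially in $\cF$. Fix $n>0$ and $I\in\ul\cN({\bf n})$. Construction \ref{construct:principal-ell-spaces} already gives the diagram (\ref{eqn:2-sided-ell}) of simplicial sets. Its vertical maps are fibrations with fibres homotopy equivalent to $(X^I)_\ell$, its squares are homotopy cartesian, and the right squares are honestly cartesian, since $q_{f,I}$ and $\tilde q_{f,I}$ collapse $P_\ell(f_I)$ to the base point. To obtain the $I$-components of the $\cF$-spaces $\ul\cB(P_\ell(f),G_\ell(X))$ and $\ul\cB(P_\ell(f),G_\ell(X),X_\ell)$, I would take the 2-sided bar constructions and multiply by $\prod_{S\subset{\bf n}}E(\Sigma_{|S|},\Sigma_{|S|})$, exactly as in (\ref{eqn:explicit-X-ell}).

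\textbf{Step 1: the $\cF$-structure.} Functoriality in $\alpha:{\bf n}\to{\bf m}$ comes from three compatibilities. First, the compatibility of the $G_\ell(X^I)$-actions with smash products given by Proposition \ref{prop:G-ell(X)-act}(3) handles non-decreasing $\alpha$. Second, the right action of $G_\ell(X^I)$ on $P_\ell(f_I)$ is compatible with $\alpha$ by the analogue of (\ref{eqn:prin-compat}); this uses Definition \ref{defn:principal-ell-simplex}(2) together with the retraction $(\bZ/\ell)^\bu_\infty\circ(\bZ/\ell)^\bu_\infty\to(\bZ/\ell)^\bu_\infty$. Third, the symmetric-group coordinates absorb permutations, so that general maps of $\cF$ are handled as in Segal's construction. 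Specialness follows level-wise, because the $\Sigma$-factors are contractible and $\ul\cB$ is special. Pointed-connectedness over $\ul\cN$ follows from the sections. With this in place, $p_{f,\ell}$, $q_{f,\ell}$, $\tilde p_{f,\ell}$ and $\tilde q_{f,\ell}$ are natural in $\alpha$ by construction.

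\textbf{Step 2: the weak equivalences.} The map $\epsilon$ is a fibrewise homotopy equivalence by Proposition \ref{prop:X-ell-cases}(2), so the left square is a map of $\bZ/\ell$-complete $X$-fibrations. For $p_{f,\ell}$, I would show that for each simplex $\Delta[d]\to\ul\cB_I$ there is a $G_\ell(X^I)$-equivariant weak equivalence $G_\ell(X^I)\times\Delta[d]\to P_\ell(f_I)\times_{\ul\cB_I}\Delta[d]$. Choose one principal simplex; it exists because the fibre is equivalent to $(X^I)_\ell$ and $X$ is smash $\ell$-good, so the mod-$\ell$ equivalence $X^I\to(X^I)_\ell$ lifts over $\Delta[d]$ using the fibration property. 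Then act on it. That this map is an equivalence is the key point: a mod-$\ell$ equivalence $X^I\times\Delta[d]\to F$ into an $\ell$-complete fibre is the same as a point of $G_\ell(X^I)$ after composing with a fixed equivalence $F\simeq(X^I)_\ell$. This uses the universal property of $(-)_\ell$ for mod-$\ell$ equivalences into $\bZ/\ell$-complete targets. Consequently the bar construction $B(P_\ell(f_I),G_\ell(X^I))\to\ul\cB_I$ has contractible fibres, so $p_{f,\ell}$ is a level-wise weak equivalence. The map $\tilde p_{f,\ell}$ is a map of fibrations over $p_{f,\ell}$ that is an equivalence on fibres, by the homotopy cartesian middle square, so it is also a weak equivalence.

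\textbf{Step 3: conclusion.} The right square is cartesian, so it is a map of $\bZ/\ell$-complete $X$-fibrations. Composing $q_{f,\ell}\circ p_{f,\ell}^{-1}$ in the homotopy category then yields the homotopy class of maps $f\to\pi_{X,\ell}$.

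I expect the main obstacle to be Step 2, namely the contractibility of the fibres of $P_\ell(f_I)$ modulo $G_\ell(X^I)$. One must check that the components of $G_\ell(X^I)$, the mod-$\ell$ equivalences rather than only invertible ones, act simply transitively up to homotopy on principal simplices. This requires the smash $\ell$-good hypothesis and idempotence of $\eta$ to identify $\ul{Hom}_*(X^I,(X^I)_\ell)$ with $\ul{Hom}_*((X^I)_\ell,(X^I)_\ell)$ homotopically.
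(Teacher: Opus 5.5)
Your proposal is correct and follows essentially the same route as the paper, which also just repeats the proof of Proposition \ref{prop:2-sided}. As there, $\epsilon$ is handled by Proposition \ref{prop:X-ell-cases}(2), the middle maps by the principal $G_\ell(X^I)$-fibration structure of $P_\ell(f_I)\to\ul\cB_I$ (contractible fibres of the bar construction), and the right square by its being a cartesian projection; you supply more detail than the paper, notably why mod-$\ell$ equivalences into an $\ell$-complete fibre are acted on simply transitively up to homotopy by $G_\ell(X^I)$, a point the paper leaves implicit.
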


\begin{proof}
The map $\epsilon: \ul\cE   \to (\bZ/\ell)_\infty^\bu(\ul\cE)$
appearing in the upper left corner of (\ref{eqn:2-sided-func-ell}) is induced by the natural 
transformation $id(-) \to  (\bZ/\ell)_\infty^\bu(-)$ determining a fiber homotopy equivalence 
$f \to (\bZ/\ell)_\infty^\bu(f)$ by Proposition \ref{prop:X-ell-cases}(2). 

The middle horizontal 
maps determine weak homotopy equivalences because each $(P_\ell(f))_I \to \ul\cB G_\ell(X)_I$
is a ``principal $G_\ell(X^I)$"-fibration.  By inspection, we see that the right horizontal arrows
determine a map of $\bZ/\ell$-completed $X$-fibrations.
\end{proof}

\vskip .1in

Modifying slightly the proof of Theorem \ref{thm:X-universal} by replacing Propositions 
\ref{prop:functor} and \ref{prop:2-sided} by Proposition \ref{prop:functor-ell} and
Construction \ref{construct:principal-ell-spaces} 
yields a proof of the following representability theorem for $\bZ/\ell$-complete $X$-fibrations.

\begin{thm} 
\label{thm:X-ell-universal}
Let $X$ be a pointed, connected simplicial set which is smash $\ell$-good.
The natural transformation of functors from 
 $Ho((\cF{\text -}spec/\ul\cN)_*)^{op}$  to the category  of sets
 \begin{equation}
(-)^*(\pi_{X,\ell}):  Hom_{Ho((\cF{\text -}spec/\ul\cN)_*)}(-, \ul\cB G_\ell(X)) \quad \to \quad X_\ell{\text-}fib(-) 
\end{equation}
sending $\phi: \ul\cB \dashrightarrow \ul\cB G_\ell(X)$ to  $\phi^*(\pi_{X,\ell})$
in $X_\ell{\text-}fib(\ul\cB)$ is an isomorphism of functors with inverse the natural transformation
\begin{equation}
\label{eqn:rho-ell-map}
\rho_{X,\ell}: X_\ell{\text-}fib(-) \quad \to \quad Hom_{Ho((\cF{\text -}spec/\ul\cN)_*})(-, \ul\cB G_\ell(X))
\end{equation}
sending the equivalence class in $X_\ell{\text-}fib(\ul\cB)$ of a $\bZ/\ell$-complete $X$-fibration $f: \ul\cE \to \ul\cB$  
 to the homotopy class of the composition 
\begin{equation}
\label{eqn:rho-ell}
\rho_{X,\ell}(f) \quad \equiv \quad q_{f ,\ell}\circ p_{f,\ell}^{-1}: \ul\cB \dashrightarrow \ul\cB G(X)
\end{equation}
 as in (\ref{eqn:2-sided-func-ell}).

If $f: \ul\cE \to \ul\cB$ is a $\bZ/\ell$-complete $X$-fibration with $\ul\cB$ a pointed-connected special $\cF$-space over $\ul\cN$, 
then a representative of $\rho_{X,\ell}(f)$ is called a classifying map for $f$.
\end{thm}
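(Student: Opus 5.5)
The plan is to follow the template of the proof of Theorem \ref{thm:X-universal}, replacing Propositions \ref{prop:functor} and \ref{prop:2-sided} by Proposition \ref{prop:functor-ell}, Construction \ref{construct:principal-ell-spaces} and Proposition \ref{prop:func-P(f)-ell}. I will show that $(-)^*(\pi_{X,\ell})$ is surjective and then injective on each $\ul\cB$; the formula (\ref{eqn:rho-ell}) then gives the inverse. Naturality of $(-)^*(\pi_{X,\ell})$ is already provided by Proposition \ref{prop:functor-ell}, so it suffices to work with a single pointed-connected special $\cF$-space $\ul\cB$ over $\ul\cN$.

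\emph{Surjectivity.} Let $f: \ul\cE \to \ul\cB$ be a $\bZ/\ell$-complete $X$-fibration. Proposition \ref{prop:func-P(f)-ell} gives the zig-zag (\ref{eqn:2-sided-func-ell}). Its left square $\epsilon$ is a fiber homotopy equivalence by Proposition \ref{prop:X-ell-cases}(2), and its middle square covers the weak equivalence $p_{f,\ell}$. Hence $f$ is equivalent, in the sense of Definition \ref{defn:X-ell-fib} and the analogue of Remark \ref{rem:restate}, to $\pi_{P_\ell(f),X_\ell}$.

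The right square is a map of $\bZ/\ell$-complete $X$-fibrations covering $q_{f,\ell}$. Its levelwise squares are cartesian, since the horizontal maps are projections killing $P_\ell(f_I)$. Therefore $\pi_{P_\ell(f),X_\ell}$ is isomorphic to $q_{f,\ell}^*(\pi_{X,\ell})$. Transporting along $p_{f,\ell}$ by means of the $\bZ/\ell$-complete versions of Lemmas \ref{lem:iota!} and \ref{lem:p*}, which are needed because $p_{f,\ell}$ need not be a fibration or cofibration, identifies the class of $f$ with $(q_{f,\ell}\circ p_{f,\ell}^{-1})^*(\pi_{X,\ell})$. This proves surjectivity and shows $(-)^*(\pi_{X,\ell})\circ \rho_{X,\ell} = id$.

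\emph{Injectivity.} Suppose $\phi^*(\pi_{X,\ell}) \sim \phi'^*(\pi_{X,\ell})$. By definition there is a chain of weak equivalences of bases with image the identity in $Ho((\cF{\text -}spec/\ul\cN)_*)$, covered by maps of $\bZ/\ell$-complete $X$-fibrations. I would apply the construction $f \mapsto (P_\ell(f), q_{f,\ell})$ functorially along this chain. A map of $\bZ/\ell$-complete $X$-fibrations $(\tilde\psi,\psi)$ induces a map $P_\ell(f) \to P_\ell(f')$ compatible with the $G_\ell(X^I)$-actions, by composing principal simplices with $(\bZ/\ell)^\bu_\infty(\tilde\psi)$. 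This yields a commutative ladder of the bottom rows of (\ref{eqn:2-sided-func-ell}), all mapping to $\ul\cB G_\ell(X)$. It follows that the classifying maps agree in $Ho$, giving $\rho_{X,\ell}\circ(-)^*(\pi_{X,\ell}) = id$.

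For this step one must also check that $\rho_{X,\ell}(\phi^*\pi_{X,\ell}) = \phi$. This is handled by naturality of $P_\ell$ applied to the canonical map $\phi^*\pi_{X,\ell} \to \pi_{X,\ell}$, together with the observation that for $\pi_{X,\ell}$ itself $q\circ p^{-1}$ is the identity. The latter holds because $P_\ell(\pi_{X,\ell,I})$ receives a $G_\ell(X^I)$-equivariant equivalence from $B(G_\ell(X^I),G_\ell(X^I))$.

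The main obstacle is verifying that $f \mapsto P_\ell(f)$ is genuinely functorial for maps of $\bZ/\ell$-complete $X$-fibrations, including the mod-$\ell$ equivalence condition in Definition \ref{defn:principal-ell-simplex}. A related point is that the principal-fibration argument makes $p_{f,\ell}$ a weak equivalence, which requires $P_\ell(f_I)\times_{\ul\cB_I}\Delta[d] \simeq G_\ell(X^I)$. My approach would be to use that a fiber homotopy equivalence composed with a mod-$\ell$ equivalence $X^I \to$ fiber remains a mod-$\ell$ equivalence. The equivariant equivalence would come from the retraction $\eta$ of the triple $(\bZ/\ell)_\infty$, exactly as in Proposition \ref{prop:G-ell(X)-act}. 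I would also use that any mod-$\ell$ equivalence into a $\bZ/\ell$-complete target factors up to homotopy through $(X^I)_\ell$, which is where the smash $\ell$-good hypothesis enters.
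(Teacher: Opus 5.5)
Your proposal is correct and takes the paper's route. The paper proves Theorem \ref{thm:X-ell-universal} by rerunning the proof of Theorem \ref{thm:X-universal} with Proposition \ref{prop:functor-ell} and Construction \ref{construct:principal-ell-spaces} in place of their unstable counterparts: surjectivity comes from the zig-zag (\ref{eqn:2-sided-func-ell}), and injectivity from comparing classifying maps along the chain that defines the equivalence relation. Your use of the naturality of $f \mapsto (P_\ell(f), q_{f,\ell})$ under maps of $\bZ/\ell$-complete $X$-fibrations, together with $\rho_{X,\ell}(\pi_{X,\ell}) = id$, makes explicit what the paper compresses into the phrase ``focusing on the base of these chains.''
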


\vskip .1in

We relate the classifying theorems Theorem \ref{thm:X-universal} and \ref{thm:X-ell-universal} in the following proposition.

\begin{prop}
\label{prop:relate}
Consider be a pointed-connected special $\cF$-space   $\ul\cB$ over $\ul\cN$,
let $f: \ul\cE \to \ul\cB$ be an $X$-fibration over $\ul\cB$, and consider the fiber-wise $\bZ/\ell$-completion,
 $\tilde f \ \equiv \ (\bZ/\ell)^\bu_\infty(f): \widetilde{\ul\cE} \to \ul\cB$.  Then the (homotopy type of the)
 classifying map 
$\rho_{X,\ell}(\tilde f):  \ul\cB \ \dashrightarrow \ \ul\cB G_\ell(X)$ for $(\bZ/\ell)^\bu_\infty(f)$ 
equals $\iota_\ell \circ \rho_X(f): \ul\cB \ \dashrightarrow \ \ul\cB G(X) \ \to \ \ul\cB G_\ell(X).$ 

Consequently, the following square of functors 
 \begin{equation}
\xymatrix{
Hom_{Ho((\cF{\text -}spec/\ul\cN)_*)}(-,\ul\cB G(X)) \ar[d]_-{\iota_\ell \circ(-)} \ar[rr]^-{(-)^*(\pi_X))}
& & X{\text -}fib(-) \ar[d]^{(\bZ/\ell)^\bu_\infty(-)} \\
Hom_{Ho((\cF{\text -}spec/\ul\cN)_*)}(-,\ul\cB G_\ell(X))  \ar[rr]^-{(-)^*(\pi_{X,\ell)}}
& & X_\ell{\text -}fib(-) 
}
\end{equation}
is commutative.
\end{prop}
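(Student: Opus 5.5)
The plan is to first construct the comparison map $\iota_\ell: \ul\cB G(X) \to \ul\cB G_\ell(X)$ at the level of monoids, then lift it to a map of fibrations, and finally compare the two classifying constructions through the two-sided bar constructions of Propositions \ref{prop:2-sided} and \ref{prop:func-P(f)-ell}.

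\emph{Step 1: the map $\iota_\ell$.} For each $I$, send a $t$-simplex $\theta: X^I\times\Delta[t]\to Sin(|X^I|)$ of $G(X^I)$ to the composite
$$X^I\times\Delta[t]\to Sin(|X^I|)_\ell \simeq (X^I)_\ell .$$
To make this strict rather than only up to homotopy, I would instead use the canonical zigzag through $(\bZ/\ell)_\infty(Sin|X^I|)$. The cleanest choice is to observe that the target may be replaced by $(Sin|X^I|)_\ell$, since $X^I\to Sin|X^I|$ is a weak equivalence and hence a mod-$\ell$ equivalence. This gives a homomorphism of simplicial monoids $G(X^I)\to G_\ell(X^I)$; the monoid structure is compatible by the triple structure $\eta$, as in Proposition \ref{prop:G-ell(X)-act}. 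The homomorphism is compatible with the smash maps $\alpha$ of (\ref{eqn:compat}) and (\ref{eqn:smash}), and with the action maps via $Sin|X^I|\to (X^I)_\ell$. Applying the bar constructions therefore yields a map of $\cF$-spaces $\iota_\ell$, together with a map $\pi_X\to\pi_{X,\ell}$ over it that is fiberwise $\bZ/\ell$-completion. Hence $\iota_\ell^*(\pi_{X,\ell})$ is equivalent to $(\bZ/\ell)^\bu_\infty(\pi_X)$ in $X_\ell$-$fib(\ul\cB G(X))$.

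\emph{Step 2: commutativity of the square.} Given the square, the first assertion follows by evaluating on $\rho_X(f)$ and using Theorem \ref{thm:X-ell-universal}. It therefore suffices to prove naturality: for $\phi:\ul\cB\dashrightarrow\ul\cB G(X)$ we need
$$(\bZ/\ell)^\bu_\infty(\phi^*\pi_X)\sim \phi^*(\bZ/\ell)^\bu_\infty(\pi_X)\sim (\iota_\ell\circ\phi)^*\pi_{X,\ell}.$$
Fiberwise completion commutes with pullback, because it is constructed fiber-by-fiber over simplices. Compatibility of this with the $\iota_!$ and $p^*$ operations used in Proposition \ref{prop:functor} comes from the natural maps $\iota_!(\tilde f)\to (\bZ/\ell)^\bu_\infty(\iota_!f)$, which are fiber homotopy equivalences because they are fiberwise mod-$\ell$ equivalences between $\bZ/\ell$-complete fibers. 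Combining these with Step 1 gives the square.

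\emph{Step 3: alternative direct route.} One can also compare classifying maps directly. There is a natural map $P(f_I)\to P_\ell(\tilde f_I)$, sending $\phi$ to its composite with completion, which is equivariant along $G(X^I)\to G_\ell(X^I)$. This produces a map of the bottom rows of (\ref{eqn:2-sided-func}) and (\ref{eqn:2-sided-func-ell}) commuting with $p_f, p_{f,\ell}$ (up to $\iota:\ul\cB\to Sin|\ul\cB|$) and with $q_f$, $\iota_\ell\circ q_{f,\ell}$.

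The main obstacle is the strictness in Step 1. Ensuring $\iota_\ell$ is an honest map of $\cF$-spaces, and not just a homotopy-coherent one, requires handling the mismatch between $Sin|X^I|$ and $(X^I)_\ell$ and the $\Sigma$-twistings. I would try first to handle this by introducing an intermediate monoid $G'(X^I)$ of maps $X^I\to (Sin|X^I|)_\ell$, so that $\iota_\ell$ becomes a zigzag of level-wise weak equivalences.
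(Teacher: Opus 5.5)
The paper states this proposition without proof and never constructs $\iota_\ell$, so the question is whether your argument closes on its own.

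\textbf{What works.} Step~2 is the right skeleton. Fiberwise completion commutes strictly with pullback. Compatibility with $\iota_!$ is easier than you make it: Lemma~\ref{lem:iota!} shows $\iota_!$ is inverse to $\iota^*$ on classes, so pullback-compatibility suffices. Combined with Theorems~\ref{thm:X-universal} and~\ref{thm:X-ell-universal}, both assertions reduce to the single fact
$\iota_\ell^*(\pi_{X,\ell})\sim(\bZ/\ell)^\bu_\infty(\pi_X)$ in $X_\ell{\text -}fib(\ul\cB G(X))$.

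\textbf{The gap.} Step~1 never produces an $\iota_\ell$ with this property, and your proposed repair does not work.
\begin{enumerate}
\item There is no map $Sin(|X^I|)\to(X^I)_\ell$. So post-composition does not send $G(X^I)$ into $G_\ell(X^I)$, and the ``action maps via $Sin(|X^I|)\to(X^I)_\ell$'' do not exist either.
\item ``Replacing the target by $(Sin(|X^I|))_\ell$'' gives maps into a different object, not a homomorphism into $G_\ell(X^I)$.
\item The intermediate $G'(X^I)$ of maps $X^I\to(Sin(|X^I|))_\ell$ is not a monoid. To form $\gamma_1\circ\gamma_2$ you must extend $\gamma_1$ to a self-map of $(Sin(|X^I|))_\ell$. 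The completion triple only extends maps out of $X^I$ to maps out of $(X^I)_\ell$; nothing extends $\gamma_1$ over $Sin(|X^I|)$.
\item Step~3 has the same defect: ``composite with completion'' would need a map $Sin(|\ul\cE_I|)\to(\bZ/\ell)^\bu_\infty(\ul\cE_I)$, which does not exist.
\end{enumerate}

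\textbf{Two ways to close it.}

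\emph{Homotopy-category definition.} Everything lives in $Ho((\cF{\text -}spec/\ul\cN)_*)$ and the paper leaves $\iota_\ell$ unspecified. So you may simply define $\iota_\ell:=\rho_{X,\ell}((\bZ/\ell)^\bu_\infty(\pi_X))$. The needed fact is then Theorem~\ref{thm:X-ell-universal}, and Step~2 finishes the proof.

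\emph{Strict zigzag.} If you want a genuine natural map, put $Sin(|Y|)$ in the source. Let $Y=X^I$, let $j:Y\to Sin(|Y|)$ be the canonical map, let $\epsilon$ be the counit $|Sin(-)|\to id$, and let $\lambda,\eta$ be the unit and multiplication of $(-)_\ell$.
\begin{itemize}
\item Let $\hat G_\ell(Y)$ be the mod-$\ell$ equivalences $Sin(|Y|)\to(Sin(|Y|))_\ell$ under Kleisli composition. Then $\theta\mapsto\lambda\circ Sin(\epsilon\circ|\theta|)$ is an honest homomorphism $G(Y)\to\hat G_\ell(Y)$, using $\eta\circ(\lambda)_\ell=id$ and naturality of $\lambda$.
\item Let $M(Y)=\hat G_\ell(Y)\times_{\ul{Hom}_*(Y,(Sin|Y|)_\ell)}G_\ell(Y)$, formed along $(-)\circ j$ and $(j)_\ell\circ(-)$. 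This is a submonoid by naturality of $\eta$.
\item Since $(-)\circ j$ is a trivial fibration and $(j)_\ell\circ(-)$ is a weak equivalence, both projections from $M(Y)$ are weak equivalences.
\end{itemize}
The actions on $Sin(|Y|)$, $(Sin|Y|)_\ell$ and $Y_\ell$ are compatible along $\lambda$ and $(j)_\ell$. This yields $\iota_\ell$ as a zigzag of monoid maps covered by maps of universal fibrations, whose fiber maps are $\bZ/\ell$-completions or homotopy equivalences. You would still need to check compatibility with the smash maps $\alpha$ and the $\Sigma$-twistings.
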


\vskip .1in

\begin{remark}
\label{rem:ell-local}
The discussion of this section culminating in Theorem \ref{thm:X-ell-universal} applies
essentially verbatim to the ``Bousfield-Kan localization at $\ell$".   Namely, we simply 
replace $(\bZ/\ell)_\infty(-)$ by $(\bZ_{(\ell)}) _\infty(-)$ and $(-)_\ell$ by $(-)_{(\ell)}$.  
 \end{remark}
 
 \vskip .2in


\section{Oriented $\bZ/\ell$-complete $X$-fibrations}
\label{sec:or-ell-complete}

We begin this section with a few basic properties of oriented $X$-fibrations and
oriented $\bZ/\ell$-complete $X$-fibrations.   For our mail results, the relevance
of orientations arises in the conclusions of Corollary \ref{cor:cover} relating the
classifying $\cF$-spaces $\cB G^o(X)$ and $\cB G^o_\ell(X)$ for $X$ a finite, nilpotent
simplicial set.

 \vskip .1in

\begin{defn}
\label{defn:orient}
Let $X$ be a pointed, connected simplicial set satisfying the condition that $\Sigma_n \subset G^o(X^n)$
(i.e., that permuting factors
of the smash product is weakly homotopy equivalent to the identity) for each $n > 0$.  Consider a
pointed-connected special $\cF$-space $\ul\cB$ over $\ul\cN$.
An  $X$-fibration of $\cF$-spaces, $(f:\ul\cE \to \ul\cB, \ s: \ul\cB \to \ul\cE)$ is said
to be oriented if its classifying map $\rho_X(f): \ul\cB \ \dashrightarrow \ {\ul\cB}G(X)$
is equipped with a factorization of the form $\iota \circ \rho_X^o(f): \ul\cB \ \dashrightarrow \ {\ul\cB}G^o(X) \to \ {\ul\cB}G(X).$

We define $X{\text -}fib^o(\ul\cB)$ to be equivalence class of oriented $X$-fibrations, where we set the 
oriented $X$-fibrations $f, \ f^\prime$  equivalent if their factorizations $ \rho_X^o(f), \ 
 \rho_X^o(f^\prime)$ are equal in $Hom_{Ho((\cF{\text -}spec/\ul\cN)_*)}(-, \ul\cB G(X))$.
\end{defn}

\vskip .1in

We obtain the following as a corollary of Theorem \ref{thm:X-universal}.

\begin{cor}
\label{cor:X-fib-o}
Let $X$ be a pointed, connected simplicial set satisfying the condition that $\Sigma_n \subset G^o(X^n)$.
Then the following natural transformations are mutually inverse:
\begin{equation}
(-)^*(\pi^o_X):  Hom_{Ho((\cF{\text -}spec/\ul\cN)_*)}(-, \ul\cB G^o(X)) \quad \to \quad X{\text-}fib^o(-),
\end{equation}
\begin{equation}
\label{eqn:nat-trans-o}
\rho^o_X:  X{\text-}fib^o(-) \quad \to \quad Hom_{Ho((\cF{\text -}spec/\ul\cN)_*})(-, \ul\cB G^o(X)).
\end{equation}
\end{cor}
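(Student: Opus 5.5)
The plan is to deduce Corollary \ref{cor:X-fib-o} almost formally from Theorem \ref{thm:X-universal} and Definition \ref{defn:orient}. The main point is that an element of $X{\text-}fib^o(\ul\cB)$ is, by definition, determined by a class $\rho^o_X(f) \in Hom_{Ho((\cF{\text -}spec/\ul\cN)_*)}(\ul\cB,\ul\cB G^o(X))$ lifting $\rho_X(f)$ along $\iota: \ul\cB G^o(X) \to \ul\cB G(X)$. We read the equivalence relation in Definition \ref{defn:orient} as equality of the factorizations $\rho^o_X(f)$ in $Hom(-,\ul\cB G^o(X))$; this is the only reading under which the corollary is meaningful. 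With that reading, $\rho^o_X$ is well defined and injective by construction. It is natural in $\ul\cB$ because $\rho_X$ is natural (Theorem \ref{thm:X-universal}) and pulling back along a map $\psi:\ul\cB'\dashrightarrow\ul\cB$ sends the factorization $\iota\circ\rho^o_X(f)$ to $\iota\circ(\rho^o_X(f)\circ\psi)$.

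The next step is to check that $(-)^*(\pi^o_X)$ lands in $X{\text-}fib^o$. Given $\phi:\ul\cB\dashrightarrow\ul\cB G^o(X)$, the $X$-fibration $\phi^*(\pi^o_X)$ coincides with $(\iota\circ\phi)^*(\pi_X)$. This holds because $\pi^o_X$ is the restriction of $\pi_X$ to $\ul\cB G^o(X)$, as in Example \ref{ex:X-fib}(2), so $\pi^o_X=\iota^*\pi_X$ as a strict pullback. By Theorem \ref{thm:X-universal}, its classifying map is $\rho_X(\phi^*\pi^o_X)=\iota\circ\phi$. We equip $\phi^*(\pi^o_X)$ with the factorization $\phi$, which makes it an oriented $X$-fibration. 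It follows that $\rho^o_X\circ(-)^*(\pi^o_X)=id$.

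For the other composite, start with an oriented $f$ carrying the factorization $\rho^o_X(f)$. The oriented fibration $(\rho^o_X(f))^*(\pi^o_X)$ has factorization $\rho^o_X(f)$, by the previous paragraph. Since oriented classes are determined by their factorizations, this fibration equals $f$ in $X{\text-}fib^o(\ul\cB)$. As a consistency check, the underlying $X$-fibrations also agree: by Theorem \ref{thm:X-universal}, both are classified by $\iota\circ\rho^o_X(f)=\rho_X(f)$. So $(-)^*(\pi^o_X)\circ\rho^o_X=id$, and the two transformations are mutually inverse.

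I expect the main obstacle to be the technical compatibility step: identifying $\pi^o_X$ with $\iota^*(\pi_X)$ levelwise above each $I$, and verifying that pulling back along a zigzag representing $\phi$ commutes with this identification. This should reduce to the strict cartesian squares $B(G^o(X^I),Sin|X^I|)\to B(G(X^I),Sin|X^I|)$ over $BG^o(X^I)\to BG(X^I)$ (with the $E\Sigma$ factors carried along), together with the fact that $G^o(X^I)\subset G(X^I)$ is a union of components. The latter fact makes $\iota$ compatible with fibrant and cofibrant replacements in $(\cF{\text -}spec/\ul\cN)_*$. The hypothesis $\Sigma_n\subset G^o(X^n)$ is exactly what is needed for $\ul\cB G^o(X)$ to be a sub-$\cF$-space, since the symmetric-group twists then preserve the identity components.
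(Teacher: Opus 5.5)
Your proposal is correct and follows the paper's approach. The paper gives no separate argument and simply calls the statement a corollary of Theorem \ref{thm:X-universal}; your write-up is the formal unwinding of that claim, via $\pi^o_X=\iota^*\pi_X$ and Definition \ref{defn:orient}. You are also right to read the equivalence relation there as equality of factorizations in $Hom_{Ho((\cF{\text -}spec/\ul\cN)_*)}(-,\ul\cB G^o(X))$: this matches Definition \ref{defn:orient-ell}, and the $\ul\cB G(X)$ written in Definition \ref{defn:orient} is a typo.
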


\vskip .1in

 We essentially copy Definition \ref{defn:orient} and Corollary \ref{cor:X-fib-o} to give the $\bZ/\ell$-complete
 analogs.
 
 \begin{defn}
\label{defn:orient-ell}
Let $X$ be a pointed, connected simplicial set satisfying the condition that $\Sigma_n \subset G^o(X^n)$
and assume further that $X$ is smash $\ell$-good.  Consider a
pointed-connected special $\cF$-space $\ul\cB$ over $\ul\cN$.
Then a $\bZ/\ell$-complete $X$-fibration $(f: \ul\cE \to \ul\cB, \ s: \ul\cB \to \ul\cE)$  
 is said to be oriented if its classifying map $\rho_{X,\ell}(f): \ul\cB \ \dashrightarrow \ {\ul\cB}G_\ell(X)$
is equipped with a factorization of the form 
$\iota \circ \rho^o_{X,\ell}(f): \ul\cB \ \dashrightarrow \ {\ul\cB}G_\ell^o(X) \to \ {\ul\cB}G_\ell(X).$

We define $X_\ell{\text -}fib^o(\ul\cB)$ to be equivalence class of oriented $\bZ/\ell$-complete $X$-fibrations, where we set the 
oriented $\bZ/\ell$-complete $X$-fibrations $f, \ f^\prime$  equivalent if their factorizations $ \rho^o_{X,\ell}(f), \ 
 \rho^o_{X,\ell}(f^\prime)$ are equal in $Hom_{Ho((\cF{\text -}spec/\ul\cN)_*)}(-, \ul\cB G_\ell^o(X))$.
\end{defn}

\vskip .1in

\begin{cor}
\label{cor:X-fib-o-ell}
Let $X$ be a pointed, connected simplicial set satisfying the condition that $\Sigma_n \subset G^o(X^n)$
and assume further that $X$ is smash $\ell$-good. 
Then the following natural transformations are mutually inverse:
\begin{equation}
(-)^*(\pi^o_{X,\ell}):  Hom_{Ho((\cF{\text -}spec/\ul\cN)_*)}(-, \ul\cB G_\ell^o(X)) \quad \to \quad X_\ell{\text-}fib^o(-),
\end{equation}
\begin{equation}
\label{eqn:nat-trans-ell}
\rho^o_{X,\ell}:  X_\ell{\text-}fib^o(-) \quad \to \quad Hom_{Ho((\cF{\text -}spec/\ul\cN)_*})(-, \ul\cB G_\ell^o(X)).
\end{equation}
\end{cor}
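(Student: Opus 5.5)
The plan is to derive Corollary \ref{cor:X-fib-o-ell} from Theorem \ref{thm:X-ell-universal}, in the same way Corollary \ref{cor:X-fib-o} follows from Theorem \ref{thm:X-universal}. The key point is that Definition \ref{defn:orient-ell} is arranged so that an element of $X_\ell{\text-}fib^o(\ul\cB)$ is, by definition, the data of a homotopy class $\rho^o_{X,\ell}(f) \in Hom_{Ho((\cF{\text -}spec/\ul\cN)_*)}(\ul\cB, \ul\cB G_\ell^o(X))$ lifting $\rho_{X,\ell}(f)$ along $\iota: \ul\cB G^o_\ell(X) \to \ul\cB G_\ell(X)$. Equivalence of oriented fibrations is defined by equality of these lifts. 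Hence $\rho^o_{X,\ell}$ is well defined and injective by construction, and naturality in $\ul\cB$ follows from the naturality of $\rho_{X,\ell}$ in Theorem \ref{thm:X-ell-universal}, using that precomposition commutes with $\iota\circ(-)$.

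For the other direction, take $\psi: \ul\cB \dashrightarrow \ul\cB G^o_\ell(X)$ and form $\psi^*(\pi^o_{X,\ell})$. I would first check that $\pi^o_{X,\ell}$ of (\ref{eqn:pi-univ-ell-o}) is the pullback of $\pi_{X,\ell}$ along $\iota$. This holds levelwise because $\ul\cB(G^o_\ell(X),X_\ell)_I$ is built from the submonoid $G^o_\ell(X^I)$ acting on the same $(X^I)_\ell$, so the square is cartesian. Consequently $\psi^*(\pi^o_{X,\ell}) = (\iota\circ\psi)^*(\pi_{X,\ell})$ is a $\bZ/\ell$-complete $X$-fibration. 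By Theorem \ref{thm:X-ell-universal} its classifying map is $\iota\circ\psi$, so $\psi$ serves as an orientation. This defines $(-)^*(\pi^o_{X,\ell})$ with values in $X_\ell{\text-}fib^o(-)$, and it gives $\rho^o_{X,\ell}\circ(-)^*(\pi^o_{X,\ell}) = id$, provided the orientation attached to $\psi^*(\pi^o_{X,\ell})$ is taken to be $\psi$ itself.

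For the reverse composite, start with an oriented $f$ with lift $\rho^o := \rho^o_{X,\ell}(f)$. Theorem \ref{thm:X-ell-universal} gives $f \sim (\iota\circ\rho^o)^*(\pi_{X,\ell}) = (\rho^o)^*(\pi^o_{X,\ell})$ as $\bZ/\ell$-complete $X$-fibrations. The two fibrations have the same orientation $\rho^o$, so they are equal in $X_\ell{\text-}fib^o(\ul\cB)$.

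The main obstacle is the bookkeeping of orientation data. The lift $\psi$ of $\iota\circ\psi$ need not be unique up to homotopy, since $\iota$ need not induce an injection on homotopy classes. We therefore must treat an oriented fibration as a pair (fibration, chosen lift) and make sure the equivalence relation depends only on the lift, as Definition \ref{defn:orient-ell} literally states. A second point needs verification: the pullback property of $\pi^o_{X,\ell}$ must be compatible with the $\cF$-structure, that is, $\alpha$ must carry $G^o_\ell(X^I)$ into $G^o_\ell(X^J)$. I would deduce this from the commutative squares (\ref{eqn:smash}) together with the hypothesis $\Sigma_n\subset G^o(X^n)$, which ensures that the permutation twists stay in the identity component.
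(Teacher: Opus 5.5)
Your proposal is correct and follows essentially the paper's route: the paper gives no separate proof, obtaining Corollary~\ref{cor:X-fib-o-ell} by copying the deduction of Corollary~\ref{cor:X-fib-o} from the representability theorem, i.e.\ from Theorem~\ref{thm:X-ell-universal} combined with the fact that Definition~\ref{defn:orient-ell} makes an oriented class nothing more than a chosen lift of the classifying map through $\iota: \ul\cB G^o_\ell(X) \to \ul\cB G_\ell(X)$. The details you supply are precisely what the paper leaves implicit: that $\pi^o_{X,\ell}$ is the levelwise pullback of $\pi_{X,\ell}$ along $\iota$, and that the orientation must be carried as part of the data since lifts need not be unique.
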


\vskip .1in

The following is an analog of Proposition \ref{prop:relate}.

\begin{cor}
\label{cor:relate-o}
Let $X$ be a pointed, connected simplicial set which is smash $\ell$-good
and consider a pointed-connected special $\cF$-space
$\ul\cB$ which satisfies the condition that $\pi_1(\ul\cB_I) = 0$ 
for all $I \in \ul\cN(\bf n)$, all $n > 0$. 
Let $f: \ul\cE \ \to \ \ul\cB$ be an oriented $X$-fibration with orientation
$\rho^o_X(f):  \ul\cB  \ \to \ \ul\cB G^o(X)$.
Then $(\bZ/\ell)_\infty(f): \ul\cE_\ell \ \to \ \ul\cB_\ell$
is a $\bZ/\ell$-complete $X$-fibration with orientation
$$\lambda \circ (\rho^o_X(f))_\ell:  \ul\cB_\ell \ \to \ \ul\cB G^o(X)_\ell \ \to \ \ul\cB G^o_\ell(X).$$

Let $(\cF{\text -}\widetilde{spec}/\ul\cN)_*$ denote the full subcategory of pointed-connected 
special $\cF$-spaces $\ul\cB$ with the property that $\pi_1(\ul\cB_I) = 0$ 
for all $I \in \ul\cN(\bf n)$, all $n > 0$.  Then the following square of contravariant functors
from $Ho((\cF{\text -}\widetilde{spec}/\ul\cN)_*)$ to (sets)
 \begin{equation}
\xymatrix{
Hom_{Ho((\cF{\text -}\widetilde{spec}/\ul\cN)_*)}(-,\ul\cB G^o(X)) \ar[d]_-{\lambda \circ(-)_\ell} \ar[rr]^-{(-)^*(\pi^o_X))}
& & X{\text -}fib^o(-) \ar[d]^{(\bZ/\ell)_\infty(-)} \\
Hom_{Ho((\cF{\text -}\widetilde{spec}/\ul\cN)_*)}((-)_\ell,\ul\cB G^o_\ell(X))  \ar[rr]^-{(-)^*(\pi^o_{X,\ell}))}
& & X_\ell{\text -}fib^o(-) 
}
\end{equation}
commutes.
\end{cor}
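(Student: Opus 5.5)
The plan is in two stages. First I show that level-wise $\bZ/\ell$-completion carries an oriented $X$-fibration to an oriented $\bZ/\ell$-complete $X$-fibration over $\ul\cB_\ell$. Then I identify its oriented classifying map.

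\textbf{Stage 1: completing the fibration.} Since $\pi_1(\ul\cB_I)=0$ for all $I$, the action of $\pi_1(\ul\cB_I)$ on $H^*(X^I,\bZ/\ell)$ is trivial, hence nilpotent. Proposition \ref{prop:X-ell-cases}(3), via the Bousfield--Kan mod-$\ell$ fibre lemma, then shows that $(\bZ/\ell)_\infty(f):\ul\cE_\ell\to\ul\cB_\ell$ is a $\bZ/\ell$-complete $X$-fibration over $\ul\cB_\ell$. Here $\ul\cB_\ell$ is again a pointed-connected special $\cF$-space over $\ul\cN$, because $(-)_\ell$ commutes with finite products up to natural homotopy equivalence. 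The section is $s_\ell$, and condition (2) of Definition \ref{defn:ell-fibration} follows by functoriality of $(-)_\ell$ applied to condition (2) of Definition \ref{defn:X-fibration}.

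\textbf{Stage 2: identifying the classifying map.} I would apply $(-)_\ell$ to the whole diagram (\ref{eqn:2-sided-func}), restricted to the oriented setting, i.e.\ replacing $G(X)$ by $G^o(X)$ via the factorization $\rho^o_X(f)$. This exhibits $(\bZ/\ell)_\infty(f)$ as equivalent to $((\rho^o_X(f))_\ell)^*((\pi^o_X)_\ell)$; simple connectivity again allows the fibre lemma to be used at each level. Next I construct a map of $\bZ/\ell$-complete $X$-fibrations covering
\[
\lambda: \ul\cB G^o(X)_\ell \to \ul\cB G^o_\ell(X),
\]
going from $(\pi^o_X)_\ell$ to $\pi^o_{X,\ell}$. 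The map $\lambda$ comes from the universal property (Corollary \ref{cor:X-fib-o-ell}) applied to $(\pi^o_X)_\ell$. That $(\pi^o_X)_\ell$ is a $\bZ/\ell$-complete fibration needs $\pi_1(BG^o(X^I))=\pi_0(G^o(X^I))=0$, which holds since $G^o$ is connected. Concretely, $\lambda$ is induced at the monoid level by sending $\theta:X^I\times\Delta[t]\to Sin|X^I|$ to its composite with $Sin|X^I|\to (Sin|X^I|)_\ell\simeq (X^I)_\ell$. This lands in $G^o_\ell(X^I)$ because $\theta$ is homotopic to the canonical map. Composing the two maps of fibrations gives a map $(\bZ/\ell)_\infty(f)\to \pi^o_{X,\ell}$ over $\lambda\circ(\rho^o_X(f))_\ell$. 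By Corollary \ref{cor:X-fib-o-ell} the oriented classifying map is therefore $\lambda\circ(\rho^o_X(f))_\ell$. The compatibility with the unoriented maps $\rho_{X,\ell}$ comes from Proposition \ref{prop:relate}.

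\textbf{Commutativity of the square.} Given this description on objects, commutativity reduces to naturality. Both composites send a class $\phi:\ul\cB\dashrightarrow\ul\cB G^o(X)$ to the class of $\lambda\circ\phi_\ell$: one directly, the other via $(\phi^*\pi^o_X)_\ell\simeq\phi_\ell^*(\pi^o_X)_\ell\simeq \phi_\ell^*\lambda^*\pi^o_{X,\ell}$. It remains to check that $(-)_\ell$ is well defined on $Ho((\cF{\text -}\widetilde{spec}/\ul\cN)_*)$, i.e.\ preserves weak equivalences, and that it respects the equivalence relation defining $X{\text -}fib^o$.

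\textbf{Main obstacle.} I expect the hardest step to be that $(-)_\ell$ commutes with pullback up to fibre homotopy equivalence, $(\phi^*\pi)_\ell\simeq\phi_\ell^*(\pi_\ell)$, compatibly with the $\cF$-structure and sections. I would prove this level-wise by comparing homotopy fibres, using the fibre lemma on both sides, with simple connectivity of the bases as the essential input.
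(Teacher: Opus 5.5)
Your proposal is correct and follows the paper's proof: the first assertion is Proposition \ref{prop:X-ell-cases}(3), since a trivial $\pi_1$-action is nilpotent. The orientation is then read off via Corollary \ref{cor:X-fib-o-ell} from the chain of maps of $\bZ/\ell$-complete $X$-fibrations $(\ul\cE)_\ell \to \ul\cB(G^o(X),X)_\ell \to \ul\cB(G^o_\ell(X),X_\ell)$, covering $(\rho^o_X(f))_\ell$ and then $\lambda$, exactly as in the paper's diagram (\ref{eqn:comm-cF-2}); your naturality argument for the square spells out what the paper leaves implicit. One caveat: your ``concrete'' formula for $\lambda$ is not a strict monoid map into $G^o_\ell(X^I)$, because the natural equivalence runs $(X^I)_\ell \to (Sin(|X^I|))_\ell$ rather than the reverse, but the paper likewise takes $\lambda$ to be ``the natural map'' without construction, so this does not separate your argument from the paper's.
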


\begin{proof}
By Proposition \ref{prop:X-ell-cases}(3), $(\bZ/\ell)_\infty(f)$ is a $\bZ/\ell$-complete $X$-fibration.
The fact that the classifying map for this  $\bZ/\ell$-complete $X$-fibration is oriented by
$\lambda \circ (\rho^o_X(f))_\ell$ follows from the commutativity of the following diagram
of $\cF$-spaces
\begin{equation}
\label{eqn:comm-cF-2}
\xymatrix{
 (\ul\cE)_\ell \ar[d]_{f_\ell} \ar[r] & \cB (G^o(X),X)_\ell \ar[r]  \ar[d]^{(\pi_{X,o})_\ell} &  \ul \cB (G^o_\ell(X),X_\ell) 
\ar[d]^{\pi_{X,\ell,o}} \ar[r] & \cB (G_\ell(X),X_\ell) \ar[d]^{\pi_{X,\ell}}\\
\ul\cB_\ell \ar[r]_-{(\phi_f)_\ell} & \ul \cB G^o(X)_\ell \ar[r]_{{\lambda}} & \ul \cB G^o_\ell(X) \ar[r] & \ul \cB G_\ell(X) .
}
\end{equation}
\end{proof}

\vskip .1in

The following proposition applies in particular to $X$ with $|X| \simeq S^2$.
The assertions are basically given in \cite[Prop VI.7.1]{Bo-Kan}. 
 We remind the reader
that a pointed, connected topological space $T$ is said to be nilpotent if 
$\pi_1(T)$ acts nilpotently on $\pi_i(T)$ for all $i \geq 1$.

\begin{prop}
\label{prop:fingen}  \cite[Prop 7.1]{Bo-Kan}
Let $X$ be a finite, nilpotent simplicial set.
\begin{enumerate}
\item
The homotopy groups of $G^o(X)$ are finitely generated abelian groups.
\item
The map $\pi_*(G^o(X)) \ \to \ \pi_*((G^o(X)_\ell)$ is given by tensoring with $\bZ_\ell$.
\item
The map $\pi_*(G^o(X)) \ \to \ \pi_*(G_\ell^o(X))$ is given by tensoring with $\bZ_\ell$.
\item
The natural map $(G^o(X))_\ell \ \to \ G^o_\ell(X)$ is a homotopy equivalence.
\end{enumerate}
\end{prop}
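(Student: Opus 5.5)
The plan is to reduce everything to the fibration sequence relating $G^o(X)$ to the pointed mapping space, and then invoke the Bousfield--Kan machinery for nilpotent spaces with finitely generated homotopy. Evaluation at the base point is trivial for pointed maps, so $G^o(X)$ is the identity component of $\ul{Hom}_*(X,Sin(|X|))$. Since $X$ is finite and connected, this is analyzed by induction over the skeleta of $X$: restriction to skeleta gives a finite tower of fibrations whose fibers are finite products of iterated loop spaces $\Omega^k Sin(|X|)$.

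\textbf{Step 1 (part (1)).} A finite nilpotent simplicial set has finitely generated homotopy groups. Hence each fiber in the skeletal tower has finitely generated homotopy groups. The long exact sequences then show, inductively, that $\pi_i(G^o(X))$ is finitely generated for $i\ge 1$. The component group is not an issue, since $G^o(X)$ is connected by definition. Because $G^o(X)$ is a connected simplicial monoid (an $H$-space), it is simple, and in particular nilpotent.

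\textbf{Step 2 (parts (2) and (4)).} For a nilpotent space with finitely generated homotopy, the $\bZ/\ell$-completion satisfies $\pi_*(Y_\ell)=\pi_*(Y)\otimes\bZ_\ell$ \cite[VI]{Bo-Kan}; this gives (2). For (3) I would run the same skeletal analysis on $G^o_\ell(X)$, which is a component of $\ul{Hom}_*(X^I,(X^I)_\ell)$. Mod-$\ell$ equivalences form a union of components, so $G^o_\ell(X)$ is the identity component of this mapping space. Its skeletal tower has fibers $\Omega^k(X_\ell)$, and by the same result $\pi_*(X_\ell)=\pi_*(X)\otimes\bZ_\ell$.

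\textbf{Step 3 (comparing the towers).} The map $Sin(|X|)\to X_\ell$ (via $(Sin|X|)_\ell\simeq X_\ell$) induces a map of towers. On each fiber it is $\pi_*\to\pi_*\otimes\bZ_\ell$. Since $\bZ_\ell$ is flat over $\bZ$, the five lemma applied up the tower identifies $\pi_*(G^o(X))\to\pi_*(G^o_\ell(X))$ with tensoring by $\bZ_\ell$, which is (3).

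\textbf{Step 4 (part (4)).} The map $G^o(X)\to G^o_\ell(X)$ has $\bZ/\ell$-complete target: it is a finite tower built from $\bZ/\ell$-complete fibers, hence $\ell$-complete and nilpotent. So it factors through $(G^o(X))_\ell$. By (2) and (3), the induced map $(G^o(X))_\ell\to G^o_\ell(X)$ is an isomorphism on all homotopy groups, and is therefore a homotopy equivalence.

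The main obstacle is the low-degree bookkeeping in the towers. One must check that restricting to identity components commutes with the tower, since $\pi_0$ of the fibers may be nontrivial. One must also check that tensoring with $\bZ_\ell$ still gives correct answers on nonabelian $\pi_1$-terms of the intermediate stages. To handle both points I would work with the $\ell$-adic completion of nilpotent groups rather than with $\otimes\bZ_\ell$ at the intermediate stages, following \cite[VI.7.1]{Bo-Kan}.
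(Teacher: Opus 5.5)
Your proposal is correct and follows essentially the paper's route: the paper obtains (1) and (3) by citing Serre's finiteness theorem and the Bousfield--Kan function-space result \cite[Prop VI.7.1]{Bo-Kan}, and your skeletal-tower induction reproves the content of that citation, including the low-degree nilpotent bookkeeping you flag; (2) is \cite[Ex VI.5.2]{Bo-Kan}, exactly as in your Step 2. Your Step 4 also matches the paper, which deduces (4) from (2) and (3) and then upgrades the weak equivalence to a homotopy equivalence because both sides are Kan complexes --- you should state that last point explicitly.
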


\begin{proof}
Assertion (1) is  a well known consequence of the fundamental theorem of J.-P. Serre \cite{Serre}
concerning the homotopy groups of spheres.  (See \cite[Prop 7.1(i]{Bo-Kan}.)
Assertion (2) is given by \cite[Ex VI.5.2]{Bo-Kan}.

Assertion (3) is given by \cite[Prop VI.7.1.(iii)]{Bo-Kan}
Together, assertions (2) and (3) imply that the natural map $(G^o(X))_\ell \ \to \ G^o_\ell(X)$ is a
weak equivalence.  Since this is a map of Kan complexes, it is a homotopy equivalence.
\end{proof}

\vskip .1in

Proposition \ref{prop:fingen}(4) in conjunction with the explicit descriptions of 
$\ul \cB G^o(X))_I$  for each $I \in \ul\cN({\bf n})$ and of $\cB G^o_\ell(X)_I$ 
implies the following corollary.  We remind the reader that the smash product of 
any two pointed, connected simplicial sets is simply connected.
  
 \begin{cor}
\label{cor:cover}
Let $X$ be a finite nilpotent simplicial set.
If $\Sigma_n \subset G^o(X^n)$ for all $n>0$,
then the natural map \ $\cB G^o(X)  \to \cB G^o_\ell(X)$ \ induces a homotopy equivalence
of $\cF$-spaces over $\ul \cN$
\begin{equation}
\label{eqn:S2-hom-equiv}
\lambda: \ul \cB G^o(X)_\ell \quad \to \quad  \ul \cB G^o_\ell(X).
\end{equation}

Moreover, the natural map \ $\ul\cB G^o(X),X)_\ell \ \to \ \ul\cB(G_\ell^o(X),X_\ell)$ \ covers $\lambda$,
determining a map of oriented $\bZ/\ell$-complete $\cF$-spaces 
$$(\pi_X^o)_\ell: \ul\cB G^o(X),X)_\ell \to \cB G^o(X)  \quad \to \quad \pi_{X,\ell}^o: \ul\cB(G_\ell^o(X),X_\ell) \to \ul \cB G^o_\ell(X).$$
\end{cor}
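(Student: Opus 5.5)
The plan is to prove the statement level-by-level, using the explicit descriptions of the two $\cF$-spaces above each $I \in \ul\cN({\bf n})$, and then invoke Proposition \ref{prop:fingen}(4). Weak equivalences in $\cF[s.sets_*]$ are level-wise, so it suffices to check that for every $n>0$ and every $I=(i_1,\ldots,i_n)$ the map
$$\lambda_I: (\ul\cB G^o(X)_I)_\ell \ \to \ \ul\cB G^o_\ell(X)_I$$
is a weak equivalence. Since both sides are Kan complexes (the target after fibrant replacement if needed), this gives a homotopy equivalence. By Example \ref{ex:G(X)-act} and Example \ref{ex:BGX-X-ell}, the source is the completion of $BG^o(X^I)\times \prod_S E(\Sigma_{|S|},\Sigma_{|S|})$ and the target is $BG^o_\ell(X^I)\times\prod_S E(\Sigma_{|S|},\Sigma_{|S|})$. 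The $E\Sigma$ factors are contractible, and $(-)_\ell$ commutes with finite products up to homotopy (Definition \ref{defn:ell-complete}). So the claim reduces to showing that $(BG^o(X^I))_\ell \to BG^o_\ell(X^I)$ is a weak equivalence. The map is induced by the monoid map $G^o(X^I)\to G^o_\ell(X^I)$, composing with $Sin(|X^I|)\to (X^I)_\ell$ via the triple structure.

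For the core step, apply Proposition \ref{prop:fingen} to $Y=X^I$, a product of smash powers of $X$. A finite product of finite nilpotent simplicial sets is finite nilpotent, and smash powers of $X$ are simply connected, hence finite nilpotent. Parts (3) and (4) then give that $G^o(Y)\to G^o_\ell(Y)$ induces $-\otimes\bZ_\ell$ on homotopy groups and that $(G^o(Y))_\ell\simeq G^o_\ell(Y)$. Both $G^o(Y)$ and $G^o_\ell(Y)$ are connected grouplike monoids, so $BG^o(Y)$ and $BG^o_\ell(Y)$ are simply connected, with $\pi_{k+1}$ of each equal to $\pi_k$ of the corresponding monoid. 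Hence $BG^o(Y)$ is simply connected with finitely generated homotopy groups, and the Bousfield–Kan theory gives $\pi_*((BG^o(Y))_\ell)=\pi_*(BG^o(Y))\otimes\bZ_\ell$. Comparing with $\pi_*(BG^o_\ell(Y))=\pi_{*-1}(G^o_\ell(Y))=\pi_{*-1}(G^o(Y))\otimes\bZ_\ell$ shows that $\lambda_I$ is an isomorphism on all homotopy groups. Naturality of the completion map in $I$ makes the maps $\lambda_I$ assemble into a map of $\cF$-spaces over $\ul\cN$, compatible with sections.

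For the covering statement, do the same on total spaces. Above $I$, the space $\ul\cB(G^o(X),X)_I$ fibers over $BG^o(X^I)$ with fiber $Sin(|X^I|)$, and it is simply connected. The Bousfield–Kan mod-$\ell$ fibre lemma (as in Proposition \ref{prop:X-ell-cases}(3)) identifies the completed fibration as one with fiber $(X^I)_\ell$. The natural map to $B(G^o_\ell(X^I),(X^I)_\ell)$ is then a map of fibrations that is an equivalence on bases, by the previous step, and on fibers. It is compatible with the structure maps $\alpha$ because the actions in (\ref{eqn:smash}) are natural. By Corollary \ref{cor:relate-o} this is a map of oriented $\bZ/\ell$-complete $X$-fibrations covering $\lambda$.

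The main obstacle is the first identification. It needs care that the component $G^o$ (rather than all of $G$) is what Proposition \ref{prop:fingen} treats, and that the comparison map factors through $(G^o(X^I))_\ell$ strictly enough to commute with the bar construction and with the $\Sigma$-twisted structure maps. If a direct check of this is awkward, I would instead argue with homotopy groups of $B$ and the uniqueness of $\ell$-completion for simply connected spaces of finite type, as above.
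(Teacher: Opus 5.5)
Your proposal follows the paper's proof. The base equivalence is checked level-wise from the explicit descriptions of $\ul\cB G^o(X)_I$ and $\ul\cB G^o_\ell(X)_I$ together with Proposition \ref{prop:fingen}, and the covering statement comes from the mod-$\ell$ fibre lemma of Proposition \ref{prop:X-ell-cases}(3), using that each $BG^o(X^I)$ is simply connected; your homotopy-group comparison passing from $G^o$ to $BG^o$ spells out a step the paper leaves implicit. Do not cite Corollary \ref{cor:relate-o} for the orientation: its proof uses the very map you are constructing, and the orientation is automatic once your map covers $\lambda$ into the oriented fibration $\pi^o_{X,\ell}$.
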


\begin{proof}
The first assertion follows from Proposition \ref{prop:fingen}(4).  The second assertion
follows from Proposition \ref{prop:X-ell-cases}(3) and the fact that each $BG^o(X^I)$
is simply connected.
\end{proof}

\vskip .1in

We apply the preceding corollary to the special case in which $|X| \simeq S^m$.  We are
implicitly using the fact that level-wise $\bZ/\ell$-completion of the $\cF$-space
$\ul\cB G^o(S^m)$ determines the Bousfield $\bZ/\ell$-completion of the associated
connective $\Omega$-spectrum (see \cite{Bar-Bo}).

\begin{prop}
\label{prop:S2-spec}
The homotopy type of the map $||\ul\cB G^o(S^m)_\ell||^+ \  \to \ ||\ul\cB G_\ell(S^m)||^+$
of $\Omega$-spectra is independent of $m>0$, and we denote this by 
\begin{equation}
\label{eqn:GS}
(\bf{BG^o(S)})_\ell \quad \to \quad   \bf{BG_\ell(S)}.
 \end{equation}
 The  sequence of $0$-connected $\Omega$-spectra
 \begin{equation}
\label{eqn:fib-seq}
(\bf{BG^o(S)}_{[0]})_\ell \quad \to \quad   \bf{BG_\ell(S)}_{[0]} \quad \to \quad {\bf K}(\bZ_\ell^*,1)
 \end{equation}
 is a fiber sequence of $\Omega$-spectra.
 \end{prop}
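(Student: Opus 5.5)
We prove the two assertions of Proposition \ref{prop:S2-spec} separately: first the independence of $m$, then the fiber sequence (\ref{eqn:fib-seq}), which we reduce to a computation of $\pi_0$ of the relevant simplicial monoids at the level of $\cF$-spaces.

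\emph{Independence of $m$.} We compare $X = S^m$ with $X' = S^m \wedge S^1 \simeq S^{m+1}$, using the smash product with a fixed sphere. For $I \in \ul\cN({\bf n})$, send $g \in G(X^I)$ to $g \wedge id$ and $g \in G_\ell(X^I)$ to $g\wedge id$ composed with the natural map $(X^I)_\ell \wedge S^{i} \to (X^I \wedge S^i)_\ell$. This is compatible with the structure maps $\alpha$, since smash products with a fixed sphere commute with the smash pairings up to the symmetric group twists already built into $\prod_S E(\Sigma_{|S|},\Sigma_{|S|})$. It therefore yields maps of $\cF$-spaces $\ul\cB G^o(S^m)\to \ul\cB G^o(S^{m+1})$ and $\ul\cB G_\ell(S^m) \to \ul\cB G_\ell(S^{m+1})$, together with a commutative square involving $\lambda$. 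On $\pi_i$ with $i>0$ these maps are the Freudenthal suspension maps $\pi_{i+k}(S^k) \to \pi_{i+k+1}(S^{k+1})$ for $k = m\cdot|I|$, and their $\ell$-completions. Such maps are isomorphisms in the stable range, and $k$ grows with $I$. After group completion, the associated $\Omega$-spectra $||-||^+$ only see the colimit over $I$ (Theorem \ref{thm:summary}(3)), so the induced maps of $\Omega$-spectra are equivalences. Hence the homotopy type of $||\ul\cB G^o(S^m)_\ell||^+ \to ||\ul\cB G_\ell(S^m)||^+$ is independent of $m$.

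\emph{The fiber sequence.} By Corollary \ref{cor:cover}, we may replace $(\bf{BG^o(S)})_\ell$ by $||\ul\cB G^o_\ell(S^m)||^+$. Levelwise, $G^o_\ell(X^I) \subset G_\ell(X^I)$ is a union of components, and the quotient is $\pi_0 G_\ell(X^I)$. For $|X|\simeq S^m$, a mod-$\ell$ self-equivalence of the $\ell$-completed sphere $(X^I)_\ell$ has a degree in $\bZ_\ell^*$, and $G^o_\ell$ is exactly the degree-one part. Degree is multiplicative under both composition and smash product. It therefore defines a map of $\cF$-spaces $\ul\cB G_\ell(S^m) \to \ul\cB(\bZ_\ell^*)$, the bar construction on the discrete abelian group $\bZ_\ell^*$ with smash product acting as multiplication. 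Its homotopy fiber over each $I$ is $BG^o_\ell(X^I)$, because $BG^o_\ell \to BG_\ell \to B\pi_0$ is a fibration whenever $\pi_0$ is a group.

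Applying $||-||^+$, and using that levelwise fiber sequences of special $\cF$-spaces with connected bases give fiber sequences of $\Omega$-spectra, we obtain a fiber sequence whose base is the spectrum associated to $\ul\cB(\bZ_\ell^*)$. This spectrum is ${\bf K}(\bZ_\ell^*,1)$: its $0$-space component is $B\bZ_\ell^*$, and it deloops because the group is abelian. Passing to $0$-connected covers $(-)_{[0]}$ removes the $\pi_0=\bN$ contributions from $\ul\cN$, which yields (\ref{eqn:fib-seq}).

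The main obstacle is the independence of $m$. One must check carefully that suspension by a fixed sphere is a genuine map of $\cF$-spaces. The smash-with-$S^1$ operation does not strictly commute with the reordering of factors, so a twist by a sign, i.e.\ an element of $\Sigma$, appears. Handling this may require the $\Sigma_{|S|}$-coordinates, or it may require working in the homotopy category of $\cF$-spaces via Theorem \ref{thm:summary}.
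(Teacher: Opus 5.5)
There is a genuine gap in the independence-of-$m$ step, and it is more than the technicality you flag at the end. The conclusion you are aiming for, an equivalence of the full $\Omega$-spectra for $m$ and $m+1$, is false. So neither the $\Sigma_{|S|}$-coordinates nor passage to the homotopy category can rescue it.

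Permuting smash factors gives a map of $\cF$-spaces from finite sets (whose spectrum is the sphere spectrum) to $\ul\cB G_\ell(S^m)$. Hence $\eta$ times the generator of $\pi_0=\bZ$ is the image of the transposition in $\pi_1=\varinjlim_n\pi_0G_\ell(S^{mn})=\bZ_\ell^*$. That image is the degree $(-1)^m$ of the swap on $S^m\wedge S^m$. Therefore $||\ul\cB G_\ell(S^m)||^+$ and $||\ul\cB G_\ell(S^{m+1})||^+$ are not equivalent. Moreover, $\ul\cB G^o(S^{m+1})$ is not even defined when $m+1$ is odd, since then $\Sigma_n\not\subset G^o$.

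What holds, and what the paper's proof actually establishes, is independence after passing to $0$-connected covers. The paper obtains this from the group completion theorem \cite{Segal}. Via mod-$\ell$ homology, the base-point components of the $0$-spaces are $\varinjlim_n BG_\ell(S^{nm})$ and $\varinjlim_n BG^o_\ell(S^{nm})$. The indices $nm$ are cofinal in the suspension system, so these colimits do not depend on $m$. If you want an honest map of $\cF$-spaces, use the inclusion of $\ul\cB G_\ell(X^k)$ into $\ul\cB G_\ell(X)$ as the part lying over $k\bN\subset\bN$. This inclusion is strict, is multiplication by $k$ on $\pi_0$, and is an equivalence on $0$-connected covers by the same cofinality.

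For the fibre sequence, your degree map is a reasonable alternative. It has the merit of producing the map to ${\bf K}(\bZ_\ell^*,1)$ on the level of $\cF$-spaces, which the paper obtains only on $0$-spaces. But the appeal to ``levelwise fibre sequences with connected bases'' does not apply as you set it up. Graded over $\ul\cN$, your base has $\pi_0=\bN$ at ${\bf 1}$, and the homotopy fibre over the base point only sees the $n=0$ component, not $\coprod_n BG^o_\ell(S^{mn})$. Forgetting the grading (legitimate for even $m$, where block permutations have degree $1$) makes the base connected, but it alters the fibre in the $n=0$ component.

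Either way, you need the ingredient the paper uses in both halves: the group completion identification of base-point components with $\varinjlim_n BG_\ell(S^{nm})$. After that, $\varinjlim_n BG^o_\ell(S^{nm})\to\varinjlim_n BG_\ell(S^{nm})\to B\bZ_\ell^*$ is a fibration because suspension induces the identity on $\pi_0G_\ell=\bZ_\ell^*$. This is how the paper concludes.
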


\begin{proof}
As shown in \cite[\S 4]{Segal}, the natural maps
$$\ul\cB G^o_\ell(S^m)({\bf 1}) \ = \ (\bigsqcup_{n\geq 0} B G^o_\ell(S^{mn}) \ \to \ (\bigsqcup_{n\geq 0} B G^o_\ell(S^{mn}))^+,$$
$$\ul\cB G_\ell(S^m)({\bf 1}) \ = \ (\bigsqcup_{n\geq 0} B G_\ell(S^{mn}) \ \to \ (\bigsqcup_{n\geq 0} B G_\ell(S^{mn}))^+$$
induce the maps $(-)\otimes_{\bZ/\ell[\bN]} \bZ/\ell[\bZ]$ in mod-$\ell$ homology and thus induce 
homotopy equivalences of $\ell$-complete spaces
\begin{equation}
\label{eqn:ell-compl}
\varinjlim_n B G^o_\ell(S^{n\cdot m}) \ \stackrel{\approx}{\to} \ (\bigsqcup_{n\geq 0} BG^o_\ell(S^{mn}))^+_{[0]}, \quad
\varinjlim_n B G_\ell(S^{n\cdot m}) \ \stackrel{\approx}{\to}  \ (\bigsqcup_{n\geq 0} B G_\ell(S^{mn}))^+_{[0]}.
\end{equation}
Observe that the natural maps (given by suspensions) 
$\varinjlim_n B G^o_\ell(S^n) \to \varinjlim_n B G^o_\ell(S^{n\cdot m})$ and 
$\varinjlim_n B G_\ell(S^n) \to \varinjlim_n B G_\ell(S^{n\cdot m})$ are isomorphisms, so that  
the homotopy type of the map of 0-connected spectra
$||\ul\cB G^o(S^m)_\ell||^+_{[0]} \  \to \ ||\ul\cB G_\ell(S^m)||_{[0]}^+$ is independent of $m$ for $m > 0$.

Since the map $G_\ell(S^n) \to G_\ell(S^{n+1})$ induced by suspension induces the
identity map $\bZ_\ell^* = \pi_0(G_\ell(S^n)) \ \to \  \pi_0(G_\ell(S^{n+1})) = \bZ_\ell^*$ for $n > 0$,
we conclude that 
$$ \varinjlim_n (B G^o(S^{n\cdot m}))_\ell \ \stackrel{\approx}{\to} \ 
\varinjlim_n B G^o_\ell(S^{n\cdot m}) \ \to \ \varinjlim_n B G_\ell(S^{n\cdot m})  \ \to \ K(\bZ_\ell^*,1)$$
is a fiber sequence.  This implies that 
$$||\ul\cB G^o(S^m)_\ell||^+_{[0]} \ \stackrel{\approx}{\to} \ 
||\ul\cB G_\ell^o(S^m_\ell)||^+_{[0]} \quad \to \quad ||\ul\cB G_\ell(S^m)||^+_{[0]} \quad \to  {\bf K}(\bZ_\ell^*,1)$$
is a fiber sequence of 0-connected $\Omega$-spectra as asserted.
\end{proof}

\vskip .1in

\begin{cor}
\label{cor:S-hom}
The homotopy groups \ $\pi_{i+1}((\bf{BG^o(S)})_\ell)  \ = \ \pi_{i+1}(\bf{BG_\ell(S)})$ \
can be naturally identified with the $\ell$-adic completion
of the $i$-th stable homotopy groups of the spheres for $i > 0$, whereas $\pi_1((\bf{BG^o(S)})_\ell)) = 0$
and $\pi_1(\bf{BG_\ell(S)}) = \bZ_\ell^*$.
\end{cor}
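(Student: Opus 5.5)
The plan is to read off the homotopy groups from the level-wise description of the $\cF$-spaces together with the fiber sequence of Proposition \ref{prop:S2-spec}. By Theorem \ref{thm:summary}(3), the $0$-space of $||\ul\cB G_\ell(S^m)||^+$ is the group completion of $\ul\cB G_\ell(S^m)({\bf 1})$. By (\ref{eqn:ell-compl}), its base-point component is identified with $\varinjlim_n BG_\ell(S^{nm})$, and likewise for the oriented version. Hence for $i\ge 0$,
$$\pi_{i+1}({\bf BG_\ell(S)}) \ = \ \varinjlim_n \pi_{i+1}(BG_\ell(S^{nm})) \ = \ \varinjlim_n \pi_i(G_\ell(S^{nm})),$$
and similarly $\pi_{i+1}(({\bf BG^o(S)})_\ell) = \varinjlim_n \pi_i(G^o(S^{nm})_\ell)$. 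Independence of $m$ is supplied by Proposition \ref{prop:S2-spec}.

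\textbf{Case $i>0$.} Take $X$ with $|X|\simeq S^{nm}$, which is finite and nilpotent. By Example \ref{ex:sphere-spectrum}, $\pi_i(G^o(X)) = \pi_{i+nm}(S^{nm})$, a finitely generated group by Proposition \ref{prop:fingen}(1). Proposition \ref{prop:fingen}(2),(4) then gives
$$\pi_i(G^o(X)_\ell)\cong\pi_i(G^o_\ell(X))\cong \pi_{i+nm}(S^{nm})\otimes\bZ_\ell .$$
For $i>0$ the components of $G_\ell(X)$ and $G^o_\ell(X)$ have the same higher homotopy, so $\pi_{i+1}$ of the oriented and unoriented spectra agree; this is also the long exact sequence of (\ref{eqn:fib-seq}), since ${\bf K}(\bZ_\ell^*,1)$ has homotopy only in degree $1$. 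The maps in the colimit are induced by suspension and are compatible with the identifications of Example \ref{ex:sphere-spectrum}. For $nm > i+1$ they are Freudenthal isomorphisms, so the colimit is $\pi_i^s\otimes\bZ_\ell$, which is the $\ell$-adic completion of the finite group $\pi^s_i$.

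\textbf{Case $i=0$.} By definition $G^o$ is connected, so $\pi_0(G^o(X)_\ell)=0$ and $\pi_1(({\bf BG^o(S)})_\ell)=0$. For the unoriented spectrum, $\pi_0(G_\ell(S^N))$ is the monoid of mod-$\ell$ self-equivalences of the $\ell$-completed sphere up to homotopy. This is the group of units of $\pi_N((S^N)_\ell)=\bZ_\ell$, namely $\bZ_\ell^*$, and it is stable under suspension as noted in the proof of Proposition \ref{prop:S2-spec}. Hence $\pi_1({\bf BG_\ell(S)}) = \bZ_\ell^*$, consistent with (\ref{eqn:fib-seq}).

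\textbf{Main obstacle.} The step I expect to be the real difficulty is naturality: the identifications must commute with the suspension maps in the colimit. I would handle this by checking that the maps $G^o(S^{N})\to G^o(S^{N+m})$ given by smashing with the identity correspond, under Example \ref{ex:sphere-spectrum}, to the suspension $\Omega^N_1S^N\to\Omega^{N+m}_1S^{N+m}$. I would then use the naturality of the map $(-)\otimes\bZ_\ell$ in Proposition \ref{prop:fingen}.
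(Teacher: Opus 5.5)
Your argument is correct and is essentially the one the paper intends. The paper gives no separate proof: it reads the corollary off from the identifications (\ref{eqn:ell-compl}) and the fiber sequence (\ref{eqn:fib-seq}) of Proposition \ref{prop:S2-spec}, combined with Example \ref{ex:sphere-spectrum} and Proposition \ref{prop:fingen}, just as you do. Your check that the colimit maps are suspensions, so that Freudenthal identifies the colimit with $\pi_i^s\otimes\bZ_\ell$, and your direct computation $\pi_0(G_\ell(S^N))=\bZ_\ell^*$ fill in details the paper leaves implicit.
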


\vskip .2in


\section{Variants of the ${\bf J}$-map}
\label{sec:J-hom}

We shall find it convenient to refer to any $X$-fibration over $\ul \cB$
as an $S^2$-fibration if $|X|$ is homotopy equivalent to the 2-sphere.

\vskip .1in

The proof of the following lemma is essentially contained in its statement.

\begin{lemma}
\label{lem:cJ}
For each $n > 0$, the topological action of $GL_n(\bC)^{top}$ on $\bC^+$ determines an embedding
$GL_n(\bC)^{top} \hookrightarrow G^{top,o}(\bC^{n+})$, where $G^{top,o}(\bC^{n+})$
is the topological group of oriented self homotopy equivalences of $\bC^{n+}$.   For each $n > 0$ and 
each $I= (i_1,\ldots i_n) \in \ul\cN({\bf n})$, these maps induce maps of fibrations
\begin{equation}
\label{eqn:fiber-XI}
\xymatrix{
Sin({\ul \cB}(GL(\bC),\bC^+)^{top})_I \ar[r] \ar[d]_{\tau_{\bC^+}} 
\ar[r] \ar@/^2pc/[rr]
& \ul Sin(\cB (G^{top,o}(\bC^+),\bC^+))_I \ar[d]^{\pi_{\bC^+}^o} \ar[r] &
 \ul Sin(\cB (G^{top}(\bC^+),\bC^+))_I \ar[d]^{\pi_{\bC^+}}\\
Sin({\ul \cB}GL(\bC)^{top})_I \ar[r]^{\cJ^{o}} \ar@/_2pc/[rr]_{\cJ} & Sin(\ul\cB G^{top,o}(\bC^+))_I \ar[r]^{\iota} 
& Sin(\ul\cB G^{top}(\bC^+))_I.
}
\end{equation}
whose induced maps on fibers are homotopic to the identity map of $|X^I |$.

\end{lemma}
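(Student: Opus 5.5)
The plan is to build the maps level-by-level from the group actions and then assemble them into maps of $\cF$-spaces over $\ul\cN$ using the uniform categorical construction of Examples \ref{ex:BGL} and \ref{ex:G(|X|)}.

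\textbf{Step 1: the embedding.} For each $n$, the continuous action of $GL_n(\bC)^{top}$ on $(\bC^n)^+$ (Proposition \ref{prop:map-cone}) gives a continuous homomorphism $GL_n(\bC)^{top} \to G^{top}((\bC^n)^+)$. Since $GL_n(\bC)^{top}$ is connected, this homomorphism lands in the identity component $G^{top,o}((\bC^n)^+)$. Injectivity is clear, because a linear map is determined by its action on $\bC^n \subset (\bC^n)^+$. Identify $(\bC^n)^+$ with $|X^n|$ for $|X| \simeq \bC^+ = S^2$, via the smash-product compatibility (\ref{eqn:GL-smash}).

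\textbf{Step 2: compatibility with the $\cF$-structure.} For $I=(i_1,\ldots,i_n)$, take products to get $\prod_j GL_{i_j}(\bC)^{top} \to \prod_j G^{top,o}((\bC^{i_j})^+)$. The key check is that these maps commute with the structure maps of $\cF$. Under non-decreasing $\alpha$, block sum of matrices must go to smash product of self-maps. This holds because $(\bC^m)^+\wedge(\bC^n)^+ \to (\bC^{m+n})^+$ is induced by $\oplus$, so $(A\oplus B)^+ = A^+\wedge B^+$. Permutations are carried by the common $\prod_S E(\Sigma_{|S|},\Sigma_{|S|})$ factor in (\ref{eqn:id-GL}) and (\ref{eqn:id-GL-I}). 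Since $GL$ is connected, permutation matrices lie in the identity component, which is consistent with $\Sigma_n \subset G^o$. Consequently the homomorphisms define functors of topological categories $\cC(GL)({\bf n}) \to \cC(G^{top,o})({\bf n})$, natural in ${\bf n}$. Applying the same reasoning to the object-spaces $\wedge_j(\bC^{i_j})^+$, which the action maps equivariantly via the identity on points, gives functors on the tilde categories. Taking nerves and then $Sin$ yields the left squares of (\ref{eqn:fiber-XI}). The right squares come from the inclusion $G^{top,o} \subset G^{top}$, and composing the two gives $\cJ$.

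\textbf{Step 3: the fibers.} Both vertical maps are projections of two-sided bar constructions, so they are quasifibrations on realization and hence fibrations after $Sin$, up to replacement. Over the base point, the fiber of $B(\prod_j GL_{i_j},\wedge_j(\bC^{i_j})^+)$ is $\wedge_j(\bC^{i_j})^+$ itself, and the upper map is the identity on this space. Therefore the induced map on fibers is the identity of $|X^I|$ up to the identification of Step 1.

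\textbf{Main obstacle.} The hardest point is functoriality under arbitrary maps in $\cF$, not just non-decreasing ones. The smash product is only homotopy commutative, so one has to verify that the $\Sigma_{|S|}$-twisting data on both sides are matched strictly. I would handle this by arguing that the map is the identity on the discrete object/permutation data and strictly monoidal on morphisms. That reduces the check to the strict identity $(A\oplus B)^+ = A^+\wedge B^+$ together with its permuted versions, which hold on the nose for coordinate permutations.
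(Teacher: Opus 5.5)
Your proposal is correct and is the argument the paper has in mind. The paper gives no separate proof, only remarking that it is ``essentially contained in its statement'': the action homomorphisms $GL_{i_j}(\bC)^{top} \to G^{top,o}((\bC^{i_j})^+)$ land in the identity component by connectedness, are compatible on the nose with block sum versus smash product and with coordinate permutations, and so induce maps of bar constructions that are the identity on the fiber over the base point. One minor slip, inherited from the description in Example~\ref{ex:BGL}: by Definition~\ref{defn:X-fibration} and speciality, the fiber over $I$ should be the product $\prod_j (\bC^{i_j})^+ \simeq |X^I|$ rather than the smash $\wedge_j (\bC^{i_j})^+$, but your identity-on-fibers argument is unaffected.
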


\vskip .1in

The following proposition follows immediately from Lemma \ref{lem:cJ}.

\vskip .1in

\begin{prop}
Let $X$ be a finite, pointed simplicial set with $|X| \simeq S^2$.  Then $\tau_{\bC^+}$ of (\ref{eqn:tau-top-spec})
fits in a commutative diagram of $\cF$-spaces
\begin{equation}
\label{eqn:cJ}
\xymatrix{
Sin({\ul \cB}(GL(\bC),\bC^+)^{top}) \ar[r] \ar[d]_{\tau_{\bC^+}} 
\ar[r] \ar@/^2pc/[rr]
& \ul Sin(\cB (G^{top,o}(\bC^+),\bC^+)) \ar[d]^{\pi_{\bC^+}^o} \ar[r] &
 \ul Sin(\cB (G^{top}(\bC^+),\bC^+)) \ar[d]^{\pi_{\bC^+}}\\
Sin({\ul \cB}GL(\bC)^{top}) \ar[r]^{\cJ^{o}} \ar@/_2pc/[rr]_{\cJ} & Sin(\ul\cB G^{top,o}(\bC^+)) \ar[r]^{\iota} & Sin(\ul\cB G^{top}(\bC^+))
}
\end{equation}
whose vertical maps are $X$-fibrations and whose squares are Cartesian.

Consequently, $\tau_{\bC^+}$ is an oriented $X$ fibration oriented by
$$adj^{-1}\circ \cJ^o: Sin({\ul \cB}(GL(\bC))^{top}) \ \dashrightarrow \  \ul\cB G^o(X),$$
where $adj: \ul\cB G(X) \to Sin(\ul\cB G(|X|)^{top})$ is the weak homotopy equivalence appearing in (\ref{eqn:compat-I}).
\end{prop}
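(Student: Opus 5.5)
The plan is to assemble the levelwise diagrams (\ref{eqn:fiber-XI}) of Lemma \ref{lem:cJ} into a diagram of $\cF$-spaces, verify the $X$-fibration axioms and cartesianness level by level, and then read off the orientation from the universal property of Theorem \ref{thm:X-universal} and Corollary \ref{cor:X-fib-o}.

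First, I check that the maps in (\ref{eqn:fiber-XI}) are natural in ${\bf n}$, so that they assemble into maps of $\cF$-spaces over $\ul\cN$. All three columns are nerves of topological categories of the same shape, as in Examples \ref{ex:BGL} and \ref{ex:G(|X|)}. Their objects are indexed by $I$ together with permutations $\sigma_S$. Their morphisms are $\prod_j GL_{i_j}(\bC)$, $\prod_j G^{top,o}((\bC^{i_j})^+)$ and $\prod_j G^{top}((\bC^{i_j})^+)$ respectively, and the actions are on $\wedge_j(\bC^{i_j})^+$. The embedding $GL_n(\bC)^{top}\hookrightarrow G^{top,o}((\bC^n)^+)$ is given by the action on one-point compactifications. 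Block sum goes to smash product under this embedding, because $\oplus:\bC^m\times\bC^n\to\bC^{m+n}$ induces $\wedge$ on compactifications. Permutation matrices act compatibly on coordinates. So the maps are functors of topological categories commuting with the $\cF$-structure maps, and they respect sections (the points $\infty$) and the projections to $\ul\cN$.

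Second, I check cartesianness and the $X$-fibration axioms. For each $I$, the squares of (\ref{eqn:fiber-XI}) are bar constructions $B(H,Y)\to BH$ pulled back along group maps $H\to H'$ with the same fiber $Y=\wedge_j(\bC^{i_j})^+$. Hence they are cartesian levelwise, and therefore cartesian as squares of $\cF$-spaces. Take $X$ finite with $|X|\simeq S^2$, so that $|X^I|\simeq\wedge_j(\bC^{i_j})^+$. Then:
\begin{itemize}
\item condition (1) of Definition \ref{defn:X-fibration} follows from Lemma \ref{lem:cJ};
\item condition (2) follows because $\alpha$ acts on fibers by the smash product maps $\wedge_{\alpha,I}$;
\item $\pi_{\bC^+}$ and $\pi^o_{\bC^+}$ are $X$-fibrations by the topological analogue of Example \ref{ex:X-fib}(1),(2), transported through Proposition \ref{prop:close-relate};
\item $\tau_{\bC^+}$ is an $X$-fibration as the pullback of $\pi^o_{\bC^+}$.
\end{itemize}

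Third, I deduce the orientation. By Proposition \ref{prop:close-relate}, $adj:\ul\cB G(X)\to Sin(\ul\cB G(|X|)^{top})$ and its oriented analogue are weak equivalences covered by cartesian maps of $X$-fibrations. Thus $\pi^o_{\bC^+}$ is equivalent to $\pi^o_X$, and $\tau_{\bC^+}\simeq(adj^{-1}\circ\cJ^o)^*(\pi^o_X)$ in $X{\text -}fib(-)$. By Theorem \ref{thm:X-universal} and Corollary \ref{cor:X-fib-o}, the classifying map of $\tau_{\bC^+}$ therefore factors as $\iota\circ adj^{-1}\circ\cJ^o$, which is the asserted orientation.

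The main obstacle is that $\Sigma_n\subset G^o(X^n)$ is needed for $\ul\cB G^o(X)$ to be defined. This holds because permutations of the even-dimensional spheres $(\bC)^+$ have degree $+1$; it fails for odd spheres. A secondary point is making the identification $adj$ compatible with the oriented subobjects.
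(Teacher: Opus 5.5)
Your proposal is correct and takes the paper's route: the paper only says the proposition ``follows immediately from Lemma \ref{lem:cJ}''. Your three steps supply the details the paper leaves implicit:
\begin{enumerate}
\item assembling the levelwise maps of (\ref{eqn:fiber-XI}) into maps of $\cF$-spaces, using that block sum corresponds to smash product;
\item cartesianness, because each square is a pulled-back bar construction;
\item reading off the orientation from Proposition \ref{prop:close-relate}, Theorem \ref{thm:X-universal} and Corollary \ref{cor:X-fib-o}, together with your welcome check that $\Sigma_n \subset G^o(X^n)$ for even spheres.
\end{enumerate}
One small correction: since $X^I=\prod_j X^{i_j}$ and the $\cF$-spaces must be special, the fiber over $I$ is the product $\prod_j(\bC^{i_j})^+$, not the smash $\wedge_j(\bC^{i_j})^+$. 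The smash enters only through the structure maps $\wedge_{\alpha,I}$, and the $\wedge$ you took from Example \ref{ex:BGL} is a slip in the paper.
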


\vskip .1in 

\begin{notation}
\label{note:J}
Let $X$ be a finite, pointed simplicial set with $|X| \simeq S^2$.  We denote by \quad \quad 
$ {\bf J^o}: {\bf B GL(C)} \quad \to \quad {\bf BG^o(S)}$ \quad \quad
the homotopy class of maps of connective $\Omega$-spectra 
determined by the map 
$$(adj^o)^{-1}\circ \cJ^o: Sin({\ul \cB}GL(\bC)^{top})  \dashrightarrow \ul\cB G^o(X) \ {\text in} \ Ho((\cF{\text -}spec/\ul\cN)_*)$$
in (\ref{eqn:cJ}).

We set \quad \quad 
${\bf J} \ \equiv \ \iota \circ  {\bf J^o}: \ {\bf B GL(C)} \quad \to \quad {\bf BG(S)}.$
\end{notation}

\vskip .1in

The following construction can be justified by using permutative categories whose objects and 
morphisms are schemes over $R$ for $R$ either $\bC$ or $W(k)$ or $k$, analogous to the justification
of Example \ref{ex:BGL}.  (See \cite{F80}.)  Alternatively, one can use the fact 
that $\tau^{top}_{|S^{2,alg}_\bC|}$ has an underlying algebraic structure.

\begin{construct}
\label{construct:base-change}
Let $R$ be either $\bC$ or $W(k)$ or $k$.
There is a natural  map of $\cF$-objects of pointed simplicial $R$-schemes
\begin{equation}
\label{eqn:tau-alg}
\tau_R: {\ul \cB}(GL_R,S_R^{2,alg}) \quad \to \quad {\ul \cB}GL_R \ .
\end{equation}
The evaluation at ${\bf 1} \in \cF$ of $\tau_R$ determines the natural projection 
$$ \coprod_{n\geq 0} B(GL_{R,n},S_R^{2n,alg}) \quad \to \quad \coprod_{n\geq 0} BGL_{R,n} \ .$$

Base changes along $\Spec \bC \to \Spec W(k)$ and along $\Spec k \to \Spec W$ determine a commutative
diagram of simplicial schemes
\begin{equation}
\label{eqn:base-change-s}
\xymatrix{
{\ul\cB}(GL_\bC,S^{2,alg}_\bC) \ar[d]_{\tau_{\bC}} \ar[r] & {\ul\cB}(GL_{W(k)},S^{2,alg}_{W(k)})  
\ar[d]^{\tau_{W(k)}}  & \ar[l] {\ul\cB}(GL_k,S^{2,alg}_k) \ar[d]^{\tau_k} \\
\ul\cB GL_\bC \ar[r] & \ul\cB GL_{W(k)} &  \ar[l] \ul\cB GL_k \ .
}
\end{equation}
\end{construct}

\vskip .1in

We complement Lemma \ref{lem:cJ} with the following fiber homotopy equivalences
of oriented $\bZ/\ell$-complete $S^2$-fibrations.

\begin{prop}
\label{prop:class-compare}
The following commutative diagram of $\cF$-spaces 
\begin{equation}
\label{eqn:C-to-Cwedge}
\xymatrix{
Sin( {\ul \cB} (GL(\bC),\bC^+)^{top})_\ell \ar[d]_{(\tau_{\bC^+})_\ell}  \ar[r]^{\tilde \gamma} &
Sin({\ul \cB} (GL(\bC),|S^{2,alg}_\bC|)^{top})_\ell \ar[r] \ar[d]^{(\tau_{|S^{2,alg}_\bC|})_\ell} &
({\ul\cB} (GL_{\bC},S_\bC^{2,alg}))^\wedge \ar[d]_{(\tau_\bC)^\wedge}    
 \\
 Sin({\ul \cB} GL(\bC)^{top})_\ell  \ar[r]^= & Sin(\ul\cB GL(\bC)^{top})_\ell \ar[r]_{f_{top\mapsto \bC}} &
({\ul\cB} GL_{\bC})^\wedge.
}
\end{equation}
consists of squares which are maps of oriented $\bZ/\ell$-complete $S^2$-fibrations with horizontal maps
which are weak equivalences.
Here, the lower horizontal arrow  $f_{top \mapsto \bC}$ is the homotopy class of $f_{top \mapsto \bC}:  
Sin({\ul \cB} GL(\bC))^{top})_\ell \ \dashrightarrow \ ({\ul\cB} GL_{\bC})^\wedge$ in $Ho(\cF[s.sets_*])$
specified by the functorial
``short chain"  of homotopy equivalences $\rho_{U\mapsto X}, \ \rho_{top \mapsto \pi_0}$ of Theorem \ref{thm:holim}.
\end{prop}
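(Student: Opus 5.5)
The plan is to treat the two squares of (\ref{eqn:C-to-Cwedge}) separately. For each one we check three things level-wise for every $I \in \ul\cN({\bf n})$: the square commutes as a diagram of $\cF$-spaces; the horizontal maps are weak equivalences; and the induced maps on fibers are homotopy equivalences compatible with the smash product maps $\wedge_{\alpha,I}$. Since everything lives in $(\cF{\text -}spec/\ul\cN)_*$, these checks are enough to give maps of $\bZ/\ell$-complete $S^2$-fibrations in the sense of Definition \ref{defn:ell-fibration}. Orientations then follow, because the source $Sin({\ul\cB}GL(\bC)^{top})_I$ is simply connected levelwise (each $BGL_n(\bC)$ is). Corollary \ref{cor:relate-o} and Corollary \ref{cor:cover} then transport the orientation of $\tau_{\bC^+}$, coming from $\cJ^o$ in (\ref{eqn:cJ}), across each square.

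For the left square, Proposition \ref{prop:map-cone} supplies $GL_n^{top}$-equivariant homotopy equivalences $\gamma_n:|S^{2n,alg}_\bC|\to(\bC^n)^+$, compatible with smash products by (\ref{eqn:GL-smash}). These assemble into a map of topological $\cF$-objects ${\ul\cB}(GL(\bC),|S^{2,alg}_\bC|)^{top}\to{\ul\cB}(GL(\bC),\bC^+)^{top}$ over the identity of ${\ul\cB}GL(\bC)^{top}$, and this map is a fiberwise homotopy equivalence. Since the map is equivariant, one can alternatively use an equivariant homotopy inverse; in any case one gets $\tilde\gamma$ after applying $Sin$ and $(-)_\ell$. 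To see that $\ell$-completion preserves the fibration structure, apply Proposition \ref{prop:X-ell-cases}(3): its nilpotence hypothesis holds because $\pi_1$ of the base is trivial. The map on fibers is then $(\gamma_{I})_\ell$, an equivalence compatible with $\wedge_{\alpha,I}$.

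The right square is the main obstacle. Apply the functorial chain of Theorem \ref{thm:holim} levelwise to the $\cF$-objects of simplicial $\bC$-schemes ${\ul\cB}(GL_\bC,S^{2,alg}_\bC)$ and ${\ul\cB}GL_\bC$. Naturality with respect to the map $\tau_\bC$ and to the structure maps $\alpha$ yields a commutative ladder of $\cF$-spaces. This ladder consists of two maps, $\rho_{U\mapsto X}$ and $\rho_{top\mapsto\pi_0}$, each of which is a levelwise homotopy equivalence on both the total spaces and the bases. The target $({\ul\cB}GL_\bC)^\wedge$ must be a special $\cF$-space; this holds by Corollary \ref{cor:product} together with the Künneth property for $BGL_n(\bC)$. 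I expect the real difficulty to be showing that $(\tau_\bC)^\wedge$ is a fibration with fibers $(S^{2I})_\ell$ that respects smash products. For this, first replace $(\tau_\bC)^\wedge$ by a fibration via a fibrant replacement. Its homotopy fiber is identified with $(S^{2I,alg}_\bC)^\wedge$ by the $\bC$-analog of Proposition \ref{prop:hom-fiber}, i.e.\ the comparison of geometric and homotopy-theoretic fibers from \cite[Thm 10.7]{F82}, which by naturality is compatible with the $\wedge$ of Definition \ref{defn:smash-alg}. Because the ladder maps are weak equivalences on total spaces and bases, the five lemma gives equivalences on homotopy fibers. Hence the square is a fiber homotopy equivalence after pulling back, and the $\cF$-structure maps on fibers agree up to homotopy with the smash products by (\ref{eqn:et-iso}) and Proposition \ref{prop:map-cone}.

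Finally, we must check that the composite lower map represents $f_{top\mapsto\bC}$ in $Ho(\cF[s.sets_*])$, which holds by construction, and that the orientation classes agree. The latter follows from the uniqueness in Corollary \ref{cor:X-fib-o-ell} once the squares are known to be maps of $\bZ/\ell$-complete fibrations over simply connected bases.
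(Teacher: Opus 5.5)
Your proposal follows the paper's proof: the left square via the $GL_n^{top}$-equivariant, smash-compatible equivalences $\gamma_n$ of Proposition \ref{prop:map-cone}, the right square via naturality of $\rho_{U\mapsto X}$, $\rho_{top\mapsto\pi_0}$ and the comparison theorem (your fibrant replacement, five-lemma and \cite[Thm 10.7]{F82} steps just fill in details the paper leaves implicit), and orientations transported from $(\tau_{\bC^+})_\ell$ via Corollary \ref{cor:relate-o}. The one step to drop is your closing appeal to ``uniqueness'' in Corollary \ref{cor:X-fib-o-ell}: that corollary is a bijection in which the orientation is part of the data, so it does not say an orientation over a simply connected base is unique (orientations are genuinely extra data, cf.\ Remark \ref{rem:false}). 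The step is also unnecessary, since, as in the paper, the orientations of the middle and right columns are simply defined by transport, which makes their compatibility tautological.
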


\begin{proof}
The compatibility of $\gamma_n\times \gamma_n$
with smash products (see (\ref{eqn:equiv})) together with the explicit descriptions of the $\cF$-spaces
(see Example \ref{ex:BGL}) provides the necessary functoriality with respect to 
maps in $\cF$  to yield the commutativity of the left square of (\ref{eqn:C-to-Cwedge}).
The compatibility established in Proposition \ref{prop:map-cone} of the actions of $GL_n(C)^{top}$ 
on $|S^{2n,alg}|$ and on $(\bC^n)^+$ utilizing the maps 
$ \gamma_n: |S^{2n,alg}_\bC| \to (\bC^n)^+$ of (\ref{eqn:tau-n}) imply that the left square of (\ref{eqn:C-to-Cwedge})
is a fiber homotopy equivalences.

The commutativity of the right square of (\ref{eqn:C-to-Cwedge}) follows from the naturality 
of $\rho_{U\mapsto X}, \ \rho_{top \mapsto \pi_0}$.
 This square is a homotopy equivalence of $S^2$-spaces by  
 the ``classical comparison theorem" in \'etale homotopy as in (\ref{eqn:Phi_X}). 
 
 By Corollary \ref{cor:relate-o}, $(\tau_{\bC^+})_\ell$ is oriented by $\lambda \circ ((adj^o)^{-1}\circ \cJ^o)_\ell$.
 We define the orientations of $(\tau_{|S^{2,alg}_\bC|})_\ell$ and $(\tau_\bC)^\wedge$ as those determined
 by $(\tau_{\bC^+})_\ell$: namely, \\
 $\lambda \circ ((adj^o)^{-1}\circ \cJ^o)_\ell$ and 
$\lambda \circ ((adj^o)^{-1}\circ \cJ^o)_\ell \circ (f_{top\mapsto \bC}^{-1}$).
\end{proof}

\vskip .1in

Construction \ref{construct:base-change} leads to the following maps of oriented $\bZ/\ell$-complete $X$-fibrations
where $|X| = S^2$.
Observe that $Sin(\ul\cB GL(\bC)^{top})$ is a pointed-connected special $\cF$-space over 
$\ul\cN$, so that $(\ul\cB GL_R)^\wedge$
is also a pointed-connected special $\cF$-space over $\ul\cN$ for $R$ equal to to $\bC$ or $k$ or $W(k)$.

\begin{prop}
\label{prop:tau-hat}
By applying $(-)^\wedge$ to (\ref{eqn:base-change-s}) in Construction \ref{construct:base-change} we 
obtain the following commutative diagram of $\cF$-spaces
\begin{equation}
\label{eqn:base-change-ss}
\xymatrix{
(\ul\cB (GL_\bC,S^{2,alg}_\bC))^\wedge \ar[d]_{(\tau_{\bC})^\wedge} \ar[r] & (\ul\cB (GL_{W(k)},S^{2,alg}_{W(k)}))^\wedge  
\ar[d]^{(\tau_{W(k)})^\wedge}  & \ar[l] (\ul\cB (GL_k,S^{2,alg}_k))^\wedge \ar[d]^{(\tau_k)^\wedge} \\
(\ul\cB GL_\bC)^\wedge \ar[r] & (\ul\cB GL_{W(k)})^\wedge &  \ar[l] (\ul\cB GL_k)^\wedge \ .
}
\end{equation}
whose squares constitute maps of oriented $\bZ/\ell$-complete $X$-fibrations with $|X| = S^2$ 
and whose horizontal maps are weak equivalences.

We denote by $f_{\bC\mapsto k}: (\ul\cB GL_\bC)^\wedge  \ \dashrightarrow \ (\ul\cB GL_k)^\wedge$  the homotopy class
of maps of $\cF$-spaces defined by the bottom row of (\ref{eqn:base-change-ss}).
\end{prop}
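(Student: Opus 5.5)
The plan is to apply the functor $(-)^\wedge$ level-wise to the diagram (\ref{eqn:base-change-s}) of $\cF$-objects of simplicial schemes of Construction \ref{construct:base-change}, and then to verify three things: that the result is a diagram of special $\cF$-spaces over $\ul\cN$, that the horizontal maps are weak equivalences, and that the vertical maps are oriented $\bZ/\ell$-complete $S^2$-fibrations with the squares being maps of such.

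\textbf{Step 1: functoriality and specialness.} Functoriality of $(-)^\wedge$ (Definition \ref{defn:wedge}) gives commutative diagrams of $\cF$-spaces, since base change and $(-)^\wedge$ commute with the structure maps indexed by $\cF$. To see that $(\ul\cB GL_R)^\wedge$ and $(\ul\cB(GL_R,S^{2,alg}_R))^\wedge$ are special, I would use the explicit description (\ref{eqn:id-GL}) of the $I$-components as products of $BGL_{n_i,R}$ (and of $B(GL_{n_i,R},S^{2n_i,alg}_R)$) with the combinatorial factors. I would then apply Corollary \ref{cor:product}; its K\"unneth hypothesis holds because the complex points have torsion-free cohomology, namely that of $BU(n)$ and Thom-space type spaces. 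The base change hypotheses needed there hold for $BGL_{n,W(k)}$ because it is the bar construction on a reductive group scheme. For $B(GL_{n,W(k)},S^{2n,alg}_{W(k)})$ they hold degreewise by Example \ref{ex:S-alg} together with the $E_1$ spectral sequence for simplicial schemes.

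\textbf{Step 2: horizontal weak equivalences.} These follow level-wise from Proposition \ref{prop:comparison}, applied to each $I$-component; the section $\Spec W(k)\to(X_{W(k)})_0$ is supplied by the base point $\Spec W(k)$ in degree $0$ and by $\infty$ in the sphere. Naturality of (\ref{eqn:natural}) gives the commutativity of both squares.

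\textbf{Step 3: fibration structure.} For the left column, Proposition \ref{prop:class-compare} already shows that $(\tau_\bC)^\wedge$ is fiber homotopy equivalent, via weak equivalences, to the oriented $\bZ/\ell$-complete $S^2$-fibration $(\tau_{\bC^+})_\ell$. After replacing each vertical map functorially by a fibration over the $\cF$-space, it is therefore an oriented $\bZ/\ell$-complete $S^2$-fibration. For the middle and right columns, I would check conditions (1) and (2) of Definition \ref{defn:ell-fibration}:
\begin{itemize}
\item \emph{Fibers.} For $(\tau_k)^\wedge$ the homotopy fiber is identified with $(S^{2,alg}_k)^{\wedge I}\simeq (X^I)_\ell$ by Proposition \ref{prop:hom-fiber} (geometric versus homotopy fiber), together with Proposition \ref{prop:Pn} and the smash isomorphism (\ref{eqn:et-iso}).
\item \emph{Smash compatibility.} The compatibility with smash products comes from the algebraic smash map (\ref{eqn:define-wedge}). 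It can alternatively be transported from the $\bC$-column through the horizontal equivalences, since being a fiber homotopy equivalence on fibers is detected after the weak equivalences of Step 2.
\end{itemize}
Orientations are then defined by transport, exactly as in the proof of Proposition \ref{prop:class-compare}: the orientation of $(\tau_{\bC})^\wedge$ is composed with the inverse of the (invertible in $Ho$) horizontal equivalences. Corollary \ref{cor:X-fib-o-ell} guarantees that this is compatible with classifying maps. By construction the squares are then maps of oriented $\bZ/\ell$-complete $X$-fibrations.

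\textbf{Main obstacle.} The expected difficulty is Step 3's identification of the fibers of $(\tau_{W(k)})^\wedge$ and $(\tau_k)^\wedge$ compatibly with smash products for all $I$. The naturality of the geometric-versus-homotopy fiber comparison of \cite[Thm 10.7]{F82} must be made strictly compatible with the $\cF$-structure. I would try first to avoid this by deducing everything from the $\bC$-column via the weak equivalences of Step 2. That route needs only homotopy-fiber invariance under level-wise weak equivalences of maps over level-wise weakly equivalent bases, which holds because $(s.sets_*)$ is right proper.
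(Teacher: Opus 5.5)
Your proposal is correct and follows essentially the same route as the paper. Specialness comes from Corollary \ref{cor:product}, the horizontal weak equivalences from Proposition \ref{prop:comparison}, the fibers are identified with $(\prod_j S^{2i_j})_\ell$ via the geometric-versus-homotopy-fiber comparison of \cite[Thm 10.7]{F82} (packaged in Proposition \ref{prop:hom-fiber}), and the orientations of the $W(k)$- and $k$-columns are transported from the $\bC$-column. The paper also explicitly uses the simple connectivity of the bases $(BGL_{n,R})^\wedge$ to make the fiber comparison functorial; the same fact is what makes your alternative of transporting fiber identifications from the $\bC$-column through the horizontal equivalences unambiguous.
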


\begin{proof}
The fact that the $\cF$-spaces $({\ul \cB}GL_R)^\wedge$ and $({\ul \cB}(GL_R,S_R^{alg}))^\wedge$ 
are special $\cF$-spaces follows from Corollary \ref{cor:product}.  The fact that the lower horizontal maps of
(\ref{eqn:base-change-ss}) are homotopy equivalences follows from Proposition \ref{prop:comparison}.

A functorial isomorphism is given in \cite[Thm 10.7]{F82} relating the mod-$\ell$ \'etale cohomology 
of the geometric fiber of  a map $X \to Y$ of simplicial $R$-schemes such as 
those occurring in Corollary \ref{cor:product} and the mod-$\ell$ cohomology 
of the homotopy-theoretic fiber of $X^\wedge \to Y^\wedge$.
This, together with the fact that each $(BGL_{n,R})^\wedge$ is simply connected, provides the 
functorial homotopy comparisons of the fibers of $(\ul\cB(GL_R,S_R^{2,alg}))^\wedge_I \to 
(\ul\cB GL_R)^\wedge_I$ with $(\prod_{j= 1}^n S^{2i_j})_\ell$ for any $I = (i_1,\ldots,i_n) \in  \ul\cN({\bf n})$
required for $(\tau_R)^\wedge$ to be a $\bZ/\ell$-completed $S^2$-fibration.  
Consequently, the squares of (\ref{eqn:base-change-ss}) determine maps of $\bZ/\ell$-complete $X$-fibrations.

By Proposition \ref{prop:comparison}, the horizontal maps of (\ref{eqn:base-change-ss}) are weak equivalences.

We give $(\tau_{W(k)})^\wedge$ and $(\tau_k)^\wedge$ the orientations determined by the orientation
of $(\tau_{\bC})^\wedge$.
\end{proof}

\vskip .1in

\begin{defn}
\label{defn:cJ-k}
Let $X$ be a finite, pointed simplicial set with $|X| \simeq S^2$.
 We define 
 $$(\cJ^o_k)^\wedge: (\ul\cB GL_k)^\wedge \quad \dashrightarrow \quad \ul\cB G^o_\ell(X)$$
 to be the 
homotopy class of the classifying map for the oriented $Z/\ell$-completed $S^2$-fibration $(\tau_k)^\wedge$
occurring in (\ref{eqn:base-change-ss}).   
\end{defn}

\vskip .1in

 The following proposition foreshadows Theorem \ref{thm:stable-adams}.

\begin{prop}
\label{prop:relate-Js}
Set \ $f_{top \mapsto k} \ \equiv \ f_{top \mapsto \bC} \circ f_{\bC \mapsto k}$ in $Ho(\cF[s.sets_*]/\ul\cN)$,
and set 
$$\cJ^o_\ell \ \equiv \ (-)_\ell \circ (adj)^{-1}\circ \cJ: Sin({\ul \cB}GL(\bC)^{top})  \dashrightarrow \ul\cB G^o(X)  \ \to \ \ul\cB G^o_\ell(X).$$
Then the maps  
\begin{equation}
\label{eqn:equal-class}
\cJ_\ell^o, \ \  (\cJ_k)^\wedge \circ f_{top \mapsto k}:  Sin({\ul \cB} GL(\bC)^{top})_\ell \ \dashrightarrow \  \cB G_\ell(X)
\end{equation}
are equal in $Ho(\cF[s.sets_*]/\ul\cN)$.
\end{prop}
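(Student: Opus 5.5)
The plan is to reduce the statement to the representability theorem for oriented $\bZ/\ell$-complete $S^2$-fibrations, Corollary \ref{cor:X-fib-o-ell}. That corollary says two maps into $\ul\cB G^o_\ell(X)$ are equal in the homotopy category exactly when the pulled-back oriented $\bZ/\ell$-complete $X$-fibrations are equivalent. So I would show that $\cJ^o_\ell$ and $(\cJ^o_k)^\wedge\circ f_{top\mapsto k}$ classify the same element of $X_\ell{\text -}fib^o(Sin(\ul\cB GL(\bC)^{top})_\ell)$. Because $\lambda$ of Corollary \ref{cor:cover} is an equivalence, I read $\cJ^o_\ell$ as the map $\lambda\circ((adj^o)^{-1}\circ\cJ^o)_\ell$. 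The unoriented statement then follows by composing both maps with $\ul\cB G^o_\ell(X)\to\ul\cB G_\ell(X)$.

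The first step is to identify what $\cJ^o_\ell$ classifies. By the proposition following Lemma \ref{lem:cJ}, $\tau_{\bC^+}$ is an oriented $X$-fibration oriented by $(adj^o)^{-1}\circ\cJ^o$. The base $Sin(\ul\cB GL(\bC)^{top})$ has simply connected components $BGL_n(\bC)$, so it lies in $(\cF{\text -}\widetilde{spec}/\ul\cN)_*$. Corollary \ref{cor:relate-o} then says that $(\tau_{\bC^+})_\ell$ is an oriented $\bZ/\ell$-complete $X$-fibration over $Sin(\ul\cB GL(\bC)^{top})_\ell$, with classifying map exactly $\cJ^o_\ell$.

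The second step is to transport $(\tau_{\bC^+})_\ell$ along the two chains of fiber homotopy equivalences. Proposition \ref{prop:class-compare} gives maps of oriented $\bZ/\ell$-complete $S^2$-fibrations from $(\tau_{\bC^+})_\ell$ to $(\tau_\bC)^\wedge$, covering $f_{top\mapsto\bC}$. Proposition \ref{prop:tau-hat} gives maps relating $(\tau_\bC)^\wedge$, $(\tau_{W(k)})^\wedge$ and $(\tau_k)^\wedge$, covering the zig-zag whose class is $f_{\bC\mapsto k}$. All horizontal maps in these diagrams are weak equivalences, and each square is a map of $\bZ/\ell$-complete $X$-fibrations. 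By Remark \ref{rem:restate}, together with naturality of $(-)^*(\pi^o_{X,\ell})$ (Proposition \ref{prop:functor-ell}), this yields
$$f_{top\mapsto k}^*\big((\tau_k)^\wedge\big)\;\sim\;(\tau_{\bC^+})_\ell \quad\text{in } X_\ell{\text -}fib^o\big(Sin(\ul\cB GL(\bC)^{top})_\ell\big).$$
Since $(\cJ^o_k)^\wedge$ classifies $(\tau_k)^\wedge$ by Definition \ref{defn:cJ-k}, naturality shows that $(\cJ^o_k)^\wedge\circ f_{top\mapsto k}$ classifies the left-hand side. Injectivity in Corollary \ref{cor:X-fib-o-ell} then gives the claimed equality.

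The main obstacle is the orientations. Equivalence in $X_\ell{\text -}fib^o$ means the chosen factorizations through $\ul\cB G^o_\ell(X)$ agree, not merely that the underlying fibrations agree. Here the orientations on $(\tau_{|S^{2,alg}_\bC|})_\ell$, $(\tau_\bC)^\wedge$, $(\tau_{W(k)})^\wedge$ and $(\tau_k)^\wedge$ were defined by transport from $(\tau_{\bC^+})_\ell$ in the proofs of Propositions \ref{prop:class-compare} and \ref{prop:tau-hat}. I would therefore check that this transported orientation is compatible with composing along the zig-zags. This should amount to unwinding the definitions, using that each step of the chain inverts a weak equivalence in $Ho((\cF{\text -}spec/\ul\cN)_*)$.

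A second, smaller point is that the chain in Remark \ref{rem:restate} must compose to the identity on the base. I would arrange this by pulling everything back to $Sin(\ul\cB GL(\bC)^{top})_\ell$ using the bijections of Lemmas \ref{lem:iota!} and \ref{lem:p*}.
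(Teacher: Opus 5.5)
Your proposal is correct and follows essentially the same route as the paper. The paper's one-line proof observes that the orientations on $(\tau_\bC)^\wedge$ and $(\tau_k)^\wedge$ are defined by transport from $(\tau_{\bC^+})_\ell$ along the fiber homotopy equivalences of Propositions \ref{prop:class-compare} and \ref{prop:tau-hat}, so the classifying map $(\cJ_k)^\wedge$ is represented in $Ho((\cF{\text -}spec/\ul\cN)_*)$ by the composite $\cJ^o_\ell\circ f_{top\mapsto \bC}^{-1}\circ f_{\bC\mapsto k}^{-1}$; your argument through Corollary \ref{cor:X-fib-o-ell} and the chain of fibration equivalences unwinds this in more detail, and your point that the orientation compatibility is definitional is exactly what makes that short proof work.
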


\begin{proof}
The asserted equality follows immediately from the observation that 
$(\cJ_k)^\wedge$ can be represented using Propositions \ref{prop:class-compare} and \ref{prop:tau-hat}
as the composition of maps in $Ho((\cF{\text -}spec/\ul\cN)_*)$
\begin{equation}
\label{eqn:string}
(\ul\cB GL_k)^\wedge \ \stackrel{f_{\bC\mapsto k}}{\dashleftarrow} \   (\ul\cB GL_\bC)^\wedge  \
 \stackrel{f_{top\mapsto \bC}}{\dashleftarrow} Sin({\ul \cB} GL(\bC)^{top})_\ell \
\stackrel{ \cJ^o_\ell}{\dashrightarrow} \  \ul\cB G_\ell(X).
\end{equation}
\end{proof}

\vskip .2in


\section{Adams operations}
\label{sec:Adams}

We refer to \cite[\S 3.2]{Atiyah} for a general discussion of the Adams operations and to \cite[11.13]{JFA}
for their stable version.  
We view the Adams operation $\psi^p$ as a map ${\bf kU } \to {\bf kU}$ of connective $\Omega$-spectra.
In Theorem \ref{thm:Sul}, we give  our interpretation of the models for $(\psi^p)_\ell$ utilized by D. Sullivan 
in \cite[\S 4]{Sul} and envisioned by D. Quillen in \cite{Quillen68}.

\begin{thm} 
\label{thm:Sul}
Let $\sigma_{\bC} \in Gal(\bC/\bQ)$ be an extension to a discontinuous automorphism of $\bC$ of the Teichm\"uller lifting
$\sigma_{W(k)}: W(k) \to W(k)$ of the arithmetic Frobenius $\sigma \in Gal(k/\bF_p)$.  
 The following commutative diagram in $\cF[s.sets_*]$ (with horizontal maps which are
homotopy equivalences)
\begin{equation}
\label{eqn:sigma} 
\xymatrix{
Sin(\ul \cB GL(\bC)^{top})_\ell  \ar[rr]^-{f_{top\mapsto \bC}}  & & (\ul\cB GL_\bC)^\wedge \ar[d]^{(\sigma_\bC^{-1})^\wedge} 
\ar[rr]^-{f_{\bC \mapsto k}} & & ({\ul\cB}GL_k)^\wedge \ar[d]^{F^\wedge} \\
Sin(\ul \cB GL(\bC)^{top})_\ell  \ar[rr]_-{f_{top\mapsto \bC}} & & (\ul\cB GL_{\bC})^\wedge  \ar[rr]^-{f_{\bC \mapsto k}} & & ({\ul\cB}GL_k)^\wedge 
}
\end{equation}
determines a homotopy commutative diagram of associated connective spectra upon applying the functor $(||-||)$
\begin{equation}
\label{eqn:sigma-||} 
\xymatrix{
||Sin(\ul \cB GL(\bC)^{top})_\ell || \ar[d]_{(\psi^p)_\ell} \ar[rr]^-{f_{top\mapsto \bC}}  & & ||(\ul\cB GL_\bC)^\wedge|| 
\ar[d]^{(\sigma_\bC^{-1})^\wedge} 
\ar[rr]^-{f_{\bC \mapsto k}} & & ||({\ul\cB}GL_k)^\wedge||^{+} \ar[d]^{F^\wedge} \\
||(Sin(\ul \cB GL(\bC)^{top})_\ell ||  \ar[rr]_-{f_{top\mapsto \bC}} & & ||(\ul\cB GL_{\bC})^\wedge||^{+}  \ar[rr]^-{f_{\bC \mapsto k}} 
& & ||({\ul\cB}GL_k)^\wedge||
}
\end{equation}
\end{thm}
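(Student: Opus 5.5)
Our plan has three steps. First we establish the commutativity of the right square of (\ref{eqn:sigma}). Then we produce the left vertical map of (\ref{eqn:sigma-||}) by transport. Finally we identify that map with $(\psi^p)_\ell$.

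\emph{Right square.} The Teichm\"uller lift $\sigma_{W(k)}$ reduces modulo $p$ to the arithmetic Frobenius $\sigma$. The group schemes $GL_n$ and the bar constructions $BGL_n$ are defined over $\bZ$. Hence base change along $\sigma_{W(k)}$, $\sigma_\bC$ and $\sigma$ gives a commutative diagram of $\cF$-objects of simplicial schemes, compatible with the two base-change rows of (\ref{eqn:base-change-s}). Applying the functor $(-)^\wedge$ and Proposition \ref{prop:comparison} (naturality of $\rho_{W\mapsto\bC}$ and $\rho_{W\mapsto k}$), we see that $f_{\bC\mapsto k}$ intertwines $(\sigma_\bC^{-1})^\wedge$ with $(\sigma^{-1})^\wedge$ on $(\ul\cB GL_k)^\wedge$.

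By Proposition \ref{prop:Frob}(3), $F\circ\sigma=(-)^p$ induces the identity on rigid \'etale types. Thus $(\sigma^{-1})^\wedge=F^\wedge$, and this identification holds functorially in $\cF$ because every construction is levelwise natural. A small point needs care here: $(-)^p$ must act as the identity on the rigid type itself, not merely up to homotopy. This holds because the absolute Frobenius induces the identity on the \'etale site. All horizontal maps are weak equivalences by Proposition \ref{prop:comparison} and Theorem \ref{thm:holim}.

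\emph{Left vertical map.} We define the left vertical arrow as the composite $f_{top\mapsto\bC}^{-1}\circ(\sigma_\bC^{-1})^\wedge\circ f_{top\mapsto\bC}$ in $Ho(\cF[s.sets_*])$. With this definition the left square commutes tautologically. Applying $||-||$ and Theorem \ref{thm:summary} yields homotopy commutative squares of connective spectra.

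\emph{Identification with $(\psi^p)_\ell$.} What remains is to show that this map of spectra is $(\psi^p)_\ell$. We plan to check this in three stages.

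\begin{enumerate}
\item Restrict along the diagonal torus $\ul\cB T$. Using $F^\wedge$ on $(BT_{n,k})^\wedge=(B\bG_m^{n})^\wedge$, the Frobenius acts on $H^2_{et}(B\bG_{m,k},\bZ/\ell^d)$ by multiplication by $p$, in the same way as in the computation of Proposition \ref{prop:Pn}. This is exactly the action of $\psi^p$ on line bundles. Hence, on the $0$-space $\coprod BU(n)_\ell$, the map agrees with $(\psi^p)_\ell$ after restriction to maximal tori.
\item Since $\ell$-completed $BU(n)$ is detected by its torus, and $\psi^p$ is additive, the two self-maps of $\ul\cB GL(\bC)^{top}_\ell$ agree on $\pi_*$ and on mod-$\ell$ cohomology.
\item To pass from this to a homotopy of \emph{spectrum} maps, use Theorem \ref{thm:summary}. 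Maps ${\bf kU}_\ell\to{\bf kU}_\ell$ of connective $\Omega$-spectra are determined by the induced infinite loop map on zero spaces. Moreover, because $\ell$-adic ${\bf kU}$ has torsion-free homotopy and $\ell$-complete $K$-theory has no phantoms, such a map is determined by its effect on $\pi_*$, equivalently by its action on the Bott class and the unit.
\end{enumerate}

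Stage 3 is the main obstacle: rigidity of self-maps of ${\bf kU}_\ell$. We would first try the Adams--Priddy or Madsen--Snaith--Tornehave uniqueness of infinite loop structures, together with the vanishing of $\varprojlim^1$ for maps between $\ell$-complete ${\bf kU}$. This would show that a stable self-map agreeing with $\psi^p$ on homotopy groups is homotopic to $(\psi^p)_\ell$. Indeed, the paper states Theorem \ref{thm:Sul} as its ``interpretation'' of $(\psi^p)_\ell$, so this identification is essentially Sullivan's and we would cite \cite[\S 4]{Sul} for it.
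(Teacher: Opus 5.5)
Your proposal is correct and is essentially the paper's argument. The right square comes from $F\circ\sigma=(-)^p$ inducing the identity (Proposition \ref{prop:Frob}), and the left square is reduced via the splitting principle to the diagonal torus, where both vertical maps become the $p$-th power map, as in the paper's diagram (\ref{eqn:T-GL}).

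Your Stage 3 makes explicit a rigidity step that the paper leaves implicit, but the reason you give is not the right one. Torsion-free homotopy with no phantoms does not by itself force a self-map to be detected on $\pi_*$. Also, a stable self-map is not determined by its effect on the unit and the Bott class. What works is $[{\bf kU}_\ell,{\bf kU}_\ell]\cong[KU_\ell,KU_\ell]$, which follows from connective-to-coconnective vanishing plus the $K(1)$-locality of $KU_\ell$. The latter is a ring of $\bZ_\ell$-valued measures on $\bZ_\ell^\times$, detected by the action on all of $\pi_*$, and your Stage 2 supplies exactly that.
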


\begin{proof}
We consider the commutative diagram of $\cF$-spaces over $\ul \cN$:
\begin{equation}
\label{eqn:T-GL}
\xymatrix@C6pc{
Sin(\ul\cB T(\bC)^{top})_\ell   \ar@/_4pc/[ddd]_{(-)^p} \ar[d] \ar[r]^{f_{top\mapsto \bC}} & (\ul\cB T_{\bC})^\wedge \ar[d] \ar@/^4pc/[ddd]^{(-)^p} \\
Sin(\ul \cB GL(\bC)^{top})_\ell  \ar[d]^{(\psi^p)_\ell} \ar[r]^{f_{top\mapsto \bC}} & (\ul\cB GL_{\bC}))^\wedge \ar[d]_{(\sigma_\bC^{-1})^\wedge}\\
Sin(\ul \cB GL(\bC)^{top})_\ell   \ar[r]^{f_{top\mapsto \bC}} & (\ul\cB GL_{\bC}))^\wedge \\
Sin(\ul\cB T(\bC)^{top})_\ell \ar[r]^{f_{top\mapsto \bC}}  \ar[u] & (\ul\cB T_{\bC})^\wedge \ar[u]
}
\end{equation}
(where $T_n(\bC) \hookrightarrow GL_n(\bC)$ is the subgroup of diagonal matrices).
The splitting theorem for (complex, topological) $K$-theory  implies that to  prove the homotopy commutativity
of the left square of (\ref{eqn:sigma-||}) (which is the same as the middle square of (\ref{eqn:T-GL})),
 it suffices to show that the larger
square determined by the upper and middle squares of (\ref{eqn:T-GL}) is commutative.  This is 
implied by the commutativity of outer square of (\ref{eqn:T-GL}).

The commutativity of the right squares of (\ref{eqn:sigma}) and (\ref{eqn:sigma-||}) is implied
by Proposition \ref{prop:Frob}.
\end{proof}

The following proposition presents a first version of the stable Adams conjecture.

\vskip .1in
 
 \begin{prop}
 \label{prop:F-wedge}
 As in Proposition \ref{prop:hom-fiber}, the (geometric) Frobenius map determines the commutative square of $\cF$-spaces
  \begin{equation}
\label{eqn:Fwedge}
\xymatrix{
(\ul\cB (GL_k,S_k^{2,alg}))^\wedge \ar[r]^{F^\wedge} \ar[d]_{(\tau_k)^\wedge} &  
(\ul\cB (GL_k,S_k^{2,alg}))^\wedge \ar[d]^{(\tau_k)^\wedge} \\
(\ul\cB GL_k)^\wedge \ar[r]^{F^\wedge}  & (\ul\cB GL_k)^\wedge,
}
\end{equation}
which constitutes a map of  $\bZ/\ell$-complete $S^2$-fibrations
over the weak equivalence $F^\wedge: (\ul\cB GL_k)^\wedge \to (\ul\cB GL_k)^\wedge$.

Consequently, the maps
$$ \cJ_k^\wedge, \ \circ \cJ_k^\wedge \circ F^\wedge: \ (\ul\cB GL_k)^\wedge \quad \to \quad \ul\cB G_\ell^o(S^2)
\quad \to \ul\cB G_\ell(S^2)$$
are homotopic.
\end{prop}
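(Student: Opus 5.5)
The plan is to verify that (\ref{eqn:Fwedge}) is a map of $\bZ/\ell$-complete $S^2$-fibrations in the sense of Definition \ref{defn:ell-fibration}, and then to deduce the homotopy of classifying maps from the representability results, namely Theorem \ref{thm:X-ell-universal} together with Proposition \ref{prop:functor-ell}.

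First, commutativity and functoriality in $\cF$. The geometric Frobenius $F$ is a natural endomorphism of every simplicial $k$-variety defined over $\bF_p$. The $\cF$-objects of simplicial $k$-schemes $\ul\cB(GL_k,S_k^{2,alg})$ and $\ul\cB GL_k$ of Construction \ref{construct:base-change} are defined over $\bF_p$, and so are all their structure maps, including $\tau_k$. Hence $F$ commutes with $\tau_k$ and with all maps induced by $\cF$. Applying the functor $(-)^\wedge$ therefore gives a commutative square of $\cF$-spaces over $\ul\cN$ that also respects the sections.

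Next, $F^\wedge$ on the base is a weak equivalence. Levelwise, $F$ on $(BGL_{n,k})$ induces multiplication by $p^s$ on $H^{2s}_{et}$; this follows from the cycle class argument of Proposition \ref{prop:Pn}, applied via the splitting principle to Chern classes. Since $p$ is a unit in $\bZ/\ell^d$, this is an isomorphism on mod-$\ell$ cohomology. For the products indexed by $I$, combine this with Corollary \ref{cor:product}. The spaces involved are $\bZ/\ell$-complete and simply connected, so $F^\wedge$ is a levelwise weak equivalence.

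The fiber condition is the main step. We need that, for each $I$, the induced map $\ul\cE_I\to F^{\wedge*}\ul\cE_I$ is a fiber homotopy equivalence. By Proposition \ref{prop:hom-fiber}, extended to products via \cite[Thm 10.7]{F82} as in the proof of Proposition \ref{prop:tau-hat}, the map on homotopy fibers is identified with $F^\wedge$ on $(S^{2I,alg}_k)^\wedge$. On $H^{2n}$ this map is multiplication by a power of $p$, which is a unit in $\bZ_\ell$. So it is a mod-$\ell$ equivalence between $\bZ/\ell$-complete spaces, and hence a homotopy equivalence. A map of fibrations over a common base that is an equivalence on fibers is a fiber homotopy equivalence. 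The point to flag is that this equivalence has degree $p^n\neq 1$, so it does not preserve the orientation. We therefore claim only a map of unoriented $\bZ/\ell$-complete $S^2$-fibrations.

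Finally, the homotopy of classifying maps. A map of $\bZ/\ell$-complete $X$-fibrations over a weak equivalence $\phi$ shows that $f$ is equivalent to $\phi^*(f')$. The equivalence relation of Definition \ref{defn:X-ell-fib} is designed so that a chain of this kind gives equality of classes in $X_\ell{\text -}fib$. Applying this with $f=f'=(\tau_k)^\wedge$ and $\phi=F^\wedge$ gives $[(\tau_k)^\wedge]=(F^\wedge)^*[(\tau_k)^\wedge]$. By naturality of $\rho_{X,\ell}$ (Theorem \ref{thm:X-ell-universal}), this yields
\[
\iota\circ\cJ_k^\wedge=\rho_{X,\ell}((\tau_k)^\wedge)=\rho_{X,\ell}((\tau_k)^\wedge)\circ F^\wedge=\iota\circ\cJ_k^\wedge\circ F^\wedge
\]
in $Hom_{Ho((\cF{\text -}spec/\ul\cN)_*)}((\ul\cB GL_k)^\wedge,\ul\cB G_\ell(S^2))$. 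Here $\cJ_k^\wedge$ denotes the oriented classifying map of Definition \ref{defn:cJ-k}, and $\iota$ is its composite into $\ul\cB G_\ell(S^2)$. This is the asserted homotopy after composing into $\ul\cB G_\ell(S^2)$; no claim is made at the level of $\ul\cB G^o_\ell(S^2)$.
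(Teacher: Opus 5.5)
Your proposal is correct and takes essentially the same route as the paper. Over each $I$, you use Proposition \ref{prop:hom-fiber} to identify the induced map on homotopy fibers with a product of degree-$p^{i_j}$ self-maps of $\ell$-completed spheres. Since $p$ is an $\ell$-adic unit, this gives a fiber homotopy equivalence. You then conclude by naturality of the classifying map from Theorem \ref{thm:X-ell-universal}.

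Your additional steps fill in points the paper takes for granted: naturality of $F$ for commutativity, and the Chern-class argument that $F^\wedge$ is a weak equivalence on the base. Your insistence on equality only in the unoriented set $X_\ell{\text -}fib$ is the correct reading. The paper's proof writes $X_\ell{\text -}fib^o$ at that point, but the degree-$p^{n}$ fiber map does not preserve orientation (cf.\ Remark \ref{rem:false}), and only the composite into $\ul\cB G_\ell(S^2)$ is actually used.
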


 \begin{proof}
 Proposition \ref{prop:hom-fiber}  implies for each $I = (i_1,\ldots,i_n) \in \ul\cN({\bf n})$ that
 \begin{equation}
 \label{eqn:Fwedge-I}
 \xymatrix{
((\ul\cB (GL_k,S_k^{2,alg}))^\wedge)_I \ar[r]^{F^\wedge} \ar[d]_{(\tau_k)^\wedge} &  
((\ul\cB (GL_k,S_k^{2,alg}))^\wedge)_I \ar[d]^{(\tau_k)^\wedge} \\
((\ul\cB GL_k)^\wedge)_I \ar[r]^{F^\wedge}  & ((\ul\cB GL_k)^\wedge)_I ,
}
\end{equation}
induces a map between homotopy fibers of the vertical maps of (\ref{eqn:Fwedge-I})
which is equivalent to $\times_{i=1}^n p^{n_i}: \prod S^{2n_i}_\ell \to\prod S^{2n_i}_\ell$.  
Since the image of $p$  is a unit in $\bZ/\ell$, each of these squares is a 
fiber homotopy equivalence over the homotopy equivalence
$((\ul\cB GL_k)^\wedge)_I \to ((\ul\cB GL_k)^\wedge)_I $.  This implies the first assertion.

Thus, (\ref{eqn:Fwedge}) implies that the $\bZ/\ell$-complete $S^2$ fibrations
$(\tau_k)^\wedge , \ (F^\wedge)^*( (\tau_k)^\wedge)$ are equal in $X_\ell{\text -}fib^o((\ul\cB GL_k)$. 
Hence, the classifying maps
for $(\tau_k)^\wedge$ and $(F^\wedge)^*( (\tau_k)^\wedge)$ are equal in 
$Hom_{Ho((\cF{\text -}spec/\ul\cN)_*)}((\ul\cB GL_k)^\wedge, \ul\cB G_\ell(X))$.   
This conclusion is equivalent to the second assertion granted that $ \cJ_k^\wedge$
is the classifying map for $(\tau_k)^\wedge$.
 \end{proof}

\vskip .2in


\section{The Stable Adams Conjecture}
\label{sec:stable}

\vskip .1in

In our proof of Theorem \ref{thm:stable-adams}, we  use  the following proposition ``commuting"
the functors $(\bZ_\ell)_\infty(-)$ with $(||-||)^+$.  We implicitly use properties of $\cF$-spaces
summarized in Theorem \ref{thm:summary}.

\begin{prop}
\label{prop:ell-spectra}
Let $\ul \cB$ be a pointed-connected special $\cF$-space 
over $\ul\cN$ with the property that $\ul\cB_I$ is $\ell$-good for
every $I \in \ul\cN(\bf n), \ n > 0$.   Then the natural map of $\cF$-spaces
\ $\ul\cB  \  \to \ \ul \cB_\ell$ \ induces a homotopy equivalence of connective $\Omega$-spectra
\begin{equation}
\label{eqn:specequiv}
\bZ/\ell_\infty(|| \ul\cB ||^+) \times_{{\bf K(\bZ_\ell,0)}} {\bf K(\bZ,0)} \quad \to \quad || \ul \cB_\ell ||^+,
\end{equation}
where $\bZ/\ell_\infty(|| \ul\cB ||^+)$ is the Bousfield $\ell$-completion of the connective $\Omega$-spectrum $|| \ul\cB ||^+$.
\end{prop}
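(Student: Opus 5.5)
The plan is to compare the two sides of (\ref{eqn:specequiv}) through their homotopy groups. Both sides are connective $\Omega$-spectra whose $\pi_0$ is $\bZ$. Indeed, $\pi_0(||\ul\cB||^+)$ is the group completion of $\pi_0(\ul\cB({\bf 1})) = \bN$, since each $\ul\cB_I$ is connected and $\ul\cB \to \ul\cN$ is split. For the same reason $\pi_0(||\ul\cB_\ell||^+) = \bZ$. The fiber product with ${\bf K(\bZ,0)}$ over ${\bf K(\bZ_\ell,0)}$ is designed exactly to replace the $\pi_0$ of the Bousfield completion, which is $\bZ_\ell$, by $\bZ$. It therefore suffices to prove two things about the $1$-connected covers (the $[0]$-components): the map $\ul\cB \to \ul\cB_\ell$ induces a mod-$\ell$ homology equivalence of these covers, and the cover of $||\ul\cB_\ell||^+$ is $\ell$-complete. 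Granting both, the universal property of Bousfield $\ell$-completion \cite{Bo} identifies the cover of $||\ul\cB_\ell||^+$ with the completion of the cover of $||\ul\cB||^+$, and adding back $\pi_0$ gives (\ref{eqn:specequiv}).

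For the first step I work with $0$-spaces, using Theorem \ref{thm:summary}(3) and the group completion theorem \cite{Mc-Seg}. These give
\[
H_*(||\ul\cB||^+_0,\bZ/\ell) \;\cong\; H_*(\ul\cB({\bf 1}),\bZ/\ell)\otimes_{\bZ/\ell[\bN]}\bZ/\ell[\bZ],
\]
and similarly for $\ul\cB_\ell$, exactly as in the proof of Proposition \ref{prop:S2-spec}. Since each $\ul\cB_I$ is $\ell$-good, the map $\ul\cB_I \to (\ul\cB_I)_\ell$ is a mod-$\ell$ equivalence componentwise. Hence $\ul\cB({\bf 1}) \to \ul\cB_\ell({\bf 1})$ is a mod-$\ell$ equivalence compatible with the $\bN$-module structures, and the induced map of group completions is a mod-$\ell$ homology isomorphism. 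Restricting to base-point components, which are simple spaces as $0$-spaces of spectra, gives a mod-$\ell$ equivalence of $0$-spaces. Applying the same argument to the deloopings $\ul\cB(T^n)$, or equivalently using that a map of connective spectra which is a mod-$\ell$ equivalence on $1$-connected $0$-space covers is a mod-$\ell$ equivalence of spectra, yields the spectrum-level statement.

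The second step, that $||\ul\cB_\ell||^+_{[0]}$ is $\ell$-complete, is where I expect the main obstacle. My first approach is to use that $\ul\cB_\ell$ is special with $\ell$-complete levels, so that the identity component of $||\ul\cB_\ell||^+_0$ is the $\ell$-completed plus construction $\varinjlim_n (\ul\cB_{(n)})_\ell$ taken along the section $s$. This is a telescope of $\ell$-complete spaces whose homology is finite type, and I must check that it stays $\ell$-complete, i.e.\ that its homotopy groups are Ext-$\ell$-complete. The danger is that a colimit of $\ell$-complete spaces need not be $\ell$-complete in general. If the direct approach fails, I would instead appeal to the result of \cite{Bar-Bo} already invoked before Proposition \ref{prop:S2-spec}: for the $\cF$-spaces under consideration, level-wise Bousfield–Kan completion models Bousfield completion of the associated spectrum, so the comparison map is a mod-$\ell$ equivalence between $\ell$-complete objects and hence an equivalence.

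The final step is then formal. The map in (\ref{eqn:specequiv}) is an isomorphism on $\pi_0$, where it is the identity of $\bZ$, and an equivalence on $[0]$-covers. By the five lemma applied to the fiber sequences ${\bf X}_{[0]}\to {\bf X}\to {\bf K}(\pi_0,0)$, it is a homotopy equivalence of connective $\Omega$-spectra.
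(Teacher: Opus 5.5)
Your outline is the same as the paper's proof: pass to $0$-connected covers, identify the cover of $||\ul\cB_\ell||^+$ with the Bousfield completion of the cover of $||\ul\cB||^+$ via \cite{Bar-Bo}, and restore $\pi_0$ using the fiber product with ${\bf K}(\bZ,0)$. Your group-completion argument for the mod-$\ell$ equivalence is fine. The genuine gap is the step you flagged yourself: showing that $||\ul\cB_\ell||^+_{[0]}$ is $\ell$-complete. Falling back on \cite{Bar-Bo} does not close it. That result compares unstable and stable completion of a \emph{given} connected spectrum $E$. Applied to $E=||\ul\cB_\ell||^+_{[0]}$, it only reduces the question to whether the space $\Omega^\infty_0||\ul\cB_\ell||^+\simeq(\mathrm{tel}_n(\ul\cB_n)_\ell)^+$ is $\ell$-complete, which is the original question. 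The paper's proof passes over the same point when it asserts that $||\ul\cB_\ell||$ is a level-wise $\ell$-completion of $||\ul\cB||$.

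In fact this step fails under the stated hypotheses, for exactly the reason you feared: a colimit of $\ell$-complete spaces need not be $\ell$-complete. Fix $k\ge 2$ and let
\[
M=\{0\}\sqcup\coprod_{n\ge1}\{n\}\times K(\ell^{-n}\bZ,k)\ \subset\ \bN\times K(\bQ,k),
\]
a commutative simplicial monoid, and set $\ul\cB({\bf n})=M^{\times n}$.
\begin{itemize}
\item This is a pointed-connected special $\cF$-space over $\ul\cN$. Every $\ul\cB_I$ is a product of copies of $K(\bZ,k)$, hence $\ell$-good.
\item Adding $s(1)$ acts as multiplication by $\ell$ on $\pi_k$. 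So $\pi_k(||\ul\cB||^+)=\bZ[1/\ell]$, which completes to $0$, and the left side of (\ref{eqn:specequiv}) is ${\bf K}(\bZ,0)$.
\item On the other hand, $\Omega^\infty_0||\ul\cB_\ell||^+\simeq\mathrm{tel}\bigl(K(\bZ_\ell,k)\xrightarrow{\ell}K(\bZ_\ell,k)\xrightarrow{\ell}\cdots\bigr)\simeq K(\bQ_\ell,k)$. Hence $\pi_k(||\ul\cB_\ell||^+)=\bQ_\ell\neq0$, and the two sides are not equivalent.
\end{itemize}

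What is missing is a stability hypothesis. For example: each $\ul\cB_n$ is simply connected with finitely generated homotopy groups, and $\pi_k(\ul\cB_n)\to\pi_k(\ul\cB_{n+1})$ is an isomorphism for $n\gg k$. This holds for $Sin(\ul\cB GL(\bC)^{top})$, the case used in Theorem \ref{thm:stable-adams}. Under it, your direct approach works without \cite{Bar-Bo}:
\begin{itemize}
\item The telescope is simply connected, so the plus construction changes nothing.
\item Its $\pi_k$ is $\pi_k(\ul\cB_N)\otimes\bZ_\ell$ for $N\gg k$, which is finitely generated over $\bZ_\ell$ and hence Ext-$\ell$-complete.
\item A connective spectrum with Ext-$\ell$-complete homotopy groups is $\ell$-complete, which combined with your mod-$\ell$ equivalence gives the identification of covers.
\end{itemize}
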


\begin{proof}

Because  $\ul\cB_I$ is $\ell$-good for every $I \in \ul\cN(\bf n), \ n > 0$, $\ul\cB (T_n)$ is $\ell$-complete where
$T_n$ is a finite simplicial set with $|T_n| \simeq S^n$ (where the functor $\ul\cB$ sending pointed finite sets
to pointed simplicial sets is extended to finite simplicial sets by taking diagonals).  Consequently, $||\ul\cB_\ell ||$
is a level-wise $\ell$-completion of $|| \ul\cB ||$.

    As shown in \cite{Bar-Bo}, we may replace the Bousfield localization $\bZ/\ell_\infty((|| \ul\cB ||^+)_{[0]}) $
 of the 0-connected spectrum $(|| \ul\cB ||^+)_{[0]}$
by the level-wise $\bZ/\ell$-completion $(|| \ul\cB_\ell ||^+)_{[0]}$, where $(-)_{[0]}$ sends a connective $\Omega$-spectrum
to its 0-connected cover. Consider the diagram of connective $\Omega$-spectra
\begin{equation}
 \label{eqn:Omega-diag}
 \xymatrix{
( \bZ/\ell_\infty(|| \ul\cB ||^+)_{[0]} \ar[r]^=  \ar[d] & (\bZ/\ell_\infty(|| \ul\cB ||^+)_{[0]} \ar[d] &  \ar[l]_-{\approx} \ar[d] (|| \ul \cB_\ell ||^+)_{[0]} \\
 \bZ/\ell_\infty(|| \ul\cB ||^+) \times_{{\bf K(\bZ_\ell,0)}} {\bf K(\bZ,0)} \ar[d]  \ar[r] & \bZ/\ell_\infty(|| \ul\cB ||^+) \ar[d] & ||\ul \cB_\ell ||^+   \ar[l] \ar[d] \\
 {\bf K(\bZ,0)} \ar[r] & {\bf K(\bZ_\ell,0)} & \ar[l] {\bf K(\bZ,0)}
 }
\end{equation}
whose columns are fiber sequences.  We conclude the homotopy equivalence of (\ref{eqn:specequiv}) by
observing that the map $||\ul \cB_\ell ||^+ \to \bZ/\ell_\infty(|| \ul\cB ||^+)$ factors through \\
$\bZ/\ell_\infty(|| \ul\cB ||^+) \times_{{\bf K(\bZ_\ell,0)}}  {\bf K(\bZ,0)}$.
\end{proof}

\vskip .1in

We now verify that Theorem \ref{thm:X-ell-universal} together and
the translation of Proposition \ref{prop:F-wedge} into the context of connective $\Omega$-spectra
imply the following theorem.  We utilize ${\bf BG_\ell(S)}$ to denote the connective $\Omega$-spectrum  $||\ul\cB G_\ell(S^2)||^+$.

\vskip .1in

\begin{thm}
\label{thm:stable-adams}
Let ${\bf kU}$ denote the connective $\Omega$-spectrum of (topological) complex K-theory.
If $p$ and  $\ell$ are distinct primes, then the maps 
$${\bf J}_\ell, \ {\bf J}_\ell \circ (\psi^p)_\ell:  ({\bf kU})_\ell \times_{{\bf K(\bZ_\ell,0)}} {\bf K(\bZ,0)} 
\quad \to \quad {\bf BG_\ell(S)}$$
are equal in the homotopy category of  connective $\Omega$-spectra.
\end{thm}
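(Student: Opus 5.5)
The plan is to prove the stronger unstable statement first: equality of two maps in $Ho((\cF{\text -}spec/\ul\cN)_*)$ with source $Sin(\ul\cB GL(\bC)^{top})_\ell$ and target $\ul\cB G_\ell(X)$, where $|X|\simeq S^2$. After that I pass to $\Omega$-spectra using Corollary \ref{cor:homotopy-cat}, Theorem \ref{thm:summary} and Proposition \ref{prop:ell-spectra}. The two $\cF$-space maps are $\cJ_\ell$, the composite of $\cJ^o_\ell$ with $\ul\cB G^o_\ell(X)\to\ul\cB G_\ell(X)$, and $\cJ_\ell\circ(\psi^p)_\ell$, where $(\psi^p)_\ell$ is modelled as in Theorem \ref{thm:Sul}.

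\emph{Step 1.} By Proposition \ref{prop:relate-Js}, in the homotopy category
$$\cJ_\ell \;=\; \cJ_k^\wedge\circ f_{\bC\mapsto k}\circ f_{top\mapsto\bC}.$$

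\emph{Step 2.} Theorem \ref{thm:Sul} gives a commutative diagram in which the horizontal maps are weak equivalences. It lets me rewrite the composite
$$f_{\bC\mapsto k}\circ f_{top\mapsto \bC}\circ(\psi^p)_\ell \quad\text{as}\quad F^\wedge\circ f_{\bC\mapsto k}\circ f_{top\mapsto\bC}.$$
Here is a subtlety: Theorem \ref{thm:Sul} is stated after applying $||-||$, and the identification with $(\psi^p)_\ell$ uses the splitting principle. My choice is to \emph{define} the $\cF$-space model of $(\psi^p)_\ell$ as the transported map
$$f_{top\mapsto\bC}^{-1}\circ(\sigma_\bC^{-1})^\wedge\circ f_{top\mapsto\bC},$$
using the left square of (\ref{eqn:sigma}). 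This makes the unstable identity hold by construction, and the spectrum-level identification with $(\psi^p)_\ell$ is exactly the content of (\ref{eqn:sigma-||}).

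\emph{Step 3.} Combining Steps 1 and 2,
$$\cJ_\ell\circ(\psi^p)_\ell \;=\; \cJ_k^\wedge\circ F^\wedge\circ f_{\bC\mapsto k}\circ f_{top\mapsto\bC}.$$
Proposition \ref{prop:F-wedge} gives $\cJ_k^\wedge\circ F^\wedge=\cJ_k^\wedge$ as maps into $\ul\cB G_\ell(S^2)$. The reason is that $F^\wedge$ induces a map of $\bZ/\ell$-complete $S^2$-fibrations: the fiber degree is $p^{n}$, a unit in $\bZ_\ell$. By Theorem \ref{thm:X-ell-universal}, this map of fibrations yields equality of classifying maps. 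Hence $\cJ_\ell\circ(\psi^p)_\ell=\cJ_\ell$ in $Ho((\cF{\text -}spec/\ul\cN)_*)$.

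The forgetful passage to $\ul\cB G_\ell$ is essential. The Frobenius multiplies the fiber orientation by $p^n$, so the oriented classifying maps into $\ul\cB G^o_\ell$ need not agree. This is precisely the orientation defect mentioned in the introduction, and it is why the target is ${\bf BG_\ell(S)}$.

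\emph{Step 4: passage to spectra.} Corollary \ref{cor:homotopy-cat} upgrades the equality to one in $Ho(\cF[s.sets_*])$, and $||-||^+$ then gives equal maps of connective $\Omega$-spectra by Theorem \ref{thm:summary}. The source spectrum is $||Sin(\ul\cB GL(\bC)^{top})_\ell||^+$. To identify it with $({\bf kU})_\ell\times_{{\bf K(\bZ_\ell,0)}}{\bf K(\bZ,0)}$, I apply Proposition \ref{prop:ell-spectra}. Its hypothesis is that each $BGL_{n}(\bC)$ product is simply connected, hence $\ell$-good. Group completion of $Sin(\ul\cB GL(\bC)^{top})$ gives ${\bf kU}$, and ${\bf J}_\ell$ is by definition the spectrum map induced by $\cJ_\ell$.

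I expect Step 2 to be the main obstacle, namely the compatibility of the $\cF$-space model of the Adams operation with the Frobenius. Theorem \ref{thm:Sul} only controls this after stabilization, via the splitting principle. So one must either accept the model of $(\psi^p)_\ell$ as the conjugated Galois action, as above, or argue that equality of the two composites into ${\bf BG_\ell(S)}$ only needs the spectrum-level square (\ref{eqn:sigma-||}). I would try the latter first, composing at the level of spectra. Then only the unstable equality $\cJ_k^\wedge\circ F^\wedge=\cJ_k^\wedge$ is required, and (\ref{eqn:sigma-||}) supplies the rest directly.
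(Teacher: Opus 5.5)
Your proposal is correct and matches the paper's proof. The paper's diagram (\ref{eqn:st-ad}) combines your Steps 1--3 in the same way: the bottom square comes from Proposition \ref{prop:relate-Js}, the top square from Theorem \ref{thm:Sul}, and the right triangle from Proposition \ref{prop:F-wedge} with Theorem \ref{thm:X-ell-universal}; the paper then passes to spectra via Proposition \ref{prop:ell-spectra}. You also point out that Theorem \ref{thm:Sul} only controls $(\psi^p)_\ell$ after applying $||-||$, and resolve it either by using the conjugated Galois action as the $\cF$-space model or by composing at the spectrum level, which clarifies a point the paper passes over.
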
 

\begin{proof}
By Proposition \ref{prop:ell-spectra} applied to $\ul\cB$ equal to $(\ul\cB GL(\bC))^{top}_\ell$, to prove the asserted homotopy 
equivalence of spectra it suffices to prove that the compositions
\begin{equation}
\label{eqn:step1}
 (\cJ)_\ell  , \  (\cJ)_\ell \circ (\psi^p)_\ell: (\ul\cB GL(\bC))^{top}_\ell  \
  \to \ \ul\cB G_\ell(S^2)
\end{equation}
are homotopic.  

We consider the following diagram of pointed--connected special $\cF$-spaces
 \begin{equation}
 \label{eqn:st-ad}
 \xymatrix{
Sin(\ul \cB GL(\bC)^{top})_\ell \ar@/_4pc/[dd]_{\cJ_\ell} \ar[d]_{(\psi^p)_\ell} \ar[rr]^{f_{top\mapsto k}} & 
& (\ul\cB GL_k)^\wedge \ar@/^4pc/[dd]^{\cJ_k} \ar[d]^{F^\wedge} \\
Sin(\ul \cB GL(\bC)^{top})_\ell \ar[d]_{\cJ_\ell}  \ar[rr]^{f_{top\mapsto k}} & & (\ul\cB GL_k)^\wedge \ar[d]^{(\cJ_k)^\wedge} \\
Sin(\ul \cB G^{top}(\bC^+))_\ell  \ar[rr]_{\approx} & &  \ul\cB G_\ell(S^2).
}
\end{equation}
The top square of (\ref{eqn:st-ad}) homotopy
commutes by Theorem \ref{thm:Sul}; the bottom square homotopy commutes by Proposition \ref{prop:relate-Js};
the far right triangle homotopy commutes by Proposition \ref{prop:F-wedge} and Theorem \ref{thm:X-ell-universal}.
Thus, the homotopy commutativity of the far left triangle is implied by these homotopy commutative squares
and  the fact that the horizontal
maps of (\ref{eqn:st-ad}) are homotopy equivalences.
\end{proof}

\vskip .1in

We recall from  Proposition \ref{prop:S2-spec} the notation $(\bf{BG^o(S)})_\ell \ \to \   \bf{BG_\ell(S)}$
for the map of connective $\Omega$-spectra $||(\ul\cB G^o(S^m))_\ell||^+ \  \to \ ||\ul\cB G_\ell(S^m)||^+$
(which is independent of $m$).  If $\bf{B}$ is an $\Omega$-spectrum, we denote by $\bf{B}_{[0]}$ the
0-connected cover of $\bf{B}$.

\begin{thm}
\label{thm:oriented-stable-adams}
Let ${\bf J}_\ell^o:  ({\bf BU})_\ell \ \to \ (\bf{BG^o(S)}_{[0]})_\ell$ denote the map 
of 1-connected $\Omega$-spectra induced by ${\bf J}_\ell$.
If $p$ and $\ell$ are distinct primes, then the following maps 
  $${\bf J}_\ell^o, \ {\bf J}_\ell^o \circ (\psi^p)_\ell:  ({\bf BU})_\ell 
\quad \to \quad (\bf{BG^o(S)}_{[0]})_\ell $$
are equal in the homotopy category of connective $\Omega$-spectra.
\end{thm}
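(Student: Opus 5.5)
We follow the proof of Theorem \ref{thm:stable-adams}, replacing the classifying object $\ul\cB G_\ell(S^2)$ by its oriented analogue $\ul\cB G^o_\ell(S^2)$ and working throughout with oriented $\bZ/\ell$-complete $S^2$-fibrations. First we reduce the statement to $\cF$-spaces. $({\bf BU})_\ell$ is the 1-connected cover of the $\Omega$-spectrum associated to $Sin(\ul\cB GL(\bC)^{top})_\ell$. By Proposition \ref{prop:ell-spectra} and Proposition \ref{prop:S2-spec}, $(\bf{BG^o(S)}_{[0]})_\ell$ is the 0-connected cover of $||\ul\cB G^o(S^2)_\ell||^+ \simeq ||\ul\cB G^o_\ell(S^2)||^+$, using Corollary \ref{cor:cover}. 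Passage to connected covers is functorial. By Corollary \ref{cor:homotopy-cat}, it therefore suffices to show that
$$\cJ^o_\ell, \ \cJ^o_\ell\circ(\psi^p)_\ell: Sin(\ul\cB GL(\bC)^{top})_\ell \ \dashrightarrow \ \ul\cB G^o_\ell(S^2)$$
agree in $Ho((\cF{\text -}spec/\ul\cN)_*)$. Here $\cJ^o_\ell = \lambda\circ((adj^o)^{-1}\circ\cJ^o)_\ell$ classifies the oriented $\bZ/\ell$-complete fibration $(\tau_{\bC^+})_\ell$ (Corollary \ref{cor:relate-o}, which applies since each $BGL_n(\bC)$ is simply connected).

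Next we rebuild diagram (\ref{eqn:st-ad}) with oriented targets. The top square is unchanged and commutes by Theorem \ref{thm:Sul}. For the bottom square, the oriented version of Proposition \ref{prop:relate-Js} follows from Propositions \ref{prop:class-compare} and \ref{prop:tau-hat}: the orientations of $(\tau_\bC)^\wedge$ and $(\tau_k)^\wedge$ were \emph{defined} by transport along $f_{top\mapsto\bC}$ and $f_{\bC\mapsto k}$. Corollary \ref{cor:X-fib-o-ell} then identifies $(\cJ^o_k)^\wedge\circ f_{top\mapsto k}$ with $\cJ^o_\ell$ in $Hom(-,\ul\cB G^o_\ell(S^2))$. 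The horizontal maps are weak equivalences. Once the right-hand triangle commutes, the left triangle commutes as well, which is the desired equality.

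The main obstacle is the right triangle, namely an oriented form of Proposition \ref{prop:F-wedge}. We need $(\cJ^o_k)^\wedge\circ F^\wedge = (\cJ^o_k)^\wedge$ as maps into $\ul\cB G^o_\ell(S^2)$, not merely into $\ul\cB G_\ell(S^2)$. Square (\ref{eqn:Fwedge}) shows that $(\tau_k)^\wedge$ and $(F^\wedge)^*((\tau_k)^\wedge)$ are equivalent as unoriented fibrations. However, the induced map on fibers has degree $p^{i}$, not $1$, so it does not respect the chosen orientations; this is exactly the flaw in the unoriented statement. The two lifts to $\ul\cB G^o_\ell(S^2)$ of the common class in $Hom(-,\ul\cB G_\ell(S^2))$ thus differ by the action of the unit $p\in\bZ_\ell^*=\pi_0 G_\ell(S^2)$.

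To handle this, use the fiber sequence (\ref{eqn:fib-seq}) and pass to 1-connected covers. Lifts through $(\bf{BG^o(S)}_{[0]})_\ell\to\bf{BG_\ell(S)}_{[0]}$ of a fixed map from a 1-connected spectrum $\bf E$ form a torsor under $[\bf E,\Omega {\bf K}(\bZ_\ell^*,1)]=[\bf E, {\bf K}(\bZ_\ell^*,0)]$, and this group vanishes for ${\bf E}=({\bf BU})_\ell$ since $\bf E$ is 1-connected. Hence on ${\bf BU}$ the two oriented classes coincide, because their images in $\bf{BG_\ell(S)}_{[0]}$ agree by Theorem \ref{thm:stable-adams}. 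This last step is where the argument really uses the 1-connected covers, and it is the step to check most carefully. In particular, we must verify that the reduction of the first paragraph is compatible with this obstruction-theoretic uniqueness, so that the argument can be run directly at the spectrum level rather than at the level of $\cF$-spaces.
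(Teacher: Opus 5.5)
Your final paragraph is the paper's proof. Theorem \ref{thm:stable-adams} makes the two maps agree in ${\bf BG_\ell(S)}_{[0]}$. The fiber sequence (\ref{eqn:fib-seq}) then places their difference in the image of $Hom(({\bf BU})_\ell,{\bf K}(\bZ_\ell^*,0))$, and this group vanishes because $({\bf BU})_\ell$ is connected.

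The $\cF$-space reduction and the oriented diagram in your first two paragraphs should be dropped. The oriented $\cF$-level equality on all of $Sin(\ul\cB GL(\bC)^{top})_\ell$ actually fails before passing to the connected cover, by Remark \ref{rem:false}. So the compatibility you flag at the end is not an issue: the spectrum-level argument stands on its own.
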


\begin{proof}
The fiber sequence $ (\bf{BG^o(S)}_{[0]})_\ell \ \to  \ \bf{BG_\ell (S)}_{[0]} \ \to  \ {\bf K(\bZ_\ell^*,1)}$
of (\ref{eqn:GS}) determines the exact sequence
$$Hom_{Ho(c.\Omega-spectra)}(({\bf BU})_\ell,{\bf K(\bZ_\ell^*,0)})  \to Hom_{Ho(c.\Omega-spectra)}(({\bf BU})_\ell,(\bf{BG^o(S)}_{[0]})_\ell)$$
$$ \quad \to \quad Hom_{Ho(c.\Omega-spectra)}(({\bf BU})_\ell,\bf{BG_\ell(S)}_{[0]}).$$
Consider the difference  
$${\bf J}_\ell^o - {\bf J}_\ell^o \circ (\psi^p)_\ell \in Hom_{Ho(c.\Omega-spectra)}(({\bf BU})_\ell,(\bf{BG^o(S}_{[0]})_\ell).$$
By Theorem \ref{thm:stable-adams}, this difference maps to 0 in 
$Hom_{Ho(c.\Omega-spectra)}(({\bf BU})_\ell,\bf{BG_\ell(S)}_{[0]})$ and thus is an element of 
$Hom_{Ho(c.\Omega-spectra)}(({\bf BU})_\ell,{\bf K(\bZ_\ell^*,0)}) $.  Because 
 $({\bf BU})_\ell$ is 0-connected, $Hom_{Ho(c.\Omega-spectra)}(({\bf BU})_\ell,{\bf K(\bZ_\ell^*,0)}) = 0$ so that
$${\bf J}_\ell^o \ =  \ {\bf J}_\ell^o \circ (\psi^p)_\ell \ \in \ Hom_{Ho(c.\Omega-spectra)}(({\bf BU})_\ell,(\bf{BG^o(S)}_{[0]})_\ell)$$ 
as asserted.
\end{proof}

\vskip .1in

\begin{remark}
\label{rem:false}
The maps  ${\bf J}_\ell, \ {\bf J}_\ell \circ (\psi^p)_\ell$ of Theorem \ref{thm:oriented-stable-adams}  
factor through $\bf{BG_\ell^o(S)} \to \bf{BG_\ell(S)}$.  However, as explained in \cite{B-K}, these 
factorizations are not equal in\\
 $Hom_{Ho(c.\Omega-spectra)}(({\bf kU})_\ell \times_{\bf {K(\bZ_\ell,0)}} {\bf K(\bZ,0)},(\bf{BG_\ell^o(S)})$.
\end{remark}

Applying Theorem \ref{thm:oriented-stable-adams}, we easily obtain the following $\ell$-local version of the Stable Adams Conjecture .

\vskip .1in

\begin{cor}
\label{cor:local}
If $p$ and $\ell$ are distinct primes, the following maps of 1-connected spectra
$${\bf J}_{(\ell)}^o, \ {\bf J}_{(\ell)}^o \circ (\psi^p)_{(\ell)}:  ({\bf BU})_{(\ell)} 
\quad \to \quad (\bf{BG^o(S)}_{[0]})_{(\ell)} $$
are equal in the homotopy category of  connective $\Omega$-spectra.
\end{cor}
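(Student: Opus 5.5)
The plan is to deduce Corollary \ref{cor:local} from Theorem \ref{thm:oriented-stable-adams} by an arithmetic fracture argument. The key input is that the homotopy groups of the target are torsion, so that $\ell$-localization and $\ell$-completion see the same information.

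First I will record the homotopy of the relevant spectra. By Corollary \ref{cor:S-hom} and Serre's finiteness theorem, $(\bf{BG^o(S)}_{[0]})_{(\ell)}$ is a 1-connected spectrum whose homotopy groups $\pi_{i+1}$ for $i>0$ are the $\ell$-primary parts of the stable stems $\pi_i^s$. These are finite $\ell$-groups. Consequently the natural map
$$(\bf{BG^o(S)}_{[0]})_{(\ell)} \to (\bf{BG^o(S)}_{[0]})_\ell$$
is a homotopy equivalence of connective $\Omega$-spectra, because $\ell$-completion is the identity on finite $\ell$-groups. This step uses the Bousfield--Kan comparison of $\bZ_{(\ell)}$- and $\bZ/\ell$-completion, which is recorded for $\cF$-spaces in Remark \ref{rem:ell-local} and Proposition \ref{prop:ell-spectra}.

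Second, I will reduce the problem to maps out of the completion. Let ${\bf D}={\bf J}_{(\ell)}^o - {\bf J}_{(\ell)}^o\circ(\psi^p)_{(\ell)}$, viewed as an element of $Hom(({\bf BU})_{(\ell)},(\bf{BG^o(S)}_{[0]})_{(\ell)})$. The constructions of ${\bf J}$ and $\psi^p$ are natural with respect to the completion maps $(-)_{(\ell)}\to(-)_\ell$. Composing ${\bf D}$ with the equivalence from the first step therefore gives the composite of ${\bf J}_\ell^o-{\bf J}_\ell^o\circ(\psi^p)_\ell$ with $({\bf BU})_{(\ell)}\to({\bf BU})_\ell$. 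By Theorem \ref{thm:oriented-stable-adams} this composite vanishes. Since post-composition with an equivalence is injective on homotopy classes, ${\bf D}=0$.

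The step I expect to be the main obstacle is verifying this naturality. Concretely, one must check that the $\ell$-local maps ${\bf J}^o_{(\ell)}$ and $(\psi^p)_{(\ell)}$, defined by applying $(\bZ_{(\ell)})_\infty$ levelwise to the relevant $\cF$-spaces, map to their $\ell$-complete versions under the natural transformation $(\bZ_{(\ell)})_\infty\to(\bZ/\ell)_\infty$. For ${\bf J}^o$ this should follow from the functoriality of levelwise completion applied to $\cJ^o$ together with the map $\lambda$ of Corollary \ref{cor:cover}. For $\psi^p$, I would first try transporting the splitting-principle argument of Theorem \ref{thm:Sul}, or alternatively just define $(\psi^p)_{(\ell)}$ as the localization of $\psi^p$ on ${\bf BU}$, in which case naturality is automatic.

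An alternative that avoids the target comparison is to note that $({\bf BU})_{(\ell)}\to({\bf BU})_\ell$ is a mod-$\ell$ equivalence and the target is $\ell$-complete. Under either route, the only delicate point is that the target is genuinely $\ell$-complete, and that follows from the finiteness of the stable stems.
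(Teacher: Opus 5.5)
Your proposal is correct and follows the paper's own proof: the paper also uses that $(\mathbf{BG^o(S)}_{[0]})_{(\ell)} \to (\mathbf{BG^o(S)}_{[0]})_\ell$ is a weak equivalence because the stable stems are finite, asserts that ${\bf J}^o_{(\ell)}$ and $(\psi^p)_{(\ell)}$ complete to ${\bf J}^o_\ell$ and $(\psi^p)_\ell$, and concludes from Theorem \ref{thm:oriented-stable-adams} via the same commutative square of differences. One small correction to your citations: the target equivalence comes from Serre finiteness together with Bousfield--Kan, which the paper cites through Proposition \ref{prop:fingen}; it does not come from Remark \ref{rem:ell-local} or Proposition \ref{prop:ell-spectra}.
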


\begin{proof}
We refer to \cite[\S II.13] {JFA} for the $\ell$-localized stable Adams operation, 
$(\psi^p)_{(\ell)}: ({\bf BU})_{(\ell)} \to ({\bf BU})_{(\ell)}$ and observe that the 
$\bZ/\ell$-completions of the maps  ${\bf J}_{(\ell)}^o, \ {\bf J}_{(\ell)}^o \circ (\psi^p)_{(\ell)}$
of $\ell$-local $\Omega$-spectra
are homotopic to the maps of $\bZ/\ell$-complete $\Omega$-spectra ${\bf J}_\ell^o, \ {\bf J}_\ell^o \circ (\psi^p)_\ell.$  
The computations of homotopy groups recalled in Proposition \ref{prop:fingen}
imply that 
\ $(\bf{BG^o(S)}_{[0]})_{(\ell)} \ \to \ (\bf{BG^o(S)}_{[0]})_\ell $
is a weak equivalence (see \cite[VI.68]{Bo-Kan})), so that the corollary follows from Theorem \ref{thm:oriented-stable-adams}
and the commutativity of the following square of $\Omega$-spectra
\begin{equation}
\label{eqn:com-spectra}
\xymatrix@C6pc{
({\bf BU})_{(\ell)}  \ar[d] \ar[r]^-{{\bf J}_{(\ell)}^o - {\bf J}_{(\ell)}^o \circ (\psi^p)_{(\ell)}} & (\bf{BG^o(S)}_{[0]})_{(\ell)} \ar[d] \\
({\bf BU})_\ell \ar[r]_-{{\bf J}_{\ell}^o - \ {\bf J}_{\ell}^o \circ (\psi^p)_{\ell}} & (\bf{BG^o(S)}_{[0]})_\ell .
}
\end{equation}
\end{proof}

\vskip .1in

\begin{remark}
\label{rem:quat}
As in  \cite[Thm 10.5]{F82}, the techniques of this paper apply to prove quaternionic analogs
of the preceding results.  One should replace $GL_n(\bC)$ acting on $\bC^{n+}$ by $Sp_{2n}(\bC)$
acting on $Q^{n+} \simeq S^{4n}$, where $Q^n$ is an $n$-dimensional vector space over the 
division algebra of quaternions (which has dimension 4 over $\bR$).  We leave the details to the reader.
\end{remark}


\vskip .2in


\begin{thebibliography}{20} 

\bibitem{Adams} J.F. Adams, {\em On the groups $J(X)$. I.}, Topology {\bf 2} (1963), 181 - 195.

\bibitem{JFA} J.F. Adams, {\it Stable homotopy and generalized homology.}  Chicago Lecture
Series in Mathematics, University of Chicago Press, 1974.

%

\bibitem{A-M} M. Artin, B. Mazur, {\it \'Etale Homotopy}. Lecture Notes in Mathematics {\bf 100},
Springer-Verlag, 1969.

\bibitem{SGA4} M. Artin, A. Grothendieck, and J.-L Verdier, {\it Theorie des Topos et 
Cohomologies \'Etale des schemas}, Lecture Notes in Mathematics {\bf 269} Springer-Verlag, 1972.

\bibitem{Atiyah}  M. Atiyah, {\it K-theory}, W.A. Benjaman, New York - Amsterdam, 1967

\bibitem{Bar-Bo} T. Barthel and A.K. Bousfield, {\em On the comparison of stable and unstable $p$-completion},
Proc. Amer. Amath. Soc. {bf 147} (2019), 897 - 908.


\bibitem{B-K} P. Bhattacharya, N. Kitchloo, {\em The $p$-local Stable Adams Conjecture: Erratum},
 arXiv:1803.11014.
 

\bibitem{Bo} A.K. Bousfield, {\em The localization of spectra with respect to homology}, Topology {\bf 18} (1979), 257-281.

\bibitem{Bo-F} A.K. Bousfield, E. Friedlander, {\em Homotopy theory of $\Gamma$-spaces, spectra, and 
bisimplicial sets}.  Lecture Notes in Mathematics {\bf 658}, Springer-Verlag, 1978.

\bibitem{Bo-Kan} A.K. Bousfield, D. Kan, {\it Homotopy limits, completions, and localizations}.  Lecture
Notes in Mathematics {\bf 569} Springer-Verlag, 1977.

\bibitem{Deligne} P. Deligne, {\it Cohomologie \'Etale (SGA $4\frac{1}{2}$)}, Lecture
Notes in Mathematics {\bf 304} Springer-Verlag, 1972.

\bibitem{F73} E. Friedlander, {\em Fibrations in \'etale homotopy theory}, Publications Math IHES {\bf 42} (1973), 
5 - 46.



\bibitem{F80}  E. Friedlander, {\em The infinite loop Adams conjecture via classification theorems for $\cF$-spaces}, 
Math. Proc. Camb. Phil. Soc.{\bf 87} (1980), 109 - 150.
%

\bibitem{F82} E. Friedlander, {\it \'Etale Homotopy of Simplicial Schemes}, Annals of Math. Stud 
{\bf 104}, Princeton Univ. Press, 1982.

\bibitem{F-Par} E. Friedlander and B. Parshall, {\em \'Etale cohomology of reductive groups}, 
{\it Algebraic $K$-Theory, Evanston,1980}, Lecture Notes in Math. {\bf 854} (1981), Springer-Verlag, 127 - 140.


%

\bibitem{MacL} S. Mac Lane, S. {\it Categories for the working Mathematician}, Grad. Texts in Math {\bf 5},
Springer-Verlag, 1974.

\bibitem{May67} J.P. May, {\it Simplicial objects in algebraic topology}, Van Nostrand Mathematical 
Studies, {\bf 11}, 1967.

\bibitem{May75} J.P. May, {\it Classifying spaces and fibrations},  Mem. Amer. Amth. Soc {\bf 155} (1975).


\bibitem{May} J.P. May, {\em The spectra associated to permutative categories}, Topology {\bf 17} (1978), 225 - 228.


\bibitem{Mc-Seg} D. McDuff, G. Segal, {\em Homology fibrations and the ``group completion" theorem},
Invent Math {\bf 31} (1975), 274 - 284.

\bibitem{Milne} J. Milne, {\it \'Etale Cohomology}, Princeton Mathematical Series {\bf 33}, Princeton 
Univ. Press, 1980.

\bibitem{nLab}  https://ncatlab.org/nlab/show/proper+model+category.


\bibitem{Quick} G. Quick, {\em Profinite homotopy theory}, Doc Math {\bf 13} (2008), 585 - 612.

\bibitem{Quick2} G. Quick, {Some remarks on profinite completion of spaces}, Adv. Stud. Pure Math {\bf 63} (2012, 413 - 448.

\bibitem{Quillen68} D. Quillen, {\em Some remarks on \'etale homotopy theory and a conjecture of Adams},
Topology {\bf 7} (1968), 111 - 116.
%
\bibitem{Quillen} D. Quillen, {\em The Adams Conjecture}, Topology {\bf 10} (1971), 67 - 80.

\bibitem{Segal} G. Segal, {\em Categories and cohomology theories}, Topology {\bf 13} (1974), 293 - 312.

\bibitem{Serre} J.-P. Serre, {\em Groupes d'homotopie et classes de groupes abelian}, Ann. of Math {\bf 58} (1953), 
258 - 294.

\bibitem{Stash} J.D. Stasheff, {A classification theorem for fiber spaces}, Topology {\bf 2}(1963), 239 - 246.

\bibitem{Sul} D. Sullivan, {\em Genetics of homotopy theory and the Adams Conjecture}, Ann. of Math
{\bf 100} (1974), 1 - 79.

\end{thebibliography}
\end{document}